\renewcommand{\chaptermark}[1]{\markboth{\@chapapp\ \thechapter.\ #1}{}}
\renewcommand{\tableofcontents}{%
    \if@twocolumn
      \@restonecoltrue\onecolumn
    \else
      \@restonecolfalse
    \fi
    \chapter*{\contentsname
        \@mkboth{%
           \itshape\contentsname}{\itshape\contentsname}}%
    \@starttoc{toc}%
    \if@restonecol\twocolumn\fi
    }
\newcommand{\bs}{\boldsymbol}
\newcommand{\ra}{\rightarrow}
\newcommand{\st}{^*}
\newcommand{\inv}{^{-1}}
\newcommand{\mk}{\mathfrak}
\newcommand{\mc}{\mathcal}
\newcommand{\sh}{^{\sharp}}
\newcommand{\R}{\mathbb{R}}
\newtheorem{theorem}{Theorem}[section]
\newtheorem{definition}[theorem]{Definition}
\newtheorem{lemma}[theorem]{Lemma}
\newtheorem{corollary}[theorem]{Corollary}
\newtheorem{proposition}[theorem]{Proposition}
\newtheorem{remark}[theorem]{Remark}
\newtheorem{example}[theorem]{Example}
\newcommand{\calL}{\mathcal{L}}
\begin{document}
\frontmatter 
\pagenumbering{roman}
{
\thispagestyle{empty}

\noindent \normalsize{Department of Mathematical Sciences\\
The PhD School of Science\\
Faculty of Science\\
University of Copenhagen\\
Denmark}

\vfill
{
\Huge\center{\bfseries On the classical and quantum momentum map}
\vspace{2cm}

\noindent\Large\center{\bfseries A thesis submitted in fulfillment of the requirements for the degree of Doctor of Philosophy}\\

\vspace{1cm}

\Large Chiara Esposito\\
}
\vfill
\noindent\large Advisor : Ryszard Nest\\
\large Discussed : 27 January 2012
}
\thispagestyle{empty}
\chapter*{}
\begin{flushright}
\textit{To Giovanni and Isabel}
\end{flushright}
\thispagestyle{empty}


\thispagestyle{empty}

\chapter*{Acknowledgments}     

First of all, I would like to thank my supervisor Prof. Ryszard Nest for his continuous encouragement
and his enthusiasm: without our funny discussions, our fights with pens, eraser and blackboard and his patience, this thesis would have never been completed.

Many thanks to Prof. Alan Weinsten and his students for the hospitality and for the beautiful experience I had at the University
of California, Berkeley, both scientific and personal. A special thanks to Benoit Jubin for his help with my paper and all the funny
 ``french conversations'' and to Sobhan Seyfaddini, for all the nice persian food he made me taste.

Many thanks to Prof. Eva Miranda for her nice hospitality at the Universitat Polit\`ecnica de Catalunya, Barcelona, for her support with applications and all the
 ideas and suggestions. And a special thanks to my dear friend Romero Solha, who introduced me to Eva, for our
 crazy discussions about girls and food. Thanks to Prof. Rui Loja Fernandes for his interesting comments on my work
 and his suggestions.

Thanks to all my friends: Giulietto, for all the deep and silly conversations and the projects we share; Rita, Diego, Ninfa and
 Sergio and all my ``neapolitan physics-family''. Thanks to my former advisor, zio Fedele, for his help and support in any moment of
 my life. To Alexandre Albore for the fun I had during these last months with his wrong proofs.
 To Hany and Bryndis, because their tango classes have been one of the nicest moments of the week in this last year.

And thanks to George, for all the intense moments we had in these three years, for his patience in bearing me (it's hard...),
for the beautiful trips and the lazy days, for always offering me a different point of view and for all the laughs we have together.




\thispagestyle{plain}
\chapter*{}

\section*{Abstract}
In this thesis we study the classical and quantum momentum maps and the theory of
reduction.
We focus on the notion of momentum map in Poisson geometry and we discuss
the classification of the momentum map in this framework. Furthermore, we
describe the so-called Poisson Reduction, a technique that allows us to
reduce the dimension of a manifold in presence of symmetries
 implemented by Poisson actions.

Using techniques of deformation quantization and quantum groups, we
introduce the quantum momentum map as a deformation of the classical
momentum map, constructed in such a way that it factorizes the quantum
action. As an application we discuss some examples of quantum reduction.

\section*{Resum\'e}

I denne afhandling studerer vi den klassiske impulsafbildning og 
kvanteimpulsafbildningen samt reduktionsteori. Vi fokuserer på 
impulsafbildningen i Poisson-geometri og diskuterer klassifikation af 
impulsafbildningen inden for den ramme. Endvidere beskriver vi den 
såkalte Poisson-reduktion, som er en teknik, der tillader os at reducere 
dimensionen af en mangfoldighed med symmetrier implementeret af 
Poisson-virkninger.

Ved at bruge metoder fra deformationskvantisering og kvantegrupper 
introducerer vi kvanteimpulsafbildningen som en deformation af den 
klassiske impulsafbildning, der er konstrueret så den faktoriserer 
kvantereduktionen. Resultaterne anvendes til at undersøge eksempler på 
kvantereduktioner.



\thispagestyle{plain}

\tableofcontents
\mainmatter
\chapter*{Introduction}
\addcontentsline{toc}{chapter}{Introduction}
\markboth{Introduction}{}

The modern notion of momentum map was first introduced by Kostant \cite{Ks} and Souriau \cite{So}, as a refinement of the idea developed by Lie \cite{Lie} in 1890. The momentum map provides a mathematical formalization of the notion of conserved
quantity associated to symmetries of a dynamical system. The definition of momentum map  only requires a canonical
Lie algebra action and its existence is guaranteed whenever the infinitesimal generators of the Lie algebra action are
Hamiltonian vector fields.
    
It is important to underline that the momentum map is a fundamental tool for the study of the so-called symplectic reduction
of certain manifolds \cite{MsWe}.
This method is a synthesis of different techniques of reduction of the phase space (generally a symplectic manifold) associated to a dynamical system, in which the symmetries are divided out. In particular, 
the Marsden-Weinstein reduction \cite{MsWe} gives a description of the symplectic leaves on the orbit space, obtained by a Hamiltonian action on a symplectic manifold.

These theories can be further generalized to the Poisson geometry framework, considering more general structures which
 reduce to the symplectic ones under certain conditions. This generalization has been performed with several approaches, see \textit{e.g.}
 \cite{FOR}, \cite{Lu1} and \cite{Md}. Poisson geometry was introduced by Lie in \cite{Lie} as a geometrization of Poisson's
 studies of classical mechanics \cite{P}. During the past 40 years, Poisson geometry has become an interesting and active field
of research, motivated by connections with many research fields as mechanics of particles and continua (Arnold \cite{A},
Lichnerowicz \cite{Li}, Marsden–Weinstein \cite{MsWe1}) and integrable systems (Gel'fand-Dickey \cite{GD} and Kostant
 \cite{Ks1}).

Furthermore, the theory of Poisson Lie groups has been developed through the work of Drinfel'd \cite{Drinfeld} and 
Semenov-Tian-Shansky \cite{STS}, \cite{ST1} on completely integrable systems and quantum groups. 
These new structures can be naturally used to define Poisson actions, \textit{i.e.} actions of Poisson Lie groups on Poisson manifolds,
as a generalization of Lie group actions. 

In this thesis we mainly focus on a generalization of the momentum map provided by Lu \cite{Lu3}, \cite{Lu1}. In particular, we give a detailed discussion about existence and uniqueness of Lu's momentum map. More precisely, we introduce a
 weaker momentum map, called infinitesimal momentum map, and we study the conditions under which the infinitesimal
  momentum map determines a momentum map in the usual sense. We describe the theory of reconstruction of the momentum
  map from the infinitesimal one in two explicit cases. Moreover, we provide the conditions which ensure the uniqueness
  of the momentum map.
  
 Furthermore, we  exploit Lu's momentum map to construct a theory of reduction for Poisson actions. Indeed, the local description of Poisson manifolds given by Weinstein in \cite{We1} and the properties of Lu's momentum map provide
  an explicit description of the infinitesimal generator of a Poisson action, which allows us to define a Poisson reduced space.

A similar mathematical construction can be implemented for the study of symmetries in quantum mechanics. The passage
between classical and quantum systems can be performed with the theory of deformation quantization 
\cite{Ko}, \cite{BFLS}, \cite{Fe}. We use this approach to study the relation between symmetries in classical and quantum
 mechanics. The key idea resides in defining a quantum momentum map such that the classical conserved quantities
 can be regarded as a classical limit of the quantum ones.
 
The problem of quantization of the momentum map and the reduction has been the main topic of many works, see \textit{e.g.} \cite{Fe1} and \cite{Lu4}. In \cite{Fe1}, Fedosov  uses the theory of deformation quantization to define a quantum momentum map and, as a consequence, a quantum reduction. The author proves that the quantum reduced space is isomorphic to the algebra obtained by canonical deformation quantization of the symplectic reduced manifold. In \cite{Lu4}, Lu defines the quantization of a Poisson action in terms of quantum group action and the quantum momentum map as a map which induces the quantum group action.

Motivated by these two works, in the present thesis we discuss a quantization procedure for the momentum map associated to a Poisson action, which uses quantum group and deformation quantization techniques.

\section*{Plan of Work}
We give here a brief description of the contents of this thesis, underlining the main new results.

Chapter 1 gives some background about momentum map in symplectic geometry.
 After a short overview of Lie group actions and
Hamiltonian systems, we present the momentum map with some basic examples and we describe the
Marsden-Weinstein reduction. The chapter aims to introduce the language that we use throughout this  thesis.

Chapter 2 starts by  recalling some elements of Poisson geometry. In particular, we focus on Poisson Lie groups
and Lie bialgebras and we discuss some properties of Poisson manifolds. Within this framework, we introduce the Poisson action and the momentum map, and, as a warm up, we see the way they generalize the  discussion in Chapter 1.

Afterwards we commence the study of the part of the momentum map, the infinitesimal momentum map, given by a certain family of differential forms associated to the momentum map and which has the advantage that it can be
generalized to the quantized case - see the definition \ref{def: inf}.

In Theorem \ref{thm: rec} we give  explicit, computable conditions for the infinitesimal momentum map to determine a global momentum map. In  Section \ref{sec: rec} we study concrete cases of  this globalization question and we 
prove the existence and uniqueness/nonuniqueness of a momentum map associated to a given infinitesimal momentum map, for the particular case when the dual Poisson Lie group is abelian, respectively the Heisenberg group.

In the following section we study the question of  infinitesimal deformations of a given momentum map. The main result is Theorem \ref{thm: idef}, which describes explicitly the space tangent to the space of momentum maps at a 
given point. In particular, modulo the trivial infinitesimal deformations (which are given by vector fields on $M$ commuting with the Poisson bivector and  invariant under the group action), the tangent space is given by a subspace of the cohomology group $H^1({\mathfrak g},C^\infty (M))$. As an application, in the case of a compact and semisimple Poisson Lie group
 $G$ acting on a Poisson manifold $M$, the only infinitesimal deformations of a momentum map are the trivial ones.

The following section (Sec. \ref{sec: poisson reduction}) is devoted to  the theory of Poisson reduction. The main result of this section is the theorem \ref{thm: pred}, which gives a generalization of the Marsden-Weinstein 
reduction  to the general case of an arbitrary Poisson Lie group action on a Poisson manifold - the results that can be found in the literature are restricted to the case when the Poisson manifold in question is in fact a 
symplectic manifold.

Chapter 3 is dedicated to the study of the quantum momentum map. As proved by Kontsevich \cite{Ko}, any Poisson manifold
 admits a canonical quantization. We start this chapter by giving a short introduction to Kontsevich's theorem and its corollary describing  the deformation quantization theory for Poisson manifolds. Afterwards we give some background information on  Hopf algebras, quantum groups and the quantization of  Poisson Lie groups and Lie bialgebras.

The quantum action is naturally expressed in terms of the action of the Hopf algebra on the deformation quantization of the smooth manifold.

Since we work with deformation quantization of the Lie bialgebra of our Poisson Lie group, the natural context is quantization of the infinitesimal momentum map. This is defined in Section \ref{sec: def}. One can briefly describe it as follows. Let $\mathcal{U}_{\hbar}$ denote the quantized Lie bialgebra. Then the coproduct $\Delta_{\hbar}$ of $\mathcal{U}_{\hbar}$ extends to an odd  derivation on the tensor algebra $T(\mathcal{U}_{\hbar}[1])$. If  $\mathcal{A}_{\hbar}$ denotes the quantized algebra of functions on the manifold, the action of $\mathcal{U}_{\hbar}$ on $\mathcal{A}_{\hbar}$ becomes  a morphism of complexes
$$
\Phi_{\hbar}: (T(\mathcal{U}_{\hbar}[1]),\delta )\rightarrow (C^{\bullet} (\mathcal{A}_{\hbar},\mathcal{A}_{\hbar}),b ),
$$
and quantum momentum map is a factorization of this morphism through the complex of formal differential forms $\Omega^{\bullet}(\mathcal{A}_{\hbar}^+)$, which reduces, modulo $\hbar^2$, to the classical infinitesimal momentum map.

Section \ref{sec: exs} is devoted to the discussion of some examples of quantum momentum map and of quantum reduction. The interesting observation is that the existence of the quantum momentum map in the above sense dictates the formulas for the quantization of the Lie bialgebra. In fact, in the low dimensional examples studied in this section, the quantization of the Lie bialgebra is essentially uniquely  determined by existence of universal formulas for the quantum momentum map. 

The corresponding quantum reduction seems to be much more subtle then in the case of the undeformed action of the group. The problem is as follows. In the case of undeformed group action, the quantum momentum map has the form $X\rightarrow dH_X$, and the reduction has the form
$(\mathcal{A}/ \langle H_X,\; X\in {\mathfrak g}\rangle)^{\mathcal{U}({\mathfrak g})}$. In the general case, the ``Hamiltonian forms'' become of the form $adb$ with, as the examples show, some of the $a$'s (and $b$'s) invertible, hence the quotients of this form vanish. We can construct the quantum reduction in our case, but it seems, at the moment, somewhat ad hoc. 
 

\chapter{Momentum Map in Symplectic Geometry}
\label{ch: one}

In this chapter we explain how the symmetries of a Hamiltonian dynamical system can be used to simplify the study of that system. The description of symmetries is implemented via Lie group actions, discussed in the first part of the chapter. The treatment of Lie group actions and Hamiltonian systems is necessary to introduce the key concept of this chapter, the momentum map. This is a mathematical construction introduced by Lie \cite{Lie}, Kostant \cite{Ks} and Souriau \cite{So} that describes the conservation laws associated to the symmetries of a Hamiltonian dynamical system.
 
As will be seen, the momentum map is the basic ingredient for the construction of the symplectic reduction.
This is a procedure that uses symmetries and conserved quantities to reduce the dimensionality of a Hamiltonian system.

\section{Lie Group Actions}\label{sec_1.1}

This section presents a brief review of the theory of Lie group actions on a manifold, with some examples. 
This provides the background necessary to introduce the concepts of symmetries of a Hamiltonian system and momentum map.
For details the reader should consult \textit{e.g.} \cite{RO} and \cite{MR}.

\begin{definition}
Let $M$ be a manifold and $G$ a Lie group. A \textbf{left action} of $G$ on $M$ is a smooth mapping $\Phi:G\times M\rightarrow M$ such that
\begin{enumerate}
 \item $\Phi(e,m)=m, \qquad e \in G, \quad \forall m\in M$,
 \item $\Phi(g,\Phi(h,m))=\Phi(gh,m),\qquad\forall g,h\in G, m\in M$,
\end{enumerate}
where $e$ denotes the identity of $G$.
\end{definition}
We often use the notation $g\cdot m:=\Phi (g,m):=\Phi_g(m):=\Phi^m(g)$.
Similarly as for the left action, a \textbf{right action} is a smooth map $\Phi:M\times G\rightarrow M$, such that $\Phi(m,e)=m$, for all $m\in M$, and $\Phi(\Phi(m,g),h)=\Phi(m,gh)$, for all $g,h\in G$ and $m\in M$.

\begin{example}
An example of left action of a Lie group $G$ on itself is given by the left translation $L_g:G\rightarrow G: h\mapsto gh$. Similarly, the right translation $R_g:G\rightarrow G$, $h\mapsto hg$ defines a right action. The inner automorphism $AD_g\equiv I_g:G\rightarrow G$, given by $I_g:=R_{g^{-1}}\circ L_g$, defines a left action of $G$ on itself called conjugation.
\end{example}

\begin{example}
The differential at the identity of the conjugation map defines a linear left action of a Lie group $G$ on its Lie algebra $\mathfrak{g}$, called the adjoint representation of $G$ on $\mathfrak{g}$, that is
\begin{equation}
 Ad_g:= T_e I_g:\mathfrak{g}\rightarrow\mathfrak{g}.
\end{equation}
If $Ad^*_g:\mathfrak{g}^*\rightarrow\mathfrak{g}^*$ is the dual of $Ad_g$, then the map
\begin{equation}
\Phi: G\times\mathfrak{g}^*\rightarrow\mathfrak{g}^* :(g,\nu)\mapsto Ad^*_{g^{-1}}\nu
\end{equation}
defines also a linear left action of $G$ on $\mathfrak{g}^*$ called the coadjoint representation of $G$ on $\mathfrak{g}^*$.
\end{example}

Given a (left) action $\Phi:G\times M\rightarrow M$, the \textbf{infinitesimal generator} $\xi_M\in TM$ associated to $\xi\in\mathfrak{g}$ is the vector field on $M$ defined by

\begin{equation}
  \xi_M(m):=\left.\frac{d}{dt}\right|_{t=0} \Phi_{\exp (- t\xi)}(m)=T_e \Phi^m\cdot\xi
\end{equation}

An infinitesimal generator is a complete vector field. Indeed, the flow of $\xi_M$ equals $(t,m)\mapsto \exp t\xi\cdot m$. Moreover, the map $\xi\in\mathfrak{g}\mapsto\xi_M\in TM$ is a Lie algebra homomorphism, that is,
\begin{enumerate}
\item $(a\xi+b\eta)_M=a\xi_M+b\eta_M$
\item $[\xi,\eta]_M=[\xi_M,\eta_M]$,
\end{enumerate} 
for all $a,b\in\mathbb{R}$ and $\xi,\eta\in\mathfrak{g}$. Motivated by these properties, we introduce the following definition.

\begin{definition}
 Let $\mathfrak{g}$ be a Lie algebra and $M$ a smooth manifold. A right (left) \textbf{Lie algebra action} of $\mathfrak{g}$ on $M$ is a Lie algebra homomorphism $\xi\in\mathfrak{g}\mapsto\xi_M\in TM$ such that the mapping $(m,\xi)\in M\times \mathfrak{g}\mapsto\xi_M(m)\in TM$ is smooth.
\end{definition}

Given a Lie group action, we refer to the Lie algebra action induced by its infinitesimal generators as the associated Lie algebra action. An example of Lie algebra action is given by the \textbf{adjoint representation} of the algebra $\mathfrak{g}$, defined by the map
\begin{equation}\label{eq: ad}
ad: \mathfrak{g}\rightarrow End(\mathfrak{g}):\xi\mapsto ad_{\xi}=[\xi,\cdot]
\end{equation}

Consider a Lie group $G$ acting on a manifold $M$.
The \textbf{isotropy subgroup} or \textbf{stabilizer} of an element $m\in M$ acted upon by the Lie group $G$ is the closed subgroup

\begin{equation}
 G_m:=\left\lbrace g\in G|\, g\cdot m=m\right\rbrace \subset G
\end{equation}
whose Lie algebra $\mathfrak{g}_m$ equals
\begin{equation}
 \mathfrak{g}_m=\left\lbrace \xi\in\mathfrak{g}\vert\, \xi_M(m)=0 \right\rbrace 
\end{equation}

The \textbf{orbit} $\mathcal{O}_m$ of the element $m\in M$ under the group action $\Phi$ is the set 
\begin{equation}
 \mathcal{O}_m\equiv G\cdot m:= \left\lbrace g\cdot m\vert\, g\in G\right\rbrace .
\end{equation}
The notion of orbit can be used to characterize an equivalence relation on the manifold $M$. Two elements $x,y\in M$ 
are equivalent, $x\sim y$, if and only if they are in the same orbit, hence if there exists an element $g\in G$
such that $\Phi_g(x)=y$. The \textbf{orbit space} is the space of these equivalence classes and is denoted by $M/G$.
 
In the following we discuss the conditions which ensure that the orbit space is a regular quotient manifold.  

A group action on $M$ is said to be
\begin{enumerate}
\item[-] transitive, if there is only one orbit,
\item[-] free, if the isotropy of every element in $M$ consists only of the identity element, 
\item[-] faithful, if $\xi_M=id_M$ implies that $g=e$
 \end{enumerate}

Let $X$ and $Y$ be two topological spaces with $Y$ first countable. A continuous map $f:X\rightarrow Y$ is called proper if for any sequence $\lbrace x_n\rbrace_{n\in\mathbb{N}}$ such that $f(x_n)\rightarrow y$ there exist a convergent subsequence $\lbrace x_{n_k}\rbrace$ such that $x_{n_k}\rightarrow x$ and $f(x)=y$. A map $f:X\rightarrow Y$ is proper if and only if it is closed and $f^{-1}(y)$ is compact, for any $y\in Y$.

\begin{definition}
Let $G$ be a Lie group acting on the manifold $M$ via the map $\Phi:G\times M\rightarrow M$. We say that $\Phi$ is \textbf{proper} whenever the map $\Theta:G\times M\rightarrow M\times M$ defined by $\Theta (g,m)=(m,\Phi(g,m))$ is proper. 
\end{definition}
The properness of the action is equivalent to the following condition: for any two convergent sequences $\lbrace m_n\rbrace$ and $\lbrace g_n\cdot m_n\rbrace$ in $M$, there exists a convergent subsequence $\lbrace g_{n_k}\rbrace$ in $G$. We say that the action $\Phi$ is proper at the point $m\in M$ when for any two convergent sequences $\lbrace m_n\rbrace$ and $\lbrace g_n\cdot m_n\rbrace$ in $M$ such that $m_n\rightarrow m$ and $g_n\cdot m_n\rightarrow m$, there exists a convergent subsequence $\lbrace g_{n_k}\rbrace$ in $G$.

The following proposition is crucial in the theory of symplectic reduction. A proof can be found \textit{e.g.} in \cite{RO} and \cite{Bo}.

\begin{proposition}\label{1.1_orbit}
Let $\Phi:G\times M\rightarrow M$ be a proper action of the Lie group $G$ on the manifold $M$. Then
\begin{enumerate}
\item For any $m\in M$, the isotropy subgroup $G_m$ is compact.
\item The orbit space $M/G$ is a Hausdorff topological space
\item If the action is free, $M/G$ is a smooth manifold, and the canonical projection $\mathfrak{p}:M\rightarrow M/G$ defines on $M$ the structure of a smooth left principal $G$-bundle.
\end{enumerate}
\end{proposition}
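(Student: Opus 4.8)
The plan is to derive all three assertions from the single hypothesis that $\Theta(g,m)=(m,\Phi(g,m))$ is proper, which by the characterization recalled above means $\Theta$ is closed with compact point-preimages. The first part is then immediate: the point $(m,m)\in M\times M$ is compact, so $\Theta^{-1}(m,m)$ is compact; but $\Theta^{-1}(m,m)=\{(g,m):g\cdot m=m\}=G_m\times\{m\}$, which is homeomorphic to $G_m$. Hence $G_m$ is compact.

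For the second part I would invoke the standard principle that the quotient of a space by an open equivalence relation with closed graph is Hausdorff. First, $\mathfrak{p}:M\to M/G$ is open, since for open $U\subseteq M$ one has $\mathfrak{p}^{-1}(\mathfrak{p}(U))=\bigcup_{g\in G}g\cdot U$, a union of open sets because each $\Phi_g$ is a homeomorphism. Second, the graph of the orbit relation is precisely $R=\im\Theta$, and as $\Theta$ is closed its image $R$ is closed in $M\times M$. Given then $[x]\ne[y]$ in $M/G$, we have $(x,y)\notin R$; a product neighbourhood $U\times V$ of $(x,y)$ missing $R$ yields, via the openness of $\mathfrak{p}$, disjoint open neighbourhoods $\mathfrak{p}(U)$ and $\mathfrak{p}(V)$ of $[x]$ and $[y]$.

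The third part is the substantial one, and I would base it on the quotient manifold theorem (Godement's criterion): $M/G$ admits a smooth structure making $\mathfrak{p}$ a submersion as soon as $R\subseteq M\times M$ is an embedded submanifold and $\mathrm{pr}_1|_R$ is a submersion. The crux is to show that $\Theta:G\times M\to M\times M$ is an embedding onto $R$. Injectivity comes from freeness, since $\Theta(g,m)=\Theta(g',m')$ forces $m=m'$ and $g^{-1}g'\in G_m=\{e\}$. For the immersion property I would use that freeness makes the infinitesimal action injective: as $G_m=\{e\}$, its Lie algebra $\mathfrak{g}_m=\{\xi\in\mathfrak{g}:\xi_M(m)=0\}$ is zero, which forces $d\Theta$ to be injective. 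A proper injective immersion is a closed embedding, so $R=\im\Theta$ is a closed embedded submanifold and $\Theta$ is a diffeomorphism onto it; under this identification $\mathrm{pr}_1|_R$ corresponds to the projection $G\times M\to M$, which is a submersion. Godement's criterion then supplies the manifold structure on $M/G$ (Hausdorff by the second part, and second countable as the open image of $M$).

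It remains to upgrade $\mathfrak{p}$ to a principal bundle projection. Since $\mathfrak{p}$ is a surjective submersion, about each point of $M/G$ there is a local section $\sigma:V\to M$, and I would define $\psi:G\times V\to\mathfrak{p}^{-1}(V)$ by $\psi(g,x)=g\cdot\sigma(x)$. This map is smooth and $G$-equivariant by construction, injective because freeness together with $\mathfrak{p}\circ\sigma=\mathrm{id}_V$ separates the orbit and base directions, and surjective since every point over $V$ lies on the orbit of some $\sigma(x)$; it is therefore the required local trivialization, exhibiting $\mathfrak{p}$ as a smooth left principal $G$-bundle. The main obstacle throughout is this third part, and within it the verification that $\Theta$ is an embedding: the delicate point is passing from freeness of the group action to injectivity of the infinitesimal action, and then combining this with properness to conclude that $R$ is genuinely an embedded (indeed closed) submanifold and not merely immersed. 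Once that is in hand, Godement's criterion and the construction of the trivializations are formal.
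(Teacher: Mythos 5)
The paper does not actually prove this proposition: it states it and defers to the references \cite{RO} and \cite{Bo}, so there is no in-text argument to compare yours against. Your proof is the standard one found in those sources and is essentially correct: part 1 from compactness of the fibre $\Theta^{-1}(m,m)\cong G_m$, part 2 from openness of $\mathfrak{p}$ together with closedness of the orbit relation $R=\im\Theta$, and part 3 via Godement's criterion after showing $\Theta$ is a proper injective immersion, hence a closed embedding onto $R$. The one step you should justify rather than assert is that freeness of the group action forces $\mathfrak{g}_m=0$: this is not purely formal, since a priori $G_m=\{e\}$ only tells you about group elements, not one-parameter subgroups; the standard argument is that $\xi_M(m)=0$ implies $\exp(t\xi)\cdot m=m$ for all $t$ (the integral curve is constant), whence $\exp(t\xi)\in G_m=\{e\}$ and so $\xi=0$. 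The only other gap is at the very end: you show $\psi(g,x)=g\cdot\sigma(x)$ is a smooth equivariant bijection, but a smooth bijection need not be a diffeomorphism. Here it is one, either by checking that $d\psi$ is an isomorphism (injectivity in the group direction again uses $\mathfrak{g}_m=0$, injectivity in the base direction uses $d\mathfrak{p}\circ d\sigma=\mathrm{id}$, and the dimensions match), or more slickly by writing $\psi^{-1}$ explicitly through the diffeomorphism $\Theta^{-1}:R\rightarrow G\times M$ that you have already constructed. With these two points made explicit the argument is complete.
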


\section{Hamiltonian systems}\label{sec: 1.2}

We start this section with a brief review of some basic notions about symplectic manifolds and invariant Hamiltonian dynamics. This provides us all the necessary background that we use in the next sections.
\begin{definition}
A \textbf{symplectic manifold} is a pair $(M,\omega)$, where $M$ is a manifold and $\omega\in\Omega^2(M)$ is a closed nondegenerate two form on $M$, that is, $d\omega=0$ and, for every $m\in M$, the map $v\in T_m M\mapsto \omega(m)(v,\cdot)\in T^*_mM$ is a linear isomorphism between the tangent space $T_mM$ and the cotangent space $T^*_mM$.
\end{definition}

Since $\omega$ is a differential two-form, hence skew-symmetric, the dimension of $M$ is always even. 
A Hamiltonian dynamical system is a triple $(M,\omega, H)$, where $(M,\omega)$ is a symplectic manifold and $H\in C^{\infty}(M)$ is the Hamiltonian function of the system. By nondegeneracy of the symplectic form $\omega$, to each Hamiltonian system one can associate a \textbf{Hamiltonian vector field} $X_H\in TM$, defined by the identity

\begin{equation}
 i_{X_H}\omega=dH
\end{equation}

It's clear that $X\in TM$ is a Hamiltonian vector field if and only if the one form $i_X\omega$ is exact.

\begin{definition}
 Let $f,g\in C^{\infty}(M)$. The \textbf{Poisson bracket} of these functions is the function $\left\lbrace f,g\right\rbrace\in C^{\infty}(M)$ defined by
\begin{equation}
 \left\lbrace f,g\right\rbrace(m)=\omega(m)\left( X_f(m),X_g(m)\right)=X_g[f](m)=-X_f[g](m). 
\end{equation}
\end{definition}

Given a symplectic manifold $(M,\omega)$, the set $C^{\infty}(M)$ can be always equipped  with a real Lie algebra structure relative to the Poisson bracket:

\begin{definition}\label{1.2_pa}
 A \textbf{Poisson manifold} is a pair $(M,\left\lbrace \cdot,\cdot\right\rbrace )$, where $M$ is a smooth manifold and $\left\lbrace \cdot,\cdot\right\rbrace$ is a bilinear operation on $C^{\infty}(M)$, such that the pair $(C^{\infty}(M),\left\lbrace \cdot,\cdot\right\rbrace)$ is a Lie algebra and $\left\lbrace \cdot,\cdot\right\rbrace$ is a derivation in each argument. The pair $(C^{\infty}(M),\left\lbrace \cdot,\cdot\right\rbrace)$ is called Poisson algebra. The functions in the center of the Lie algebra $(C^{\infty}(M), \left\lbrace \cdot, \cdot\right\rbrace)$ are called Casimir functions.
\end{definition}

Since there is an isomorphism between derivations on $C^{\infty}(M)$ and vector fields on $M$, it follows that each $H\in C^{\infty}(M)$ induces a vector field on $M$ via the expression
\begin{equation}\label{eq: hvf}
 X_H=\left\lbrace \cdot,H\right\rbrace ,
\end{equation}
called the Hamiltonian vector field associated to the Hamiltonian function $H$. The Hamiltonian equations $\dot{z}=X_H(z)$ can be equivalently written in Poisson bracket form as
\begin{equation}
 \dot{f}=\left\lbrace f,H\right\rbrace,
\end{equation}
for any $f\in C^{\infty}(M)$. The triple $(M,\left\lbrace \cdot,\cdot\right\rbrace, H)$ is called a Poisson dynamical system. The Lie algebra map $(C^{\infty}(M),\left\lbrace \cdot,\cdot\right\rbrace)\rightarrow ( TM,\left[\cdot,\cdot \right])$ that assigns to each function $f\in C^{\infty}(M)$ the associated Hamiltonian vector field $X_f\in TM$ is a Lie algebra homomorphism:

\begin{equation}
 X_{\left\lbrace f,g\right\rbrace } = \left[ X_f,X_g\right] \qquad \forall f,g\in C^{\infty}(M).
\end{equation}

Any Hamiltonian system on a symplectic manifold is a Poisson dynamical system relative to the Poisson bracket induced by the symplectic structure. 
Given a Poisson dynamical system $(M,\left\lbrace \cdot,\cdot\right\rbrace, H)$, its conserved quantities or
integrals of motion are defined by the subalgebra $C^{\infty}(M)^G$ of $(C^{\infty}(M),\left\lbrace \cdot,\cdot\right\rbrace)$ consisting of the $G$-invariant functions on $M$, i.e $f\in C^{\infty}(M)$ such that $\left\lbrace f,H\right\rbrace=0$.

As mentioned above, the symmetries of a Hamiltonian system are encoded via Lie group actions consistent with the structure of the given dynamical system. This motivates the following definition:
\begin{definition}\label{def: can}
A \textbf{canonical} action is a map $\Phi:G\times M\rightarrow M$ such that
\begin{equation}
 \Phi_g^*\left\lbrace f,g\right\rbrace=\left\lbrace \Phi_g^*f,\Phi_g^*g\right\rbrace  \qquad (\text{resp.}	\quad\Phi_g^*\omega=\omega)
\end{equation}
\end{definition}
We say that the Hamiltonian system $(M,\left\lbrace \cdot,\cdot\right\rbrace, H)$ is $G$-symmetric when the Lie group $G$ acts canonically on $(M,\left\lbrace \cdot,\cdot\right\rbrace)$ and the Hamiltonian function $H$ is $G$-invariant, that is $\Phi^*_g(H)=H$.

The infinitesimal version of this concept is the canonical action of a Lie algebra. An action of the Lie algebra $\mathfrak{g}$ on the Poisson (respectively, symplectic) manifold $M$ is canonical if the vector fields $\xi_M\in  TM$ are infinitesimal Poisson automorphisms, that is, if $\pi$ is the Poisson tensor we have that $L_{\xi_M}\pi=0$ (respectively, $L_{\xi_M}\omega=0$). We say that the Hamiltonian system $(M,\left\lbrace \cdot,\cdot\right\rbrace, H)$ is $\mathfrak{g}$-symmetric if the Lie algebra $\mathfrak{g}$ acts canonically on $(M,\left\lbrace \cdot,\cdot\right\rbrace)$ and the Hamiltonian function $H$ is $\mathfrak{g}$-invariant.

\section{Momentum map}\label{sec_1.3}

In the previous sections we introduced the symmetries of a Hamiltonian systems via Lie group actions. The conservation laws
of this system can be described with a mathematical construction called the momentum map. The definition of momentum map 
only requires a canonical Lie algebra action and its existence is guaranteed when the infinitesimal generators of the action are Hamiltonian vector fields.

\begin{definition}\label{1.3_mms}
Let $\mathfrak{g}$ be a Lie algebra acting canonically on the Poisson manifold $(M,\left\lbrace \cdot,\cdot\right\rbrace)$. Suppose that for any $\xi\in\mathfrak{g}$ the vector field $\xi_M$ is Hamiltonian, with Hamiltonian function $\boldsymbol{\mu}^{\xi}\in C^{\infty}(M)$ such that
\begin{equation}
\xi_M=X_{\boldsymbol{\mu}^{\xi}}.
\end{equation}
The map $ \boldsymbol{\mu}:M\rightarrow\mathfrak{g}^*$ defined by the relation
\begin{equation}
\boldsymbol{\mu}^{\xi}(m)=\left\langle \boldsymbol{\mu}(m),\xi\right\rangle 
\end{equation}
for all $\xi\in\mathfrak{g}$ and $m\in M$, is called \textbf{momentum map} of the $\mathfrak{g}$-action.
\end{definition}

Notice that the momentum map is not uniquely determined; indeed, $\boldsymbol{\mu}^{\xi}_1$ and $\boldsymbol{\mu}^{\xi}_2$ are momentum maps for the same canonical action if and only if for any $\xi\in \mathfrak{g}$
\begin{equation}
\boldsymbol{\mu}^{\xi}_1-\boldsymbol{\mu}^{\xi}_2
\end{equation}
is a Casimir function; if $M$ is symplectic and connected, then $\boldsymbol{\mu}$ is determined up to a constant in $\mathfrak{g}^*$.

\begin{example}[Linear momentum]
We consider the phase space $T^*\mathbb{R}^{3N}$ of a $N$-particle system. The additive group $\mathbb{R}^3$ acts on it by applying spatial translation on each factor: $\mathbf{v}\cdot (\mathbf{q}_i,\mathbf{p}^i)=(\mathbf{q}_i+\mathbf{v},\mathbf{p}^i )$, with $i=1,\dots, N$. This action is canonical and has an associated momentum map that coincides with the classical linear momentum
\begin{align}
\boldsymbol{\mu}:& T^*\mathbb{R}^{3N}\rightarrow Lie(\mathbb{R}^3)\simeq \mathbb{R}^3\\
                       & (\mathbf{q},\mathbf{p})\mapsto \sum_{i=1}^N \mathbf{p}_i
\end{align}

\end{example}

\begin{example}[Angular momentum]
Let $SO(3)$ act on $\mathbb{R}^3$ and then, by lift, on $T^*\mathbb{R}^{3}$, that is, $A\cdot (\mathbf{q},\mathbf{p})=(A\mathbf{q}, A\mathbf{p})$. This action is canonical and has an associated momentum map
\begin{align}
\boldsymbol{\mu}:& T^*\mathbb{R}^{3}\rightarrow \mathfrak{so(3)}^*\simeq \mathbb{R}^3\\
                           & (\mathbf{q},\mathbf{p})\mapsto \mathbf{q}\times\mathbf{p},
\end{align}
which is the classical angular momentum.
\end{example}

It can be shown that the momentum map satisfies Noether's Theorem \cite{N}, as stated in the following theorem.

\begin{theorem}[\cite{RO}]
Let $G$ be a Lie group acting canonically on the Poisson manifold $(M,\lbrace\cdot,\cdot\rbrace)$. Assume that this action admits a momentum map $\boldsymbol{\mu}:M\rightarrow\mathfrak{g}^*$ and that $H\in C^{\infty}(M)$ is invariant under the action of $\Phi$. 
Then the momentum map is an integral for the Hamiltonian vector field $X_H$ (\textit{i.e.} if $F_t$ is the flow of $X_H$ then $\boldsymbol{\mu}(F_t(x))=\boldsymbol{\mu}(x)$ for all $x$ and $t$ where $F_t$ is defined).
\end{theorem}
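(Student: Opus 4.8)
The plan is to reduce the vector-valued conservation statement to a family of scalar ones and then to a single Poisson-bracket identity. Since an element of $\mathfrak{g}^*$ is determined by its pairing with every $\xi\in\mathfrak{g}$, it suffices to show that each component function $\boldsymbol{\mu}^{\xi}=\langle\boldsymbol{\mu},\xi\rangle\in C^{\infty}(M)$ is constant along the flow $F_t$ of $X_H$. Concretely, I would fix $\xi\in\mathfrak{g}$ and $x\in M$ and study the scalar function $t\mapsto \boldsymbol{\mu}^{\xi}(F_t(x))$ on the interval where the flow is defined.

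First I would differentiate this scalar function in $t$. Because $F_t$ is the flow of $X_H$, the chain rule gives
\[
\frac{d}{dt}\boldsymbol{\mu}^{\xi}(F_t(x)) = \big(X_H[\boldsymbol{\mu}^{\xi}]\big)(F_t(x)),
\]
and by the definition $X_H=\{\cdot,H\}$ of the Hamiltonian vector field this equals $\{\boldsymbol{\mu}^{\xi},H\}(F_t(x))$. The whole theorem is thereby reduced to the single identity $\{\boldsymbol{\mu}^{\xi},H\}=0$ for every $\xi$.

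To establish this identity I would invoke the $G$-invariance of $H$. Differentiating $\Phi_g^{*}H=H$ at $g=e$ in the direction $\xi$ shows that $H$ is constant along the infinitesimal generator, i.e. $\xi_M[H]=0$. Since the action admits a momentum map, $\xi_M=X_{\boldsymbol{\mu}^{\xi}}$, so that $\xi_M[H]=X_{\boldsymbol{\mu}^{\xi}}[H]=\{H,\boldsymbol{\mu}^{\xi}\}$ by the relation between the bracket and the Hamiltonian field; antisymmetry then yields $\{\boldsymbol{\mu}^{\xi},H\}=0$, which is exactly what the previous step requires. Hence $\frac{d}{dt}\boldsymbol{\mu}^{\xi}(F_t(x))\equiv 0$, so $\boldsymbol{\mu}^{\xi}(F_t(x))=\boldsymbol{\mu}^{\xi}(x)$; as this holds for every $\xi$, we conclude $\boldsymbol{\mu}(F_t(x))=\boldsymbol{\mu}(x)$.

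There is no deep obstacle here: the argument is essentially careful bookkeeping. The only point requiring attention is the passage from the global invariance $\Phi_g^{*}H=H$ to the infinitesimal statement $\xi_M[H]=0$; this step only detects the identity component of $G$, but that suffices, since the infinitesimal generators $\xi_M$ and the momentum-map condition $\xi_M=X_{\boldsymbol{\mu}^{\xi}}$ are governed by the associated Lie algebra action alone. I would also keep the paper's sign conventions $\{f,g\}=X_g[f]=-X_f[g]$ and $\xi_M(m)=\frac{d}{dt}\big|_{t=0}\Phi_{\exp(-t\xi)}(m)$ firmly in view, so that every bracket and every derivative carries the correct sign throughout.
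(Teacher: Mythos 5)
Your argument is correct and complete. The paper itself states this theorem without proof, citing \cite{RO}, so there is nothing to compare against; your reduction to the components $\boldsymbol{\mu}^{\xi}$, the computation $\frac{d}{dt}\boldsymbol{\mu}^{\xi}(F_t(x))=\{\boldsymbol{\mu}^{\xi},H\}(F_t(x))$, and the derivation of $\{\boldsymbol{\mu}^{\xi},H\}=0$ from $\xi_M[H]=0$ together with $\xi_M=X_{\boldsymbol{\mu}^{\xi}}$ is exactly the standard Noether argument the citation points to, with the sign conventions handled correctly.
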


Finally, we introduce the property of equivariance of the momentum map. Let $(M,\lbrace\cdot,\cdot\rbrace)$ be a Poisson manifold and $\mathfrak{g}$ act canonically on it with a momentum map $ \boldsymbol{\mu}:M\rightarrow\mathfrak{g}^*$.
The map $(\mathfrak{g}, \left[ \cdot,\cdot\right] )\rightarrow (C^{\infty}(M),\lbrace\cdot,\cdot\rbrace)$ defined by $\xi\mapsto \boldsymbol{\mu}^{\xi}$, $\xi\in\mathfrak{g}$ is a Lie algebra homomorphism if and only if
\begin{equation}
T_z \boldsymbol{\mu}\cdot \Phi_{\xi}(m) = ad^*_{\xi}\boldsymbol{\mu}(m)
\end{equation}
for any $\xi\in\mathfrak{g}$ and any $m\in M$. A momentum map that satisfies this relation is called infinitesimally equivariant. When the Lie algebra action is associated to the action of a Lie group $G$, we say that $\boldsymbol{\mu}$ is $G$-equivariant if
\begin{equation}
\boldsymbol{\mu}\circ\Phi_g=Ad_g^*\circ\boldsymbol{\mu}
\end{equation}
for all $g\in G$. A Lie algebra action with an infinitesimally equivariant momentum map is called \textbf{Hamiltonian action} and a Lie group action with an equivariant momentum map is called globally Hamiltonian.

Details about the problem of the existence of the momentum map can be found in \cite{RO}.

\section{Symplectic Reduction}\label{sec_1.4}

In this section we describe the simplest version of symplectic reduction that constructs a symplectic manifold out of a given
symmetric one, on which the conservation laws and degeneracies associated to the symmetries have been eliminated.
Given a symmetric Hamiltonian dynamical system, the Marsden-Weinstein reduced system is also a Hamiltonian system
with reduced dimensionality, as proved in the following theorem:

\begin{theorem}[Marsden-Weinstein Reduction  \cite{MsWe}]\label{1.4_redw}
 Let $\Phi:G\times M\rightarrow M$ be a canonical action of the Lie group $G$ on the connected symplectic manifold $(M,\omega)$. Suppose that the action has an associated equivariant momentum map $\boldsymbol{\mu}:M\rightarrow \mathfrak{g}^*$. Let $u\in\mathfrak{g}^*$ be a regular value of $\boldsymbol{\mu}$ and assume that  the isotropy group $G_{u}$ under the $Ad^*$ action on $\mathfrak{g}^*$ acts freely and properly on $\boldsymbol{\mu}^{-1}(u)$. Then:
\begin{enumerate}
\label{thm: red1}\item  the space $M_{u}:=\boldsymbol{\mu}^{-1}(u)/G_{u}$ is a regular quotient manifold and there is a symplectic structure $\omega_{u}$ on $M_{u}$ uniquely determined by $i^*_{u}\omega=\mathfrak{p}^*_{u} \omega_{u}$, where $i_u:\boldsymbol{\mu}^{-1}(u)\hookrightarrow M$ is the natural inclusion and $\mathfrak{p} $ is the natural projection of $\boldsymbol{\mu}^{-1}(u)$ onto $M_{u}$. The pair $(M_{u},\omega_{u})$ is called the \textbf{symplectic reduced space}.

\label{thm: red2}\item Let $H\in C^{\infty}(M)^G$ be a $G$-invariant Hamiltonian. The flow $F_t$ of the Hamiltonian vector field $X_H$ leaves the connected components of $\boldsymbol{\mu}^{-1}(u)$ invariant and commutes with the $G$-action, so it induces a flow $F_t^{u}$ on $M_{u}$ defined by 

\begin{equation}
\mathfrak{p}_{u}\circ F_t\circ i_{u}=F_t^{\nu}\circ\mathfrak{p}_{u}.
\end{equation}

\label{thm: red3}\item The vector field generated by the flow $F_t^{u}$ on $(M_{u},\omega_{u})$ is Hamiltonian with associated reduced Hamiltonian function $H_{u}\in C^{\infty}(M_{u})$ defined by

\begin{equation}
H_{u}\circ\mathfrak{p}_{u}=H\circ i_{u}.
\end{equation}

The vector fields $X_H$ and $X_{H_{u}}$ are $\mathfrak{p}_{u}$-related. The triple $(M_{u},\omega_{u},H_{u})$ is called reduced Hamiltonian system.

\label{thm: red4}\item Let $K\in C^{\infty}(M)^G$ be another $G$-invariant function. Then $\lbrace H,K\rbrace$ is also $G$-invariant and $\lbrace H,K\rbrace_{u}=\lbrace H_{u},K_{u}\rbrace_{M_{u}}$, where $\lbrace\cdot,\cdot\rbrace_{M_{u}}$ denotes the Poisson bracket associated to the symplectic form $\omega_{u}$ on $M_{u}$.
\end{enumerate} 
 
\end{theorem}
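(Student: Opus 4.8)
The plan is to establish the reduced space in three stages: first the smooth manifold structure of $M_u$, then the construction and nondegeneracy of $\omega_u$ by descent (the heart of the argument), and finally the dynamical statements (2)--(4), which follow from Noether's theorem together with the naturality of Hamiltonian vector fields under symplectomorphisms.

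First I would show that $\boldsymbol{\mu}^{-1}(u)$ is a closed embedded submanifold whose tangent space at $m$ is $\ker T_m\boldsymbol{\mu}$: since $u$ is a regular value, this is the preimage theorem. By equivariance $\boldsymbol{\mu}\circ\Phi_g=Ad_g^*\circ\boldsymbol{\mu}$, so for $g\in G_u$ and $m\in\boldsymbol{\mu}^{-1}(u)$ one gets $\boldsymbol{\mu}(g\cdot m)=Ad_g^*u=u$; hence $G_u$ preserves the level set. As $G_u$ acts freely and properly there by hypothesis, Proposition \ref{1.1_orbit} applies and shows that $M_u=\boldsymbol{\mu}^{-1}(u)/G_u$ is a smooth manifold and $\mathfrak{p}_u$ is a surjective submersion (a principal $G_u$-bundle).

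The core step is the descent of the form. I would first record the identity $\ker T_m\boldsymbol{\mu}=(T_m\mathcal{O}_m)^{\omega}$, where $(\cdot)^{\omega}$ is the $\omega$-orthogonal complement; this follows from $\langle T_m\boldsymbol{\mu}\cdot v,\xi\rangle=\omega(\xi_M(m),v)$, itself a consequence of $\xi_M=X_{\boldsymbol{\mu}^{\xi}}$ and $i_{X_{\boldsymbol{\mu}^{\xi}}}\omega=d\boldsymbol{\mu}^{\xi}$. Next I would verify that $i_u^*\omega$ is $G_u$-invariant (the action is canonical, $\Phi_g^*\omega=\omega$, and $\Phi_g$ preserves the level set) and horizontal: the vertical vectors $\xi_M(m)$ with $\xi\in\mathfrak{g}_u$ lie in $T_m\mathcal{O}_m$, hence are $\omega$-orthogonal to all of $\ker T_m\boldsymbol{\mu}$. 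By the standard basic-form lemma this produces a unique $\omega_u$ with $\mathfrak{p}_u^*\omega_u=i_u^*\omega$. Closedness is formal, since $\mathfrak{p}_u^*d\omega_u=i_u^*d\omega=0$ and $\mathfrak{p}_u^*$ is injective. The hard part is nondegeneracy: if $\omega_u([v],\cdot)=0$, then $v$ is $\omega$-orthogonal to $\ker T_m\boldsymbol{\mu}=(T_m\mathcal{O}_m)^{\omega}$, so $v\in T_m\mathcal{O}_m$; writing $v=\xi_M(m)$ and using the infinitesimal equivariance $T_m\boldsymbol{\mu}\cdot\xi_M(m)=ad_{\xi}^*u$ together with $v\in\ker T_m\boldsymbol{\mu}$ forces $ad_{\xi}^*u=0$, i.e. $\xi\in\mathfrak{g}_u$, whence $v$ is vertical and $[v]=0$.

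Finally I would deduce (2)--(4). By Noether's theorem $\boldsymbol{\mu}$ is constant along the flow $F_t$ of $X_H$, so $F_t$ preserves $\boldsymbol{\mu}^{-1}(u)$ and its connected components; since the action is canonical and $H$ is $G$-invariant, each $\Phi_g$ is a symplectomorphism fixing $H$, so $\Phi_{g*}X_H=X_H$ and $F_t$ commutes with the $G_u$-action, hence descends to $F_t^u$ with $\mathfrak{p}_u\circ F_t\circ i_u=F_t^u\circ\mathfrak{p}_u$. As $H$ is $G_u$-invariant on the level set it descends to $H_u$ with $H_u\circ\mathfrak{p}_u=H\circ i_u$; a short pullback computation, $\omega_u(Y,[v])=(\mathfrak{p}_u^*\omega_u)(X_H,v)=\omega(X_H,v)=dH(v)=dH_u([v])$ for the projected field $Y$, gives $i_Y\omega_u=dH_u$, so $Y=X_{H_u}$ and $X_H,X_{H_u}$ are $\mathfrak{p}_u$-related. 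For (4), canonicity yields $\Phi_g^*\{H,K\}=\{H,K\}$, so $\{H,K\}$ reduces; then $\{H_u,K_u\}_{M_u}\circ\mathfrak{p}_u=(\mathfrak{p}_u^*\omega_u)(X_H,X_K)=\omega(X_H,X_K)\circ i_u=\{H,K\}\circ i_u$, and surjectivity of $\mathfrak{p}_u$ gives the identity. The main obstacle throughout is the nondegeneracy step, where the interplay between $\ker T_m\boldsymbol{\mu}$, the orbit directions, and the coadjoint isotropy $\mathfrak{g}_u$ via equivariance must be tracked with care.
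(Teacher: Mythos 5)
Your argument is correct and is the standard proof of the Marsden--Weinstein theorem: the quotient manifold structure from properness and freeness, the descent of $i_u^*\omega$ via the identity $\ker T_m\boldsymbol{\mu}=(T_m\mathcal{O}_m)^{\omega}$ with nondegeneracy forced by infinitesimal equivariance, and the dynamical statements from Noether's theorem and $\mathfrak{p}_u$-relatedness. Note that the paper itself gives no proof of this statement --- it is quoted as a classical result with a citation to Marsden and Weinstein --- so there is nothing to compare against; your write-up supplies exactly the argument the cited reference contains, with all the delicate points (in particular the double $\omega$-orthogonal complement and the step $ad^*_{\xi}u=0\Rightarrow\xi\in\mathfrak{g}_u$) handled correctly.
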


The symplectic reduction can be rephrased in terms of algebra of functions. In \cite{SW}, Sniatycki and Weinstein proposed a
different procedure that yields a reduced Poisson algebra; they proved that this algebra coincides with the Poisson algebra on the reduced space $M_u$. 
We briefly introduce the results obtained in \cite{SW}, which we use in the next chapters.

 Consider the Hamiltonian function $\boldsymbol{\mu}^{\xi}\in C^{\infty}(M)$, with $\xi\in\mathfrak{g}$ and let 
 $\boldsymbol{\mu}_i=\boldsymbol{\mu}^{e_i}$, $i=1,\dots ,n$ be the components of the Hamiltonian function on the
 basis $\{e_i\}$ of $\mathfrak{g}$. 
  Define the ideal $\mathcal{I}$ in  $C^{\infty}(M)^G$ generated by the momenta $\boldsymbol{\mu}_i$ as 
\begin{equation}
 \mathcal{I}=\lbrace f\in C^{\infty}(M)^G\vert f=\sum_i g_i\boldsymbol{\mu}_i, g_i\in  C^{\infty}(M) \rbrace .
\end{equation}
Then we have:

\begin{lemma}\label{lem: reda}
Let $\mathfrak{g}$ be a Lie algebra acting canonically on the Poisson manifold $(M,\left\lbrace \cdot,\cdot\right\rbrace)$ with
$G$-equivariant momentum map $\boldsymbol{\mu}:M\rightarrow \mathfrak{g}^*$. The the ideal $\mathcal{I}$ is a Poisson subalgebra of $C^{\infty}(M)^G$.
\end{lemma}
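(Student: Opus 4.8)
The plan is to prove the statement by verifying, for any two elements $f,h\in\mathcal{I}$, the two conditions that membership in $\mathcal{I}$ requires: that $\{f,h\}$ is $G$-invariant and that it lies in the ideal of $C^\infty(M)$ generated by the $\boldsymbol{\mu}_i$. Closure of $\mathcal{I}$ under the pointwise product is immediate, since $\mathcal{I}$ is by construction an ideal of $C^\infty(M)^G$ and products of invariant functions are invariant; so the only genuine content of the lemma is closure under the Poisson bracket, and that is what I would concentrate on.

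First I would check that $\{f,h\}$ is $G$-invariant. This is exactly where the hypothesis that the action is canonical enters: for every $g\in G$ we have $\Phi_g^*\{f,h\}=\{\Phi_g^*f,\Phi_g^*h\}$, and since $f,h\in C^\infty(M)^G$ the right-hand side equals $\{f,h\}$. In other words $C^\infty(M)^G$ is itself a Poisson subalgebra of $C^\infty(M)$, and in particular $\{f,h\}\in C^\infty(M)^G$.

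The heart of the argument is to show that $\{f,h\}$ nevertheless lies in the ideal generated by the momenta. Writing $h=\sum_j k_j\boldsymbol{\mu}_j$ with $k_j\in C^\infty(M)$ and expanding by the Leibniz (derivation) property of the bracket in its second argument gives
\[
\{f,h\}=\sum_j \{f,k_j\}\,\boldsymbol{\mu}_j+\sum_j k_j\,\{f,\boldsymbol{\mu}_j\}.
\]
The first sum is visibly of the required form $\sum_j a_j\boldsymbol{\mu}_j$ with coefficients $a_j=\{f,k_j\}\in C^\infty(M)$. The main obstacle is the second sum, whose terms $k_j\{f,\boldsymbol{\mu}_j\}$ are not manifestly in the ideal; the key point is that it vanishes identically. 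Indeed $\{f,\boldsymbol{\mu}_j\}=X_{\boldsymbol{\mu}_j}[f]=(e_j)_M[f]$, and the $G$-invariance of $f$ forces the infinitesimal generators $(e_j)_M$ to annihilate $f$, so $\{f,\boldsymbol{\mu}_j\}=0$ for every $j$. Hence $\{f,h\}=\sum_j\{f,k_j\}\,\boldsymbol{\mu}_j$ lies in the ideal generated by the $\boldsymbol{\mu}_i$.

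Combining the two steps, $\{f,h\}$ is a $G$-invariant function belonging to the ideal generated by the momenta, so $\{f,h\}\in\mathcal{I}$, and together with the automatic closure under products this shows $\mathcal{I}$ is a Poisson subalgebra. The one subtlety to keep track of is that the coefficients $k_j$ (and $g_i$) need not themselves be invariant — only the total sums $f,h$ are — so the vanishing of the second sum must be deduced from the invariance of $f$ rather than of the individual $k_j$; this is precisely why it pays to expand the bracket using the representation $h=\sum_j k_j\boldsymbol{\mu}_j$ while invoking invariance of the \emph{other} factor $f$.
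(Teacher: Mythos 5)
The paper itself gives no proof of this lemma --- it is quoted from Sniatycki--Weinstein \cite{SW} and stated without argument --- so there is nothing internal to compare against. Your proof is correct and is the standard one: invariance of $\{f,h\}$ follows from the action being canonical, and the decisive step is that for $G$-invariant $f$ one has $\{f,\boldsymbol{\mu}_j\}=X_{\boldsymbol{\mu}_j}[f]=(e_j)_M[f]=0$, which kills the non-ideal terms in the Leibniz expansion of $\{f,\sum_j k_j\boldsymbol{\mu}_j\}$. Your closing remark about the coefficients $k_j$ not being invariant is exactly the right subtlety to flag, and your argument in fact proves the slightly stronger statement $\{C^{\infty}(M)^G,\mathcal{I}\}\subset\mathcal{I}$, i.e.\ that $\mathcal{I}$ is a Poisson ideal in $C^{\infty}(M)^G$, which is what the subsequent construction of the reduced algebra $\mathcal{R}=C^{\infty}(M)^G/\mathcal{I}$ actually uses.
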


The action of $G$ on $C^{\infty}(M)$ induces an action of $G$ on the quotient algebra $C^{\infty}(M)^G$ such that the projection homomorphism $\rho:C^{\infty}(M)\rightarrow C^{\infty}(M)/\mathcal{I}$ is $G$-equivariant. In \cite{SW} the authors proved that the quotient $\mathcal{R}= C^{\infty}(M)^G/\mathcal{I}$ naturally inherits a Poisson algebra structure. More precisely, under the assumptions of  Lemma \ref{lem: reda}, we have

\begin{lemma}
 $\rho^{-1}(C^{\infty}(M)^G)$ is the normalizer of $ \mathcal{I}$ and it has the structure of a Poisson subalgebra of $C^{\infty}(M)$.
\end{lemma}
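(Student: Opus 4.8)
The plan is to pin down the normalizer of $\mathcal{I}$ by an explicit computation and then read the Poisson subalgebra structure off that description. Throughout write $N=\boldsymbol{\mu}^{-1}(0)$, so that (assuming $0$ is a regular value) $\mathcal{I}=\langle\boldsymbol{\mu}_1,\dots,\boldsymbol{\mu}_n\rangle$ is precisely the ideal of functions vanishing on $N$ and $\rho:C^{\infty}(M)\to C^{\infty}(M)/\mathcal{I}\cong C^{\infty}(N)$ is restriction to $N$. Recall that the normalizer (idealizer) of $\mathcal{I}$ for the Poisson bracket is
\[
\mathcal{N}(\mathcal{I})=\{f\in C^{\infty}(M)\mid \{f,\mathcal{I}\}\subseteq\mathcal{I}\}.
\]
The first thing I would record is that $N$ is coisotropic: infinitesimal equivariance gives $\{\boldsymbol{\mu}_i,\boldsymbol{\mu}_j\}=\boldsymbol{\mu}^{[e_i,e_j]}\in\mathcal{I}$, so the constraints are first class (this is essentially Lemma \ref{lem: reda}) and, equivalently, each infinitesimal generator $\xi_M=X_{\boldsymbol{\mu}^{\xi}}$ is tangent to $N$, since $\xi_M[\boldsymbol{\mu}_j]=\{\boldsymbol{\mu}_j,\boldsymbol{\mu}^{\xi}\}\in\mathcal{I}$.

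The heart of the argument is the identification $\mathcal{N}(\mathcal{I})=\rho^{-1}(C^{\infty}(M)^G)$. Because $\mathcal{I}$ is generated by the $\boldsymbol{\mu}_i$, the Leibniz rule reduces the defining condition to the generators: for $g=\sum_i h_i\boldsymbol{\mu}_i$ one has $\{f,g\}=\sum_i\big(\{f,h_i\}\boldsymbol{\mu}_i+h_i\{f,\boldsymbol{\mu}_i\}\big)$, so $f\in\mathcal{N}(\mathcal{I})$ if and only if $\{f,\boldsymbol{\mu}_i\}\in\mathcal{I}$ for every $i$. Now the key translation: from the convention $X_H=\{\cdot,H\}$ together with $\xi_M=X_{\boldsymbol{\mu}^{\xi}}$ we get $\{f,\boldsymbol{\mu}^{\xi}\}=\xi_M[f]$, so the condition $\{f,\boldsymbol{\mu}^{\xi}\}\in\mathcal{I}$ says exactly that $\xi_M[f]$ vanishes on $N$. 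Since $\xi_M$ is tangent to $N$, this equals $(\xi_M|_N)[f|_N]$, whence $f\in\mathcal{N}(\mathcal{I})$ precisely when $f|_N$ is annihilated by all the infinitesimal generators, that is, when $f|_N$ is $G$-invariant (for connected $G$; in general one restricts to the identity component). Under the isomorphism $C^{\infty}(M)/\mathcal{I}\cong C^{\infty}(N)$ this is exactly $\mathcal{N}(\mathcal{I})=\rho^{-1}(C^{\infty}(M)^G)$, where $C^{\infty}(M)^G$ is read as the invariants inside the quotient; identifying these with images of globally invariant functions uses an averaging argument and properness of the action.

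With the normalizer description in hand, the Poisson subalgebra property is formal. Closure under the pointwise product is Leibniz: for $f,g\in\mathcal{N}(\mathcal{I})$ and $h\in\mathcal{I}$ we have $\{fg,h\}=f\{g,h\}+g\{f,h\}\in\mathcal{I}$. Closure under the bracket is the Jacobi identity:
\[
\{\{f,g\},h\}=\{f,\{g,h\}\}-\{g,\{f,h\}\},
\]
whose right-hand side lies in $\mathcal{I}$ because $\{g,h\},\{f,h\}\in\mathcal{I}$ and $f,g\in\mathcal{N}(\mathcal{I})$; hence $\{f,g\}\in\mathcal{N}(\mathcal{I})$.

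I expect the genuine obstacle to be the geometric step, namely that $\xi_M$ is tangent to $N$ and that pointwise vanishing of all the $\xi_M[f]$ on $N$ is equivalent to $G$-invariance of $f|_N$; this is where the hypotheses (equivariance, regularity of the value $0$, and connectedness/properness of $G$) actually enter. Once the first-class/coisotropic structure of the constraints is in place, the rest is routine bookkeeping with the Leibniz and Jacobi identities.
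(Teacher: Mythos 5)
The paper does not actually prove this lemma: it is quoted from Sniatycki--Weinstein \cite{SW} and stated without argument, so there is no in-paper proof to compare against. Judged on its own, your proof is correct and is essentially the standard argument for this result. The two pillars are exactly right: (i) the normalizer condition reduces, by the Leibniz rule, to $\{f,\boldsymbol{\mu}_i\}\in\mathcal{I}$ for the generators, and via $\{f,\boldsymbol{\mu}^{\xi}\}=\xi_M[f]$ and tangency of $\xi_M$ to $N$ this becomes $G$-invariance of $f|_N$, i.e.\ $\rho(f)\in(C^{\infty}(M)/\mathcal{I})^G$; (ii) closure of the normalizer under product and bracket is pure Leibniz plus Jacobi. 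You also correctly isolate where the hypotheses enter. Two caveats worth making explicit if this were written up: the equivalence between ``$\{f,\boldsymbol{\mu}_i\}$ vanishes on $N$'' and ``$\{f,\boldsymbol{\mu}_i\}\in\mathcal{I}$'' needs $\mathcal{I}$ to coincide with the full vanishing ideal of $N$, which is exactly the regular-value (Hadamard-lemma) assumption you flag but which the paper's definition of $\mathcal{I}$ leaves implicit; and the step from ``annihilated by all $\xi_M$'' to ``$G$-invariant'' genuinely requires connectedness of $G$ (or restricting to the identity component), which you also note. The final remark about averaging and properness is not needed for the statement as given and could be dropped; the identification of invariants in the quotient with restrictions of functions to $N$ that are $G$-invariant is all that is used.
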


\begin{corollary}
 $C^{\infty}(M)^G$ inherits the structure of a Poisson algebra such that $\rho$ restricted to $\rho^{-1}(C^{\infty}(M)^G)$ is a Poisson algebra homomorphism.
\end{corollary}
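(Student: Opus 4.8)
The plan is to realize the Poisson structure on the reduced algebra as the quotient of the Poisson bracket carried by the normalizer $N := \rho^{-1}(C^\infty(M)^G)$, divided out by $\mathcal{I}$, and then to read off from the construction that $\rho$ restricted to $N$ is a Poisson homomorphism onto its image. The two preceding results supply everything needed: Lemma \ref{lem: reda} gives $\{\mathcal{I}, \mathcal{I}\} \subseteq \mathcal{I}$, while the preceding lemma identifies $N$ with the normalizer $\{f \in C^\infty(M) : \{f, \mathcal{I}\} \subseteq \mathcal{I}\}$ and shows that $N$ is a Poisson subalgebra of $C^\infty(M)$.

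First I would upgrade $\mathcal{I}$ from a Poisson subalgebra to a Poisson ideal of $N$. That $\mathcal{I} \subseteq N$ is immediate from Lemma \ref{lem: reda}, since $\{\mathcal{I}, \mathcal{I}\} \subseteq \mathcal{I}$ says that every element of $\mathcal{I}$ normalizes $\mathcal{I}$. The bracket-closure $\{N, \mathcal{I}\} \subseteq \mathcal{I}$ is exactly the defining property of the normalizer from the preceding lemma, while the associative-ideal property $N \cdot \mathcal{I} \subseteq \mathcal{I}$ is inherited from the fact that $\mathcal{I}$, being the kernel of the algebra projection $\rho$, is an associative ideal in all of $C^\infty(M)$. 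Together these say that $\mathcal{I}$ is a Poisson ideal of $N$.

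Next I would define the bracket on the quotient $\mathcal{R} := N/\mathcal{I}$ — which under $\rho$ is identified with the image of $C^\infty(M)^G$ in $C^\infty(M)/\mathcal{I}$ — by setting $\{\rho(f), \rho(g)\} := \rho(\{f, g\})$ for $f, g \in N$. The crux is well-definedness: replacing $f$ by $f + a$ with $a \in \mathcal{I}$ alters $\{f, g\}$ by $\{a, g\}$, and since $g \in N$ we have $\{a, g\} = -\{g, a\} \in \mathcal{I}$; the second slot is symmetric, so the class $\rho(\{f, g\})$ does not depend on the chosen representatives. The Poisson-algebra axioms — bilinearity, antisymmetry, the Jacobi identity, and the Leibniz rule relative to the quotient product — then transfer verbatim from $N$, each being the $\rho$-image of the corresponding identity in $N$, since $\rho$ is a surjective associative homomorphism onto $\mathcal{R}$ and the bracket was defined through $\rho$. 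By construction the equality $\rho(\{f, g\}) = \{\rho(f), \rho(g)\}$ holds on all of $N = \rho^{-1}(C^\infty(M)^G)$, which is precisely the assertion that $\rho$ restricted to $\rho^{-1}(C^\infty(M)^G)$ is a Poisson homomorphism.

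The one genuinely delicate step is the well-definedness of the quotient bracket, that is, that $\mathcal{I}$ is a Poisson ideal of $N$ rather than merely a subalgebra; everything afterwards is a formal transfer of identities across the surjection $\rho$. This is the structural reason why the normalizer, and not $C^\infty(M)^G$ or $C^\infty(M)$ itself, is the correct domain: it is by definition the largest subalgebra whose bracket with $\mathcal{I}$ lands back in $\mathcal{I}$, hence the largest subalgebra on which the quotient bracket can be defined at all.
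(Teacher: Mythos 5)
Your argument is correct and follows exactly the route the paper intends: the corollary is stated as an immediate consequence of the preceding lemma (the normalizer property of $\rho^{-1}(C^{\infty}(M)^G)$ together with Lemma \ref{lem: reda}), and your filling-in — showing $\mathcal{I}$ is a Poisson ideal of the normalizer and passing the bracket to the quotient via $\rho$ — is the standard Sniatycki--Weinstein deduction the paper is invoking. Your identification of the well-definedness of the quotient bracket as the one nontrivial point is exactly right.
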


The Poisson algebra $\mathcal{R}$ is called the reduced Poisson algebra of the considered system. Finally, we have:

\begin{theorem}
The Poisson algebra $\mathcal{R}$ is canonically isomorphic to the Poisson algebra of the reduced phase space $C^{\infty}(M_{u})$ with Poisson structure induced by $\omega_{u}$.
\end{theorem}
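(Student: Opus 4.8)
The plan is to construct an explicit isomorphism $\bar\Psi\colon \mathcal{R} = C^\infty(M)^G/\mathcal{I} \to C^\infty(M_u)$ by restriction to the level set followed by descent to the quotient, and then to recognise that it preserves the Poisson bracket by invoking the fourth assertion of Theorem~\ref{1.4_redw}. Throughout I take the level to be the common zero locus $\Sigma := \boldsymbol{\mu}^{-1}(0)$ of the momenta $\boldsymbol{\mu}_i$ generating $\mathcal{I}$ (for a general regular value one shifts the $\boldsymbol{\mu}_i$ by constants and replaces $G$ by the coadjoint isotropy $G_u$); since $0$ is fixed by the coadjoint action, here $G_u = G$ and $M_u = \Sigma/G$.

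First I would define the restriction–descent map. Writing $i_u\colon \Sigma \hookrightarrow M$ for the inclusion and $\mathfrak{p}_u\colon \Sigma \to M_u$ for the projection, any $f \in C^\infty(M)^G$ restricts to a $G$-invariant function $i_u^* f$ on $\Sigma$, and since $\mathfrak{p}_u$ realises $\Sigma$ as a principal $G$-bundle over $M_u$ (Proposition~\ref{1.1_orbit}), $i_u^* f$ descends to a unique $\bar f \in C^\infty(M_u)$ with $\bar f \circ \mathfrak{p}_u = i_u^* f$. The assignment $\Psi\colon f \mapsto \bar f$ is plainly an algebra homomorphism $C^\infty(M)^G \to C^\infty(M_u)$.

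Next I would check that $\Psi$ is onto and compute its kernel. For surjectivity, given $h \in C^\infty(M_u)$ I would extend the $G$-invariant function $\mathfrak{p}_u^* h$ from the closed invariant submanifold $\Sigma$ to a $G$-invariant smooth function on $M$, using a $G$-invariant tubular neighbourhood (available since the action is proper) together with a $G$-invariant cutoff. For the kernel, $\Psi(f) = 0$ means exactly that $f$ vanishes on $\Sigma$. The inclusion $\mathcal{I} \subseteq \ker\Psi$ is immediate, since each $\boldsymbol{\mu}_i$ vanishes on $\Sigma$. For the reverse inclusion I would use that $0$ is a regular value, so that near $\Sigma$ the $\boldsymbol{\mu}_i$ extend to a coordinate system; Hadamard's lemma then writes a function vanishing on $\Sigma$ locally as $\sum_i g_i \boldsymbol{\mu}_i$, and a partition of unity assembles a global expression with $g_i \in C^\infty(M)$. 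Since the given $f$ is $G$-invariant, this places it in $\mathcal{I}$, so $\ker\Psi = \mathcal{I}$ and $\Psi$ induces an algebra isomorphism $\bar\Psi\colon \mathcal{R} \xrightarrow{\ \sim\ } C^\infty(M_u)$.

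Finally, the compatibility with the brackets is precisely the fourth assertion of Theorem~\ref{1.4_redw}: with $H_u = \Psi(H)$ and $K_u = \Psi(K)$ it reads $\Psi(\{H,K\}) = \{\Psi(H),\Psi(K)\}_{M_u}$, which says that $\Psi$ carries the bracket of $C^\infty(M)^G$ (the one inducing the bracket of $\mathcal{R}$) to the bracket of $\omega_u$. Hence $\bar\Psi$ is an isomorphism of Poisson algebras, and it is canonical because its definition involves no choices. I expect the main obstacle to be the reverse kernel inclusion: converting the pointwise vanishing of a $G$-invariant $f$ on $\boldsymbol{\mu}^{-1}(0)$ into a global factorisation $f = \sum_i g_i \boldsymbol{\mu}_i$. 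This is exactly where the regular-value hypothesis is indispensable, and where the definition of $\mathcal{I}$ must allow the coefficients $g_i$ to be arbitrary smooth (not necessarily $G$-invariant) functions.
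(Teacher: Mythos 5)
The paper does not actually prove this statement: it is quoted from Sniatycki--Weinstein \cite{SW} and stated without argument, so there is no internal proof to compare against. Your restriction--descent construction is the standard route to this result and is essentially sound: $\Psi(f)=\bar f$ with $\bar f\circ\mathfrak{p}_u=i_u^*f$ is well defined because $\mathfrak{p}_u$ is a principal-bundle submersion (Proposition~\ref{1.1_orbit}), surjectivity follows from invariant extension off the closed invariant level set, the identification $\ker\Psi=\mathcal{I}$ is exactly the Hadamard-lemma/partition-of-unity argument you describe (with the complement of the level set handled by $f=\sum_i\bigl(f\boldsymbol{\mu}_i/\sum_j\boldsymbol{\mu}_j^2\bigr)\boldsymbol{\mu}_i$), and the bracket compatibility is literally part 4 of Theorem~\ref{1.4_redw} once one observes that $H_u$ there coincides with $\Psi(H)$. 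The one point worth making explicit is the normalization you already flag: the ideal $\mathcal{I}$ as defined in the paper is generated by the unshifted momenta, so the quotient $\mathcal{R}$ genuinely computes the reduction at the level $u=0$ (where $G_u=G$); for $u\neq 0$ the shifted functions $\boldsymbol{\mu}_i-\langle u,e_i\rangle$ are in general only $G_u$-invariant and the whole construction must be carried out over $C^{\infty}(M)^{G_u}$, which is a modification of the statement rather than a one-line remark. With that caveat acknowledged, your argument is correct.
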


\chapter{Momentum Map in Poisson Geometry}\label{ch: two}

In this chapter we discuss the generalization of the theory of momentum map and reduction to the Poisson geometry case. Similarly to the previous chapter, in this formalism the description of symmetries is 
implemented via Poisson actions. In order to introduce Poisson actions we give some background about Poisson Lie groups and Lie bialgebras and we discuss a more complete definition of Poisson manifolds 
with certain properties. 
We introduce the generalization of momentum map given by Lu \cite{Lu3} and its related Hamiltonian action.
After this introductory part, we give a new definition of the momentum map in terms of one-forms and we study its properties. As in the symplectic case, the momentum map here plays a fundamental role in the 
construction of the Poisson reduction theory.

\section{Lie bialgebras}
\label{sec:lie bialgebras}

The first object under consideration is a generalization of the notion of Lie algebra, called Lie bialgebra. In general, given a Lie algebra $\mathfrak{g}$, its dual space $\mathfrak{g}^*$ is a vector space. In this section we see that $\mathfrak{g}$ can be endowed with a structure  which induces a Lie algebra structure on its dual $\mathfrak{g}^*$.
 The corresponding Lie groups carry a Poisson structure compatible with the group multiplication.
 For details on this topic see e.g. \cite{YK}. As will be seen in next chapter, the importance of these structures
 relies in the fact that they admit a standard procedure of quantization.

Let $\mathfrak{g}$ be a finite dimensional Lie algebra and $\delta$ a linear map from $\mathfrak{g}$ to $\mathfrak{g}\otimes \mathfrak{g}$ with transpose
${}^{t}\delta: \mathfrak{g^*}\otimes\mathfrak{g^*}\rightarrow\mathfrak{g^*}$. Recall that a linear map on $\mathfrak{g^*}\otimes\mathfrak{g^*}$ can be identified with a bilinear map on $\mathfrak{g}$.

\begin{definition}
A \textbf{Lie bialgebra} is a Lie algebra $\mathfrak{g}$ with a linear map $\delta:\mathfrak{g}\rightarrow\mathfrak{g}\wedge\mathfrak{g}$ such that
\begin{enumerate}
 \item ${}^{t}\delta: \mathfrak{g^*}\otimes\mathfrak{g^*}\rightarrow\mathfrak{g^*}$ defines a Lie bracket on $\mathfrak{g}^*$, and
 \item \label{cond2} $\delta$ is a 1-cocycle on $\mathfrak{g}$ relative to the adjoint representation of $\mathfrak{g}$ on $\mathfrak{g}\otimes\mathfrak{g}$
\end{enumerate}
\end{definition}
\noindent Condition \ref{cond2} means that the 2-cocycle
\begin{equation}\label{eq: cc}
 ad_{\xi}(\delta(\eta))-ad_{\eta}(\delta(\xi))-\delta([\xi,\eta]) = 0
\end{equation}
for any $\xi,\eta\in\mathfrak{g}$.

In the following we will adopt the notation
\begin{equation}
 [x,y]_{*}={}^{t}\delta(x\otimes y),
\end{equation}
for any $x,y\in\mathfrak{g}^*$. Thus, by definition
\begin{equation}
 \langle[x,y]_{*},\xi\rangle=\langle\delta(\xi), x\otimes y\rangle
\end{equation}
for $\xi\in\mathfrak{g}$.

As discussed in the previous chapter any Lie algebra $\mathfrak{g}$ acts on itself by the adjoint representation $ad:\xi\in\mathfrak{g}\mapsto ad_{\xi}\in End\ \mathfrak{g}$, defined by 
$ad_{\xi}(\eta)=[\xi,\eta]$ (see eq. (\ref{eq: ad})). 

We now introduce the definition of coadjoint representation of a Lie algebra on the dual vector space.
Let $\mathfrak{g}$ be a Lie algebra and let $\mathfrak{g}^*$ be its dual vector space. For $\xi\in\mathfrak{g}$, we set

\begin{equation}
 ad_{\xi}^*=-{}^{t}(ad_{\xi}).
\end{equation}
Thus $ad_{\xi}^*$ is the endomorphism of $\mathfrak{g}^*$ satisfying
\begin{equation}
 \langle x,ad_{\xi}(y)\rangle=-\langle ad_{\xi}^*x,y\rangle.
\end{equation}
The map $\xi\in\mathfrak{g}\mapsto ad_{\xi}^*\in End\ \mathfrak{g}^*$ is a representation of $\mathfrak{g}$ in $\mathfrak{g}^*$, that we call coadjoint representation.
Hence, eq. (\ref{eq: cc}) can be written as

\begin{equation}
\begin{split}
 \langle[x,y]_{\mathfrak{g}^*},[\xi,\eta]\rangle & + \langle[ad_{\xi}^*x,y]_{*},y\rangle + \langle [x, ad_{\xi}^*y],y\rangle\\
& -\langle[ad_{\xi}^*x,y]_{*},\xi\rangle-\langle [x, ad_{\xi}^*y],\xi\rangle=0
\end{split}
\end{equation}
It is important to stress that there is a symmetry between $\mathfrak{g}$, with Lie bracket $[\cdot,\cdot],$ and $\mathfrak{g}^*$, with Lie bracket $[\cdot,\cdot]_{*}$, defined by $\delta$. In fact, setting

\begin{equation}
 ad_{x}(y)=[x,y]_{\mathfrak{g}^*}
\end{equation}
and
\begin{equation}
 \langle ad_{x}y, \xi\rangle=-\langle y, ad_{x}^*\xi\rangle,
\end{equation}
the map $x\in\mathfrak{g}^*\mapsto ad_{x}^*\in End\ \mathfrak{g}$ is the coadjoint representation of $\mathfrak{g}^*$ in the dual of $\mathfrak{g}^*$, which is isomorphic to $\mathfrak{g}$. Hence, eq. (\ref{eq: cc}) 
is equivalent to

\begin{equation}\label{eq: cc1}
\begin{split}
 \langle[x,y]_{\mathfrak{g}^*},[\xi,\eta]\rangle &+ \langle ad_{\xi}^*x,ad_{y}^*\eta\rangle -\langle ad_{\xi}^*y,ad_{x}^*\eta\rangle\\
 &-\langle ad_{\eta}^*x,ad_{y}^*\xi\rangle+ \langle ad_{\eta}^*y,ad_{x}^*\xi\rangle=0.
 \end{split}
\end{equation}
The symmetry between $\mathfrak{g}$ and $\mathfrak{g}^*$ then follows from the fact that eq. (\ref{eq: cc1}) is equivalent to the condition  on ${}^{t}[\cdot,\cdot]:\mathfrak{g}^*\rightarrow \mathfrak{g}^*\otimes \mathfrak{g}^*$ to be a 1-cocycle on $\mathfrak{g}^*$ with values on $\mathfrak{g}^*\otimes \mathfrak{g}^*$, where $\mathfrak{g}^*$ acts on $\mathfrak{g}^*\otimes \mathfrak{g}^*$ by the adjoint action.

\begin{proposition}
 If $(\mathfrak{g},\delta)$ is a Lie bialgebra, and $[\cdot,\cdot]$ is a Lie bracket on $\mathfrak{g}$, then $(\mathfrak{g}^*, {}^{t}\delta)$ is a Lie bialgebra, where ${}^{t}[\cdot,\cdot]$ defines a Lie bracket 
on $\mathfrak{g}^*$.
\end{proposition}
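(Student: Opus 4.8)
The plan is to read the statement as the $\mathfrak{g}\leftrightarrow\mathfrak{g}^*$ duality that has already been made explicit in the reformulation of the cocycle condition, equations (\ref{eq: cc}) and (\ref{eq: cc1}). Concretely, I would verify the two defining conditions of a Lie bialgebra for the pair consisting of $\mathfrak{g}^*$ equipped with the bracket $[\cdot,\cdot]_*={}^{t}\delta$ and the candidate cobracket $\gamma:={}^{t}[\cdot,\cdot]:\mathfrak{g}^*\rightarrow\mathfrak{g}^*\otimes\mathfrak{g}^*$. Nothing new needs to be computed; the point is to check that the hypotheses on $(\mathfrak{g},\delta)$ supply exactly the two facts required of $(\mathfrak{g}^*,\gamma)$.

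First I would dispose of the Lie algebra structure and the transpose condition, both of which are essentially free. Condition (1) in the definition of a Lie bialgebra applied to $(\mathfrak{g},\delta)$ states precisely that $[\cdot,\cdot]_*={}^{t}\delta$ is a Lie bracket on $\mathfrak{g}^*$, so $(\mathfrak{g}^*,[\cdot,\cdot]_*)$ is a genuine Lie algebra. For the cobracket $\gamma={}^{t}[\cdot,\cdot]$ I would note that, since $[\cdot,\cdot]$ is antisymmetric, $\gamma$ indeed takes values in $\mathfrak{g}^*\wedge\mathfrak{g}^*$, and that under the canonical identification $(\mathfrak{g}^*)^*\cong\mathfrak{g}$ its transpose is ${}^{t}\gamma={}^{t}({}^{t}[\cdot,\cdot])=[\cdot,\cdot]$. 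This is the original Lie bracket on $\mathfrak{g}$, a Lie bracket by hypothesis, so condition (1) holds for $(\mathfrak{g}^*,\gamma)$.

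The remaining condition (2), that $\gamma$ be a 1-cocycle on $\mathfrak{g}^*$ relative to the adjoint representation of $\mathfrak{g}^*$, carries the actual content, and it is delivered by the symmetric rewriting already performed above. Condition (2) for $(\mathfrak{g},\delta)$ is equation (\ref{eq: cc}), which was shown to be equivalent to the manifestly symmetric identity (\ref{eq: cc1}). Reading (\ref{eq: cc1}) with the roles of $(\mathfrak{g},[\cdot,\cdot])$ and $(\mathfrak{g}^*,[\cdot,\cdot]_*)$ interchanged -- equivalently, applying the same pairing identities to $\gamma={}^{t}[\cdot,\cdot]$ in place of $\delta$ -- shows that (\ref{eq: cc1}) is also equivalent to the assertion that $\gamma$ is a 1-cocycle on $\mathfrak{g}^*$ with values in $\mathfrak{g}^*\otimes\mathfrak{g}^*$ for the adjoint action. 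Since (\ref{eq: cc}) holds by the assumption that $(\mathfrak{g},\delta)$ is a Lie bialgebra, so does (\ref{eq: cc1}), and hence the cocycle condition for $\gamma$. This establishes condition (2) and completes the verification.

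The only place requiring care -- rather than a genuine obstacle -- is the bookkeeping that makes the word ``symmetry'' precise: one must track the sign conventions $\langle ad_{\xi}^*x,y\rangle=-\langle x,ad_{\xi}y\rangle$ and $\langle ad_{x}^*\xi,y\rangle=-\langle \xi,ad_{x}y\rangle$ consistently, and check that the canonical isomorphism $(\mathfrak{g}^*)^*\cong\mathfrak{g}$ intertwines the relevant transposes and pairings, so that exchanging the two roles in (\ref{eq: cc1}) is a legitimate identification and not merely a formal relabelling. Once these conventions are fixed, the symmetry of (\ref{eq: cc1}) is exact and the proof reduces to the assembly of the observations above.
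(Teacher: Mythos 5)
Your proposal is correct and follows essentially the same route as the paper: the proposition is stated there without a separate proof precisely because the preceding paragraphs establish that the cocycle condition (\ref{eq: cc}) is equivalent to the symmetric identity (\ref{eq: cc1}), which is invariant under exchanging the roles of $(\mathfrak{g},[\cdot,\cdot])$ and $(\mathfrak{g}^*,[\cdot,\cdot]_*)$. Your additional remarks on verifying condition (1) via ${}^{t}({}^{t}[\cdot,\cdot])=[\cdot,\cdot]$ and on tracking the sign conventions are exactly the bookkeeping the paper leaves implicit.
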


By definition, $(\mathfrak{g}^*, {}^{t}[\cdot,\cdot])$ is the \textbf{dual} of the Lie bialgebra $(\mathfrak{g},\delta)$. It is easy to see that the dual of $(\mathfrak{g}^*, {}^{t}[\cdot,\cdot])$ coincides with $(\mathfrak{g},\delta)$.

\begin{proposition}
 Let $(\mathfrak{g},\delta)$ be a Lie bialgebra with dual $(\mathfrak{g}^*, {}^{t}[,])$. There exists a unique Lie algebra structure on the vector space $\mathfrak{g}\oplus\mathfrak{g}^*$ such that
\begin{enumerate}
\item it restricts to the given brackets on $\mathfrak{g}$ and $\mathfrak{g}^*$
\item the scalar product  $\langle\cdot, \cdot\rangle$ on $\mathfrak{g}\oplus\mathfrak{g}^*$ is invariant.
\end{enumerate}
It is given by
\begin{equation}\label{eq: dbra}
\left[ \xi+x,\eta+y\right]=\left[ x,y\right]-ad^*_{\eta}x+ad^*_{\xi}y +\left[ \xi,\eta\right]+ad^*_{x}\eta-ad^*_{y}\xi .
\end{equation}
\end{proposition}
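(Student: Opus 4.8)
The plan is to work with the canonical symmetric nondegenerate pairing on $\mathfrak{g}\oplus\mathfrak{g}^*$ given by $\langle \xi+x,\eta+y\rangle=\langle x,\eta\rangle+\langle y,\xi\rangle$, for which $\mathfrak{g}$ and $\mathfrak{g}^*$ are each isotropic and are dual to one another, and to read condition (2) as the ad-invariance relation $\langle[U,V],W\rangle+\langle V,[U,W]\rangle=0$ for all $U,V,W\in\mathfrak{g}\oplus\mathfrak{g}^*$. Since any bracket satisfying (1) is determined by bilinearity once the mixed bracket $[\xi,y]$ (with $\xi\in\mathfrak{g}$, $y\in\mathfrak{g}^*$) is known, the whole problem splits into (a) showing that invariance forces a unique mixed bracket, which yields both the uniqueness and the explicit formula (\ref{eq: dbra}), and (b) checking that the resulting bracket is genuinely a Lie bracket, which is the existence part.

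For (a) I would write $[\xi,y]=a+b$ with $a\in\mathfrak{g}$ and $b\in\mathfrak{g}^*$ and recover each component by pairing. To extract $b$, pair with an arbitrary $\zeta\in\mathfrak{g}$: since $\mathfrak{g}$ is isotropic, $\langle[\xi,y],\zeta\rangle=\langle b,\zeta\rangle$, while invariance and the restriction $[\xi,\zeta]=ad_\xi\zeta$ give $\langle[\xi,y],\zeta\rangle=-\langle y,ad_\xi\zeta\rangle=\langle ad^*_\xi y,\zeta\rangle$; nondegeneracy then forces $b=ad^*_\xi y$. Symmetrically, pairing $[\xi,y]$ with an arbitrary $z\in\mathfrak{g}^*$, using invariance and the definition of the coadjoint action of $\mathfrak{g}^*$ (so that $\langle\xi,ad_y z\rangle=-\langle ad^*_y\xi,z\rangle$), forces $a=-ad^*_y\xi$. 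Hence $[\xi,y]=ad^*_\xi y-ad^*_y\xi$ is the only admissible mixed bracket, and substituting it into $[\xi+x,\eta+y]=[\xi,\eta]+[\xi,y]-[\eta,x]+[x,y]$ reproduces exactly (\ref{eq: dbra}). This settles uniqueness.

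For (b), antisymmetry is immediate by swapping the two arguments in the formula. The substance is the Jacobi identity, which I would verify on homogeneous triples, reducing by multilinearity to cases according to how many arguments lie in $\mathfrak{g}$. The all-$\mathfrak{g}$ and all-$\mathfrak{g}^*$ cases collapse to the Jacobi identities of $\mathfrak{g}$ and of $\mathfrak{g}^*$ respectively. The two genuinely mixed cases (two arguments in one factor, one in the other) are interchanged by the $\mathfrak{g}\leftrightarrow\mathfrak{g}^*$ duality established earlier, so it suffices to treat, say, $\xi,\eta\in\mathfrak{g}$ and $x\in\mathfrak{g}^*$: expanding the cyclic sum with the formula and separating the $\mathfrak{g}$- and $\mathfrak{g}^*$-components, the vanishing of the mixed Jacobiator is precisely the $1$-cocycle compatibility (\ref{eq: cc}), equivalently its symmetric form (\ref{eq: cc1}), which holds because $(\mathfrak{g},\delta)$ is a Lie bialgebra.

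The hard part will be the bookkeeping in this mixed case: several iterated coadjoint terms of the form $ad^*_\xi(ad^*_\eta x)$ and $ad_\xi(ad^*_x\eta)$ appear, and matching them to (\ref{eq: cc1}) requires careful and repeated use of the defining relations $\langle ad^*_\xi y,\zeta\rangle=-\langle y,ad_\xi\zeta\rangle$ and $\langle ad^*_x\eta,z\rangle=-\langle\eta,ad_x z\rangle$ on both factors, together with the Jacobi identities already available on each summand. Once this single mixed case is checked, the duality symmetry disposes of the remaining one, and invariance of the pairing is automatic from the construction, completing the existence half of the statement.
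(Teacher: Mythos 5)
Your argument is correct, and in fact the thesis states this proposition without any proof (it is quoted from the standard literature on Lie bialgebras, cf.\ the reference given there), so there is no in-paper argument to compare against; what you propose is essentially the classical proof of the Manin-triple construction. Your uniqueness step is complete as written: isotropy of each summand plus nondegeneracy of the pairing pins down the two components of the mixed bracket $[\xi,y]$ as $ad^*_{\xi}y-ad^*_{y}\xi$, and invariance of the pairing for the resulting bracket is then automatic, as you say. For the existence step, one refinement is worth making explicit when you carry out the bookkeeping: in the mixed Jacobiator for $\xi,\eta\in\mathfrak{g}$, $x\in\mathfrak{g}^*$, the $\mathfrak{g}^*$-component is $ad^*_{[\xi,\eta]}x-ad^*_{\xi}ad^*_{\eta}x+ad^*_{\eta}ad^*_{\xi}x$ and vanishes using only that $ad^*$ is a representation of $\mathfrak{g}$ (i.e.\ the Jacobi identity of $\mathfrak{g}$ alone); it is only the $\mathfrak{g}$-component, $-ad^*_{x}[\xi,\eta]+[\xi,ad^*_{x}\eta]+[ad^*_{x}\xi,\eta]+ad^*_{ad^*_{\eta}x}\xi-ad^*_{ad^*_{\xi}x}\eta$, whose vanishing is equivalent to the $1$-cocycle condition (\ref{eq: cc}) in its dualized form (\ref{eq: cc1}). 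Your appeal to the $\mathfrak{g}\leftrightarrow\mathfrak{g}^*$ symmetry to dispose of the second mixed case is legitimate, since the formula (\ref{eq: dbra}) is manifestly symmetric under the swap and the paper records that the cocycle condition for $\mathfrak{g}$ is equivalent to that for $\mathfrak{g}^*$. With that one clarification the plan goes through without obstruction.
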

Moreover, the structure (\ref{eq: dbra}) is a Lie bracket on $\mathfrak{g}\oplus\mathfrak{g}^*$ if and only if $\mathfrak{g}$ is a Lie bialgebra.

\begin{definition}
The \textbf{double} $\mathfrak{d}=\mathfrak{g}\bowtie\mathfrak{g}^*$ of the Lie bialgebra $\mathfrak{g}$ is defined by the vector space $\mathfrak{g}\oplus\mathfrak{g}^*$ together with the Lie bracket  given by 
(\ref{eq: dbra}).
\end{definition}

Note that $\mathfrak{d}=\mathfrak{g}\bowtie\mathfrak{g}^*$ is also the double of $\mathfrak{g}^*$·

\subsection{Classical Yang-Baxter equation and r-matrices}
\label{subsec: classical yang baxter}

We now introduce a particular class of Lie bialgebra structures, given by a coboundary of an element $r\in \mathfrak{g}\otimes \mathfrak{g}$, called \textbf{r-matrix}. An $r$-matrix defines a cocycle $\delta$ as follows: 

\begin{equation}
 \delta(x)= ad_x(r)=[x\otimes 1+1\otimes x,r].
\end{equation}
To each element $r$ in $\mathfrak{g}\otimes \mathfrak{g}$, we associate the map $\underline{r}:\mathfrak{g}^*\rightarrow \mathfrak{g}$ defined by
\begin{equation}
 \underline{r}(\xi)(\eta)=r(\xi,\eta),
\end{equation}
for $\xi,\eta\in\mathfrak{g}^*$. When $\delta$ is determined by $r$ we write $[\xi,\eta]^r$ instead of $[\xi,\eta]_{*}$.
\begin{proposition}
If $r$ is skew-symmetric, then
\begin{equation}
 [\xi,\eta]^r=ad_{\underline{r}\xi}^*\eta-ad_{\underline{r}\eta}^*\xi.
\end{equation}
\end{proposition}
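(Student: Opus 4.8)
The plan is to unwind the definition of $[\cdot,\cdot]^r$ through its transpose characterization and reduce everything to the duality pairing between $\mathfrak{g}$ and $\mathfrak{g}^*$. Recall that for $\xi,\eta\in\mathfrak{g}^*$ the bracket $[\xi,\eta]^r={}^{t}\delta(\xi\otimes\eta)$ is characterized by $\langle[\xi,\eta]^r,\zeta\rangle=\langle\delta(\zeta),\xi\otimes\eta\rangle$ for all $\zeta\in\mathfrak{g}$. Hence it suffices to verify the claimed identity after pairing both sides with an arbitrary $\zeta\in\mathfrak{g}$, and the whole proof becomes a finite bookkeeping computation in the duality pairing.

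First I would write $r=\sum_i a_i\otimes b_i$ with $a_i,b_i\in\mathfrak{g}$ and record the two ingredients I need. From the coboundary formula,
\begin{equation}
\delta(\zeta)=ad_{\zeta}(r)=\sum_i\bigl([\zeta,a_i]\otimes b_i+a_i\otimes[\zeta,b_i]\bigr),
\end{equation}
while the defining relation $\underline{r}(\xi)(\eta)=r(\xi,\eta)$ gives the explicit description $\underline{r}(\xi)=\sum_i\langle a_i,\xi\rangle\,b_i$. Skew-symmetry of $r$ then translates into the single relation $\langle\underline{r}\xi,\eta\rangle=-\langle\underline{r}\eta,\xi\rangle$ for all $\xi,\eta\in\mathfrak{g}^*$; this is the only place the hypothesis will be used.

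Next I would expand $\langle\delta(\zeta),\xi\otimes\eta\rangle$ with the coboundary formula and convert each adjoint action into a coadjoint one via $\langle\xi,[\zeta,a]\rangle=-\langle ad_{\zeta}^*\xi,a\rangle$. Collecting the two resulting sums and recognizing them through the formula for $\underline{r}$ yields
\begin{equation}
\langle[\xi,\eta]^r,\zeta\rangle=-\langle\underline{r}(ad_{\zeta}^*\xi),\eta\rangle-\langle ad_{\zeta}^*\eta,\underline{r}(\xi)\rangle.
\end{equation}

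Finally, I would rewrite the right-hand side of the claim in the same pairing: using $\langle ad^*_{a}\eta,\zeta\rangle=-\langle\eta,[a,\zeta]\rangle$ gives
\begin{equation}
\langle ad^*_{\underline{r}\xi}\eta-ad^*_{\underline{r}\eta}\xi,\zeta\rangle=-\langle\eta,[\underline{r}\xi,\zeta]\rangle+\langle\xi,[\underline{r}\eta,\zeta]\rangle.
\end{equation}
The two expressions are then matched term by term: applying the skew-symmetry relation turns $\langle\underline{r}(ad_{\zeta}^*\xi),\eta\rangle$ into $-\langle\underline{r}\eta,ad_{\zeta}^*\xi\rangle$, and pushing the coadjoint action back onto the bracket reproduces $\langle\xi,[\underline{r}\eta,\zeta]\rangle$; the second term is handled identically without needing skew-symmetry. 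The main (and essentially only) subtlety is tracking the signs and invoking the skew-symmetry of $r$ at exactly the right step — without it one is left with the unsymmetrized expression and the identity does not hold.
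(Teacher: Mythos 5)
Your proof is correct: writing $r=\sum_i a_i\otimes b_i$, pairing the defining relation $\langle[\xi,\eta]^r,\zeta\rangle=\langle\delta(\zeta),\xi\otimes\eta\rangle$ against an arbitrary $\zeta\in\mathfrak{g}$, and converting adjoint to coadjoint actions does reduce the identity to the single skew-symmetry relation $\langle\underline{r}\alpha,\beta\rangle=-\langle\underline{r}\beta,\alpha\rangle$, used exactly once and exactly where you say. The paper states this proposition without proof (it is background on $r$-matrices quoted from the literature), so there is no argument to compare against; your direct verification is the standard one and the signs all check out.
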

In order to show when the $r$-matrix defines a Lie bialgebra, we introduce the Schouten bracket of an element $r\in\mathfrak{g}\otimes \mathfrak{g}$ with itself, denoted by $[r,r]$.
It is the element of $\bigwedge^3 \mathfrak{g}$ defined by
\begin{equation}
 [r,r](\xi,\eta,\varsigma)=-2\circlearrowleft\langle\varsigma,[\underline{r}\xi,\underline{r}\eta],
\end{equation}
where $\circlearrowleft$ denotes the summation over the circular permutation of $\xi$, $\eta$ and $\varsigma$.

\begin{proposition}
 The $r$-matrix defines a Lie bracket on $\mathfrak{g}^*$ if and only if $[r,r]$ is ad-invariant.
\end{proposition}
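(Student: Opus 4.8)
The plan is to first notice that, because $\delta$ is defined as the coboundary $\delta(x)=ad_x(r)=[x\otimes 1+1\otimes x,\,r]$, it is automatically a $1$-cocycle, so condition \ref{cond2} in the definition of a Lie bialgebra holds for free. The entire content of the proposition is therefore that the transpose ${}^{t}\delta$ is a genuine Lie bracket on $\mathfrak{g}^*$. Since $r$ is skew-symmetric, the induced bracket $[\xi,\eta]^r=ad^*_{\underline r\xi}\eta-ad^*_{\underline r\eta}\xi$ is manifestly skew-symmetric, so the only remaining condition is the Jacobi identity for $[\cdot,\cdot]^r$. I would thus reduce the statement to: \emph{$[\cdot,\cdot]^r$ satisfies Jacobi if and only if $[r,r]$ is ad-invariant.}

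Next I would compute the Jacobiator
\[
J(\xi,\eta,\varsigma)=\circlearrowleft\,[[\xi,\eta]^r,\varsigma]^r\in\mathfrak{g}^*
\]
for $\xi,\eta,\varsigma\in\mathfrak{g}^*$, substituting the explicit formula $[\xi,\eta]^r=ad^*_{\underline r\xi}\eta-ad^*_{\underline r\eta}\xi$ and expanding. Throughout I would use only the transpose relation $\langle ad^*_{\zeta}\alpha,u\rangle=-\langle\alpha,[\zeta,u]\rangle$ for $\zeta,u\in\mathfrak{g}$, $\alpha\in\mathfrak{g}^*$, together with the definition $\langle\underline r\xi,\eta\rangle=r(\xi,\eta)$. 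The aim is to pair $J(\xi,\eta,\varsigma)$ with an arbitrary $x\in\mathfrak{g}$ and, after collecting terms and exploiting the cyclic symmetry, to recognize the resulting scalar as the adjoint action of $x$ on the three-vector $[r,r]\in\bigwedge^3\mathfrak{g}$. Concretely, I expect an identity of the shape
\[
\langle J(\xi,\eta,\varsigma),\,x\rangle=c\,\big(ad_x^{(3)}[r,r]\big)(\xi,\eta,\varsigma),
\]
with $c$ a nonzero universal constant and $ad_x^{(3)}$ the derivation extension of $ad_x$ to $\bigwedge^3\mathfrak{g}$; the point is that the defining combination $[r,r](\xi,\eta,\varsigma)=-2\circlearrowleft\langle\varsigma,[\underline r\xi,\underline r\eta]\rangle$ is exactly what survives the expansion.

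Granting this identity, the conclusion is immediate: $J\equiv0$ for all $\xi,\eta,\varsigma$ if and only if $ad_x^{(3)}[r,r]=0$ for every $x\in\mathfrak{g}$, which is precisely the statement that $[r,r]$ is ad-invariant. As an alternative organizing the same computation on the dual side, one may instead verify the co-Jacobi identity $\mathrm{Alt}\circ(\delta\otimes\mathrm{id})\circ\delta=0$ for $\delta$ directly, and show that $\mathrm{Alt}(\delta\otimes\mathrm{id})\delta(x)=-c\,ad_x^{(3)}[r,r]$; this is formally dual to the Jacobiator computation and tends to make the antisymmetrization bookkeeping more symmetric.

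The main obstacle is exactly the middle step. The Jacobiator produces a large number of terms (three cyclic contributions, each splitting into several pieces), and the entire difficulty lies in the careful multilinear bookkeeping: tracking signs, applying the coadjoint–adjoint transpose relation correctly, and using the circular symmetry to see that precisely the totally antisymmetric combination defining $[r,r]$, differentiated by $ad_x$, remains. There is no deep idea beyond this, but it is easy to lose a sign or to conflate $ad$ on $\mathfrak{g}$ with $ad^*$ on $\mathfrak{g}^*$, so I would fix clean notation for $\underline r$ and the pairing before expanding anything.
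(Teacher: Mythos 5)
The paper states this proposition without proof (it is imported from the cited literature on Lie bialgebras), and your plan is precisely the standard argument used there: the cocycle condition is automatic because $\delta$ is a coboundary, skew-symmetry of $[\cdot,\cdot]^r$ is immediate from the formula, and the Jacobiator paired with $x\in\mathfrak{g}$ equals a nonzero universal multiple (in fact $\tfrac12$) of $\bigl(ad_x[r,r]\bigr)(\xi,\eta,\varsigma)$, so Jacobi holds for all arguments iff $[r,r]$ is ad-invariant. The only thing separating your sketch from a complete proof is actually carrying out the expansion of $\circlearrowleft[[\xi,\eta]^r,\varsigma]^r$ and pinning down the constant, but the identity you anticipate is correct and the bookkeeping goes through as described.
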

The condition for $[r,r]$ to be $ad$-invariant is sometimes called generalized Yang-Baxter equation. 
\begin{definition}
 Let $r$ be an element of $\mathfrak{g}\otimes \mathfrak{g}$, with symmetric part $s$ and skew-symmetric part $a$. If $s$ and $[a,a]$ are ad-invariant, then $r$ is called classical $r$-matrix.
If $r$ is skew-symmetric ($r=a$) and if $[r,r]=0$, then $r$ is called a triangular $r$-matrix.
\end{definition}

Let us define the map $\langle\underline{r,r}\rangle:\bigwedge^2 \mathfrak{g}^*\rightarrow \mathfrak{g}$, given by
\begin{equation}
 \langle\underline{r,r}\rangle(\xi,\eta)=[\underline{r}\xi,\underline{r}\eta]-\underline{r}[\xi,\eta]^r.
\end{equation}
Setting
\begin{equation}
 \langle r,r\rangle(\xi,\eta,\varsigma)=\langle\varsigma,\langle\underline{r,r}\rangle(\xi,\eta)\rangle,
\end{equation}
the map $\langle\underline{r,r}\rangle$ is identified with an element $\langle r,r\rangle\in \bigwedge^2\mathfrak{g}\otimes \mathfrak{g}$.

\begin{theorem}
Let $\mathfrak{g}$ a finite dimensional Lie algebra.
\begin{enumerate}
  \item Let $a$ be in $\mathfrak{g}\otimes \mathfrak{g}$ and skew-symmetric. Then $\langle a,a\rangle$ is in $\bigwedge^3 \mathfrak{g}$, and
\begin{equation}
	\langle a,a\rangle= -\frac{1}{2}[a,a],
\end{equation}
  \item Let $s$ be in $\mathfrak{g}\otimes \mathfrak{g}$, symmetric and ad-invariant. Then $\langle s,s\rangle$ is an ad-invariant element in $\bigwedge^3 \mathfrak{g}$, and
\begin{equation}
 \langle\underline{s,s}\rangle(\xi,\eta)=[\underline{s}\xi,\underline{s}\eta],
\end{equation}
  \item For $r=s+a$, where $a$ is skew-symmetric and $s$ is symmetric and ad-invariant, $\langle r,r\rangle$ is in $\bigwedge^3 \mathfrak{g}$ and
\begin{equation}
 \langle r,r\rangle= \langle a,a\rangle+\langle s,s\rangle.
  \end{equation}
\end{enumerate}
\end{theorem}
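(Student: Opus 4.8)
The plan is to unwind all three statements into identities about the scalar trilinear expression
$$\langle r,r\rangle(\xi,\eta,\varsigma)=\langle\varsigma,[\underline r\xi,\underline r\eta]-\underline r[\xi,\eta]^r\rangle$$
on $\mathfrak g^*$, reducing everything to two tools: the adjunction $\langle ad^*_X x,Y\rangle=-\langle x,[X,Y]\rangle$ for $X,Y\in\mathfrak g$, $x\in\mathfrak g^*$, and the symmetry properties of $\underline a$ and $\underline s$. First I would record the elementary facts: skew-symmetry of $a$ reads $\langle\eta,\underline a\xi\rangle=-\langle\xi,\underline a\eta\rangle$, symmetry of $s$ says $\underline s$ is self-adjoint, $\langle\eta,\underline s\xi\rangle=\langle\xi,\underline s\eta\rangle$, and, crucially, ad-invariance of $s$ is equivalent to the intertwining relation $\underline s(ad^*_X\xi)=[X,\underline s\xi]$. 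I would also note that since $\delta(x)=ad_x(r)$, ad-invariance of $s$ forces $\delta_s=0$, hence $[\xi,\eta]^s=0$ and, for $r=s+a$, $[\xi,\eta]^r=[\xi,\eta]^a$.

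For (1) I would substitute $[\xi,\eta]^a=ad^*_{\underline a\xi}\eta-ad^*_{\underline a\eta}\xi$ and use skew-symmetry of $a$ together with the adjunction to rewrite $\langle\varsigma,\underline a[\xi,\eta]^a\rangle$ as $\langle\eta,[\underline a\xi,\underline a\varsigma]\rangle-\langle\xi,[\underline a\eta,\underline a\varsigma]\rangle$. Collecting terms then gives
$$\langle a,a\rangle(\xi,\eta,\varsigma)=\circlearrowleft\langle\varsigma,[\underline a\xi,\underline a\eta]\rangle,$$
the cyclic sum over $\xi,\eta,\varsigma$, and comparison with $[a,a](\xi,\eta,\varsigma)=-2\circlearrowleft\langle\varsigma,[\underline a\xi,\underline a\eta]\rangle$ yields $\langle a,a\rangle=-\frac12[a,a]$. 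Total antisymmetry, hence membership in $\bigwedge^3\mathfrak g$, follows from a general fact I would verify once: the cyclic sum of any expression antisymmetric in its first two arguments is totally antisymmetric on three arguments.

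For (2), since $[\xi,\eta]^s=0$ the definition collapses to $\langle\underline{s,s}\rangle(\xi,\eta)=[\underline s\xi,\underline s\eta]$, which is the displayed formula. Then $\langle s,s\rangle(\xi,\eta,\varsigma)=\langle\varsigma,[\underline s\xi,\underline s\eta]\rangle$, and I would prove antisymmetry under $\eta\leftrightarrow\varsigma$ by using the intertwining relation to write $[\underline s\xi,\underline s\eta]=\underline s(ad^*_{\underline s\xi}\eta)$, then self-adjointness of $\underline s$ and the adjunction to pass to $-\langle\eta,[\underline s\xi,\underline s\varsigma]\rangle$; antisymmetry in $\xi,\eta$ is automatic, and these two transpositions generate $S_3$. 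Ad-invariance is a one-line consequence of Jacobi: applying $ad^*_X$ in each slot and using $\underline s(ad^*_X\,\cdot)=[X,\underline s\,\cdot]$ turns the invariance condition into $[[X,\underline s\xi],\underline s\eta]+[\underline s\xi,[X,\underline s\eta]]-[X,[\underline s\xi,\underline s\eta]]=0$, which is exactly the Jacobi identity.

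Finally, for (3) I would expand $\underline r=\underline s+\underline a$ and $[\xi,\eta]^r=[\xi,\eta]^a$ in $\langle\underline{r,r}\rangle$, separating the pure $s$-term $\langle\underline{s,s}\rangle$, the pure $a$-term $\langle\underline{a,a}\rangle$, and the mixed remainder $[\underline s\xi,\underline a\eta]+[\underline a\xi,\underline s\eta]-\underline s[\xi,\eta]^a$. The entire content of the additivity $\langle r,r\rangle=\langle a,a\rangle+\langle s,s\rangle$ is that this remainder vanishes, and this is precisely where ad-invariance of $s$ is indispensable: the relations $\underline s(ad^*_{\underline a\xi}\eta)=[\underline a\xi,\underline s\eta]$ and $\underline s(ad^*_{\underline a\eta}\xi)=[\underline a\eta,\underline s\xi]$ make $\underline s[\xi,\eta]^a$ cancel the two cross brackets exactly. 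Membership of $\langle r,r\rangle$ in $\bigwedge^3\mathfrak g$ is then inherited from (1) and (2). I expect this cross-term cancellation to be the main conceptual obstacle, while the sign-and-slot bookkeeping in the cyclic manipulation of (1) is the most error-prone routine part.
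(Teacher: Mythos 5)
Your proposal is correct: I checked the sign conventions ($\langle\eta,\underline a\xi\rangle=-\langle\xi,\underline a\eta\rangle$, the adjunction for $ad^*$, and the intertwining relation $\underline s(ad^*_X\xi)=[X,\underline s\xi]$ as the reformulation of ad-invariance), and each of the three computations — the cyclic-sum identity giving $\langle a,a\rangle=-\tfrac12[a,a]$, the collapse $[\xi,\eta]^s=0$ with the $S_3$-generation argument for total antisymmetry, and the exact cancellation of the mixed term $[\underline s\xi,\underline a\eta]+[\underline a\xi,\underline s\eta]-\underline s[\xi,\eta]^a$ — goes through as you describe. Note that the paper itself states this theorem without proof, as background material on Lie bialgebras (it is quoted from the standard literature, cf.\ the reference to Kosmann-Schwarzbach's notes), so there is no in-paper argument to compare against; your direct verification is the standard one and fills that gap completely.
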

From this theorem we obtain
\begin{corollary}
 Let $r=a+s$ where $a$ is skew-symmetric and $s$ is symmetric and ad-invariant. Then $[a,a]$ is ad-invariant if $\langle r,r\rangle=0$.
\end{corollary}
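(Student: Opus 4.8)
The plan is to read the conclusion directly off the three parts of the preceding theorem, which together express $\langle r,r\rangle$ as a sum of one term governed by $[a,a]$ and one term that is automatically ad-invariant. First I would invoke the decomposition supplied by the third part of the theorem, namely
\begin{equation*}
\langle r,r\rangle = \langle a,a\rangle + \langle s,s\rangle .
\end{equation*}
Feeding in the hypothesis $\langle r,r\rangle = 0$ then rearranges this to $\langle a,a\rangle = -\langle s,s\rangle$, which already pins down $\langle a,a\rangle$ in terms of the symmetric part.

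Next I would substitute the explicit evaluations of the two summands provided by the other two parts of the theorem. Since $a$ is skew-symmetric, the first part gives
\begin{equation*}
\langle a,a\rangle = -\frac{1}{2}[a,a],
\end{equation*}
so the identity above becomes $-\frac{1}{2}[a,a] = -\langle s,s\rangle$, that is, $[a,a] = 2\langle s,s\rangle$. Since $s$ is symmetric and ad-invariant, the second part of the theorem guarantees that $\langle s,s\rangle \in \bigwedge^{3}\mathfrak{g}$ is ad-invariant. Hence $[a,a]$, being the scalar multiple $2\langle s,s\rangle$ of an ad-invariant element, is itself ad-invariant, which is the desired conclusion.

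There is essentially no obstacle in this argument: all of the substantive work has already been packaged into the three statements of the theorem, and the corollary is just the observation that the vanishing of $\langle r,r\rangle$ forces $[a,a]$ to coincide with the manifestly ad-invariant quantity $2\langle s,s\rangle$. The only place calling for a moment's care is the bookkeeping of the signs and of the factor $\frac{1}{2}$ arising from the normalization $\langle a,a\rangle = -\frac{1}{2}[a,a]$; getting that constant right is what turns the equation $\langle a,a\rangle = -\langle s,s\rangle$ into the clean relation $[a,a] = 2\langle s,s\rangle$ from which ad-invariance is immediate.
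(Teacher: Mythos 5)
Your argument is correct and is exactly the intended derivation: the paper presents this corollary as an immediate consequence of the three parts of the preceding theorem, combining $\langle r,r\rangle=\langle a,a\rangle+\langle s,s\rangle$ with $\langle a,a\rangle=-\frac{1}{2}[a,a]$ and the ad-invariance of $\langle s,s\rangle$. The sign and factor bookkeeping you carry out is right, so nothing further is needed.
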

Thus an element $r$ in $\mathfrak{g}\otimes \mathfrak{g}$ with ad-invariant symmetric part, satisfying $\langle r,r\rangle=0$ is an $r$-matrix. The condition $\langle r,r\rangle=0$ is called classical Yang-Baxter equation.

\begin{definition}\label{eq: yb}
An $r$-matrix satisfying the classical Yang-Baxter equation is called quasi-triangular.
If the symmetric part is invertible, then $r$ is called factorisable.
\end{definition}

\subsection{Tensor notation}
\label{subsec: tensor notation}

Given $r\in \mathfrak{g}\otimes \mathfrak{g}$, we define $r_{12}$, $r_{13}$, $r_{23}$ as elements in the third power tensor of the enveloping algebra of $\mathfrak{g}$ (i.e. an associative algebra with unit such that $[x,y]=x\cdot y-y\cdot x$),
\begin{equation}
\begin{split}
 r_{12}&=r\otimes 1\\
 r_{23}&=1\otimes r.
\end{split}
\end{equation}
If $r$ = $\sum_i u_i\otimes v_i$, it is clear that $r_{13}=\sum_i u_i\otimes 1\otimes v_i$, where $1$ is the unit of the enveloping algebra.

Let us define
\begin{equation}
\begin{split}
[r_{12},r_{13}]&=[\sum_i u_i\otimes v_i\otimes 1, \sum_j u_j\otimes 1\otimes v_j]=\sum_{i,j}[u_i,u_j]\otimes v_i\otimes v_j,\\
[r_{12},r_{23}]&=[\sum_i u_i\otimes v_i\otimes 1, \sum_j 1\otimes u_j\otimes v_j]=\sum_{i,j}u_i\otimes [v_i,u_j]\otimes v_j,\\
[r_{13},r_{23}]&=[\sum_i u_i\otimes 1\otimes v_i, \sum_j 1\otimes u_j\otimes v_j]=\sum_{i,j}u_i\otimes u_j\otimes [v_i,v_j].
\end{split}
\end{equation}
in $\mathfrak{g}\otimes \mathfrak{g}\otimes\mathfrak{g}$. With this notation, if the symmetric part of $r$ is ad-invariant, we have
\begin{equation}
\langle r,r\rangle=[r_{12},r_{13}]+[r_{12},r_{23}]+[r_{13},r_{23}]
\end{equation}
and
\begin{equation}
\langle s,s\rangle=[s_{12},s_{13}]=[s_{12},s_{23}]=[s_{13},s_{23}].
\end{equation}
Hence, the classical Yang-Baxter equation reads
\begin{equation}
[r_{12},r_{13}]+[r_{12},r_{23}]+[r_{13},r_{23}]=0.
\end{equation}

\begin{example}\label{ex: axb}
Let $\mathfrak{g}$ be the 2-dimensional Lie algebra with basis $X$, $Y$ and Lie bracket
\begin{equation}
[X,Y]=X.
\end{equation}
Then $r=X\wedge Y=X\otimes Y-Y\otimes X$ is a skew-symmetric solution of the classical Yang-Baxter equation. As a consequence, $\delta(X)=ad_X(r)=0$ and $\delta(Y)=ad_Y(r)=-X\wedge Y$. In terms of dual basis 
$X^*$, $Y^*$ of $\mathfrak{g}^*$, $[X^*,Y^*]^r=-Y^*$.
\end{example}

\begin{example}\label{eq: sl2r}
Let $\mathfrak{g}=\mathfrak{sl}(2,\mathbb{C})$ and consider the Casimir element
\begin{equation}
t=\frac{1}{8}H\otimes H+\frac{1}{4}(X\otimes Y+Y\otimes X).
\end{equation}
We set
\begin{equation}
t_0=\frac{1}{8}H\otimes H,\quad t_{\pm}=\frac{1}{4}X\otimes Y
\end{equation}
and we define
\begin{equation}
r=t_0+2t_{\pm}=\frac{1}{8}(H\otimes H+4X\otimes Y).
\end{equation}
Then the symmetric part of $r$ is $t$ and the skew symmetric part is $a=\frac{1}{4}X\wedge Y$, and $r$ is a factorisable $r$-matrix. Then $\delta (H)=ad_H(a)=0$, $\delta (X)=ad_X(a)=\frac{1}{4}X\wedge H$ and 
$\delta (Y)=ad_Y(a)=\frac{1}{4}Y\wedge H$. In terms of the dual basis $H^*$, $X^*$, $Y^*$ of $\mathfrak{g}^*$,
\begin{equation}
[H^*,X^*]^r=\frac{1}{4}X^*,\quad [H^*,Y^*]^r=\frac{1}{4}Y^*,\quad [X^*,Y^*]^r=0.
\end{equation}
\end{example}

\begin{example}\label{ex: sl2rt}
On $\mathfrak{sl}(2,\mathbb{C})$ we can consider also $r=X\otimes H-H\otimes X$, which is a triangular $r$-matrix. Then $\delta (X)=0$, $\delta (Y)=2Y\wedge X$ and $\delta (H)=X\wedge H$.
\end{example}

\section{Poisson manifolds}
\label{sec: poisson manifolds}

As discussed in the previous chapter, it is always possible to define a Poisson structure on a symplectic manifold, which give rise to Poisson brackets on the space of smooth functions on the manifold.
In the following, we rephrase the definition of Poisson manifold given above in a more convenient way \cite{V} and we discuss the theory of symplectic foliation \cite{CW},\cite{We1}.

Let $\pi$ be a bivector on a manifold $M$, i.e. a skew-symmetric, contravariant 2-tensor. At each point $m$, $\pi(m)$ can be viewed as a skew-symmetric bilinear form on $T_m^*M$, or as the skew-symmetric linear map 
$\pi^{\sharp}(m): T_m^*M\rightarrow T_mM$, such that

\begin{equation}\label{eq: pish}
  \pi (m)(\alpha_m,\beta_m)=\pi^{\sharp}(\alpha_m)(\beta_m), \quad \alpha_m,\beta_m\in T_m^*M.
\end{equation}

If $\alpha$, $\beta$ are 1-forms on $M$, we define $\pi(\alpha,\beta)$ to be the function in $C^{\infty}(M)$ whose value at $m$ is $\pi(m)(\alpha_m,\beta_m)$. Given $f,g\in C^{\infty}(M)$ we set

\begin{equation}\label{eq: pbra}
  \pi(m)(df,dg)=\lbrace f,g\rbrace(m).
\end{equation}
Note that $\pi^{\sharp}(df)$ is the Hamiltonian vector field defined in (\ref{eq: hvf}). It is clear that the bracket induced by $\pi$ satisfies the Leibniz rule.

\begin{definition}
 A \textbf{Poisson manifold} $(M,\pi)$ is a manifold $M$ with a Poisson bivector $\pi$ such that the bracket defined in eq. (\ref{eq: pbra}) satisfies the Jacobi identity.
\end{definition}

\begin{example}
 If $M=\mathbb{R}^{2n}$, with coordinates $(q^i, p_i)$, $i=1,\cdots, n$ and if
\begin{equation}
 \pi^{\sharp}(dq^i)=-\frac{\partial}{\partial q^i}, \quad  \pi^{\sharp}(dp_i)=-\frac{\partial}{\partial p_i},
\end{equation}
then
\begin{equation}
 X_f=\frac{\partial f}{\partial p_i}\frac{\partial}{\partial q^i}-\frac{\partial f}{\partial q^i}\frac{\partial}{\partial p_i}
\end{equation}
and
\begin{equation}
\lbrace f,g\rbrace=\frac{\partial f}{\partial p_i}\frac{\partial g}{\partial q^i}-\frac{\partial f}{\partial q^i}\frac{\partial g}{\partial p_i},
\end{equation}
is the standard Poisson bracket of functions on the phase space. The corresponding bivector is $\pi=\frac{\partial }{\partial p_i}\wedge \frac{\partial}{\partial q^i}$.
\end{example}
In local coordinates, a bivector $\pi$ is a Poisson bivector if and only if
\begin{equation}
 \pi^{hi}\partial_h \pi^{jk}+\pi^{hj}\partial_h \pi^{ki}+\pi^{hk}\partial_h \pi^{ij}=0.
\end{equation}
In terms of bivector field $\pi$ we have the following characterization of a Poisson bivector:
\begin{proposition}\label{pro: sn}
The bivector field $\pi\in\wedge^2 TM$ is a Poisson bivector if and only if $[\pi,\pi]_S=0$, where $[\cdot,\cdot]_S$ is the Schouten-Nijenhuis bracket.
\end{proposition}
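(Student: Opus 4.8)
The plan is to reduce the statement to the local-coordinate characterization of a Poisson bivector displayed immediately above, by computing the Schouten-Nijenhuis bracket $[\pi,\pi]_S$ explicitly in a chart and recognizing its components as exactly the cyclic sum appearing there. Since the definition of a Poisson manifold already identifies ``$\pi$ is Poisson'' with ``the bracket $\{\cdot,\cdot\}$ satisfies the Jacobi identity'', and the local expression $\pi^{hi}\partial_h\pi^{jk}+\pi^{hj}\partial_h\pi^{ki}+\pi^{hk}\partial_h\pi^{ij}=0$ is precisely the coordinate form of that identity, it suffices to show that this cyclic sum is (up to a nonzero constant) the component array of the trivector field $[\pi,\pi]_S$.

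First I would recall the two defining properties of the Schouten-Nijenhuis bracket that drive the computation: it extends the Lie bracket of vector fields and reduces on functions to $[X,f]_S=X(f)$, and it obeys the graded Leibniz rule
\[
[A,\,B\wedge C]_S=[A,B]_S\wedge C+(-1)^{(|A|-1)|B|}\,B\wedge[A,C]_S,
\]
together with graded antisymmetry. Writing the bivector locally as $\pi=\tfrac{1}{2}\pi^{ij}\,\partial_i\wedge\partial_j$, these properties determine $[\pi,\pi]_S$ uniquely, and expanding by the Leibniz rule on each wedge factor produces a trivector field whose coefficient of $\partial_i\wedge\partial_j\wedge\partial_k$ is, up to an overall combinatorial constant, the cyclic sum
\[
\pi^{hi}\partial_h\pi^{jk}+\pi^{hj}\partial_h\pi^{ki}+\pi^{hk}\partial_h\pi^{ij}.
\]
Because a trivector field vanishes identically precisely when all of its components vanish at every point and in every chart, and these components are exactly the left-hand side of the stated local Poisson condition, we conclude that $[\pi,\pi]_S=0$ if and only if that condition holds, i.e.\ if and only if $\pi$ is a Poisson bivector.

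Alternatively, and more conceptually, I could bypass coordinates by establishing the intrinsic identity $\tfrac{1}{2}[\pi,\pi]_S(df,dg,dh)=\{f,\{g,h\}\}+\{g,\{h,f\}\}+\{h,\{f,g\}\}$, obtained by applying the Leibniz rule repeatedly together with the fact that $[\pi,f]_S$ is the Hamiltonian vector field $X_f=\pi^{\sharp}(df)$ up to sign; since the exact one-forms $df$ span $T_m^*M$ at every $m$, the trivector on the left vanishes exactly when the Jacobiator on the right vanishes for all $f,g,h$, which is again the Jacobi identity. The main obstacle in either route is purely bookkeeping: keeping track of the signs generated by graded antisymmetry and the Leibniz rule, and fixing the numerical factor relating $[\pi,\pi]_S$ to the Jacobiator. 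No genuine difficulty arises beyond this careful accounting, since the vanishing statement is insensitive to the overall nonzero constant.
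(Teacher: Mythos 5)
The paper does not actually prove this proposition: it is stated as a standard characterization, quoted from the literature on Poisson manifolds (cf.\ the reference to \cite{V}), immediately after the local-coordinate condition $\pi^{hi}\partial_h \pi^{jk}+\pi^{hj}\partial_h \pi^{ki}+\pi^{hk}\partial_h \pi^{ij}=0$. So there is no argument in the paper to compare against. That said, your proposal is correct and is the standard proof. Both routes you sketch work: the coordinate computation, expanding $[\pi,\pi]_S$ for $\pi=\tfrac{1}{2}\pi^{ij}\partial_i\wedge\partial_j$ via the graded Leibniz rule and recognizing the components as the cyclic sum displayed in the paper; and the intrinsic identity expressing $[\pi,\pi]_S(df,dg,dh)$ as a nonzero multiple of the Jacobiator of $\{\cdot,\cdot\}$, combined with the observation that exact one-forms span each cotangent space. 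Your explicit acknowledgement that the overall combinatorial constant depends on conventions but is irrelevant to the vanishing statement is exactly the right level of care. The only remaining work in a fully written-out version would be the sign and factor bookkeeping you already flag, which presents no genuine obstacle.
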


\begin{definition}
A mapping $\phi: (M_1,\pi_1)\rightarrow (M_2,\pi_2)$ between two Poisson manifolds is called a \textbf{Poisson mapping} if $\forall f,g\in C^{\infty}(M_2)$ one has
\begin{equation}
\lbrace f\circ\phi, g\circ\phi\rbrace_1=\lbrace f,g\rbrace_2\circ \phi
\end{equation}
\end{definition}

\begin{definition}
Given two Poisson manifolds $(M_1,\pi_1)$ and $(M_2,\pi_2)$, the pair $(M_1\times M_2,\pi)$, is a Poisson manifold, called \textbf{Poisson product}, with $\pi=\pi_1\oplus\pi_2$.
\end{definition}

An interesting feature of Poisson manifolds is the existence of the differential calculus of forms, which can be resumed in the following result:
\begin{theorem}[\cite{V}]\label{thm: bf}
 Let $(M,\pi)$ be a Poisson manifold. Then there exists a unique bilinear, skew-symmetric operation $[\cdot,\cdot]_{\pi}: \Omega^1(M)\times \Omega^1(M)\rightarrow \Omega^1(M)$ such that
\begin{equation}
\begin{split}
 [df,dg]_{\pi}&=d[f,g]_{\pi}, \qquad f,g\in C^{\infty}(M),\\
 [\alpha,f\beta]_{\pi}&=f[\alpha,\beta]_{\pi}+(\iota_{\pi^{\sharp}(\alpha)}f) \beta  \qquad f\in C^{\infty}(M), \alpha,\beta\in \Omega^1(M).
\end{split}
\end{equation}
This operation is given by the general formula
\begin{equation}\label{eq: bff}
 [\alpha,\beta]_{\pi}=\mc{L}_{\pi\sh(\alpha)}\beta-\mc{L}_{\pi\sh(\beta)}\alpha-d(\pi(\alpha,\beta))
\end{equation}
Furthermore, it provides $\Omega^1(M)$ with a Lie algebra structure such that $\pi\sh:T\st M\ra TM$ is a Lie algebra homomorphism.
\end{theorem}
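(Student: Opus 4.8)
The plan is to adopt formula \eqref{eq: bff} as the definition of $[\cdot,\cdot]_\pi$ and then verify, in turn, the two characterizing identities, uniqueness, and finally the Lie algebra structure together with the homomorphism property; the genuinely delicate point will be the Jacobi identity, where the hypothesis that $\pi$ is Poisson (equivalently $[\pi,\pi]_S=0$, Proposition \ref{pro: sn}) is indispensable.

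For uniqueness I would first note that the stated Leibniz rule in the second argument, combined with skew-symmetry, forces the companion rule $[f\alpha,\beta]_\pi = f[\alpha,\beta]_\pi - (\iota_{\pi\sh(\beta)}f)\alpha$ in the first argument. Any bilinear operation obeying such a derivation law is a local first-order operator, so it is determined by germs and may be computed in a chart, where every one-form is a finite sum $\alpha=\sum_i a_i\,dx^i$. Bilinearity and the two Leibniz rules then reduce any admissible bracket $[\alpha,\beta]_\pi$ to an expression in the $a_i,b_j$, their $\pi\sh$-derivatives, and the values $[dx^i,dx^j]_\pi = d\{x^i,x^j\}$ fixed by the first identity; hence at most one such operation exists.

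For existence I would take \eqref{eq: bff} as the definition, which manifestly yields a bilinear, skew-symmetric, one-form-valued operation, and check the two identities directly. On exact forms, using $\pi\sh(df)[g]=\pi(df,dg)=\{f,g\}$ and $\mc{L}_{\pi\sh(df)}dg = d(\iota_{\pi\sh(df)}dg)=d\{f,g\}$, the formula collapses to $d\{f,g\}-d\{g,f\}-d\{f,g\}=d\{f,g\}$. For the Leibniz rule I would expand $[\alpha,f\beta]_\pi$ using $\mc{L}_{\pi\sh(\alpha)}(f\beta)=(\pi\sh(\alpha)[f])\beta+f\,\mc{L}_{\pi\sh(\alpha)}\beta$, the $C^\infty$-linearity $\pi\sh(f\beta)=f\,\pi\sh(\beta)$ together with $\mc{L}_{fX}\alpha=f\,\mc{L}_X\alpha+df\wedge\iota_X\alpha$, and $\pi(\alpha,f\beta)=f\,\pi(\alpha,\beta)$; the terms proportional to $f$ assemble into $f[\alpha,\beta]_\pi$, while the remaining terms cancel upon using $\iota_{\pi\sh(\beta)}\alpha=-\pi(\alpha,\beta)$, leaving exactly $(\iota_{\pi\sh(\alpha)}f)\beta$.

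It then remains to prove that $\pi\sh$ intertwines the brackets and that $[\cdot,\cdot]_\pi$ satisfies Jacobi. For the homomorphism property, on exact forms $\pi\sh[df,dg]_\pi=\pi\sh\,d\{f,g\}=X_{\{f,g\}}=[X_f,X_g]=[\pi\sh df,\pi\sh dg]$, where the middle equality $X_{\{f,g\}}=[X_f,X_g]$ is precisely the Jacobi identity for $\{\cdot,\cdot\}$; comparing Leibniz rules shows that $(\alpha,\beta)\mapsto\pi\sh[\alpha,\beta]_\pi$ and $(\alpha,\beta)\mapsto[\pi\sh\alpha,\pi\sh\beta]$ obey the same derivation law, so they agree everywhere. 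Finally, for Jacobi I would observe that the Jacobiator $J(\alpha,\beta,\gamma)=[[\alpha,\beta]_\pi,\gamma]_\pi+\text{(cyclic)}$ is totally skew-symmetric, so it suffices to establish $C^\infty$-linearity in a single argument. The main obstacle is precisely this tensoriality: expanding $J(\alpha,\beta,f\gamma)$, one must see every derivative-of-$f$ term cancel, and this is exactly where the homomorphism property (hence $[\pi,\pi]_S=0$) enters. Once $J$ is tensorial it is determined by exact forms, where $J(df,dg,dh)=d\big(\{\{f,g\},h\}+\text{cyclic}\big)=0$ by the Jacobi identity of the Poisson bracket, completing the proof.
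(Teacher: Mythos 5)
The paper does not prove this theorem: it is quoted from \cite{V} with no argument supplied, so there is no in-text proof to compare yours against. Your proposal is correct and is essentially the standard construction of the Koszul bracket as found in the cited reference: uniqueness via locality and the two Leibniz rules, existence by direct verification of the defining identities for formula (\ref{eq: bff}), the homomorphism property reduced to $X_{\{f,g\}}=[X_f,X_g]$ on exact forms, and the Jacobi identity via tensoriality of the Jacobiator. The logical order you choose --- establishing $\pi\sh[\alpha,\beta]_\pi=[\pi\sh\alpha,\pi\sh\beta]$ first and then using it to cancel the second-order terms $\bigl(\pi\sh([\alpha,\beta]_\pi)-[\pi\sh\alpha,\pi\sh\beta]\bigr)[f]\,\gamma$ in $J(\alpha,\beta,f\gamma)$ --- is exactly where the hypothesis that $\pi$ is Poisson enters, and you identify this correctly. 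No gaps.
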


\subsection{Symplectic Foliation}
\label{subsec: symplectic foliation}
As mentioned above, every symplectic manifold admits a Poisson structure, but the converse does not hold. In fact, any Poisson manifold can be seen as a union of symplectic manifolds called symplectic leaves.

Locally, the symplectic foliation of $(M,\pi)$ can be described in terms of coordinates. 
More precisely, the local structure of a Poisson manifold at $O\in M$ is described by the Splitting Theorem \cite{We1}:

\begin{theorem}[Weinstein]\label{thm: split}
On a Poisson manifold $(M,\pi)$, any point $O\in M$ has a coordinate neighborhood with coordinates
$(q_1,\dots,q_k,p_1,\dots,p_k,\allowbreak y_1,\dots,y_l)$ centered at $O$, such that
\begin{equation}\label{eq: splitp}
\pi=\sum_i \frac{\partial}{\partial q_i}\wedge\frac{\partial}{\partial p_i}+\frac{1}{2}\sum_{i,j}\phi_{ij}(y) \frac{\partial}{\partial y_i}\wedge\frac{\partial}{\partial y_j}\quad \phi_{ij}(0)=0.
\end{equation}
The rank of $\pi$ at $O$ is $2k$. Since $\phi$ depends only on the $y_i$’s, this theorem gives
a decomposition of the neighborhood of $O$ as a product of two Poisson manifolds: one with rank $2k$, and the other with rank 0 at $O$.
\end{theorem}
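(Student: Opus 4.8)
The plan is to argue by induction on $2k$, the rank of $\pi\sh(O)$, splitting off one conjugate pair $(q_i,p_i)$ at each stage. If $\pi(O)=0$ there is nothing to prove: any chart $(y_1,\dots,y_l)$ centred at $O$ is already in the desired form, with $\phi_{ij}(y)=\{y_i,y_j\}$ satisfying $\phi_{ij}(0)=0$. So suppose $\pi(O)\neq 0$. Then $\pi\sh(O)$ is nonzero, and I can pick a function $p_1$ with $dp_1(O)\notin\Ker\pi\sh(O)$, so that $X_{p_1}=\pi\sh(dp_1)$ is nonvanishing near $O$. The straightening (flow-box) theorem then furnishes a coordinate $q_1$ with $X_{p_1}=\partial/\partial q_1$, whence $\{q_1,p_1\}=X_{p_1}[q_1]=1$ on a neighbourhood of $O$. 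Note that $dq_1(O)$ and $dp_1(O)$ are independent, since $\pi(dq_1,dp_1)(O)=1\neq 0$.

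First I would straighten the pair simultaneously. The Hamiltonian vector fields $X_{q_1}$ and $X_{p_1}$ commute, because $[X_{q_1},X_{p_1}]=X_{\{q_1,p_1\}}=X_1=0$, and they are linearly independent at $O$, since evaluating them on $q_1,p_1$ yields the nonsingular matrix $\left(\begin{smallmatrix}0&-1\\1&0\end{smallmatrix}\right)$. Two commuting, pointwise independent vector fields can be simultaneously rectified, and one can arrange the transverse coordinates to be invariant under both flows; this produces a chart $(q_1,p_1,w_1,\dots,w_{n-2})$, $n=\dim M$, in which $X_{p_1}=\partial/\partial q_1$ and $X_{q_1}=-\partial/\partial p_1$. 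Reading off brackets, $\{w_a,p_1\}=X_{p_1}[w_a]=\partial w_a/\partial q_1=0$ and $\{w_a,q_1\}=X_{q_1}[w_a]=-\partial w_a/\partial p_1=0$, so every mixed bracket between $q_1,p_1$ and the $w$'s vanishes.

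It remains to localize the transverse structure via the Jacobi identity. For any $a,b$,
\[
\{p_1,\{w_a,w_b\}\}=\{\{p_1,w_a\},w_b\}+\{w_a,\{p_1,w_b\}\}=0,
\]
and likewise $\{q_1,\{w_a,w_b\}\}=0$; since in this chart $\{p_1,\cdot\}=-\partial/\partial q_1$ and $\{q_1,\cdot\}=\partial/\partial p_1$, this says $\{w_a,w_b\}$ depends only on $w$. Hence
\[
\pi=\frac{\partial}{\partial q_1}\wedge\frac{\partial}{\partial p_1}+\frac{1}{2}\sum_{a,b}\{w_a,w_b\}(w)\,\frac{\partial}{\partial w_a}\wedge\frac{\partial}{\partial w_b},
\]
and the second summand, having coefficients depending only on $w$, restricts to a bracket on functions of $w$ that still satisfies Jacobi, hence defines a Poisson bivector $\pi'$ on the slice $\{q_1=p_1=0\}$ (equivalently, one checks $[\pi',\pi']_S=0$ using Proposition \ref{pro: sn}). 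Since the decomposition is a direct sum at $O$, $\pi'(O)$ has rank $2k-2$, and the inductive hypothesis applied to $\pi'$ supplies the remaining coordinates $(q_2,p_2,\dots,q_k,p_k,y_1,\dots,y_l)$ and the stated normal form, the residual $y$-block having $\phi_{ij}(0)=0$ because the transverse rank at $O$ has dropped to zero. The one genuinely nontrivial step is the simultaneous rectification of the commuting pair $X_{q_1},X_{p_1}$ together with the choice of flow-invariant transverse coordinates; the vanishing of the mixed brackets and the $w$-dependence of $\{w_a,w_b\}$ are then forced by the Jacobi identity, and everything else is bookkeeping.
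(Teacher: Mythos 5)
The paper does not prove this theorem; it is quoted from Weinstein \cite{We1} with only the statement reproduced. Your argument is the standard (and correct) proof of the splitting theorem --- essentially Weinstein's original induction on the rank, splitting off one conjugate pair at a time via the flow-box theorem, simultaneous rectification of the commuting fields $X_{q_1},X_{p_1}$, and the Jacobi identity to localize the transverse bracket --- and all the sign conventions are consistent with the paper's definition $X_H=\{\cdot,H\}$. Nothing to correct.
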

 For any point $O$ of the Poisson manifold, if $(q, p, y)$ are normal
coordinates as in the previous theorem, then the symplectic leaf through $O$ is given locally
by the equation $y = 0$.
Hence, for any point $m\in M$, we have a symplectic leaf through it. Locally, this leaf has canonical
coordinates $(q_1,\dots ,q_k, p_1,\dots , p_k)$, where the bracket is given by canonical symplectic relations. Notice that the symplectic leaf is well-defined, but each choice of coordinates $(y_1,\dots y_l)$ in 
Theorem \ref{thm: split} gives rise to a different term

\begin{equation}
\frac{1}{2}\sum_{i,j}\phi_{ij}(y)\frac{\partial}{\partial y_i}\wedge \frac{\partial}{\partial y_j}
\end{equation}
called the \textbf{transverse Poisson structure} of dimension $l$.
The transverse structures are not uniquely defined, but they are all isomorphic. Locally, the transverse structure is determined by the structure functions $\pi_{ij}(y)=\lbrace y_i,y_j\rbrace$ (which vanishes at $y=0$).

Given a Poisson manifold the Casimir functions are functions which are constant on the symplectic leaves.

\section{Poisson Lie groups}
\label{sec: poisson lie groups}

Poisson Lie groups are a particular class of Poisson manifolds. Namely, a Poisson Lie group is a Poisson manifold that is also a
 Lie group and its corresponding infinitesimal object is a Lie bialgebra. In particular we discuss Poisson Lie groups defined by
  $r$-matrices and some basic examples.

Recall that, given a Lie group $G$, the left and right translations by an element $g\in G$ are defined by
\begin{equation}
\lambda_g(h)=gh,\quad \rho_g(h)=hg,
\end{equation}
for $h\in G$.

\begin{definition}
	A \textbf{Poisson Lie group} $(G,\pi_G)$ is a Lie group equipped with a multiplicative Poisson structure $\pi_G$.
\end{definition}

In terms of the Poisson tensor $\pi_G$, the Poisson structure is multiplicative if and only if

\begin{equation}\label{eq: pig}
\pi_G (gh)=\lambda_g\pi_G(h)+\rho_h\pi_G(g),\qquad \forall g,h\in G,
\end{equation}
This is equivalent to the following condition (see \cite{YK})

\begin{equation}
\lbrace\varphi\circ\lambda_g,\psi\circ\lambda_g\rbrace(h)+\lbrace\varphi\circ\rho_h,\psi\circ\rho_h\rbrace(g)=\lbrace\varphi,\psi\rbrace(gh),
\end{equation}
for all functions $\varphi,\psi$ on $G$, and for all $g,h$ in $G$. This condition means that the multiplication map $G\times G\rightarrow G$ is a Poisson map.
 Note that a nonzero multiplicative Poisson structure is in general neither left nor right invariant. In fact the left or right translation by $g\in G$ preserves $\pi_G$ if and only if $\pi_G(g)=0$.

\begin{example}
If $\pi_G=0$, it is obviously multiplicative, hence any Lie group $G$ with the trivial Poisson structure is a Poisson Lie group. The direct product of two Poisson Lie groups is again a Poisson Lie group.
\end{example}
A very important class of Poisson Lie groups arise from the $r$-matrix formalism. Let $r\in \mathfrak{g}\wedge \mathfrak{g}$. Define a bivector $\pi_G$ on $G$ by

\begin{equation}\label{eq: pir}
\pi_G(g)=\lambda_g r-\rho_g r\qquad \forall g\in G
\end{equation}
We see that this bivector is multiplicative. In particular we have:

\begin{theorem}[Drinfeld]
Let $G$ be a connected Lie group with Lie algebra $\mathfrak{g}$. Let $r\in \mathfrak{g}\wedge \mathfrak{g}$. Define a bivector field on $G$ by eq. (\ref{eq: pir}).
Then $(G,\pi_G)$ is a Poisson Lie group if and only if $\left[ r, r\right] \in \mathfrak{g}\wedge \mathfrak{g}\wedge \mathfrak{g}$ is Ad-invariant. In particular, when $r$ satisfies the Yang-Baxter equation,
it defines a multiplicative Poisson structure on $G$.
\end{theorem}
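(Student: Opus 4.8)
The plan is to peel apart the two conditions hidden in ``$(G,\pi_G)$ is a Poisson Lie group'': multiplicativity of $\pi_G$, and the Jacobi identity for the induced bracket. I expect multiplicativity to come for free from the very shape of (\ref{eq: pir}): since left and right translations commute, $\lambda_g\rho_h=\rho_h\lambda_g$, a one-line computation gives $\lambda_g\pi_G(h)+\rho_h\pi_G(g)=\lambda_{gh}r-\rho_{gh}r=\pi_G(gh)$, which is exactly the multiplicativity criterion (\ref{eq: pig}). Hence (\ref{eq: pir}) produces a multiplicative bivector for \emph{every} $r\in\mathfrak{g}\wedge\mathfrak{g}$, and all the real content of the theorem sits in the Jacobi identity. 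By Proposition \ref{pro: sn}, that is equivalent to the vanishing of the Schouten--Nijenhuis bracket $[\pi_G,\pi_G]_S$.

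The central step is to compute $[\pi_G,\pi_G]_S$. Writing $\pi_G=r^{\lambda}-r^{\rho}$, where $r^{\lambda}$ and $r^{\rho}$ are the left- and right-invariant bivector fields with common value $r$ at $e$, I would expand by bilinearity and use three standard facts about the Schouten bracket: first, left-invariant and right-invariant vector fields commute, so every term of $[r^{\lambda},r^{\rho}]_S$ contains a vanishing vector-field bracket and $[r^{\lambda},r^{\rho}]_S=0$; second, on left-invariant multivector fields the Schouten bracket is the left-invariant extension of the algebraic Schouten bracket on $\wedge\mathfrak{g}$, whence $[r^{\lambda},r^{\lambda}]_S=([r,r])^{\lambda}$; third, since the bracket of right-invariant vector fields is the negative of the Lie bracket, the single vector-field bracket in each term flips sign and $[r^{\rho},r^{\rho}]_S=-([r,r])^{\rho}$. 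Using that the Schouten bracket of two bivectors is symmetric, these assemble into the clean identity $[\pi_G,\pi_G]_S=([r,r])^{\lambda}-([r,r])^{\rho}$, an equality of trivector fields with $[r,r]\in\wedge^3\mathfrak{g}$.

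It then remains to decide when this trivector field vanishes. Evaluating at $g\in G$, the two terms agree iff $(\lambda_g)_*[r,r]=(\rho_g)_*[r,r]$, equivalently $(\lambda_{g^{-1}}\circ\rho_g)_*[r,r]=[r,r]$. Since $\lambda_{g^{-1}}\circ\rho_g$ is the conjugation $h\mapsto g^{-1}hg$, fixing $e$ with differential $Ad_{g^{-1}}$ there, this reads $Ad_g[r,r]=[r,r]$. Letting $g$ range over $G$ (here connectedness is what makes ``invariance'' a group statement), we get $[\pi_G,\pi_G]_S=0$ iff $[r,r]$ is $Ad$-invariant; together with the automatic multiplicativity this is the asserted equivalence. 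The final clause is then immediate: if $r$ satisfies the classical Yang--Baxter equation $\langle r,r\rangle=0$, then by $\langle r,r\rangle=-\frac{1}{2}[r,r]$ we have $[r,r]=0$, which is vacuously $Ad$-invariant, so $\pi_G$ is a multiplicative Poisson structure.

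The main obstacle is the Schouten-bracket bookkeeping in the middle paragraph: correctly justifying $[r^{\lambda},r^{\rho}]_S=0$ and, above all, pinning down the sign in $[r^{\rho},r^{\rho}]_S=-([r,r])^{\rho}$, which is precisely what makes the left- and right-invariant contributions combine into a \emph{difference} rather than a sum. Everything else is pushforward bookkeeping and the pointwise translation of equality-of-invariant-fields into $Ad$-invariance.
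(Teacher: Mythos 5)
Your argument is correct. Note that the paper itself offers no proof of this theorem: it is quoted as a background result of Drinfeld, with only the passing remark that the bivector defined by (\ref{eq: pir}) ``is multiplicative.'' Your proof supplies exactly the standard argument one would want here: multiplicativity follows from $\lambda_g\rho_h=\rho_h\lambda_g$, the Jacobi identity reduces via Proposition \ref{pro: sn} to $[\pi_G,\pi_G]_S=0$, and the decomposition $[\pi_G,\pi_G]_S=([r,r])^{\lambda}-([r,r])^{\rho}$ (with the cross terms killed by commutativity of left- and right-invariant fields and the sign flip on the right-invariant side) turns this into $Ad$-invariance of $[r,r]$. The final clause follows from the paper's identity $\langle a,a\rangle=-\tfrac{1}{2}[a,a]$ for skew-symmetric $a$. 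One small remark: since the theorem is phrased with the group-level condition ``$Ad$-invariant,'' connectedness is not actually needed for your last equivalence; it would only enter if one wanted to replace $Ad$-invariance by the infinitesimal condition of $ad$-invariance (the ``generalized Yang--Baxter equation'' of Subsection \ref{subsec: classical yang baxter}).
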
	
The Poisson structure $\pi$ vanishes at the identity $e\in G$, and its linearization at $e$ is given by $d_e\pi:\mathfrak{g}\rightarrow\mathfrak{g}\wedge\mathfrak{g}$. The map $d_e\pi$ is a derivative and its dual map
$\left[ \cdot,\cdot\right]_{*}: \mathfrak{g^*}\wedge\mathfrak{g^*}\rightarrow\mathfrak{g^*}$ is given by $\left[ x,y\right]_{*}=d_e(\pi_G(\bar{x},\bar{y}))$, where $x,y\in\mathfrak{g}^*$ and $\bar{x}$ and $\bar{y}$
can be any 1-forms on $G$ with $\bar{x}(e)=x$ and $\bar{y}(e)=y$.
	
When $\pi_G$ is a Poisson bivector, $\left[ \cdot,\cdot\right]_{*}$ satisfies the Jacobi identity, so it makes $\mathfrak{g^*}$ into a Lie algebra. The Lie algebra $(\mathfrak{g^*},\left[ \cdot,\cdot\right]_{*})$ is just
the linearization of the Poisson structure at $e$.

\begin{theorem}
A multiplicative bivector field $\pi_G$ on a connected Lie group $G$ is Poisson if and only if its linearization at $e$ defines a Lie bracket on $\mathfrak{g^*}$.
\end{theorem}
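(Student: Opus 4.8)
The plan is to reduce the global Poisson condition to a purely infinitesimal Jacobi condition via Proposition \ref{pro: sn}, which says that $\pi_G$ is Poisson if and only if the Schouten--Nijenhuis bracket $[\pi_G,\pi_G]_S$ vanishes identically. Setting $g=h=e$ in the multiplicativity relation (\ref{eq: pig}) forces $\pi_G(e)=0$, and since the trivector $J:=[\pi_G,\pi_G]_S$ is assembled from $\pi_G$ and its first derivatives, $J(e)=0$ as well; thus both $\pi_G$ and $J$ have well-defined intrinsic derivatives at the identity. The whole argument then rests on the chain of equivalences $\pi_G \text{ Poisson} \Leftrightarrow J\equiv 0 \Leftrightarrow \mathrm{d}_e J=0 \Leftrightarrow [\cdot,\cdot]_*$ satisfies the Jacobi identity, where $[\cdot,\cdot]_*$ is the transpose of $\mathrm{d}_e\pi_G$.

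The middle equivalence is the geometric heart. First I would show that $J$ is again a multiplicative trivector field. The cleanest way is to observe that multiplicativity of $\pi_G$ is exactly the statement that $\pi_G\oplus\pi_G$ on $G\times G$ is $m$-related to $\pi_G$ under the multiplication $m:G\times G\to G$ (using that $\mathrm{d}m$ restricts to $(\rho_h)_*$ and $(\lambda_g)_*$ on the two factors). Since the Schouten--Nijenhuis bracket is natural under relatedness and $[\pi_G\oplus\pi_G,\pi_G\oplus\pi_G]_S=J\oplus J$ (the cross terms vanish, being built from independent variables), $J$ is $m$-related to itself, i.e.\ multiplicative. Next I would prove the rigidity lemma: a multiplicative $k$-vector field $W$ on a connected $G$ is determined by $\mathrm{d}_e W$, and in particular vanishes when $\mathrm{d}_e W=0$. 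For this, set $W^L(g)=(\lambda_{g^{-1}})_* W(g)\in\wedge^k\mathfrak{g}$; multiplicativity becomes $W^L(gh)=W^L(h)+\mathrm{Ad}_{h^{-1}}W^L(g)$, and differentiating along the curves $t\mapsto g\exp(t\xi)$ yields the first-order linear equation $\frac{d}{dt}W^L(g\exp t\xi)=\delta(\xi)-\mathrm{ad}_\xi\,W^L(g\exp t\xi)$ with $\delta:=\mathrm{d}_e W$. Since every element of the connected group $G$ is a product of exponentials, the initial value $W^L(e)=0$ propagates this ODE to a unique solution; in particular $\mathrm{d}_e W=0$ forces $W^L\equiv 0$. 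Applying this to $W=J$ gives $J\equiv 0\Leftrightarrow\mathrm{d}_e J=0$.

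It remains to identify $\mathrm{d}_e J$ with the obstruction to the Jacobi identity for $[\cdot,\cdot]_*$. Dualizing, $\mathrm{d}_e\pi_G$ transposes to $[\cdot,\cdot]_*:\wedge^2\mathfrak{g}^*\to\mathfrak{g}^*$, while the transpose of $\mathrm{d}_e J:\mathfrak{g}\to\wedge^3\mathfrak{g}$ is a map $\wedge^3\mathfrak{g}^*\to\mathfrak{g}^*$; I would compute, via the coordinate formula for $[\pi_G,\pi_G]_S$ expanded to first order at $e$, that this transpose equals (up to a nonzero constant) the Jacobiator $(x,y,z)\mapsto[[x,y]_*,z]_*+\text{cyclic}$. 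Consequently $\mathrm{d}_e J=0$ if and only if $[\cdot,\cdot]_*$ satisfies Jacobi, i.e.\ defines a Lie bracket on $\mathfrak{g}^*$, closing the chain of equivalences. The main obstacle is precisely this last identification: keeping track of the first-order Taylor expansion of a bivector that vanishes at $e$ while computing its Schouten square is bookkeeping-heavy, and one must verify that the antisymmetrization and the overall constant come out so that $\mathrm{d}_e J$ captures the full Jacobiator and nothing more. The multiplicativity-plus-rigidity step, by contrast, is structural and robust, and it is what turns the global identity $[\pi_G,\pi_G]_S=0$ into a condition at a single point.
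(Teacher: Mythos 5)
Your proposal is correct, but note that the thesis itself does not prove this statement: it appears in the background section on Poisson Lie groups and is quoted from the literature (cf.\ the reference [YK] and the surrounding Drinfeld theorems), so there is no in-paper argument to compare against. Your argument is the standard one and all three links in your chain hold up: $\pi_G(e)=0$ and $J(e)=0$ follow as you say; multiplicativity of $\pi_G$ is indeed equivalent to $\pi_G\oplus\pi_G$ being $m$-related to $\pi_G$, and naturality of the Schouten bracket under relatedness (together with the vanishing of the cross term $[\pi_G\oplus 0,0\oplus\pi_G]_S$) makes $J$ multiplicative; and your rigidity lemma is correctly set up, with the ODE obtained by writing $g\exp((t+s)\xi)=(g\exp t\xi)\exp(s\xi)$ in the cocycle identity $W^L(gh)=W^L(h)+\mathrm{Ad}_{h^{-1}}W^L(g)$ and differentiating in $s$. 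For the final identification, which you rightly flag as the delicate step, you can avoid the coordinate bookkeeping entirely: use the identity $\{\{f,g\},h\}+\mathrm{cyclic}=\tfrac{1}{2}[\pi_G,\pi_G]_S(df,dg,dh)$ and differentiate it at $e$ in a direction $\xi\in\mathfrak{g}$. Because $\pi_G(e)=0$, the derivative of $\{\{f,g\},h\}=\pi_G(d\{f,g\},dh)$ at $e$ is $\langle [d_e\{f,g\},d_eh]_*,\xi\rangle=\langle[[d_ef,d_eg]_*,d_eh]_*,\xi\rangle$, and likewise the derivative of the right-hand side is $\tfrac{1}{2}(\mathrm{d}_eJ)(\xi)(d_ef,d_eg,d_eh)$; summing cyclically shows ${}^{t}(\mathrm{d}_eJ)$ is exactly twice the Jacobiator of $[\cdot,\cdot]_*$, with the constant and antisymmetrization coming out for free. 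With that substitution your proof is complete.
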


The relation between Poisson Lie groups and Lie bialgebras is given by the following theorem:
\begin{theorem}[Drinfeld]\label{thm: dr}
If $(G,\pi_G)$ is a Poisson Lie group, then the linearization of $\pi_G$ at $e$ defines a Lie algebra structure on $\mathfrak{g}^*$ such that $(\mathfrak{g},\mathfrak{g}^*)$ form a Lie bialgebra over $\mathfrak{g}$,
called the tangent Lie bialgebra to $(G,\pi_G)$. Conversely, if $G$ is connected and simply connected, then every Lie bialgebra $(\mathfrak{g},\mathfrak{g}^*)$ over $\mathfrak{g}$ defines a unique multiplicative Poisson
structure $\pi_G$ on $G$ such that $(\mathfrak{g},\mathfrak{g}^*)$ is the tangent Lie bialgebra to the Poisson Lie group $(G,\pi_G)$.
\end{theorem}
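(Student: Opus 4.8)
The plan is to prove both directions by passing through the \emph{group 1-cocycle} that multiplicativity encodes, so that the Lie-bialgebra cocycle condition (\ref{eq: cc}) appears simply as its infinitesimal shadow. For the forward direction I would first reformulate (\ref{eq: pig}). Setting $\theta(g):=\rho_{g^{-1}}\pi_G(g)\in\mathfrak{g}\wedge\mathfrak{g}$ (the bivector at $g$ translated back to the identity), and using that left and right translations commute together with $Ad_g=T_e I_g$, $I_g=R_{g^{-1}}\circ L_g$, a short computation shows that the multiplicativity identity (\ref{eq: pig}) is equivalent to the cocycle identity
$$
\theta(gh)=\theta(g)+Ad_g\,\theta(h).
$$
Thus a multiplicative bivector is the same datum as a smooth $1$-cocycle $\theta:G\to\mathfrak{g}\wedge\mathfrak{g}$ for the adjoint action, with $\theta(e)=0$.

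Next I would differentiate. Since $\theta(e)=0$ and $d_e\theta=d_e\pi_G=\delta$, differentiating the group cocycle identity at $e$ produces, by the standard argument that the derivative of a group $1$-cocycle is a Lie-algebra $1$-cocycle, exactly
$$
ad_\xi\bigl(\delta(\eta)\bigr)-ad_\eta\bigl(\delta(\xi)\bigr)-\delta([\xi,\eta])=0,
$$
which is condition \ref{cond2} of a Lie bialgebra, i.e. equation (\ref{eq: cc}). The remaining requirement, that ${}^{t}\delta=[\cdot,\cdot]_{*}$ be a genuine Lie bracket on $\mathfrak{g}^*$, is already furnished by the preceding theorem, according to which a multiplicative bivector is Poisson precisely when its linearization at $e$ defines a Lie bracket on $\mathfrak{g}^*$. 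Combining the two, $(\mathfrak{g},\delta)$ is a Lie bialgebra; it is by construction the tangent bialgebra of $(G,\pi_G)$.

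For the converse I would run this construction backwards. Given a Lie bialgebra $(\mathfrak{g},\delta)$, the cocycle $\delta$ satisfies (\ref{eq: cc}), and because $G$ is connected and simply connected it integrates to a \emph{unique} smooth group $1$-cocycle $\theta:G\to\mathfrak{g}\wedge\mathfrak{g}$ with $\theta(gh)=\theta(g)+Ad_g\,\theta(h)$ and $d_e\theta=\delta$. I then define $\pi_G(g):=\rho_g\,\theta(g)$; reversing the computation of the first paragraph shows $\pi_G$ obeys (\ref{eq: pig}), hence is multiplicative. Since condition \ref{cond2} of the bialgebra guarantees that $[\cdot,\cdot]_{*}$ is a Lie bracket, the preceding theorem again certifies that $\pi_G$ is Poisson, and $\theta(e)=0$ gives $d_e\pi_G=d_e\theta=\delta$, so its tangent bialgebra is the given $(\mathfrak{g},\delta)$. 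Uniqueness of $\pi_G$ reduces to uniqueness of the integrating cocycle $\theta$.

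The main obstacle is precisely this integration step in the converse: manufacturing the global cocycle $\theta$ from the infinitesimal datum $\delta$. This is a nonabelian analogue of Lie's second theorem, and it is exactly where connectedness and simple connectedness enter. I would carry it out by viewing $\delta$ as the defining data of a left-invariant $\mathfrak{g}\wedge\mathfrak{g}$-valued $1$-form on $G$ whose integrability (flatness) is equivalent to the cocycle condition (\ref{eq: cc}); simple connectedness then produces a unique primitive, delivering existence and uniqueness of $\theta$—and therefore of $\pi_G$—in one stroke.
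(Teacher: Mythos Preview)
The paper does not actually prove this theorem; it is stated as a classical result of Drinfeld and followed only by a remark on the integration step. So there is no detailed argument to compare against. That said, your outline is the standard one and is essentially correct: recasting multiplicativity as a group $1$-cocycle $\theta(gh)=\theta(g)+Ad_g\theta(h)$, differentiating to obtain (\ref{eq: cc}), and invoking the preceding theorem for the Jacobi identity on $\mathfrak{g}^*$ is exactly how the forward direction goes; and running this backwards through an integrated cocycle is the right strategy for the converse.

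Two small points. First, a mislabeling: in the converse you write that ``condition~\ref{cond2} of the bialgebra guarantees that $[\cdot,\cdot]_{*}$ is a Lie bracket.'' It is condition~1 (that ${}^t\delta$ is a Lie bracket) which feeds into the preceding theorem to give $[\pi_G,\pi_G]_S=0$; condition~\ref{cond2} is what you used to integrate $\delta$ to $\theta$. Second, your description of the integration step as ``flatness of a left-invariant $\mathfrak{g}\wedge\mathfrak{g}$-valued $1$-form'' is imprecise, since $\mathfrak{g}\wedge\mathfrak{g}$ carries no relevant bracket for a Maurer--Cartan equation. The paper's own remark points to the cleaner route: the cocycle condition (\ref{eq: cc}) says exactly that $\xi\mapsto(\xi,\delta(\xi))$ is a Lie algebra homomorphism $\mathfrak{g}\to\mathfrak{g}\ltimes(\mathfrak{g}\wedge\mathfrak{g})$, and for $G$ connected and simply connected Lie's second theorem integrates this to a group homomorphism $g\mapsto(g,\theta(g))$ into $G\ltimes(\mathfrak{g}\wedge\mathfrak{g})$, whose second component is the desired group cocycle. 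This both produces $\theta$ and gives its uniqueness for free.
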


It is important to emphasize that, given a connected Lie group $G$ with Lie algebra $\mathfrak{g}$, for an arbitrary 1-cocycle $\delta$ on $\mathfrak{g}$, there is no general way to integrate $\delta$ to the 1-cocycle 
$\epsilon$ on $G$ such that $d_e\epsilon=\delta$. In the case of a Lie bialgebra $(\mathfrak{g},\mathfrak{g}^*,\delta)$ the integration can be reduced to integrating Lie algebras to Lie groups and Lie algebra homomorphisms 
to Lie groups homomorphisms.

\subsection{Examples}
\label{subsec: examples}

\paragraph{Quasi-triangular structure of $SL(2, \mathbb{R})$}
Let $G=SL(2,\mathbb{R})$ be the group of real $2\times 2$ matrices with determinant 1. We consider $r=\frac{1}{8}(H\otimes H+4 X\otimes Y)$, as seen in Example \ref{eq: sl2r}, as element in 
$\mathfrak{sl}(2,\mathbb{R})\otimes\mathfrak{sl}(2,\mathbb{R})$, with skew-symmetric part $r_0=\frac{1}{4}(X \otimes Y - Y\otimes X)$. Then, given a generic element 
$g=\left( \begin{matrix} a & b \\ c & d \end{matrix}\right) \in G$ and using (\ref{eq: pir}) for $r_0$ we get the quadratic Poisson brackets,

\begin{align} 
\lbrace a,b\rbrace & =\frac{1}{4}ab&  \lbrace a,c\rbrace & =\frac{1}{4}ac &   \lbrace a,d\rbrace & =\frac{1}{2}bc\\
\lbrace b,c\rbrace &=0 & \lbrace b,d\rbrace &=\frac{1}{4}bd& \lbrace c,d\rbrace &=\frac{1}{4}cd.
\end{align}

It is easy to check that $ad-bc$ is a Casimir element for this Poisson structure, which is indeed defined on $SL(2,\mathbb{C})$.

\paragraph{Triangular structure of $SL(2, \mathbb{R})$}

Consider now the triangular $r$-matrix defined in Example \ref{ex: sl2rt},
\begin{equation}
r=X\otimes H-H\otimes X=\left( \begin{matrix} 0 & -1 & 1 & 0 \\ 0 & 0 & 0 & -1\\ 0 & 0 & 0 & 1\\ 0 & 0 & 0 & 0 \end{matrix}\right).
\end{equation}
We find
\begin{align} 
\lbrace a,b\rbrace &=1-a^2 &   \lbrace a,c\rbrace & =c^2 &   \lbrace a,d\rbrace & =c(-a+d)\\
\lbrace b,c\rbrace &=c(a+d)& \lbrace b,d\rbrace &=d^2-1 & \lbrace c,d\rbrace &=-c^2.
\end{align}
Also in this case $ad-bc$ is a Casimir function.

\paragraph{Quasi-triangular structure of $SU(2)$}\label{sub: su2}
Let $G=SU(2)$ and $\mathfrak{g}=\mathfrak{su}(2)$. Let

\begin{equation}
e_1=\frac{1}{2}\left( \begin{matrix} i & 0 \\ 0 & -i \end{matrix}\right) \quad e_2=\frac{1}{2}\left( \begin{matrix} 0 & 1 \\ -1 & 0 \end{matrix}\right) \quad e_3=\frac{1}{2}\left( \begin{matrix}0 & i \\ i & 0 \end{matrix}\right) .
\end{equation}
Then ${e_1,e_2,e_3}$ is a basis for $\mathfrak{g}$, and $[e_1,e_2]=e_3$, $[e_2,e_3]=e_1$, $[e_3,e_1]=e_2$. Any $r\in \mathfrak{g}\wedge\mathfrak{g}$ is such that $\left[ r,r\right]$ is $Ad_G$-invariant. 

Let $r=2(e_2\wedge e_3)$ and define the Poisson structure on $SU(2)$ by
\begin{equation}
\pi (g)=2\left( \rho_g(e_2\wedge e_3)-\lambda_g(e_2\wedge e_3)\right).
\end{equation}
Hence, given a generic element $g\in SU(2)$ as $g=\left( \begin{matrix} a & b \\ c & d, \end{matrix}\right)$, the Poisson brackets are given by
\begin{align}
\left\lbrace a,b\right\rbrace &=iab & \left\lbrace a,c\right\rbrace &=iac & \left\lbrace a,d\right\rbrace &=2ibc\\
\left\lbrace b,c\right\rbrace &=0 & \left\lbrace b,d\right\rbrace &=ibd & \left\lbrace c,d\right\rbrace &=icd.
\end{align}
Finally, the Lie bracket defined by $r$ on $\mathfrak{su}(2)^*$ is given by
\begin{equation}
[e_1^*,e_2^*]=e_2^*,\quad [e_1^*,e_3^*]=e_3^*,\quad [e_2^*,e_3^*]=0.
\end{equation}

\subsection{The dual of a Poisson Lie group}
\label{sec: dual}
In Section \ref{sec:lie bialgebras} we introduced the dual and the double of a Lie bialgebra, so now we discuss the corresponding constructions at the level of the Lie group.

Given a Poisson Lie group $(G,\pi_G)$, we consider its Lie bialgebra $\mathfrak{g}$ whose 1-cocycle is $\delta =d_e\pi_G$. Let us denote the dual Lie bialgebra by $(\mathfrak{g}^*,\delta)$. 
By Theorem \ref{thm: dr} we know that there is a unique connected and simply connected Poisson Lie group $(G^*,\pi_{G\st})$, called the \textbf{dual} of $(G,\pi_G)$, associated to the Lie bialgebra $(\mathfrak{g}^*,\delta)$. If $G$ is connected and simply connected, then the dual of $G^*$ is $G$.

When $(G,\pi_G)$ is a Poisson Lie group, its Lie bialgebra $(\mathfrak{g},\delta)$ has a double $\mathfrak{d}$. The connected and simply connected Lie group $\mathcal{D}$ with Lie algebra $\mathfrak{d}$ is 
called the \textbf{double} of $(G,\pi_G)$. Since $\mathfrak{d}$ is a factorisable Lie bialgebra with $r$-matrix $r_{\mathfrak{d}}$, $\mathcal{D}$ is a factorisable Poisson Lie group with Poisson structure

\begin{equation}
 \pi_{\mathcal{D}}(d)=\lambda_d r_{\mathfrak{d}}-\rho_d r_{\mathfrak{d}}
\end{equation}
where $d\in\mathcal{D}$.

\begin{example}
Let $G=SU(2)$ be equipped with the Poisson structure given in  Example \ref{sub: su2}. Then $G^*$ can be identified with $SB(2,\mathbb{C})$. The double Lie algebra $\mathfrak{d}=\mathfrak{g}\bowtie\mathfrak{g}^*$ 
is the Lie algebra $\mathfrak{sl}(2,\mathbb{C})$ considered as a real Lie algebra. The decomposition $\mathfrak{sl}(2,\mathbb{C})=\mathfrak{su}(2) \oplus \mathfrak{sb}(2,\mathbb{C})$ is the well-known Gram-Schmidt decomposition.
\end{example}

\begin{example}
Consider the Lie bialgebra $\mathfrak{g}=ax+b$ of Example \ref{ex: axb}. A matrix representation of $\mathfrak{g}$ is the Lie algebra $\mathfrak{gl}(2, \mathbb{R})$ via
\begin{equation}
X=\left( \begin{matrix} 1 & 0 \\ 0 & 0 \end{matrix}\right),\quad Y=\left( \begin{matrix} 0 & 1 \\ 0 & 0 \end{matrix}\right),\quad X^*=\left( \begin{matrix} 0 & 0 \\ 0 & 1 \end{matrix}\right),\quad Y^*=\left( \begin{matrix} 0 & 0 \\ 1 & 0 \end{matrix}\right)
\end{equation}
with metric
\begin{equation}
\gamma(A,B)=tr(AJBJ), \quad\text{where}\quad J=\left( \begin{matrix} 0 & 1 \\ 1 & 0 \end{matrix}\right).
\end{equation}
The subgroups $G$ and $G^*$ of the Lie group $GL^+(2, \mathbb{R})$ of matrices with determinant $>0$ are given by
\begin{equation}
G=\left\lbrace \left( \begin{matrix} x & y \\ 0 & 1 \end{matrix}\right): x>0\right\rbrace\quad G^*=\left\lbrace \left( \begin{matrix} 1 & 0 \\ a & b \end{matrix}\right): b>0\right\rbrace.
\end{equation}
The Poisson bivector on $GL^+(2, \mathbb{R})$ in the coordinates $\left( \begin{matrix} x & y \\ a & b \end{matrix}\right)$ reads
\begin{equation}
	\pi=xy\partial_x\wedge \partial_y+ab\partial_a\wedge\partial_b+xb(\partial_x\wedge\partial_b+\partial_a\wedge\partial_y).
\end{equation}
It is degenerate at points with $xb=0$ and vanishes at $x=b=0$.
\end{example}

\section{Poisson actions and Momentum maps}

A Poisson action is a key concept for the generalization of the theory of momentum map, since it generalizes the canonical action discussed in the previous chapter. 

Recall, from Definition \ref{def: can}, that a canonical action of a Lie group $G$ on a Poisson manifold $M$ is defined as a group action which preserves the Poisson structure. Instead, a Poisson action is an action of a Poisson Lie group on a Poisson manifold satisfying a different property of compatibility between the Poisson bivectors of both manifolds. When the Poisson structure is trivial we recover the canonical actions.
 
In the following we always assume that $G$ is connected and simply connected, such that Theorem \ref{thm: dr} holds.

\begin{definition}
 The action of $(G,\pi_G)$ on $(M,\pi)$ is called \textbf{Poisson action} if the map $\Phi:G\times M\rightarrow M$ is Poisson, where $G\times M$ is a Poisson  product with structure $\pi_G\oplus\pi$
\end{definition}
If $(G,\pi_G)$ is a Poisson Lie group, the left and right actions of $G$ on itself are Poisson actions. From the previous chapter, we have that, given an action $\Phi$ of $G$ on $M$, the infinitesimal generator $\xi_M$ associated to $\xi \in \mathfrak{g}$ is the vector field on $M$ defined by
\begin{equation}
\xi_M(m):=\left.\frac{d}{dt}\right|_{t=0} \Phi_{\exp (- t\xi)}(m).
\end{equation}
This defines an action of $\mathfrak{g}$ on $M$ by $\xi\in\mathfrak{g}\mapsto \xi_M$, in fact we have
\begin{equation}
[\xi,\eta]_M=[\xi_M,\eta_M],\quad \text{for}\quad \xi,\eta\in\mathfrak{g}.
\end{equation}
Note that if $G$ carries the zero Poisson structure $\pi_G=0$, the action is Poisson if and only if it preserves $\pi$. In general, when $\pi_G\neq 0$, the structure $\pi$ is not invariant with respect to the action.

\begin{proposition}
Assume that $(G,\pi_G)$ is a connected Poisson Lie group with associate 1-cocycle of $\mathfrak{g}$
\begin{equation}
\delta=d_e\pi_G:\mathfrak{g}\rightarrow \wedge^2 \mathfrak{g},
\end{equation}
and let $(M,\pi)$ be a Poisson manifold. The action $\Phi: G\times M\rightarrow M$ is a Poisson action if and
only if
\begin{equation}\label{eq: pa}
\mathcal{L}_{\xi_M}(\pi)=-(\delta(\xi))_M
\end{equation}
 for any $\xi\in\mathfrak{g}$, where $\mathcal{L}$ denotes the Lie derivative.
\end{proposition}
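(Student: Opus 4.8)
The plan is to show that the Poisson-map condition on $\Phi:G\times M\to M$, when written out in terms of the multiplicative bivector $\pi_G$ and $\pi$, reduces infinitesimally to the stated identity. The natural starting point is the characterization that $\Phi$ is a Poisson map if and only if $\Phi_*(\pi_G\oplus\pi)=\pi$, i.e.\ the bivector $\pi_G\oplus\pi$ on $G\times M$ is $\Phi$-related to $\pi$ on $M$. Since $G$ is connected and $\pi_G$ is multiplicative (so $\pi_G(e)=0$), I would aim to replace this global condition by an equivalent infinitesimal one obtained by differentiating along the group directions at the identity $e\in G$.

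\medskip

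\noindent\textbf{Main steps.} First I would fix $\xi\in\mathfrak{g}$ and consider the curve $g(t)=\exp(t\xi)$ in $G$, together with the induced flow $\Phi_{\exp(t\xi)}$ on $M$, whose generator is (up to sign) $\xi_M$. The idea is to differentiate the relation $\Phi_*(\pi_G\oplus\pi)=\pi$ at $t=0$. On the $M$-factor this differentiation produces the Lie derivative $\mathcal{L}_{\xi_M}(\pi)$, since the flow of $\xi_M$ transports $\pi$ and its rate of change is by definition the Lie derivative. Second, I would compute the contribution coming from the $G$-factor: because $\pi_G$ vanishes at $e$, its \emph{value} contributes nothing, but its \emph{derivative} at $e$ is exactly the cocycle $\delta=d_e\pi_G:\mathfrak{g}\to\wedge^2\mathfrak{g}$. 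Pushing the bivector $\delta(\xi)\in\wedge^2\mathfrak{g}$ forward to $M$ via the infinitesimal action $\eta\mapsto\eta_M$ gives the term $(\delta(\xi))_M$, and tracking the signs carefully (the $\exp(-t\xi)$ convention in the definition of $\xi_M$ is the likely source of the minus sign) yields $\mathcal{L}_{\xi_M}(\pi)=-(\delta(\xi))_M$. Third, for the converse, I would use that $G$ is connected so that the global multiplicativity condition~\eqref{eq: pig} together with the integrated flow allows one to recover the Poisson-map property from its infinitesimal version: integrating the derived identity along one-parameter subgroups and using multiplicativity of $\pi_G$ reconstructs $\Phi_*(\pi_G\oplus\pi)=\pi$ on all of the connected group.

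\medskip

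\noindent\textbf{Main obstacle.} The delicate part is the bookkeeping in the differentiation of $\Phi_*(\pi_G\oplus\pi)$, where one must carefully separate the two tensor slots of the bivector and identify which slot is differentiated in the $G$-direction versus evaluated on $M$. In particular, verifying that the cross terms (mixing a $G$-direction with an $M$-direction) vanish or assemble correctly into $(\delta(\xi))_M$, and getting the precise sign right, is where the real work lies; the identification $d_e\pi_G=\delta$ from Theorem~\ref{thm: dr} and the homomorphism property $[\xi,\eta]_M=[\xi_M,\eta_M]$ are the two ingredients that make the cross terms close up. The converse direction requires invoking connectedness of $G$ to promote the infinitesimal identity back to the global Poisson-map condition, which is routine but should be stated explicitly.
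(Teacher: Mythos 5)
The paper states this proposition without proof: it is quoted as a standard fact (due to Semenov-Tian-Shansky and Lu--Weinstein) in the background section on Poisson actions, so there is no in-paper argument to compare yours against. On its own merits, your outline is the correct and standard one: write the Poisson-map condition at $(g,m)$ as $\pi(g\cdot m)=(T_m\Phi_g)\pi(m)+(T_g\Phi^m)\pi_G(g)$, pull back along $\Phi_{g(t)^{-1}}$ with $g(t)=\exp(t\xi)$, and differentiate at $t=0$; the $M$-slot yields $\mathcal{L}_{\xi_M}\pi$, and since $\pi_G(e)=0$ the $G$-slot contributes only its linearization $d_e\pi_G(\xi)=\delta(\xi)$ pushed to $M$ by $T_e\Phi^m$, which is exactly $(\delta(\xi))_M$ up to the sign coming from the $\exp(-t\xi)$ convention. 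The converse by connectedness plus multiplicativity of $\pi_G$ (the set of $g$ satisfying the relation is closed under products and contains a neighborhood of $e$) is also the right mechanism. Two small remarks: the ``cross terms'' you worry about do not actually arise, because the product bivector $\pi_G\oplus\pi$ has no mixed $G$--$M$ component, so $T\Phi$ applied to it splits cleanly into the two terms above; and the homomorphism property $[\xi,\eta]_M=[\xi_M,\eta_M]$ is not needed for this computation (it enters only when one checks that $\delta$ so obtained is a cocycle, not in deriving the equivalence itself). With those simplifications your sketch fills in to a complete proof.
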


\begin{definition}
A Lie algebra action $\xi\mapsto \xi_M$ is called an \textbf{infinitesimal Poisson action} of the Lie bialgebra $(\mathfrak{g},\delta)$ on $(M,\pi)$ if it satisfies eq. (\ref{eq: pa}).
\end{definition}

In this formalism the definition of momentum map reads (Lu, \cite{Lu3}, \cite{Lu1}):

\begin{definition}\label{def: mm}
A \textbf{momentum map} for the Poisson action $\Phi:G\times M\rightarrow M$ is a map $\boldsymbol{\mu}: M\rightarrow G^*$ such that
\begin{equation}\label{eq: mmp}
\xi_M=\pi^{\sharp}(\boldsymbol{\mu}^*(\theta_{\xi}))
\end{equation}
where $\theta_{\xi}$ is the left invariant 1-form on $G^*$ defined by the element $\xi\in\mathfrak{g}=(T_eG^*)^*$ and $\boldsymbol{\mu}^*$ is the cotangent lift $T^* G^*\rightarrow T^*M$.
\end{definition}
If $G$ has trivial Poisson structure, then $G^*=\mathfrak{g}^*$, the differential 1-form $\theta_{\xi}$ is the constant 1-form $\xi$ on $\mathfrak{g}^*$, and
\begin{equation}
\boldsymbol{\mu}^*(\theta_{\xi})=d(\boldsymbol{\mu}^{\xi}),\quad\text{where}\quad \boldsymbol{\mu}^{\xi}(m)=\langle\boldsymbol{\mu}(m),\xi\rangle.
\end{equation}
Thus, in this case, we recover the usual definition of a momentum map for a Hamiltonian action $\boldsymbol{\mu}: M\rightarrow \mathfrak{g}^*$, that is
\begin{equation}
\xi_M=\pi^{\sharp}(d(\boldsymbol{\mu}^{\xi})).
\end{equation}
In other words, $\xi_M$ is the Hamiltonian vector field with Hamiltonian $\boldsymbol{\mu}^{\xi}\in C^{\infty}(M)$.

When $\pi_G$ is not trivial, $\theta_{\xi}$ is a Maurer-Cartan form, hence $\boldsymbol{\mu}^*(\theta_{\xi})$ can not be written as a differential of a Hamiltonian function.

\subsection{Dressing Transformations}\label{sec: dressing}

One of the most important example of Poisson action is the dressing action of $G$ on $G^*$. Consider a Poisson Lie group $(G,\pi_G)$, its dual $(G^*,\pi_{G^*})$ and its double $\mathcal{D}$, with Lie algebras 
$\mathfrak{g}$, $\mathfrak{g}^*$ and $\mathfrak{d}$, respectively.

\begin{theorem}\label{thm: drac}
Let $l(\xi)$ the vector field on $G^*$ defined by
\begin{equation}\label{eq: idr}
	l(\xi)=\pi_{G\st}\sh(\theta_{\xi})
\end{equation}
for each $\xi\in\mathfrak{g}$. Here $\theta_{\xi}$ is the left invariant 1-form on $G^*$ defined by $\xi\in\mathfrak{g}=(T_eG^*)^*$. Then
\begin{enumerate}[1.]
		\item The map $\xi\mapsto l(\xi)$ is an action of $\mathfrak{g}$ on $G^*$, whose linearization at $e$ is the coadjoint action of $\mathfrak{g}$ on $\mathfrak{g}^*$.
		\item The action $\xi\mapsto l(\xi)$ is an infinitesimal Poisson action of the Lie bialgebra $\mathfrak{g}$ on the Poisson Lie group $G^*$.
	\end{enumerate}
\end{theorem}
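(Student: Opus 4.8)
The plan is to establish the two assertions separately, using the Lie algebroid structure on $1$-forms from Theorem \ref{thm: bf} as the main engine and the multiplicativity (\ref{eq: pig}) of $\pi_{G\st}$ as the essential structural input. Throughout I identify the left-invariant $1$-forms on $G^*$ with $\mathfrak{g}=(T_eG^*)^*$ via $\xi\mapsto\theta_\xi$, and I record at the outset that
\begin{equation}
l(\xi)(e)=\pi_{G\st}\sh(e)(\xi)=0,
\end{equation}
since a multiplicative Poisson tensor vanishes at the identity. Thus $e$ is a fixed point of every $l(\xi)$ and each may be linearized there.

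For the first assertion I would first prove that $\xi\mapsto l(\xi)$ is a Lie algebra homomorphism. Since $\pi_{G\st}\sh:(\Omega^1(G^*),[\cdot,\cdot]_{\pi_{G\st}})\to(TG^*,[\cdot,\cdot])$ is a Lie algebra homomorphism by Theorem \ref{thm: bf} and $l(\xi)=\pi_{G\st}\sh(\theta_\xi)$, it suffices to show $[\theta_\xi,\theta_\eta]_{\pi_{G\st}}=\theta_{[\xi,\eta]}$. I would do this in two steps: (i) the Koszul bracket of two left-invariant forms is again left-invariant, which is exactly where multiplicativity (\ref{eq: pig}) enters decisively; and (ii) evaluating (\ref{eq: bff}) at $e$, where $\pi_{G\st}$ vanishes to first order, the two Lie-derivative terms contribute only through the intrinsic derivative $d_e\pi_{G\st}$, while $d(\pi_{G\st}(\theta_\xi,\theta_\eta))$ reproduces, by the characterization of the tangent Lie bialgebra (Theorem \ref{thm: dr}), the bracket of $\mathfrak{g}$ regarded as the double dual of $\mathfrak{g}^*$. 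These combine to give $[\theta_\xi,\theta_\eta]_{\pi_{G\st}}(e)=[\xi,\eta]$, and left-invariance upgrades this to the desired identity, whence $[l(\xi),l(\eta)]=l([\xi,\eta])$. The linearization claim falls out of the same computation: differentiating $l(\xi)=\pi_{G\st}\sh(\theta_\xi)$ at $e$ and using $l(\xi)(e)=0$ kills every term except $(d_e\pi_{G\st})\sh(\,\cdot\,)(\xi)$, and since $d_e\pi_{G\st}={}^{t}[\cdot,\cdot]$ is dual to the bracket of $\mathfrak{g}$, this endomorphism of $T_eG^*=\mathfrak{g}^*$ is precisely $\mp\,ad_\xi^*$, i.e. the coadjoint action with the sign fixed by the conventions above.

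For the second assertion I must verify the infinitesimal Poisson-action condition (\ref{eq: pa}), namely $\mathcal{L}_{l(\xi)}\pi_{G\st}=-(\delta(\xi))_{G^*}$, where the right-hand side denotes $\sum_i l(a_i)\wedge l(b_i)$ for $\delta(\xi)=\sum_i a_i\wedge b_i$. The plan is to compute the left-hand side from $l(\xi)=\pi_{G\st}\sh(\theta_\xi)$ via the identity $(\mathcal{L}_{\pi_{G\st}\sh\alpha}\pi_{G\st})\sh\beta=\pi_{G\st}\sh\bigl([\alpha,\beta]_{\pi_{G\st}}-\mathcal{L}_{\pi_{G\st}\sh\alpha}\beta\bigr)$, which follows from $[\pi_{G\st}\sh\alpha,\pi_{G\st}\sh\beta]=\pi_{G\st}\sh[\alpha,\beta]_{\pi_{G\st}}$ (Theorem \ref{thm: bf}) together with the Leibniz rule for $\mathcal{L}$; inserting the defining formula (\ref{eq: bff}) leaves only terms in $\mathcal{L}_{\pi_{G\st}\sh\beta}\alpha$ and $d\,\pi_{G\st}(\alpha,\beta)$. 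Feeding in the Maurer--Cartan equation for the left-invariant form $\theta_\xi$ on $G^*$, whose structure constants are those of $\mathfrak{g}^*$ and hence the transpose of $\delta$, turns $d\theta_\xi$ into the cobracket data, and matching the result against $-(\delta(\xi))_{G^*}$ reduces to the Lie-bialgebra compatibility (\ref{eq: cc}) between the bracket of $\mathfrak{g}^*$ and the cocycle $\delta$.

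I expect the main obstacle to be step (i) above, the left-invariance of the Koszul bracket of left-invariant forms, together with the bookkeeping in the second assertion. Both hinge on extracting usable consequences of multiplicativity (\ref{eq: pig}): the pullback $\lambda_u^*\pi_{G\st}$ differs from $\pi_{G\st}$ by an explicit right-translated term, and controlling how this correction propagates through the Lie-derivative terms of (\ref{eq: bff}) and through $\mathcal{L}_{l(\xi)}\pi_{G\st}$ is the technical heart of the argument. Once multiplicativity is correctly exploited, both assertions reduce to finite-dimensional identities in the Lie bialgebra $(\mathfrak{g},\delta)$, where the cocycle condition (\ref{eq: cc}) closes the computation.
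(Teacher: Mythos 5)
The paper does not actually prove Theorem \ref{thm: drac}: it defers to the reference \cite{YK}. That said, your plan is the standard argument and it dovetails exactly with the machinery the paper develops immediately after the theorem statement. Your key lemma for part 1, namely $[\theta_{\xi},\theta_{\eta}]_{\pi_{G\st}}=\theta_{[\xi,\eta]}$, is precisely equation (\ref{eq: theta}), which the paper proves by the very mechanism you describe: contracting (\ref{eq: bff}) with a left-invariant vector field $X$, using multiplicativity to reduce $\mc{L}_X\pi_{G\st}$ to ${}^{t}\delta(x)$, and concluding left-invariance plus the correct value at $e$. Combined with the fact that $\pi_{G\st}\sh$ is a Lie algebra homomorphism (Theorem \ref{thm: bf}) and the vanishing of $\pi_{G\st}$ at $e$, this gives both the homomorphism property and the linearization $\eta\mapsto -ad^*_{\xi}\eta$, exactly as you say. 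Your part 2 is also sound: the identity $(\mc{L}_{\pi_{G\st}\sh\alpha}\pi_{G\st})\sh\beta=\pi_{G\st}\sh([\alpha,\beta]_{\pi_{G\st}}-\mc{L}_{\pi_{G\st}\sh\alpha}\beta)$ follows from Theorem \ref{thm: bf} and Leibniz, and feeding in (\ref{eq: bff}) and the Maurer--Cartan equation (\ref{eq: mct}) yields $\mc{L}_{l(\xi)}\pi_{G\st}=-(\delta(\xi))_{G\st}$ directly. One small imprecision: at that final stage the cocycle condition (\ref{eq: cc}) is not really what closes the computation --- the Maurer--Cartan equation already encodes the bracket of $\mk{g}\st$, and the matching against $-(\delta(\xi))_{G\st}$ is then a pointwise identity; the bialgebra compatibility (equivalently, multiplicativity of $\pi_{G\st}$) is consumed earlier, in the left-invariance step of part 1. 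This does not affect the correctness of the plan.
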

A proof of this theorem can be found in \cite{YK}. 

The action defined in (\ref{eq: idr}) is generally called left infinitesimal dressing action of $\mathfrak{g}$ on $G^*$. In
particular, when $G$ is a trivial Poisson Lie group, its dual group $G^*$ is the Abelian group $\mathfrak{g}^*$, and the left infinitesimal dressing action of $\mathfrak{g}$ on $\mathfrak{g}^*$ is given by the linear  vector fields $l(\xi): \eta\in\mathfrak{g}^* \mapsto -ad^*_{\xi}\eta\in\mathfrak{g}^*$, for each $\xi\in\mathfrak{g}$. It is easy to see   that $\xi\mapsto l(\xi)$ is a Lie algebra homomorphism from $\mathfrak{g}$ to the Lie algebra of linear vector fields on $\mathfrak{g}^*$, in fact we find $l([\xi,\eta])=l(\xi)l(\eta)-l(\eta)l(\xi)$.

Similarly, the right infinitesimal dressing action of $\mathfrak{g}$ on $G^*$ is defined by
\begin{equation}
r(\xi)=-\pi_{G\st}\sh(\theta_{\xi})
\end{equation}
where $\theta_{\xi}$ is the right invariant 1-form on $G^*$.

Let $l(\xi)$ (resp. $r(\xi)$) a left (resp. right) dressing vector field on $G^*$. If all the dressing vector fields are complete, we can integrate the $\mathfrak{g}$-action into a Poisson $G$-action on $G^*$ called the
\textbf{dressing action} and we say that the dressing actions consist of dressing transformations.

\begin{proposition}
The symplectic leaves of $G$ (resp. $G^*$) are the connected components of the orbits of the right or left dressing action of $G^*$ (resp. $G$).
\end{proposition}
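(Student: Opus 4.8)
The plan is to show that the tangent distribution spanned by the dressing vector fields coincides, at every point, with the characteristic distribution of the Poisson structure, whose maximal integral manifolds are by definition the symplectic leaves. Throughout I treat the case of the symplectic leaves of $G$ realised as orbits of the dressing action of $G^*$; the statement for $G^*$ then follows by the symmetry between $\mk{g}$ and $\mk{g}^*$ expressed by the common double $\mk{d}=\mk{g}\bowtie\mk{g}^*$.

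First I recall that on any Poisson manifold $(M,\pi)$ the image $\mc{C}=\im\pi\sh\subseteq TM$ is an integrable (generalized) distribution, integrability being a consequence of $[\pi,\pi]_S=0$ (Proposition \ref{pro: sn}), and that its maximal integral manifolds are precisely the symplectic leaves. Concretely, $\mc{C}_g$ is spanned by all vectors $\pi\sh(\alpha_g)$ as $\alpha_g$ ranges over $T_g^*M$.

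The key step is a pointwise computation. By definition (the analogue of Theorem \ref{thm: drac} with $\mk{g}$ and $\mk{g}^*$ interchanged) the left dressing vector fields on $G$ are $l(x)=\pi_G\sh(\theta_x)$, where $\theta_x$ is the left invariant $1$-form with $\theta_x(e)=x\in\mk{g}^*=(T_eG)^*$. Since left translation identifies $T_g^*G$ with $\mk{g}^*$, the covectors $\{\theta_x(g):x\in\mk{g}^*\}$ exhaust $T_g^*G$ at every $g$. Hence
\begin{equation}
\{\,l(x)(g):x\in\mk{g}^*\,\}=\{\,\pi_G\sh(\alpha):\alpha\in T_g^*G\,\}=\im\pi_G\sh(g)=\mc{C}_g .
\end{equation}
The same argument applied to right invariant forms, which also exhaust $T_g^*G$, shows that the right dressing vector fields span $\mc{C}_g$ as well; thus both the left and the right dressing distributions equal $\mc{C}$.

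Finally, assuming the dressing vector fields are complete, so that the infinitesimal action integrates to a $G^*$-action as in the preceding discussion, the orbits of this action are the maximal integral manifolds of the distribution they generate. By the previous step that distribution is exactly $\mc{C}$, so the connected components of the orbits are the symplectic leaves of $G$; interchanging the roles of $G$ and $G^*$ yields the companion statement. The main obstacle, and the step requiring care, is the passage from the infinitesimal data to the orbits: one must invoke an orbit theorem of Stefan--Sussmann type to guarantee that the integral manifolds of the possibly singular distribution $\mc{C}$ are genuine immersed submanifolds coinciding with the dressing orbits, and the phrase ``connected components'' is precisely what absorbs the discrepancy between a connected symplectic leaf and a possibly disconnected group orbit.
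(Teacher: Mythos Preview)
The paper states this proposition without proof, citing it as a known fact from the literature (it is essentially due to Semenov-Tian-Shansky and appears in Lu--Weinstein). Your argument is the standard one and is correct: the left-invariant one-forms $\theta_x$, $x\in\mk{g}^*$, trivialise $T^*G$, so $\{\pi_G\sh(\theta_x)(g)\}=\im\pi_G\sh(g)$ at every point, and the dressing distribution coincides with the characteristic distribution of $\pi_G$. Your invocation of the Stefan--Sussmann orbit theorem to pass from the pointwise equality of distributions to the identification of orbits with leaves is exactly the right tool, and your remark on connected components is well placed. There is nothing to correct.
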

The momentum map for the dressing action of $G$ on $G^*$ is the opposite of the identity map from $G^*$ to itself.

For the Poisson Lie group $G\st$
we identify $\mk{g}$ with the space of left invariant 1-forms on $G\st$. Then, this space is closed under the bracket defined by $\pi_{G\st}$ and the induced bracket on $\mk{g}$, by the above identification,
coincides with the original Lie bracket on $\mk{g}$ (see \cite{We}).
Given $\xi\in\mathfrak{g}$, we denote by $\theta_{\xi}$ the left invariant form on 
$G\st$ whose value at identity is $\xi$. The basic property of $\theta$'s is the Maurer-Cartan equation for $G\st$:
\begin{equation}\label{eq: mct}
    d\theta_{\xi}+\frac{1}{2}\theta\wedge\theta\circ\delta(\xi)=0
\end{equation}
In the following proposition we prove new relations satisfied by $\theta$:
\begin{proposition}
Let $\theta_{\xi},\theta_{\eta}$ be two left invariant 1-forms on $G\st$, such that $\theta_\xi(e)=\xi$, $\theta_{\eta}(e)=\eta$ then
\begin{equation}\label{eq: theta}
\theta_{[\xi,\eta]}=[\theta_{\xi},\theta_{\eta}]_{\pi_{G\st}}
\end{equation}
and
\begin{equation}\label{eq: theta2}
 \mc{L}_X\pi_{G\st}(\theta_{\xi},\theta_{\eta})=x([\xi,\eta])+\pi_{G^*}(\theta_{ad^*_x \xi},\theta_{\eta})+\pi_{G^*}(\theta_{\xi},\theta_{ad^*_x\eta})
\end{equation}

\end{proposition}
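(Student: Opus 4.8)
The plan is to treat both identities as equalities of left-invariant tensor fields on $G^*$, so that everything reduces to a computation at the identity, where the linearization of $\pi_{G^*}$ is controlled by Drinfeld's theorem (Theorem \ref{thm: dr}).

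For \eqref{eq: theta} I would first note that, since $\pi_{G^*}$ is multiplicative, the Vaisman bracket $[\theta_\xi,\theta_\eta]_{\pi_{G^*}}$ of two left-invariant $1$-forms is again left-invariant; this is exactly the statement, recalled just above from \cite{We}, that $\xi\mapsto\theta_\xi$ identifies $\mathfrak g$ with a Lie subalgebra of $(\Omega^1(G^*),[\cdot,\cdot]_{\pi_{G^*}})$. A left-invariant form being determined by its value at $e$, it then suffices to check $[\theta_\xi,\theta_\eta]_{\pi_{G^*}}(e)=[\xi,\eta]$. I would evaluate formula \eqref{eq: bff} at $e$: the two Lie-derivative terms involve $\pi_{G^*}^{\sharp}(\theta_\xi)=l(\xi)$ (see \eqref{eq: idr}), which vanishes at $e$ because $\pi_{G^*}(e)=0$. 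Writing $\mc{L}_{l(\xi)}\theta_\eta=i_{l(\xi)}d\theta_\eta+d(i_{l(\xi)}\theta_\eta)$ and using $i_{l(\xi)}\theta_\eta=\pi_{G^*}(\theta_\xi,\theta_\eta)$, the contraction against $d\theta_\eta$ drops out at $e$ and each Lie-derivative term collapses to $\pm\,d(\pi_{G^*}(\theta_\xi,\theta_\eta))|_e$. Summing the three contributions of \eqref{eq: bff} leaves precisely $d(\pi_{G^*}(\theta_\xi,\theta_\eta))|_e$, and by Theorem \ref{thm: dr} the linearization of $\pi_{G^*}$ at $e$ is the cobracket of $\mathfrak g^*$, whose transpose is the original bracket of $\mathfrak g$; hence this equals $[\xi,\eta]=\theta_{[\xi,\eta]}(e)$, proving \eqref{eq: theta}.

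For \eqref{eq: theta2} I would expand the Lie derivative of the bivector via its derivation property over the pairing,
\[
(\mc{L}_X\pi_{G^*})(\theta_\xi,\theta_\eta)=X\bigl(\pi_{G^*}(\theta_\xi,\theta_\eta)\bigr)-\pi_{G^*}(\mc{L}_X\theta_\xi,\theta_\eta)-\pi_{G^*}(\theta_\xi,\mc{L}_X\theta_\eta).
\]
Taking $X$ to be the invariant vector field on $G^*$ with $X(e)=x\in\mathfrak g^*$, I would compute $\mc{L}_X\theta_\xi$ from the Maurer-Cartan equation \eqref{eq: mct} together with Cartan's formula: as $\theta_\xi(X)$ is constant, only $i_X d\theta_\xi$ survives, and matching it against the definition of the coadjoint action $ad^*_x$ of $\mathfrak g^*$ on $\mathfrak g$ identifies $\mc{L}_X\theta_\xi$ with the invariant form $\theta_{ad^*_x\xi}$. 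These two terms produce $\pi_{G^*}(\theta_{ad^*_x\xi},\theta_\eta)$ and $\pi_{G^*}(\theta_\xi,\theta_{ad^*_x\eta})$. The remaining term I would treat with \eqref{eq: theta}: rearranging \eqref{eq: bff} gives $d(\pi_{G^*}(\theta_\xi,\theta_\eta))=\mc{L}_{l(\xi)}\theta_\eta-\mc{L}_{l(\eta)}\theta_\xi-\theta_{[\xi,\eta]}$, and pairing with the invariant field $X$ feeds the constant $\langle x,[\xi,\eta]\rangle=x([\xi,\eta])$ into the right-hand side via the last summand. Assembling the three pieces yields the claimed formula.

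The step I expect to be the main obstacle is the sign and convention bookkeeping, specifically deciding whether $X$ is the left- or the right-invariant extension of $x$: this choice flips the sign of $\mc{L}_X\theta_\xi$ and correspondingly redistributes the contribution of $X(\pi_{G^*}(\theta_\xi,\theta_\eta))$ among the constant term and the two $\pi_{G^*}(\cdot,\cdot)$ terms. At $e$ all candidate versions coincide because $\pi_{G^*}(e)=0$, so the signs cannot be fixed pointwise there and must instead be pinned down by a genuinely global computation, most cleanly through the multiplicativity law \eqref{eq: pig} and the $Ad^*$-equivariance of the invariant forms. Establishing left-invariance of the Vaisman bracket of invariant forms, used in the first part, is the other point requiring care, and is exactly where multiplicativity of $\pi_{G^*}$ enters.
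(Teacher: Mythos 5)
Your proposal is correct in substance and rests on the same two pillars as the paper's proof: reduction to left-invariant data so that everything is controlled at the identity, where Drinfeld's theorem identifies the linearization of $\pi_{G^*}$ with the transpose of the bracket on $\mathfrak g$, together with the Leibniz expansion of $\mc{L}_X$ and the identity $\mc{L}_X\theta_\xi=\theta_{ad^*_x\xi}$. The mechanics differ in two places. For \eqref{eq: theta} the paper does not evaluate at $e$; it contracts the bracket against an arbitrary left-invariant field $X$ and shows $\iota_X[\theta_\xi,\theta_\eta]_{\pi_{G\st}}=(\mc{L}_X\pi_{G\st})(\theta_\xi,\theta_\eta)$, which is the constant $x([\xi,\eta])$ — this single computation delivers left-invariance and the identification of the bracket simultaneously. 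You instead import left-invariance from the fact recalled from \cite{We} and then evaluate \eqref{eq: bff} at $e$, where $\pi_{G^*}(e)=0$ kills the interior products against $d\theta$; your bookkeeping there ($+d(\pi(\theta_\xi,\theta_\eta))|_e$ from each Lie-derivative term, $-$ from the third, net $+$) is right. This is a clean and more elementary endgame, at the price of leaning on the citation for closure of the invariant forms under the bracket, which the paper's contraction argument proves en route. For \eqref{eq: theta2} you go further than the paper in one respect — you actually derive $\mc{L}_X\theta_\xi=\theta_{ad^*_x\xi}$ from the Maurer--Cartan equation and Cartan's formula, where the paper merely asserts it — and your device of contracting \eqref{eq: bff} with $X$ to pin down the constant term globally is a genuine improvement, since the paper's step $(\mc{L}_X\pi_{G\st})(\theta_\xi,\theta_\eta)={}^t\delta(x)(\xi,\eta)$ tacitly assumes this function is constant. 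One caveat: the constant does not come solely from the $\theta_{[\xi,\eta]}$ summand as you state; contracting $\mc{L}_{l(\xi)}\theta_\eta-\mc{L}_{l(\eta)}\theta_\xi$ with $X$ contributes $2(\mc{L}_X\pi_{G\st})(\theta_\xi,\theta_\eta)$ plus the two $\pi_{G^*}(\theta_{ad^*_x\cdot},\cdot)$ terms, and only after comparing with the Leibniz expansion does the net coefficient of $x([\xi,\eta])$ come out to $+1$. This is exactly the sign-distribution issue you flag yourself, so it is a presentational looseness rather than a gap.
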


\begin{proof}
First, we prove that $[\theta_{\xi},\theta_{\eta}]_{\pi_{G\st}}$ is a left-invariant 1-form. Let us consider and element $x\in\mk{g}\st$ and the correspondent left invariant vector field $X\in TG\st$. 
We contract $X$ with the bracket $[\theta_{\xi},\theta_{\eta}]_{\pi_{G\st}}$ to show that we obtain a constant. More precisely, we contract $X$ term by term with the equation (\ref{eq: bff}) and we get
\begin{equation}\label{eq: xc}
\begin{split}
 \iota_X d\pi_{G\st}(\theta_{\xi},\theta_{\eta})&=(\mc{L}_X\pi_{G\st})(\theta_{\xi},\theta_{\eta})+\pi_{G\st}(\mc{L}_X\theta_{\xi},\theta_{\eta})+\pi_{G\st}(\theta_{\xi},\theta_{\eta})\\
     \iota_X \mc{L}_{\pi\sh_{G\st}(\theta_{\xi})}\theta_{\eta}&=(\mc{L}_X\pi_{G\st})(\theta_{\eta},\theta_{\xi})-\pi_{G\st}(\mc{L}_X\theta_{\xi},\theta_{\eta})\\
      \iota_X \mc{L}_{\pi\sh_{G\st}(\theta_{\eta})}\theta_{\xi}&=(\mc{L}_X\pi_{G\st})(\theta_{\xi},\theta_{\eta})-\pi_{G\st}(\theta_{\xi},\theta_{\eta})
\end{split}
\end{equation}
Then, substituting the relations (\ref{eq: xc}) in (\ref{eq: bff}) we obtain
\begin{equation}
\iota_X [\theta_{\xi},\theta_{\eta}]_{\pi_{G\st}}=(\mc{L}_X\pi_{G\st})(\theta_{\xi},\theta_{\eta}).
\end{equation}
Since $\mc{L}_X\pi_{G\st}(e)={}^{t}\delta(x)$, eq. (\ref{eq: theta}) is proved. Moreover, we have
 \begin{equation}
\begin{split}
\mc{L}_X\pi_{G\st}(\theta_{\xi},\theta_{\eta})&=(\mc{L}_X\pi_{G\st})(\theta_{\xi},\theta_{\eta})+\pi_{G\st}(\mc{L}_X\theta_{\xi}, \theta_{\eta})+\pi_{G\st}(\theta_{\xi},\theta_{\eta})\\
&={}^{t}\delta(x)(\xi,\eta)+\pi_{G^*}(\theta_{ad^*_x \xi},\theta_{\eta})+\pi_{G^*}(\theta_{\xi},\theta_{ad^*_x\eta}),
\end{split}
\end{equation}
since $\mc{L}_X\theta_{\xi}=\theta_{ad^*_x \xi}$. From ${}^{t}\delta(x)(\xi,\eta)=x([\xi,\eta])$, eq. (\ref{eq: theta2}) follows.
\end{proof}

For sake of completeness, we record an alternative way to define the dressing action. Consider $g\in G$ and $u\in G^*$ and let $ug\in \mathcal{D}$ be their product. Since $\mathfrak{d}=\mathfrak{g}\oplus\mathfrak{g}^*$, elements in $\mathcal{D}$ 
close to the unit can be decomposed in a unique way as a product of an element in $G$ and an element in $G^*$. Then, there exist elements $^u g\in G$ and $u^g\in G^*$ such that
\begin{equation}
ug=^ugu^g.
\end{equation}
Hence, the action of $u\in G^*$ on $g\in G$ (resp. the action of $g\in G$ on $u\in G^*$) is given by
\begin{equation}
(u,g)\mapsto (ug)_G\quad (\text{resp.}\quad (u,g)\mapsto (ug)_{G^*}),
\end{equation}
where $(ug)_G$ (resp. $(ug)_{G^*}$) denotes the $G$-factor (resp. $G^*$-factor) of $ug\in\mathcal{D}$ as $g'u'$, with $g'\in G$, $u'\in G^*$.
Accordingly, the product  $gu\in\mathcal{D}$ can be uniquely decomposed into $^gug^u$, where $^g u\in G^*$ and $g^u\in G$. So, by definition,
\begin{equation}
gu=^g u g^u.
\end{equation}
This defines locally a left action of $G$ on $G^*$ and a right action of $G^*$ on $G$.

\begin{definition}
A multiplicative Poisson tensor $\pi$ on $G$ is \textbf{complete} if each left (equiv. right) dressing vector field is complete on $G$.
\end{definition}

\begin{proposition}
A Poisson Lie group is complete if and only if its dual Poisson Lie group is complete.
\end{proposition}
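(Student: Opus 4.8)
The plan is to move everything into the double $\mathcal{D}=G\bowtie G^*$ and to show that completeness on either side is equivalent to one and the same \emph{global} factorisation of $\mathcal{D}$, a condition manifestly symmetric in $G$ and $G^*$. Recall from the construction above that near the unit every element of $\mathcal{D}$ decomposes uniquely, since $\mathfrak{d}=\mathfrak{g}\oplus\mathfrak{g}^*$ as a vector space; for $g\in G$ and $u\in G^*$ this produces the partial products in $gu={}^g u\,g^u$ with ${}^g u\in G^*$, $g^u\in G$, and in $ug={}^u g\,u^g$ with ${}^u g\in G$, $u^g\in G^*$. The left dressing action of $G$ on $G^*$ is $g\cdot u={}^g u$ and the left dressing action of $G^*$ on $G$ is $u\cdot g={}^u g$; the dressing vector fields on $G^*$ (resp. on $G$) are the infinitesimal generators of the former (resp. the latter), so that completeness of $G^*$ (resp. of $G$) means exactly that these generators are complete.

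The crux is to realise the dressing flow as a translation flow in $\mathcal{D}$ read off through the factorisation. Fix $\xi\in\mathfrak{g}$ and $u\in G^*$ and consider the curve $t\mapsto\exp(t\xi)\,u$ in $\mathcal{D}$; its velocity at $u$ is the right-invariant field $\xi^{R}(u)=(dR_u)_e\xi$. Writing $\exp(t\xi)\,u=v(t)\,h(t)$ with $v(t)\in G^*$, $h(t)\in G$, $v(0)=u$, $h(0)=e$, and differentiating the product, one gets $\xi^{R}(u)=\dot v(0)+(dL_u)_e\dot h(0)$, so that $\dot v(0)$ is precisely the $G^*$-component of $\xi^{R}(u)$ in the splitting $T_u\mathcal{D}=T_uG^*\oplus(dL_u)_e\mathfrak{g}$. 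The key computation — using the explicit $r$-matrix $r_{\mathfrak{d}}$ of the double, the invariance of the pairing $\langle\cdot,\cdot\rangle$ on $\mathfrak{d}$, and the definition (\ref{eq: idr}) of $l(\xi)=\pi_{G^*}^{\sharp}(\theta_\xi)$ — is that this $G^*$-component equals $l(\xi)(u)$. Consequently the integral curve of $l(\xi)$ through $u$ is $t\mapsto{}^{\exp(t\xi)}u$, i.e. the $G^*$-factor of $\exp(t\xi)\,u$, and it is defined for exactly those $t$ for which $\exp(t\xi)\,u$ admits the factorisation, that is, lies in $G^*\!\cdot G$.

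Now the equivalences follow formally. Since $G$ is connected and simply connected it is generated by $\exp(\mathfrak{g})$, so all $l(\xi)$ are complete iff $g\,u\in G^*\!\cdot G$ for every $g\in G$, $u\in G^*$, i.e. iff $G\!\cdot\! G^*\subseteq G^*\!\cdot\! G$. A short computation shows that $G^*\!\cdot\! G$ is then a subgroup of $\mathcal{D}$: it is closed under products, since $G^*GG^*G\subseteq G^*G^*GG=G^*G$, and under inverses, since $(G^*G)^{-1}=G\,G^*\subseteq G^*G$; as it contains the generators $G$ and $G^*$ it must be all of $\mathcal{D}$. Hence completeness of $G^*$ is equivalent to $\mathcal{D}=G^*\!\cdot G$, and by the same reasoning applied to the other factorisation (valid because, as noted, $\mathcal{D}$ is equally the double of $G^*$) completeness of $G$ is equivalent to $\mathcal{D}=G\!\cdot G^*$. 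Finally the inversion $d\mapsto d^{-1}$ is a bijection of $\mathcal{D}$ carrying $G^*G$ to $(G^*G)^{-1}=G\,G^*$, so $\mathcal{D}=G^*\!\cdot G$ holds iff $\mathcal{D}=G\!\cdot G^*$ holds; the two completeness conditions therefore coincide, proving the proposition. Granting this surjectivity, connectedness of $\mathcal{D}$ and the local-diffeomorphism property of the factorisation map upgrade it to a genuine global diffeomorphism $G\times G^*\to\mathcal{D}$, which is what makes the dressing action well defined and its flows complete.

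The step I expect to be the only real obstacle is the identification in the second paragraph, namely that the $G^*$-component of $\xi^{R}$ is literally the Poisson dressing field $\pi_{G^*}^{\sharp}(\theta_\xi)$; this is the one genuinely computational point and rests on the precise form of $r_{\mathfrak{d}}$ and the $\mathrm{ad}$-invariance of $\langle\cdot,\cdot\rangle$. Everything after it is soft: the subgroup argument and the inversion symmetry are purely formal, and the only additional care needed is the standard passage from surjectivity to a global diffeomorphism, where one invokes connectedness (indeed simple connectedness) of $\mathcal{D}$.
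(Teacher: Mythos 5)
The paper states this proposition without proof --- it is imported from the literature on dressing transformations (cf.\ \cite{YK}, \cite{Lu1}) --- so there is no in-text argument to compare against. Your route through the double is the standard one, and its skeleton is sound: identify the dressing field $l(\xi)=\pi_{G^*}^{\sharp}(\theta_\xi)$ with the $T_uG^*$-component of the right-invariant field $\xi^R$ in the splitting $T_u\mathcal{D}=T_uG^*\oplus(dL_u)_e\mathfrak{g}$; deduce that completeness of the $l(\xi)$ is equivalent to $G\cdot G^*\subseteq G^*\cdot G$, hence (by your subgroup argument, which is correct) to $\mathcal{D}=G^*\cdot G$; symmetrize; and conclude via the inversion $d\mapsto d^{-1}$, which exchanges $G^*G$ and $GG^*$. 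That final symmetry is exactly the right mechanism, and the reduction of a completeness statement to a global factorization statement is what makes the equivalence work.

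That said, as a proof the proposal has one genuine gap and two points where real care is needed. The gap is the one you flag yourself: the identification of $\pi_{G^*}^{\sharp}(\theta_\xi)(u)$ with the $G^*$-component of $\xi^R(u)$ is the entire analytic content of the proposition, and it is asserted rather than computed; without it the Poisson-theoretic definition of completeness used in the paper is never actually connected to the factorization picture. The two further points: (i) the claim that the maximal integral curve of $l(\xi)$ through $u$ is defined \emph{exactly} on the set of $t$ with $\exp(t\xi)u\in G^*\cdot G$ presupposes that $G^*\cdot G$ is open in $\mathcal{D}$, that the factorization is locally unique and smooth there (so that $t\mapsto(\exp(t\xi)u)_{G^*}$ is a well-defined curve), and that the integral curve genuinely escapes to infinity when $\exp(t\xi)u$ reaches the boundary of $G^*\cdot G$; only local uniqueness near $e$ is established in the text, and globally $G\cap G^*$ can be a nontrivial discrete subgroup, so uniqueness of the factors is not automatic. (ii) Passing from completeness of the one-parameter flows $\exp(t\xi)$ to $g\cdot u\in G^*G$ for all $g$ needs the inductive step $\exp(\xi_1)(G^*G)\subseteq G^*G$, which you use implicitly; it does follow from the one-parameter statement, but it should be said. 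None of these invalidates the strategy --- they are precisely the details that the standard reference proofs supply --- but as written the argument is a correct outline rather than a complete proof.
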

Assume that $G$ is a complete Poisson Lie group. We denote respectively the left (resp. right) dressing action of $G$ on its dual $G^*$ by $g\mapsto l_g$ (resp. $g\mapsto r_g$).

\begin{definition}
A momentum map $\boldsymbol{\mu}:M\rightarrow G^*$ for a left (resp. right) Poisson action $\Phi$ is called \textbf{G-equivariant} if it is such with respect to the left dressing action of $G$ on $G^*$, that is, 
$\boldsymbol{\mu}\circ \Phi_g=\lambda_g\circ \boldsymbol{\mu}$ (resp. $\boldsymbol{\mu}\circ \Phi_g=\rho_g\circ \boldsymbol{\mu}$)
\end{definition}
A momentum map is $G$-equivariant if and only if it is a Poisson map, i.e. $\boldsymbol{\mu}_*\pi=\pi_{G^*}$. Given this generalization of the concept of equivariance introduced for Lie group actions, it is natural to call \textbf{Hamiltonian action} a Poisson action induced by an equivariant momentum map.

\subsection{Structure of the momentum map}
\label{sub: structure}
In this section we introduce a weaker definition of momentum map in infinitesimal terms. From Definition \ref{def: mm}, it follows that one can associate to a momentum map a 1-form $\alpha_{\xi}$. In the following, we discuss the properties of these forms and, using the infinitesimal momentum map, we analyze the conditions under which the momentum map is determined. Then, we introduce the concept of gauge equivalence for the $\alpha$'s and we show the relation of this equivalence class with a cohomological class in $H^1(M,\mathfrak{g})$.

As a direct consequence of the properties of $\theta$'s stated in Section \ref{sec: dressing}, we have the following proposition:
\begin{proposition}
Given a Poisson action $\Phi:G\times M\ra M$ with equivariant momentum map $\boldsymbol{\mu}:M\ra G\st$, the forms $\alpha_{\xi}=\boldsymbol{\mu}^*(\theta_{\xi})$ satisfy the following identities:
\begin{align}
\label{eq: ah}\alpha_{[\xi,\eta]}&=[\alpha_{\xi},\alpha_\eta ]_{\pi}\\
\label{eq: mca} d\alpha_{\xi} &+\frac{1}{2}\alpha\wedge\alpha\circ\delta(\xi)=0
\end{align}
\end{proposition}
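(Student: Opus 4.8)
The plan is to derive both identities by pulling back, via the momentum map $\boldsymbol{\mu}$, the two structural relations already established on $G\st$: the Maurer--Cartan equation \eqref{eq: mct} for identity \eqref{eq: mca}, and the bracket identity \eqref{eq: theta} for identity \eqref{eq: ah}. Two facts make this possible. First, an equivariant momentum map is a Poisson map (as recorded just before this proposition, $\boldsymbol{\mu}_*\pi = \pi_{G\st}$), so at operator level $T\boldsymbol{\mu}\circ\pi\sh\circ\boldsymbol{\mu}\st = \pi_{G\st}\sh$. Second, by the defining relation \eqref{eq: mmp} the infinitesimal generator satisfies $\xi_M = \pi\sh(\alpha_\xi)$; applying $T\boldsymbol{\mu}$ and using the Poisson-map condition gives $T\boldsymbol{\mu}\circ\xi_M = l(\xi)\circ\boldsymbol{\mu}$, i.e. $\xi_M$ is $\boldsymbol{\mu}$-related to the left dressing field $l(\xi)=\pi_{G\st}\sh(\theta_\xi)$ of \eqref{eq: idr}. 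I would record this $\boldsymbol{\mu}$-relatedness at the outset, since it drives the whole argument.

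Identity \eqref{eq: mca} is the more immediate of the two. Since the exterior derivative commutes with pullback and $\xi\mapsto\alpha_\xi=\boldsymbol{\mu}\st\theta_\xi$ is linear, writing $\delta(\xi)=\sum_a u_a\wedge v_a$ gives $\boldsymbol{\mu}\st(\theta\wedge\theta\circ\delta(\xi)) = \sum_a\alpha_{u_a}\wedge\alpha_{v_a} = \alpha\wedge\alpha\circ\delta(\xi)$. Applying $\boldsymbol{\mu}\st$ to \eqref{eq: mct} then yields $d\alpha_\xi + \frac{1}{2}\alpha\wedge\alpha\circ\delta(\xi)=0$ directly.

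For \eqref{eq: ah} the plan is first to prove that $[\cdot,\cdot]_\pi$ is natural under $\boldsymbol{\mu}$ on the forms at hand, namely $\boldsymbol{\mu}\st[\theta_\xi,\theta_\eta]_{\pi_{G\st}} = [\alpha_\xi,\alpha_\eta]_\pi$, and then to invoke \eqref{eq: theta} to conclude $[\alpha_\xi,\alpha_\eta]_\pi = \boldsymbol{\mu}\st\theta_{[\xi,\eta]} = \alpha_{[\xi,\eta]}$. The naturality I would check term by term in the explicit formula \eqref{eq: bff}. The two Lie-derivative terms transform by the standard identity $\mathcal{L}_X(\boldsymbol{\mu}\st\omega)=\boldsymbol{\mu}\st(\mathcal{L}_Y\omega)$ for $\boldsymbol{\mu}$-related vector fields $X,Y$, applied to the pair $(\xi_M,l(\xi))$ above, giving $\mathcal{L}_{\pi\sh(\alpha_\xi)}\alpha_\eta = \boldsymbol{\mu}\st(\mathcal{L}_{\pi_{G\st}\sh(\theta_\xi)}\theta_\eta)$ and symmetrically. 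The remaining term transforms because $\boldsymbol{\mu}$ is Poisson: the pointwise identity $\pi(\boldsymbol{\mu}\st\theta_\xi,\boldsymbol{\mu}\st\theta_\eta)=\pi_{G\st}(\theta_\xi,\theta_\eta)\circ\boldsymbol{\mu}$ gives $d(\pi(\alpha_\xi,\alpha_\eta))=\boldsymbol{\mu}\st(d(\pi_{G\st}(\theta_\xi,\theta_\eta)))$. Summing the three pulled-back terms reproduces $\boldsymbol{\mu}\st[\theta_\xi,\theta_\eta]_{\pi_{G\st}}$, as desired.

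The main obstacle is precisely this Lie-derivative step: naturality of $[\cdot,\cdot]_\pi$ is not automatic for pullbacks of arbitrary $1$-forms under an arbitrary smooth map, and it is exactly here that both the $\boldsymbol{\mu}$-relatedness of $\xi_M$ with $l(\xi)$ and the Poisson-map property of the equivariant $\boldsymbol{\mu}$ are indispensable. I would therefore be careful to establish the relatedness before manipulating the Lie derivatives, since the identity $\mathcal{L}_X\boldsymbol{\mu}\st\omega=\boldsymbol{\mu}\st\mathcal{L}_Y\omega$ fails without it; the rest is a routine, if slightly lengthy, bookkeeping through \eqref{eq: bff}.
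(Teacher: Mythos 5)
Your proof is correct and follows essentially the same route as the paper's: identity \eqref{eq: mca} is obtained by pulling back the Maurer--Cartan equation \eqref{eq: mct} (since $d$ commutes with $\boldsymbol{\mu}\st$), and identity \eqref{eq: ah} by pulling back \eqref{eq: theta} using the equivariance (Poisson-map property) of $\boldsymbol{\mu}$. The paper states this in two sentences; your write-up simply makes explicit the $\boldsymbol{\mu}$-relatedness of $\xi_M$ with the dressing field $l(\xi)$ and the term-by-term naturality of \eqref{eq: bff}, which is exactly the content hidden behind the paper's phrase ``using the equivariance of the momentum map.''
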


\begin{proof}
These identities are a direct consequence of the properties of $\theta\in\Omega(G\st)$ stated in the previous section.
In particular, eq. (\ref{eq: mca}) follows from eq. (\ref{eq: mct}) by simply recalling that the pullback and the differential commute. Eq. (\ref{eq: ah}) follows from eq. (\ref{eq: theta}), using the equivariance of the momentum map.
\end{proof}

In the following, we give the definition of infinitesimal momentum map that, as will be seen in the next chapter, plays a fundamental role in the quantization of the momentum map.
\begin{definition}\label{def: inf}
Let $M$ be a Poisson manifold and $G$ a Poisson Lie group. An \textbf{infinitesimal momentum map} is a morphism of Gerstenhaber algebras
\begin{equation}\label{eq: imm}
 \alpha: (\wedge^{\bullet}\mathfrak{g} ,\delta, [\;,\;])\longrightarrow (\Omega^{\bullet} (M), d_{DR},[\;,\;]_\pi ).
\end{equation}
\end{definition}

The following theorem is crucial in the study of the conditions in which an infinitesimal momentum map determines a momentum map in the usual sense.

\begin{theorem}\label{thm: rec}
Let $(M,\pi )$ be a Poisson manifold and $\alpha:{\mathfrak g}\rightarrow \Omega^1 (M)$ a linear map. Suppose that the following relations
\begin{equation}
\begin{split}
\alpha_{[\xi,\eta]}&=[ \alpha_{\xi},\alpha_{\eta}]_{\pi}\\
 d\alpha_{\xi} &=\alpha\wedge\alpha \circ \delta(\xi)
\end{split}
\end{equation}
are satisfied. Then:
\begin{enumerate}
    \item $\{ \alpha_{\xi}-\theta_{\xi},\ \xi\in {\mathfrak g}\}$ generate an involutive distribution $\mathcal D$ on $M \times G^*$. 
    \item Suppose, moreover, that $M$ is connected and simply connected. Then the leaves $\mathcal{F}$ of  $\mathcal D$ coincide with graphs of maps $\boldsymbol{\mu}_{\mathcal{F}}: M\rightarrow G^*$ satisfying 
$\alpha=\boldsymbol{\mu}^*_{\mathcal{F}} (\theta)$ and $G^*$ acts freely transitively on the space of leaves by left multiplication on the second factor.
    \item Vector fields $\pi^{\sharp} (\alpha_{\xi})$ give a homomorphism
\begin{equation}
 \mathfrak{g} \rightarrow TM.
\end{equation}

Suppose that they integrate to the action of $G$ on $M$ (which is automatically the case when $M$ is compact and $G$ simply connected). Then the induced action of $G$ on $M$ is a Poisson action of the Poisson group $G$
and $\boldsymbol{\mu}_{\mathcal{F}}$ is a momentum map for this action if and only if the functions
\begin{equation}
 \phi (\xi,\eta)=\pi (\alpha_{\xi} ,\alpha_{\eta})-\pi_{G^*}(\theta_{\xi},\theta_{\eta})
\end{equation}
satisfy
\begin{equation}
\label{eq:vanishing}
\phi (\xi,\eta)|_{\mathcal{F}}=0
\end{equation}
for all $\xi,\eta\in \mathfrak{g}$.
\end{enumerate}

\end{theorem}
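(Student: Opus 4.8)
The plan is to read the three assertions as, respectively, a Frobenius integrability statement, a transversality/completeness statement for the projection $p_M\colon M\times G^*\to M$, and a computation identifying both the infinitesimal Poisson-action condition and the equivariance condition. For the first part I would work on $M\times G^*$ with the $n$ one-forms $\beta_\xi=\alpha_\xi-\theta_\xi$ ($n=\dim\mathfrak{g}$), $\alpha_\xi$ pulled back from $M$ and $\theta_\xi$ from $G^*$. Since the $\theta_\xi$ form a coframe on $G^*$, the $\beta_\xi$ are pointwise linearly independent, so $\mathcal{D}=\bigcap_\xi\ker\beta_\xi$ is a rank-$\dim M$ distribution, and by Frobenius it suffices to show the $\beta_\xi$ generate a differential ideal. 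The key observation is that $\alpha_\xi$ satisfies the same quadratic Maurer--Cartan equation with respect to $\delta$ as the left-invariant forms $\theta_\xi$ do on $G^*$ (eq.\ \ref{eq: mct}). Writing $\delta(\xi)=\sum\xi'\wedge\xi''$, I would use the telescoping identity $\alpha_{\xi'}\wedge\alpha_{\xi''}-\theta_{\xi'}\wedge\theta_{\xi''}=\beta_{\xi'}\wedge\alpha_{\xi''}+\theta_{\xi'}\wedge\beta_{\xi''}$ to conclude that $d\beta_\xi=\sum(\beta_{\xi'}\wedge\alpha_{\xi''}+\theta_{\xi'}\wedge\beta_{\xi''})$ lies in the ideal generated by the $\beta$'s, giving involutivity.

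For the second part, note first that $\mathcal{D}$ is transverse to the fibers of $p_M$: a vertical vector $(0,v)$ is annihilated by every $\beta_\xi$ only if $\theta_\xi(v)=0$ for all $\xi$, i.e. $v=0$. Hence $dp_M$ restricts to an isomorphism $\mathcal{D}_{(m,u)}\to T_mM$ — given $v_M$, the equations $\theta_\xi(v_{G^*})=\alpha_\xi(v_M)$ determine $v_{G^*}$ uniquely because the $\theta_\xi$ are a coframe — so $\mathcal{D}$ is a flat Ehresmann connection on the trivial bundle $M\times G^*\to M$. The crucial point is completeness: the horizontal-lift equation for a path $\gamma$ in $M$ reads $\theta_\xi(c'(t))=\alpha_\xi(\gamma'(t))$, which by left-invariance of $\theta$ becomes the left-invariant time-dependent ODE $c'(t)=(\lambda_{c(t)})_*\omega(t)$ on $G^*$, and such equations are complete on any Lie group. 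Thus every leaf covers $M$ via $p_M$, and since $M$ is connected and simply connected each leaf projects diffeomorphically onto $M$, i.e. is the graph of a smooth $\boldsymbol{\mu}_{\mathcal{F}}\colon M\to G^*$; pulling $\beta_\xi|_{\mathcal{F}}=0$ back along the graph gives $\alpha=\boldsymbol{\mu}_{\mathcal{F}}^*(\theta)$. Finally, because both $\alpha_\xi$ and $\theta_\xi$ are invariant under left translation in the $G^*$-factor, $\mathcal{D}$ is $G^*$-invariant; this action permutes leaves and, restricted to any fiber of $p_M$ (which meets each leaf once), is free and transitive, giving the free transitive $G^*$-action on the leaf space.

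For the third part, the homomorphism property is immediate: $\pi^{\sharp}$ is a Lie-algebra homomorphism $(\Omega^1(M),[\,,\,]_\pi)\to(TM,[\,,\,])$ by Theorem \ref{thm: bf}, so $\alpha_{[\xi,\eta]}=[\alpha_\xi,\alpha_\eta]_\pi$ gives $[\pi^{\sharp}\alpha_\xi,\pi^{\sharp}\alpha_\eta]=\pi^{\sharp}\alpha_{[\xi,\eta]}$. Setting $\xi_M=\pi^{\sharp}\alpha_\xi$, I would verify the Poisson-action condition \ref{eq: pa} using the standard Poisson identity $(\mathcal{L}_{\pi^{\sharp}\alpha}\pi)(\beta,\gamma)=-d\alpha(\pi^{\sharp}\beta,\pi^{\sharp}\gamma)$ together with $d\alpha_\xi=\alpha\wedge\alpha\circ\delta(\xi)$ and $\wedge^2\pi^{\sharp}(\alpha_{\xi'}\wedge\alpha_{\xi''})=\xi'_M\wedge\xi''_M$, which yields $\mathcal{L}_{\xi_M}\pi=-(\delta(\xi))_M$; once the generators integrate, this makes the $G$-action Poisson. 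It then remains to characterize when $\boldsymbol{\mu}_{\mathcal{F}}$ is a momentum map. The defining relation $\xi_M=\pi^{\sharp}(\boldsymbol{\mu}_{\mathcal{F}}^*\theta_\xi)$ of Definition \ref{def: mm} holds automatically since $\alpha=\boldsymbol{\mu}_{\mathcal{F}}^*(\theta)$; the remaining requirement is equivariance, which as recalled earlier is equivalent to $\boldsymbol{\mu}_{\mathcal{F}}$ being a Poisson map, i.e. $\pi(\boldsymbol{\mu}_{\mathcal{F}}^*\theta_\xi,\boldsymbol{\mu}_{\mathcal{F}}^*\theta_\eta)=\boldsymbol{\mu}_{\mathcal{F}}^*(\pi_{G^*}(\theta_\xi,\theta_\eta))$ for all $\xi,\eta$ (it suffices to test on the coframe $\theta$). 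Since $\boldsymbol{\mu}_{\mathcal{F}}^*\theta_\xi=\alpha_\xi$ and the graph is the leaf $\mathcal{F}$, this is precisely $\pi(\alpha_\xi,\alpha_\eta)|_{\mathcal{F}}=\pi_{G^*}(\theta_\xi,\theta_\eta)|_{\mathcal{F}}$, i.e. $\phi(\xi,\eta)|_{\mathcal{F}}=0$.

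I expect the main obstacle to be the completeness step in part (2): transversality and the local isomorphism $dp_M|_{\mathcal{D}}$ are routine, but arguing that the leaves are \emph{global} graphs rather than merely immersed local sections requires the completeness of the horizontal-lift ODE, for which the left-invariance of the $\theta_\xi$ and the simple connectedness of $M$ are both essential. A secondary point to handle with care is the bookkeeping of coefficient and sign conventions, so that the quadratic equations for $\alpha$ and $\theta$ genuinely match in part (1) and so that the identity for $\mathcal{L}_{\xi_M}\pi$ reproduces \ref{eq: pa} with the correct sign.
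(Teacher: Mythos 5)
Your proposal follows essentially the same route as the paper's proof: the Maurer--Cartan identity for $\alpha-\theta$ gives involutivity of $\mathcal{D}$, the projection $p_1$ restricted to a leaf is a covering of the connected, simply connected $M$ and hence a diffeomorphism whose inverse yields the graph map $\boldsymbol{\mu}_{\mathcal{F}}=p_2\circ p_1^{-1}$, left invariance of the $\theta$'s gives the free transitive $G^*$-action on the leaf space, and the vanishing of $\phi$ on $\mathcal{F}$ is exactly the statement that $\boldsymbol{\mu}_{\mathcal{F}}$ is a Poisson (hence equivariant) map. The one place you go beyond the paper is the covering-map step: the paper passes from ``$p_1|_{\mathcal{F}}$ is an immersion between manifolds of equal dimension'' directly to ``covering map,'' whereas your completeness argument for the left-invariant horizontal-lift ODE on $G^*$ supplies precisely the justification that claim needs, so that addition is a genuine improvement rather than a deviation.
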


\begin{proof}
\begin{enumerate}
    \item Using the eqs. (\ref{eq: mct})-(\ref{eq: mca}), the $\mathfrak{g}$-valued form $\alpha -\theta$ on $M\times G^*$ satisfies
\begin{equation}
 d(\alpha -\theta )=(\alpha -\theta )\wedge (\alpha -\theta )
\end{equation}
and hence it defines a distribution on $M\times G^*$. Let $\mathcal{F}$ be any of its leaves. Let $p_i$, $i=1,2$ denote the projection onto the first (resp. second) factor in $M\times G^*$. Since the linear span of 
$\theta_{\xi} ,\xi\in \mathfrak{g}$ at any point $u\in G^*$ coincides with $T_u^*{G^*}$, the restriction of the projection $p_1:M\times G \rightarrow M$ to $\mathcal{F}$ is an immersion. 
Moreover, since $dim(M)=dim(\mathcal{F})$, $p_1$ is a covering map. 
    \item As we assumed that $M$ is simply connected, $p_1$ is a diffeomorphism and
\begin{equation}
 \boldsymbol{\mu}_{\mc{F}}=p_2 \circ p_1^{-1}
\end{equation}
is a smooth map whose graph coincides with $\mathcal{F}$. It is immediate, that $\alpha =\boldsymbol{\mu}_{\mathcal{F}}^* (\theta)$.

The statement about the action of $G^*$ on the space of leaves follows from the fact that $\theta$'s are left invariant.
    \item Suppose that the condition (\ref{eq:vanishing}) is satisfied. Then
\begin{equation}
 \pi (\alpha_{\xi},\alpha_{\eta}) = \boldsymbol{\mu}^*_{\mathcal{F}} (\pi_{G^*}(\theta_{\xi},\theta_{\eta}))
\end{equation}
and $Ker {\boldsymbol{\mu}_{\mathcal{F}}}_*$ coincides with the set of zero's of $\alpha_{\xi},\ {\xi}\in \mathfrak{g}$. Hence, $\boldsymbol{\mu}_{\mathcal{F}}$ is a Poisson map and, in particular
\begin{equation}
	{\boldsymbol{\mu}_{\mathcal{F}}}_* ( \pi^{\sharp} (\alpha_{\xi}))=\pi_{G^*}^\sharp (\theta_{\xi}),
\end{equation}
i.e. it is a $G$-equivariant map.
\end{enumerate}
\end{proof}

We are interested in understanding when the condition 
\begin{equation}\label{eq:MC}
d\alpha_{\xi} +\frac{1}{2}\alpha\wedge\alpha\circ\delta(\xi)=0 ,
\end{equation}
is satisfied. We show that it can be solved explicitly, at least  in the case when $M$ is a K\"{a}hler manifold.
\begin{definition}
Two solutions $\alpha$ and $\alpha'$ of eq. (\ref{eq:MC}) are said to be \textbf{gauge equivalent}, if there exists a smooth function $H:M\rightarrow \mk{g}^*$ such that
\begin{equation}
\alpha'=\exp(ad H)(\alpha)+\int_0^1 dt \exp t(ad H)(dH)
\end{equation}
\end{definition}
\begin{theorem}
Suppose that $M$ is is a K\"{a}hler manifold. The set of gauge equivalence classes of $\alpha\in \Omega^1 (M,\mk{g}^*)$ satisfying the equation  
\begin{equation}
	d\alpha_{\xi} +\frac{1}{2}\alpha\wedge\alpha\circ\delta(\xi)=0	
\end{equation}
is in bijective correspondence with the set of the cohomology classes $c\in H^1(M,\mk{g}^*)$ satisfying
\begin{equation}
	[c,c]=0.
\end{equation}
\end{theorem}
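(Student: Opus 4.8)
The plan is to recognize equation (\ref{eq:MC}) as the Maurer--Cartan equation of a differential graded Lie algebra (DGLA) built from the de Rham complex of $M$ and the Lie algebra $(\mathfrak{g}^*,[\,\cdot\,,\cdot\,]_*)$ dual to $\delta$, and then to use the formality of compact K\"ahler manifolds to transport the entire problem to the cohomology of $M$, where the Maurer--Cartan equation degenerates into the quadratic condition $[c,c]=0$.

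First I would assemble the DGLA. Put $L^k=\Omega^k(M)\otimes\mathfrak{g}^*$, with differential $d=d_{DR}\otimes\mathrm{id}$ and bracket $[\omega\otimes x,\eta\otimes y]=(\omega\wedge\eta)\otimes[x,y]_*$, extended by graded antisymmetry. Since $d$ is a derivation of $\wedge$ and $[\,\cdot\,,\cdot\,]_*$ is a Lie bracket, $(L,d,[\,\cdot\,,\cdot\,])$ is a DGLA. Identifying a linear map $\alpha:\mathfrak{g}\to\Omega^1(M)$ with an element of $L^1$, a short coefficient computation shows that pairing $\tfrac12[\alpha,\alpha]$ with $\xi\in\mathfrak{g}$ gives exactly $\alpha\wedge\alpha\circ\delta(\xi)$, so (\ref{eq:MC}) is precisely the Maurer--Cartan equation $d\alpha+\tfrac12[\alpha,\alpha]=0$ in $L$. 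Likewise the identity $\int_0^1\exp(t\,\mathrm{ad}H)\,dt=\tfrac{\exp(\mathrm{ad}H)-1}{\mathrm{ad}H}$ identifies the stated gauge equivalence with the standard gauge action of $\exp(L^0)=\exp(\Omega^0(M,\mathfrak{g}^*))$ on Maurer--Cartan elements. Thus the left-hand side of the theorem is the set of isomorphism classes of the Deligne groupoid $\mathrm{MC}(L)/\text{gauge}$.

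Next comes the geometric input. By the Deligne--Griffiths--Morgan--Sullivan theorem, the de Rham algebra of a compact K\"ahler manifold is formal, via the zig-zag $(\Omega^\bullet(M),d)\leftarrow(\ker d^c,d)\rightarrow(H^\bullet(M),0)$ of quasi-isomorphisms furnished by the $\partial\bar\partial$-lemma. Tensoring this zig-zag with $(\mathfrak{g}^*,[\,\cdot\,,\cdot\,]_*)$ yields a chain of DGLA quasi-isomorphisms from $L$ to its cohomology $(H^\bullet(M,\mathfrak{g}^*),0,[\,\cdot\,,\cdot\,])$. I would then invoke the standard fact that a quasi-isomorphism of DGLAs induces a bijection on gauge-equivalence classes of Maurer--Cartan elements (invariance of the Deligne groupoid up to equivalence). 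This reduces the problem to the zero-differential DGLA $H^\bullet(M,\mathfrak{g}^*)$, whose Maurer--Cartan elements are exactly the $c\in H^1(M,\mathfrak{g}^*)$ with $[c,c]=0$ (the bracket being cup product combined with $[\,\cdot\,,\cdot\,]_*$, landing in $H^2$), and whose residual gauge action is that of $\exp H^0(M,\mathfrak{g}^*)$. For connected $M$ this residual action is the adjoint action of the constants $\mathfrak{g}^*$, which preserves the locus $[c,c]=0$; I would either pass to its orbit space or note that the statement suppresses this $H^0$-freedom, arriving at the claimed correspondence.

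A more hands-on realization, which is where I expect the real difficulty to sit, is the Kuranishi method via Hodge theory: writing $\mathrm{id}=\mathbb{H}+(dd^*+d^*d)G$ with Green's operator $G$ and harmonic projection $\mathbb{H}$, one solves $\alpha=a-\tfrac12 d^*G[\alpha,\alpha]$ iteratively from a harmonic representative $a$ of $c$, and the sole obstruction to solvability is the harmonic part $\mathbb{H}[\alpha,\alpha]$, whose leading term is $[a,a]$ and represents $[c,c]\in H^2$. The crux --- and the main obstacle --- is to prove that \emph{all higher-order contributions} to this obstruction vanish; this is exactly where compact K\"ahlerness enters, through the $\partial\bar\partial$-lemma (the ``principle of two types''), which forces the wedge of harmonic forms to differ from a harmonic form by a $dd^c$-exact term and thereby kills every higher bracket and Massey product, i.e.\ makes $L$ formal. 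Granting this, the remaining steps --- the DGLA bookkeeping, the elliptic estimates needed for convergence of the Kuranishi iteration on the compact $M$, and the careful treatment of the constant ($H^0$) gauge freedom --- are routine.
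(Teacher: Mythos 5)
Your proposal is correct and follows essentially the same route as the paper: both recognize the equation as the Maurer--Cartan equation in the DGLA obtained by tensoring $(\Omega^{\bullet}(M),d)$ with the Lie algebra $\mathfrak{g}^*$, invoke the Deligne--Griffiths--Morgan--Sullivan formality of compact K\"ahler manifolds, and use the invariance of gauge-equivalence classes of Maurer--Cartan elements under DGLA quasi-isomorphisms to reduce to the condition $[c,c]=0$ in cohomology. The additional Kuranishi/Hodge-theoretic discussion and the remarks on the residual $H^0$ gauge action are elaborations beyond what the paper records, but the core argument coincides.
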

\begin{proof} 
Since $M$ is is a K\"{a}hler manifold,  $(\Omega^{\bullet}(M),d)$ is a formal CDGA (commutative differential graded algebra) \cite{GDMS}. As a consequence,
\begin{equation}
Hom({\mathfrak g^*},\Omega^{\bullet} (M)),d,[\cdot ,\cdot ])
\end{equation}
is a formal DGLA (some elements of DGLA's will be given in the next chapter) and, in particular, there exists a bijection between the equivalence classes of Maurer Cartan elements of $Hom({\mathfrak g^*},\Omega^{\bullet} (M),d,[\cdot ,\cdot ])$
and Maurer Cartan elements of $Hom({\mathfrak g^*},H_{DR}^{\bullet} (M),[\cdot ,\cdot ])$.

A Maurer-Cartan element in $Hom({\mathfrak g^*},H_{DR}^{\bullet} (M),[\cdot ,\cdot ])$ is an element $c\in H^1(M,\mk{g}^*)$ satisfying
\begin{equation}
	[c,c]=0,
\end{equation}
and the claim is proved.
\end{proof}

\subsection{Reconstruction problem}
\label{sec: rec}
In this section we discuss the conditions under which the distribution $\mathcal D$ defined in Theorem \ref{thm: rec}
admits a leaf satisfying eq. (\ref{eq:vanishing}).
In particular, we analyze the case where the structure on $G\st$ is trivial and the Heisenberg group case.
In the following we keep the assumption that $M$ is connected and simply connected.

\paragraph{Abelian case}
Suppose that $G^*=\mathfrak{g}^*$ is abelian. Then, the forms $\alpha_{\xi}$ satisfy $d\alpha_{\xi}=0$, hence  $\alpha_{\xi}=dH_{\xi}$ (since $H^1 (M)=0$), for some $H_{\xi}\in C^{\infty}(M)$.

Let us denote by $ev_{\xi}$ the linear functions $\mathfrak{g}^*\ni l\rightarrow z(\xi)$. Then $\theta_{\xi}=d(ev_{\xi})$ and the leaves of the distribution $\mathcal D$ coincide with the level sets (on $M\times \mathfrak{g}^*$) of the functions
\begin{equation}
	\{ H_{\xi} -ev_{\xi} \mid \xi\in\mathfrak{g} \}.	
\end{equation}
Furthermore, we have
\begin{equation}
\phi (\xi,\eta)(m,z)=\{ H_{\xi} ,H_{\eta} \} - z[[\xi,\eta]).	
\end{equation}

In this case, the basic identity (\ref{eq: bff}) reduces to
\begin{equation}
	d\{ H_{\xi} ,H_{\eta} \}=dH_{[\xi,\eta]},
\end{equation}
hence
\begin{equation}
	\{ H_{\xi} ,H_{\eta} \}-H_{[\xi,\eta]}=c(\xi,\eta),
\end{equation}
for some constants $c(\xi,\eta)$. By the Jacobi identity, the constants $c(\xi,\eta)$ define a class $[c]\in H^2 (\mathfrak{g} ,\R)$. Suppose that this class vanishes (for example if $\mathfrak{g}$ semisimple). 
Then, there exists a $z_0\in\mathfrak{g}^*$ such that $c(\xi,\eta)=z_0([\xi,\eta])$. Hence, given a leaf $\mathcal{F}$,
\begin{equation}
	\phi (\xi,\eta)|_{\mathcal{F}}=0	
\end{equation}
if and only if $\mathcal{F}$ is given by
\begin{equation}
	H_{\xi}-ev_{\xi} -z_0(\xi)=0.	
\end{equation}

In other words, the space of leaves of $\mathcal D$ which give a momentum map coincides with the affine space modeled on
$\{ z\in \mathfrak{g}^* | z|_{[\mathfrak{g} ,\mathfrak{g} ]}=0\}$ (which again vanishes when $\mathfrak{g}$ is semisimple). This proves the following theorem.
\begin{theorem}
Suppose that $G$ is a connected and simply connected Lie group with trivial Poisson structure and $M$ is compact. Then an infinitesimal momentum map is a map $\mathfrak{g} \ni \xi\rightarrow C^\infty (M)$ such that 
\begin{equation}
	d\{ H_{\xi} ,H_{\eta}\} =dH_{[\xi,\eta]},\ \forall \xi,\eta\in \mathfrak{g} .
\end{equation}
$c(\xi,\eta)=\{ H_{\xi} ,H_{\eta}\} -H_{[\xi,\eta]} $ is a two cocycle $c$ on $\mathfrak{g}$ with values in $\R$. The infinitesimal momentum map $\alpha$ is generated by a momentum map 
$\boldsymbol{\mu}$ if this cocycle vanishes and, in this case, $\boldsymbol{\mu}$ is unique.
\end{theorem}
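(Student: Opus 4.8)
The plan is to collapse the Gerstenhaber-morphism data of Definition~\ref{def: inf} to elementary Poisson-bracket bookkeeping, and then to feed the result into Theorem~\ref{thm: rec}. Since $G$ carries the trivial Poisson structure we have $\delta=d_e\pi_G=0$ and $G\st=\mk{g}\st$ is abelian, so the structure equation \eqref{eq: mca} degenerates to $d\alpha_\xi=0$. As $M$ is compact, connected and simply connected, $H^1(M)=0$, hence every closed $1$-form is exact and I may write $\alpha_\xi=dH_\xi$ with $\xi\mapsto H_\xi$ linear. Feeding $\alpha_\xi=dH_\xi$ into the bracket relation $\alpha_{[\xi,\eta]}=[\alpha_\xi,\alpha_\eta]_\pi$ and using $[df,dg]_\pi=d\{f,g\}$ from Theorem~\ref{thm: bf}, I obtain $dH_{[\xi,\eta]}=d\{H_\xi,H_\eta\}$, the defining relation in the statement; the converse --- that such an $H$ assembles into a Gerstenhaber morphism $\alpha$ --- is the same computation read in reverse.

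Next I would build the cocycle. Connectedness of $M$ forces $c(\xi,\eta):=\{H_\xi,H_\eta\}-H_{[\xi,\eta]}$ to be a real constant, and it is manifestly skew. For the Chevalley--Eilenberg condition I expand $\{\{H_\xi,H_\eta\},H_\zeta\}$; as $c(\xi,\eta)$ is constant it Poisson-commutes with $H_\zeta$, giving $\{\{H_\xi,H_\eta\},H_\zeta\}=H_{[[\xi,\eta],\zeta]}+c([\xi,\eta],\zeta)$. Summing over the cyclic permutations of $(\xi,\eta,\zeta)$, the Jacobi identity for $\{\cdot,\cdot\}$ annihilates the left-hand side while the Jacobi identity for $[\cdot,\cdot]$ and linearity of $\xi\mapsto H_\xi$ annihilate the $H$-terms; what remains is $\sum_{\mathrm{cyc}}c([\xi,\eta],\zeta)=0$, the $2$-cocycle identity for the trivial module $\R$.

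Then I invoke Theorem~\ref{thm: rec}. Its first two parts apply verbatim (both hypotheses hold, with $\delta=0$), so each leaf of $\mc{D}$ on $M\times\mk{g}\st$ is the graph of a smooth $\boldsymbol{\mu}_{\mc{F}}:M\ra\mk{g}\st$. For the third part I compute $\phi(\xi,\eta)=\pi(\alpha_\xi,\alpha_\eta)-\pi_{G\st}(\theta_\xi,\theta_\eta)$: here $\pi(dH_\xi,dH_\eta)=\{H_\xi,H_\eta\}$, while for the linear coordinates $\theta_\xi=d(ev_\xi)$ the Lie--Poisson structure on $\mk{g}\st$ gives $\pi_{G\st}(\theta_\xi,\theta_\eta)(z)=z([\xi,\eta])$, so $\phi(\xi,\eta)(m,z)=\{H_\xi,H_\eta\}(m)-z([\xi,\eta])$. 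On a leaf the functions $H_\xi-ev_\xi$ are constant, say $H_\xi-ev_\xi\equiv -a(\xi)$ for a functional $a\in\mk{g}\st$, so $\boldsymbol{\mu}_{\mc{F}}(m)(\xi)=H_\xi(m)+a(\xi)$; substituting $z(\xi)=H_\xi(m)+a(\xi)$ yields $\phi(\xi,\eta)|_{\mc{F}}=c(\xi,\eta)-a([\xi,\eta])$.

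Finally I read off existence and uniqueness. The vanishing condition \eqref{eq:vanishing} can be met on some leaf precisely when there is a functional $a$ with $c(\xi,\eta)=a([\xi,\eta])$, i.e.\ exactly when $c$ is a coboundary; thus $\alpha$ comes from a genuine momentum map iff $[c]=0$ in $H^2(\mk{g},\R)$, which is automatic, e.g., for semisimple $\mk{g}$. The residual freedom in $a$ is precisely $\{z\in\mk{g}\st : z|_{[\mk{g},\mk{g}]}=0\}$, so $\boldsymbol{\mu}$ is unique once this space is trivial --- again the semisimple case, where ``the cocycle vanishes'' and uniqueness hold simultaneously. I expect the main obstacle to be exactly this last step: correctly identifying the defining functions of the leaf and tracking the constant $a$ through $\phi|_{\mc{F}}$, since this is where the sharpening from ``$c=0$'' to ``$[c]=0$'' appears and where the affine-space parametrization of the set of momentum maps is pinned down.
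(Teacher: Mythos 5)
Your argument is correct and follows essentially the same route as the paper's own derivation in the abelian paragraph of Section \ref{sec: rec}: reduce $d\alpha_\xi=0$ to $\alpha_\xi=dH_\xi$ via $H^1(M)=0$, extract the constant cocycle $c(\xi,\eta)=\{H_\xi,H_\eta\}-H_{[\xi,\eta]}$, and feed the leaves $\{H_\xi-ev_\xi=\mathrm{const}\}$ into Theorem \ref{thm: rec} to see that $\phi|_{\mathcal{F}}=0$ holds on some leaf iff $[c]=0$, with the set of admissible leaves an affine space over $\{z\in\mathfrak{g}^*: z|_{[\mathfrak{g},\mathfrak{g}]}=0\}$. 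The caveats you raise (that the genuine condition is $[c]=0$ rather than $c=0$, and that uniqueness holds only when $z|_{[\mathfrak{g},\mathfrak{g}]}=0$ forces $z=0$, e.g.\ for semisimple $\mathfrak{g}$) are exactly the ones the paper itself records in the discussion preceding the theorem, so your proof is faithful to the intended argument.
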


\paragraph{Heisenberg group case}
Suppose now that $G^*$ is the Heisenberg group. Let $x,y,z$
be a basis for $\mathfrak{g}^*$, where $z$ is central and $[x,y]=z$. Let $\xi,\eta,\zeta$ be the dual basis of $\mathfrak{g}$. Recall that the cocycle $\delta$ on $\mathfrak{g}$ dual to the Lie algebra 
structure on $\mathfrak{g}^*$ is given by
\begin{equation}
	[l_1 ,l_2 ](\xi)=(l_1\wedge l_2) \delta (\xi).	
\end{equation}
Hence, we have
\begin{equation}
	\delta (\xi)=\delta (\eta)=0 \mbox{ and } \delta (\zeta)=\xi\wedge \eta.
\end{equation}
and
\begin{equation}
\begin{split}
d\alpha_{\xi} &=d\alpha_{\eta} =0\\
d\alpha_{\zeta}&=\alpha_{\xi}\wedge \alpha_{\eta} .
\end{split}
\end{equation}

There are essentially two possibilities for the Lie bialgebra structure on $\mathfrak{g}^*$, which give the following two possibilities for the Lie algebra structure on $\mathfrak{g}$. Either
\begin{equation}
[\xi,\eta]=0,[\xi,\zeta]=\xi,[\eta,\zeta]=\eta
\end{equation}
or
\begin{equation}
[\xi,\eta]=0,[\xi,\zeta]=\eta,[\eta,\zeta]=-\xi.
\end{equation}
The result below will turn out to be independent of the choice - the computations will be done using the second choice, which corresponds to $G=\R \ltimes \R^2$, with $\R$ acting by rotation on $\R^2$. 
Below we use the notation
\begin{equation}
\delta(\xi)=\sum_i \xi^1_i\wedge \xi^2_i.
\end{equation}

Applying the Cartan formula $\calL =[\iota ,d]$ and the identity $[ \alpha_{\xi}, \alpha_{\eta}]_\pi =\alpha_{[\xi,\eta]}$ to the basic equation (\ref{eq: bff}), we get
\begin{equation}
\sum_i \pi (\alpha_\eta ,\alpha_{\xi_i^1})\alpha_{\xi^2_i}- \sum_i \pi (\alpha_\xi ,\alpha_{\eta^1_i})\alpha_{\eta^2_i}=\alpha_{[\eta,\xi]} - d \pi (\alpha_\eta,\alpha_\xi).
\end{equation}

In our case it gives the following equations
\begin{equation}
\begin{split}
d\pi (\alpha_{\xi}, \alpha_{\eta})&=\alpha_{[\xi,\eta]}\\
d\pi(\alpha_{\zeta}, \alpha_{\eta})&=\alpha_{[\zeta,\eta]}+\pi(\alpha_{\eta}, \alpha_{\xi}) \alpha_{\eta}\\
d\pi (\alpha_{\zeta}, \alpha_{\xi})&=\alpha_{[\zeta,\xi]}-\pi(\alpha_{\xi} ,\alpha_{\eta})\alpha_{\xi}.
\end{split}
\end{equation}
which are also satisfied after replacing $\alpha$ with $\theta$. Let $\mathcal I $ denote the ideal generating our distribution $\mathcal D$. Then, from above,
\begin{align}\label{eq:constant}
& d\phi (\xi,\eta)\in \mathcal I \\
&\phi (\xi,\eta)|_{\mathcal{F}}=0\Longrightarrow d\phi (\zeta,\eta)|_{\mathcal{F}},d\phi (\zeta,\xi)|_{\mathcal{F}}\in\mathcal I.
\end{align}
Here, as before, $\mathcal{F}$ is a leaf of $\mathcal D$.
Using the relation (\ref{eq: theta2}), we get
\begin{equation}
\begin{split}
&\calL_z^*( \pi_{G^*}(\theta_{\xi},\theta_{\eta}))=\calL_x^*( \pi_{G^*}(\theta_{\xi},\theta_{\eta}))=\calL_y^*( \pi_{G^*}(\theta_{\xi},\theta_{\eta}))=0\\
&\calL_z^*( \pi_{G^*}(\theta_{\xi},\theta_{\zeta}))=\calL_y^*( \pi_{G^*}(\theta_{\xi},\theta_{\zeta}))=0 \qquad\calL_x^*( \pi_{G^*}(\theta_{\xi},\theta_{\zeta}))=1\\
&\calL_z^*( \pi_{G^*}(\theta_{\eta},\theta_{\zeta}))=\calL_x^*( \pi_{G^*}(\theta_{\eta},\theta_{\zeta}))=0 \qquad\calL_y^*( \pi_{G^*}(\theta_{\eta},\theta_{\zeta}))=1
\end{split}
\end{equation}
In particular, $\pi_{G^*} (\theta_{\xi},\theta_{\eta})$ is invariant under left translations. Since $\pi_{G^*}$ is zero at the identity, we get
\begin{equation}
\pi_{G^*} (\theta_{\xi},\theta_{\eta})=0
\end{equation}
By the first equations (\ref{eq:constant}), $\phi (\xi,\eta)$ is leafwise constant, hence so is $\pi(\alpha_{\xi}, \alpha_{\eta})$. Hence we have
\begin{lemma}
 $\pi (\alpha_{\xi}, \alpha_{\eta})=c$ is constant on $M$ and necessary condition for existence of the momentum map is $c=0$.
\end{lemma}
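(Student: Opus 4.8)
The plan is to read off the constancy of $\pi(\alpha_\xi,\alpha_\eta)$ directly from the first of the three differential identities displayed above, and then to feed the resulting constant into the momentum-map criterion of the third part of Theorem \ref{thm: rec}.

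First I would use that, for the Lie bracket on $\mathfrak{g}$ under consideration, $[\xi,\eta]=0$, so by linearity of $\alpha$ the right-hand side of the first identity vanishes, $\alpha_{[\xi,\eta]}=0$, and hence
\[
 d\,\pi(\alpha_\xi,\alpha_\eta)=\alpha_{[\xi,\eta]}=0 .
\]
Since $M$ is assumed connected, a function with vanishing differential is constant, so $\pi(\alpha_\xi,\alpha_\eta)=c$ for some $c\in\mathbb{R}$. Equivalently one may argue as in the text above: the relation $d\phi(\xi,\eta)\in\mathcal{I}$ shows that $\phi(\xi,\eta)$ is leafwise constant, and since by Theorem \ref{thm: rec} each leaf $\mathcal{F}$ projects diffeomorphically onto $M$ under $p_1$, a function pulled back from $M$ that is constant along $\mathcal{F}$ is in fact constant on all of $M$.

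For the second assertion I would invoke the third part of Theorem \ref{thm: rec}, according to which the map $\boldsymbol{\mu}_{\mathcal{F}}$ attached to a leaf $\mathcal{F}$ is a momentum map if and only if the vanishing condition (\ref{eq:vanishing}) holds for all $\xi,\eta\in\mathfrak{g}$. Here the relevant value of $\phi$ is already pinned down: combining the computation $\pi_{G^*}(\theta_\xi,\theta_\eta)=0$ established just before the lemma with the constancy obtained in the previous paragraph, we get
\[
 \phi(\xi,\eta)=\pi(\alpha_\xi,\alpha_\eta)-\pi_{G^*}(\theta_\xi,\theta_\eta)=c ,
\]
which is constant on all of $M\times G^*$, hence on every leaf. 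Therefore $\phi(\xi,\eta)|_{\mathcal{F}}=0$ can hold only if $c=0$, which is exactly the claimed necessary condition for the existence of a momentum map.

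I do not expect any genuine obstacle here: the entire content lies in recognizing that $[\xi,\eta]=0$ forces the $(\xi,\eta)$-component $d\,\pi(\alpha_\xi,\alpha_\eta)$ to vanish, and in feeding this together with the already-computed vanishing of $\pi_{G^*}(\theta_\xi,\theta_\eta)$ into the criterion (\ref{eq:vanishing}). The only point demanding some care is the passage from \emph{constant along each leaf} to \emph{constant on $M$}, which relies on $p_1$ restricting to a diffeomorphism on each leaf under the connected and simply connected hypothesis; the direct differential argument given first sidesteps this subtlety by delivering constancy on $M$ immediately.
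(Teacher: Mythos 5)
Your proposal is correct and follows essentially the same route as the paper: the constancy comes from the displayed identity $d\pi(\alpha_\xi,\alpha_\eta)=\alpha_{[\xi,\eta]}$ together with $[\xi,\eta]=0$, and the necessity of $c=0$ comes from combining $\pi_{G^*}(\theta_\xi,\theta_\eta)=0$ with the vanishing criterion (\ref{eq:vanishing}) of Theorem \ref{thm: rec}. Your direct argument via $d\pi(\alpha_\xi,\alpha_\eta)=0$ on connected $M$ is in fact marginally cleaner than the paper's detour through leafwise constancy of $\phi(\xi,\eta)$, but it is the same proof in substance.
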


Let us continue under the assumption that $c=0$. Then, given a leaf $\mathcal{F}$, by eq. (\ref{eq:constant}),  
\begin{equation}
  \phi(\eta,\zeta)|_{\mathcal{F}}=c_1 \quad \text{and} \quad  \phi(\xi,\zeta)|_{\mathcal{F}}=c_2
\end{equation}
for some constants $c_1$ and $c_2$. Setting $\mathcal{F}_1 =id \times \exp(c_1 x) \exp (c_2 y)$ to $\mathcal{F}$, we get
\begin{equation}
  \phi(\eta,\zeta)|_{\mathcal{F}_1}= \phi(\xi,\zeta)|_{\mathcal{F}_1}= \phi(\xi,\eta)|_{\mathcal{F}_1}=0.
\end{equation}

\begin{theorem} 
Let $G$ be a Poisson Lie group acting on a Poisson manifold $M$ with an infinitesimal momentum map $\alpha$ and such that $G^*$ is the Heisenberg group. 
Let $\xi,\eta,\zeta$ denote the basis of $\mathfrak{g}$ dual to the standard basis $x,y,z$ of $\mathfrak{g}^*$ (with $z$ central and $[x,y]=z$. Then
\begin{equation}
\pi(\alpha_{\xi}, \alpha_{\eta})=c
\end{equation}
is constant on $M$. The form $\alpha$ lifts to a momentum map $\boldsymbol{\mu}:M\rightarrow G^*$ if and only if $c=0$. When $c=0$ the set of momentum maps with given $\alpha$ is one dimensional with free 
transitive action of $\R$.
\end{theorem}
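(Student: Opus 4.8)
The plan is to read everything off from the vanishing criterion of Theorem~\ref{thm: rec}: under the standing hypothesis that $M$ is connected and simply connected, a leaf $\mathcal{F}$ of the distribution $\mathcal{D}$ is the graph of an honest momentum map exactly when the leafwise-constant functions $\phi(\cdot,\cdot)|_{\mathcal{F}}$ of (\ref{eq:vanishing}) all vanish. Since $\phi$ is bilinear and antisymmetric in its two $\mathfrak{g}$-arguments, it suffices to control the three numbers $\phi(\xi,\eta)|_{\mathcal{F}}$, $\phi(\xi,\zeta)|_{\mathcal{F}}$ and $\phi(\eta,\zeta)|_{\mathcal{F}}$. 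I would treat the pair $(\xi,\eta)$ first and the two $\zeta$-pairs afterwards, the split being forced by the fact that $\delta$ is concentrated on $\zeta$ and that $z$ is central.

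For the pair $(\xi,\eta)$: the identities $\calL_x^*=\calL_y^*=\calL_z^*=0$ applied to $\pi_{G^*}(\theta_\xi,\theta_\eta)$ — consequences of (\ref{eq: theta2}) and the centrality of $z$ — show this function is left-invariant, hence identically $0$ since $\pi_{G^*}$ vanishes at $e$. Thus $\phi(\xi,\eta)=\pi(\alpha_\xi,\alpha_\eta)$. The first line of (\ref{eq:constant}) gives $d\phi(\xi,\eta)\in\mathcal{I}$, so $\phi(\xi,\eta)$ is leafwise constant; as $p_1\colon\mathcal{F}\to M$ is a diffeomorphism by Theorem~\ref{thm: rec}, the function $\pi(\alpha_\xi,\alpha_\eta)$ is a genuine constant $c$ on $M$, which is the first assertion. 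A momentum map requires $\phi(\xi,\eta)|_{\mathcal{F}}=0$, and since this value equals $c$ on every leaf, it forces $c=0$ — necessity.

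For sufficiency I would assume $c=0$, so that $\phi(\xi,\eta)$ vanishes on all leaves and only the two $\zeta$-pairs survive. The second line of (\ref{eq:constant}) then yields $d\phi(\zeta,\eta)|_{\mathcal{F}},d\phi(\zeta,\xi)|_{\mathcal{F}}\in\mathcal{I}$, so that $\phi(\eta,\zeta)|_{\mathcal{F}}=c_1$ and $\phi(\xi,\zeta)|_{\mathcal{F}}=c_2$ are leafwise constants. Now I invoke the free transitive left $G^*$-action on the space of leaves (Theorem~\ref{thm: rec}) together with $\calL_x^*\pi_{G^*}(\theta_\xi,\theta_\zeta)=1$ and $\calL_y^*\pi_{G^*}(\theta_\eta,\theta_\zeta)=1$ (the remaining relevant derivatives vanishing): the $x$- and $y$-translations independently control the two constants, so a unique translation of the form $\mathrm{id}\times\exp(c_1 x)\exp(c_2 y)$ sends $(c_1,c_2)$ to $(0,0)$ without disturbing $\phi(\xi,\eta)$. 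This produces a leaf $\mathcal{F}_1$ on which all three $\phi$ vanish, hence the graph of a momentum map by Theorem~\ref{thm: rec}.

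For the $\mathbb{R}$-torsor statement I would determine which further translations preserve the simultaneous vanishing. Since $z$ is central and $\calL_z^*$ annihilates each of $\pi_{G^*}(\theta_\xi,\theta_\eta)$, $\pi_{G^*}(\theta_\xi,\theta_\zeta)$ and $\pi_{G^*}(\theta_\eta,\theta_\zeta)$, translation by $\exp(sz)$, $s\in\mathbb{R}$, leaves all three $\phi$ at $0$ and thus carries momentum maps to momentum maps; freeness is inherited from the free $G^*$-action. Conversely, two momentum-map leaves differ by a unique $g\in G^*$, and the displayed $x$- and $y$-derivatives force the $x$- and $y$-components of $g$ to vanish (a nonzero one would make $\phi(\xi,\zeta)$ or $\phi(\eta,\zeta)$ nonzero), so $g\in\exp(\mathbb{R}z)$. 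Hence the set of momentum maps is a single free orbit of $\exp(\mathbb{R}z)\cong\mathbb{R}$, giving both one-dimensionality and free transitivity. The \textbf{main obstacle} is that, in contrast to $\pi(\alpha_\xi,\alpha_\eta)$, the functions $\pi(\alpha_\eta,\alpha_\zeta)$ and $\pi(\alpha_\xi,\alpha_\zeta)$ are generally \emph{not} constant on $M$; one must extract only their leafwise constancy from the conditional implication in (\ref{eq:constant}), valid solely after imposing $c=0$, and then check that the central correction in the Heisenberg product never feeds back into the two normalized constants.
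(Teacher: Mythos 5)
Your proposal is correct and follows essentially the same route as the paper: the left-invariance argument forcing $\pi_{G^*}(\theta_\xi,\theta_\eta)=0$, the leafwise-constancy deductions from (\ref{eq:constant}), necessity of $c=0$, and the normalizing translation $\mathrm{id}\times\exp(c_1x)\exp(c_2y)$ are exactly the paper's steps. Your explicit verification that the residual freedom is the central one-parameter subgroup $\exp(\mathbb{R}z)$ fills in a detail the paper leaves implicit, but it is the intended argument.
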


\subsection{Infinitesimal deformations of a momentum map}
\label{sec: inf}

In the following we study the behavior of deformations of a momentum map, close to the identity. Indeed, we consider a deformation of $\boldsymbol{\mu}$ given by the map $X: M\rightarrow \mathfrak {g}\st$
and we discuss the property of the infinitesimal generator of the action induced by this deformed momentum map. 

\begin{theorem}\label{thm: idef}
Let $(M,\pi)$ be a Poisson manifold with a Poisson action of a Poisson Lie group $(G,\pi_G)$. Suppose that $[-\epsilon ,\epsilon ]\ni t\rightarrow \boldsymbol{\mu}_t:M\rightarrow G^*$ is a differentiable 
path of momentum maps for this action. Let $\exp :{\mathfrak g}^*\rightarrow G^*$ denote the exponential map. We can assume that $\boldsymbol{\mu}_t$ is of the form $m\rightarrow \boldsymbol{\mu} (m)\exp (t X_m) +o(\epsilon)$ 
for some differentiable map $X:M\rightarrow {\mathfrak g}^*: m\mapsto X_m $. Then, for all $\xi, \eta \in \mathfrak g$,
\begin{align}
\label{eq: inf1}\mc{L}_\xi X (\eta)-\mc{L}_\eta X (\xi) &=X( [\xi,\eta])\\
\label{eq: inf2}\{ X(\xi),\ \cdot \} &= -\mc{L}_{ad^*_X\xi}.
\end{align}
\end{theorem}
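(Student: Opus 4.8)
The plan is to differentiate, at $t=0$, the two structural identities that each momentum map $\boldsymbol{\mu}_t$ in the path satisfies: the defining relation $\xi_M=\pi^{\sharp}(\boldsymbol{\mu}_t^{*}\theta_{\xi})$ of Definition \ref{def: mm}, and the Poisson–map (equivariance) relation $\pi(\boldsymbol{\mu}_t^{*}\theta_{\xi},\boldsymbol{\mu}_t^{*}\theta_{\eta})=\pi_{G^{*}}(\theta_{\xi},\theta_{\eta})\circ\boldsymbol{\mu}_t$. Writing $\alpha_{\xi}=\boldsymbol{\mu}_0^{*}\theta_{\xi}$ and $\beta_{\xi}=\frac{d}{dt}\big|_{0}\boldsymbol{\mu}_t^{*}\theta_{\xi}$, the first task is to compute $\beta_{\xi}$ from the prescribed form $\boldsymbol{\mu}_t(m)=\boldsymbol{\mu}(m)\exp(tX_m)$ (only the first–order part matters, the $o(\epsilon)$ dropping out). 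Using the product rule for the pullback of the left Maurer–Cartan form under a pointwise right translation, together with $\frac{d}{dt}\big|_{0}Ad_{\exp(-tX)}=-ad_{X}$, I expect $\beta_{\xi}=d(X(\xi))+\alpha_{ad^{*}_{X}\xi}$, where $X(\xi)=\langle X,\xi\rangle$ and $ad^{*}$ is the coadjoint action of $\mathfrak{g}^{*}$ on $\mathfrak{g}$ (the exact sign following the conventions fixed in Definition \ref{def: mm}).

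Since the action is fixed along the path, $\xi_M=\pi^{\sharp}(\boldsymbol{\mu}_t^{*}\theta_{\xi})$ is $t$–independent, so differentiation gives $\pi^{\sharp}(\beta_{\xi})=0$. Substituting the formula for $\beta_{\xi}$ and using $\pi^{\sharp}(\alpha_{ad^{*}_{X}\xi})=(ad^{*}_{X}\xi)_M$ together with $\pi^{\sharp}(d(X(\xi)))=\{\,\cdot\,,X(\xi)\}$ yields (\ref{eq: inf2}). This vanishing $\pi^{\sharp}(\beta_{\xi})=0$ is the engine for everything else: by (\ref{eq: pish}) it forces $\iota_{\xi_M}\beta_{\eta}=\pi(\alpha_{\xi},\beta_{\eta})=0$ for all $\xi,\eta$.

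For (\ref{eq: inf1}) I would write $\mc{L}_{\xi}X(\eta)=\iota_{\xi_M}d(X(\eta))=\iota_{\xi_M}(\beta_{\eta}-\alpha_{ad^{*}_{X}\eta})$; the $\beta$–term drops by the previous remark, leaving $\mc{L}_{\xi}X(\eta)=-\langle\alpha_{ad^{*}_{X}\eta},\xi_M\rangle=-\pi(\alpha_{\xi},\alpha_{ad^{*}_{X}\eta})$. Antisymmetrizing in $\xi,\eta$, converting each $\pi(\alpha_{\bullet},\alpha_{\bullet})$ into $\pi_{G^{*}}(\theta_{\bullet},\theta_{\bullet})\circ\boldsymbol{\mu}$ by equivariance, and then applying the structural identity (\ref{eq: theta2}) with $x=X_m$, I expect to obtain $X([\xi,\eta])$ together with a leftover term of the shape $\mc{L}_{X}\pi_{G^{*}}(\theta_{\xi},\theta_{\eta})\circ\boldsymbol{\mu}$, i.e. the derivative of $\pi_{G^{*}}(\theta_{\xi},\theta_{\eta})$ along the left–invariant field with value $X_m$.

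Disposing of this leftover term is the main obstacle, and the key observation is that it is precisely the $t$–derivative of the right–hand side of the equivariance relation: differentiating $\pi(\boldsymbol{\mu}_t^{*}\theta_{\xi},\boldsymbol{\mu}_t^{*}\theta_{\eta})=\pi_{G^{*}}(\theta_{\xi},\theta_{\eta})\circ\boldsymbol{\mu}_t$ at $t=0$ gives $\pi(\beta_{\xi},\alpha_{\eta})+\pi(\alpha_{\xi},\beta_{\eta})=\mc{L}_{X}\pi_{G^{*}}(\theta_{\xi},\theta_{\eta})\circ\boldsymbol{\mu}$, and the left–hand side vanishes because $\pi^{\sharp}(\beta_{\xi})=0$. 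Hence the leftover term is zero and (\ref{eq: inf1}) follows. I would close by noting that (\ref{eq: inf1}) says exactly that $\xi\mapsto X(\xi)$ is a $1$–cocycle of $\mathfrak{g}$ with values in $C^{\infty}(M)$, in line with the announced description of the tangent space to the space of momentum maps in terms of $H^{1}(\mathfrak{g},C^{\infty}(M))$.
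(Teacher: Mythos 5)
Your proposal is correct and follows essentially the same route as the paper's proof: the same first-order expansion $\beta_\xi=\alpha_{ad^*_X\xi}+dX(\xi)$, the identity (\ref{eq: inf2}) from the $t$-independence of $\pi^{\sharp}(\boldsymbol{\mu}_t^*\theta_\xi)$, and the identity (\ref{eq: inf1}) from differentiating the Poisson-map relation and invoking (\ref{eq: theta2}). The only difference is organizational — the paper substitutes $\beta$ directly into the differentiated equivariance relation and cancels, whereas you first extract $\mc{L}_\xi X(\eta)$ by contraction and then kill the leftover $\mc{L}_X\pi_{G^*}$ term — but these are algebraically equivalent rearrangements of the same computation.
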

\begin{proof}
Assuming that the deformed momentum maps can be written as $\boldsymbol{\mu}_t(m)=\boldsymbol{\mu} (m)\exp (t X_m)$ then we have $\alpha^t_\xi =\boldsymbol{\mu}_t^* (\theta_\xi )=\langle d\boldsymbol{\mu}_t,\theta_\xi \rangle$. 
We want to figure out its behavior close to the identity so we calculate $\left.\frac{d}{dt}\right|_{t=0}\langle d\boldsymbol{\mu}_t,\theta_\xi \rangle$. First notice that
\begin{equation}
 d\boldsymbol{\mu}_t = (\rho_{\exp (t X)})_*d\boldsymbol{\mu} +(\lambda_{\boldsymbol{\mu}})_*d \exp (t X),
\end{equation}
so we get:
\begin{equation}
\begin{split}
 \left.\frac{d}{dt}\right|_{t=0}\langle (\rho_{\exp (t X)})_*d\boldsymbol{\mu},\theta_{\xi} \rangle &=\left.\frac{d}{dt}\right|_{t=0}\langle d\boldsymbol{\mu},(\rho_{\exp (t X)})^*\theta_{\xi}\rangle\\
&=\langle d\boldsymbol{\mu},\mc{L}_X\theta_{\xi}\rangle\\
&=\langle d\boldsymbol{\mu},\theta_{ad^*_X\xi}\rangle=\alpha_{ad^*_X\xi}
\end{split}
\end{equation}
and
\begin{equation}
\begin{split}
 \left.\frac{d}{dt}\right|_{t=0}\langle (\lambda_{\boldsymbol{\mu}})_*d \exp (t X),\theta_{\xi} \rangle &=\left.\frac{d}{dt}\right|_{t=0}\langle d \exp (t X),(\lambda_{\boldsymbol{\mu}})^*\theta_{\xi} \rangle\\
&=\left.\frac{d}{dt}\right|_{t=0}\langle d \exp (t X),\theta_{\xi} \rangle
\end{split}
\end{equation}
The differential of the exponential map $\exp: {\mathfrak g}^*\rightarrow G^*$ is a map from the cotangent bundle of ${\mathfrak g}^*$ to the cotangent bundle of $G\st$. It can be trivialized as 
$d\exp:{\mathfrak g}^*\times {\mathfrak g}^*\rightarrow G\st\times {\mathfrak g}^* $. Furthermore, $(\exp^{-1},id): G\st\times {\mathfrak g}^*\rightarrow {\mathfrak g}^*\times {\mathfrak g}^*$ hence the map 
${\mathfrak g}^*\times {\mathfrak g}^*\rightarrow {\mathfrak g}^*\times {\mathfrak g}^*$ is given by $tX+o(t^2)$. We get
\begin{equation}
 \left.\frac{d}{dt}\right|_{t=0}\langle d \exp (t X),\theta_{\xi} \rangle=\left.\frac{d}{dt}\right|_{t=0}\langle d(tX+o(t)),\theta_{\xi} \rangle=d\langle X,\theta_{\xi} \rangle=d\langle X,\xi \rangle
\end{equation}
and finally
\begin{equation}\label{eq: beta}
 \beta_\xi = \left.\frac{d}{dt}\right|_{t=0}\alpha^t_\xi  =\alpha_{ad^*_X\xi }+dX (\xi ).
\end{equation}
Since $\pi^{\sharp} (\alpha^t_\xi) =\mc{L}_\xi$ is independent of $t$, we get the identity (\ref{eq: inf2}).

In order to prove the relation (\ref{eq: inf1}), recall that, since $\boldsymbol{\mu}_t$ is a family of Poisson maps, one has
\begin{equation}
\pi (\alpha^t_\xi ,\alpha^t_\eta)(m)=\pi_{G^*}(\theta_\xi ,\theta_\eta)(\boldsymbol{\mu}_t (m)).
\end{equation}
Applying $\left.\frac{d}{dt}\right|_{t=0}$ to both sides, we get
\begin{equation}
\pi (\beta_\xi ,\alpha_\eta)(m)+\pi (\alpha_\xi ,\beta_\eta)(m)=\mc{L}_X (\pi_{G^*}(\theta_\xi ,\theta_\eta))(\boldsymbol{\mu} (m)).
\end{equation}
Substituting the expression of $\beta$'s (\ref{eq: beta}) and using the following identity
\begin{equation}
 \mc{L}_X (\pi_{G^*}(\theta_\xi ,\theta_\eta))(\boldsymbol{\mu} (m))=X[\xi ,\eta ]+\pi_{G^*}(\theta_{ad^*_X\xi} ,\theta_\eta)+\pi_{G^*}(\theta_\xi ,\theta_{ad^*_X\eta})
\end{equation}
the claimed equality follows.
\end{proof}

We consider the case of a compact and semisimple Poisson Lie group $G$ to obtain a uniqueness condition for the momentum map. From the relation (\ref{eq: inf1})
we can conclude that there exists a function $\Phi$ such that
\begin{equation}
 \mc{L}_{\xi}\Phi=X(\xi).
\end{equation}
Using this expression we get $\mc{L}_{ad^*_X\xi}f=\mc{L}_{\xi'}\Phi\xi''(f)$, where $\delta(\xi)=\xi'\otimes \xi''$. Now observe that
\begin{equation}
 \xi\{\Phi,f\}=\mc{L}_{\xi}\pi(d\Phi,df)=(\mc{L}_{\xi}\pi)(d\Phi,df)+\{\mc{L}_{\xi}\Phi,f\}+\{\Phi,\mc{L}_{\xi}f\}
\end{equation}
hence
\begin{equation}
\begin{split}
\{X(\xi),f\}&=\{\mc{L}_{\xi}\Phi,f\}=\xi\{\Phi,f\}-(\mc{L}_{\xi}\pi)(d\Phi,df)-\{\Phi,\mc{L}_{\xi}f\}\\
&=\xi\{\Phi,f\}-\delta(\xi)(\Phi,f)-\{\Phi,\mc{L}_{\xi}f\}\\
&=\xi\{\Phi,f\}-\mc{L}_{\xi'}\Phi\;\xi''(f)-\{\Phi,\mc{L}_{\xi}f\}.
\end{split}
\end{equation}
Substituting these results in (\ref{eq: inf2}) we get
\begin{equation}
 \xi\{\Phi,f\}-\{\Phi,\mc{L}_{\xi}f\}=0.
\end{equation}
This means that there exists a vector field $H_{\Phi}$ associated with the deformation of the momentum map which commutes with the action:
\begin{equation}
 [H_{\Phi},\mc{L}_{\xi}]=0.
\end{equation}
In other words, given the momentum map $\boldsymbol{\mu}:M\rightarrow G\st$, if there exists an endomorphism on $M$ such that the associated vector field commutes with the action,
then we get another momentum map, as discussed in the Theorem (\ref{thm: idef}).

\section{Poisson Reduction}
\label{sec: poisson reduction}

Here we present the main result of this chapter. We show that, given a Poisson action we can define a reduced manifold in terms of momentum map.
A first generalization of the Marsden-Weinstein reduction has been given by Lu in \cite{Lu1}, where it is shown that, given a Poisson Lie group acting on a symplectic manifold $M$,
the symplectic structure on $M$ induces a symplectic structure on the leaves of $M/G$ generated by the momentum map. 

Given a Poisson action $\Phi:G\times M\rightarrow M$ with momentum map $\boldsymbol{\mu}: M\rightarrow G^*$, we define a $G$-invariant foliation $\mathcal{F}$ of $M$.
The leaves are not Poisson manifolds, but considering the action of $G$ on the space of leaves, we prove that, for each leaf $\mathcal{L}$, the Poisson structure on $M$ induces a
Poisson structure on the orbit space $\mathcal{L}/G_{\mathcal{L}}$, where $G_{\mathcal{L}}$ is the isotropic group at any point of $\mathcal{L}$.
This shows that we can reduce $M$ to another Poisson manifold $\mathcal{L}/G_{\mathcal{L}}$ that we call the \textbf{Poisson reduced space}.

\subsection{Poisson structure on $M/G$}

In this section we prove that, given a Poisson Lie group $(G,\pi_G)$ acting on a Poisson manifold $(M,\pi)$, with equivariant momentum map $\boldsymbol{\mu}: M\rightarrow G^*$, the orbit space inherits a Poisson structure from $M$. From now on we further assume that the Poisson Lie group $G$ is complete.

In \cite{STS} Semenov-Tian-Shansky showed that, given a Poisson action, if the orbit space is a smooth manifold, it carries a Poisson structure such that the natural 
projection $\text{pr}:M\rightarrow M/G$ is a Poisson mapping. More precisely, given $f,h\in C^{\infty}(M)$ with the definitions
\begin{equation}
\hat{f}(m,g):=f(g\cdot m),\quad \hat{h}(m,g):=h(g\cdot m),
\end{equation}
 for any $f,h$ one finds
\begin{equation}
\{f,h\}_M(g\cdot m)=\{\hat{f}(m,\cdot),\hat{h}(m,\cdot)\}_G(g)+\{\hat{f}(\cdot, g),\hat{h}(\cdot, g)\}_M(m)
\end{equation}
Then $M/G$ inherits a Poisson structure from the Poisson structure on $M$:
\begin{equation}
f,h\in C^{\infty}(M)^G\Longrightarrow \{f,h\}_M\in C^{\infty}(M)^G.
\end{equation}

Consider a Poisson action $\Phi:G\times M\rightarrow M$ with equivariant momentum map $\boldsymbol{\mu}: M\rightarrow G^*$ and assume that the orbit space $M/G$ is a smooth manifold. 
Recall that when the Poisson structure of $G$ is trivial, the infinitesimal Poisson action $\xi_M$ is a Hamiltonian vector field, but in general this does not hold. The first goal of this section is to provide an explicit formulation for $\xi_M$, in terms of local coordinates. We use the properties of the momentum map and dressing action to obtain such a formulation.

We observe that the Poisson Lie group $G^*$ can be described locally in terms of coordinates $(q,p,y)$ such that $\pi_{G\st}$
 is given by the Splitting Theorem \ref{thm: split}.
In particular, the transverse structure is determined by the structure functions 
$\pi_{G^{*}}^{ij}(y)=\{ y_i,y_j\}$, which vanishes on the symplectic leaves. As discussed in Section \ref{sec: dressing} the Poisson Lie group $G$ acts on $G\st$ by dressing action and the dressing orbits are the same as the symplectic leaves. Hence, the generic orbit $\mathcal{O}_x$ through $x\in G\st$ is a closed submanifold of $G^*$ and $y_i$ are transversal 
coordinates such that $\{ y_i,y_j\}=0$. Note that $\boldsymbol{\mu}$ is a submersion, hence it has open image. In particular, the image of $M$ is an open neighborhood of a generic orbit of $G$ on $G^*$.

Define the functions $H_i$ as the pullbacks by $\boldsymbol{\mu}$ of the transversal coordinates $y_i$ to the orbit on $G^*$:
\begin{equation}\label{eq: hi}
 H_i:=y_i\circ\boldsymbol{\mu}.
\end{equation}
$H_i$ are defined locally in a $G$-invariant open neighborhood $U$ of the preimage $N=\boldsymbol{\mu}^{-1}(\mathcal{O}_x)$. We can assume that $x$ is a regular value of $\boldsymbol{\mu}$, hence $N$ is a 
closed $G$-invariant submanifold of $M$. Since $\{ y_i,y_j\}$ vanishes on the orbit $\mathcal{O}_x$, $\{ H_i,H_j\}$ vanishes on the preimage $N$.
The 1-forms $\alpha_{\xi}=\boldsymbol{\mu}^*(\theta_{\xi})$ are in the span of the $dH_i$'s. Since the left invariant 1-form $\theta_{\xi}$ in local coordinates can be expressed as a linear combination of $dy_i$,
using the definition (\ref{eq: hi}) we have
\begin{equation}\label{eq: al}
\alpha_{\xi}=\boldsymbol{\mu}^*(\theta_{\xi})=\sum_{i}c_{i}(\xi)dH_i
\end{equation}
for any $\xi\in\mathfrak{g}$.
As a consequence, the infinitesimal generators $\xi_M$ of the Hamiltonian action $\Phi$, induced by $\boldsymbol{\mu}$, can be written as a linear combination of Hamiltonian vector fields:
\begin{equation}\label{eq: xis}
\xi_M=\pi^{\sharp}(\alpha_{\xi})=\sum_{i}c_{i}(\xi)\lbrace H_j,\cdot\rbrace.
\end{equation}	

We use this explicit formulation  to prove that $M/G$ inherits a Poisson structure from $M$:

\begin{theorem}\label{thm: m/g}
 Let $\Phi: G\times M\rightarrow M$ be a Poisson action with equivariant momentum map $\boldsymbol{\mu}$. The algebra $C^{\infty}(M)^G$ of $G$-invariant functions on $M$ is a Lie subalgebra of $C^{\infty}(M)$.
\end{theorem}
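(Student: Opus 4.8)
The plan is to show directly that the Poisson bracket of two $G$-invariant functions is again $G$-invariant; since $C^{\infty}(M)^G$ is manifestly a linear subspace of $C^{\infty}(M)$, this closure under $\{\cdot,\cdot\}$ is precisely the statement that it is a Lie subalgebra. Because $G$ is assumed connected, a function $f\in C^{\infty}(M)$ lies in $C^{\infty}(M)^G$ if and only if $\xi_M(f)=0$ for every $\xi\in\mathfrak{g}$, where $\xi_M$ is the infinitesimal generator. Thus, given $f,h\in C^{\infty}(M)^G$, it suffices to verify that $\mathcal{L}_{\xi_M}\{f,h\}=0$ for all $\xi\in\mathfrak{g}$.

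First I would expand the Lie derivative of $\{f,h\}=\pi(df,dh)$ by applying the Leibniz rule to the contraction of $\pi$ with the two exact forms:
\begin{equation}
\mathcal{L}_{\xi_M}\bigl(\pi(df,dh)\bigr)=(\mathcal{L}_{\xi_M}\pi)(df,dh)+\pi(\mathcal{L}_{\xi_M}df,dh)+\pi(df,\mathcal{L}_{\xi_M}dh).
\end{equation}
Since the Lie derivative commutes with the exterior differential, $\mathcal{L}_{\xi_M}df=d(\xi_M f)=0$ and likewise $\mathcal{L}_{\xi_M}dh=0$ by the $G$-invariance of $f$ and $h$. Hence only the first term survives, and the problem reduces to showing that $(\mathcal{L}_{\xi_M}\pi)(df,dh)=0$.

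This is the key point, and the only place where the Poisson (rather than merely canonical) nature of the action enters: by the Poisson-action condition (\ref{eq: pa}) we have $\mathcal{L}_{\xi_M}\pi=-(\delta(\xi))_M$. Writing $\delta(\xi)=\sum_i \xi^1_i\wedge\xi^2_i\in\wedge^2\mathfrak{g}$, the associated bivector on $M$ is $(\delta(\xi))_M=\sum_i (\xi^1_i)_M\wedge(\xi^2_i)_M$, a sum of wedges of infinitesimal generators, so that
\begin{equation}
(\delta(\xi))_M(df,dh)=\sum_i\bigl[(\xi^1_i)_M(f)\,(\xi^2_i)_M(h)-(\xi^2_i)_M(f)\,(\xi^1_i)_M(h)\bigr].
\end{equation}
Every summand applies some generator $(\xi^j_i)_M$, with $\xi^j_i\in\mathfrak{g}$, to one of the invariant functions $f,h$, and hence vanishes. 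Therefore $(\mathcal{L}_{\xi_M}\pi)(df,dh)=0$, giving $\mathcal{L}_{\xi_M}\{f,h\}=0$ and thus $\{f,h\}\in C^{\infty}(M)^G$.

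The main conceptual obstacle to anticipate — and the reason the statement is not entirely immediate — is precisely that $\pi$ need not be $G$-invariant: unlike in the canonical-action case one does not have $\mathcal{L}_{\xi_M}\pi=0$, so a priori the bracket could fail to descend. What rescues the argument is that the defect $\mathcal{L}_{\xi_M}\pi$ is controlled by $\delta(\xi)\in\wedge^2\mathfrak{g}$, a purely ``group-direction'' bivector whose generators annihilate any invariant function. As an alternative, more computational route one could instead invoke the explicit local expression (\ref{eq: xis}) for $\xi_M$ as a combination of the Hamiltonian vector fields $\{H_j,\cdot\}$, together with the vanishing of the $\{H_i,H_j\}$ on the relevant submanifold; but the global, coordinate-free Lie-derivative computation above is cleaner and avoids the local splitting coordinates.
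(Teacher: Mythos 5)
Your proof is correct, but it follows a genuinely different route from the one in the paper. You work globally and coordinate-freely: you reduce invariance of $\{f,h\}$ to the vanishing of $(\mathcal{L}_{\xi_M}\pi)(df,dh)$ via the Leibniz rule, and then use the infinitesimal Poisson-action identity (\ref{eq: pa}), $\mathcal{L}_{\xi_M}\pi=-(\delta(\xi))_M$, observing that $(\delta(\xi))_M$ is built from infinitesimal generators which kill invariant functions. Notably, this argument never uses the momentum map, so it actually proves the closure of $C^{\infty}(M)^G$ under the bracket for an arbitrary Poisson action --- this is essentially the Semenov--Tian--Shansky observation quoted earlier in the same section. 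The paper instead leans on the momentum map: it invokes the local expression (\ref{eq: xis}), $\xi_M=\sum_i c_i(\xi)\{H_i,\cdot\}$, where $H_i=y_i\circ\boldsymbol{\mu}$ are pullbacks of the transverse Weinstein coordinates on $G^*$, deduces $\{H_i,f\}=\{H_i,h\}=0$ from invariance, and concludes by the Jacobi identity. The paper's route has the advantage of introducing the functions $H_i$ and the ideal they generate, which are the working tools of the subsequent reduction theorem \ref{thm: pred}; but it is local (the $H_i$ live only on a neighborhood of a regular level set) and the step passing from $\sum_i c_i(\xi)\{H_i,f\}=0$ for all $\xi$ to $\{H_i,f\}=0$ for each $i$ needs the coefficients $c_i(\xi)$ to span, which is left implicit. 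Your argument is cleaner, more general, and avoids both issues; the only thing it ``loses'' is the explicit machinery that the paper wants on stage for the next theorem.
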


\begin{proof}
Let $f,g\in C^{\infty}(M)^G$, then $\xi_M[f]=\xi_M[g]=0$ for any $\xi\in\mathfrak{g}$. Applying the relation (\ref{eq: xis}) we have that
\begin{equation}
 \sum_{i}c_{i}(\xi)\lbrace H_i,f\rbrace=\sum_{i}c_{i}(\xi)\lbrace H_i,g\rbrace=0
\end{equation}
that implies $\lbrace H_i,f\rbrace=\lbrace H_i,g\rbrace=0$ for any $i$. Then, using the Jacobi identity we get
 $\lbrace H_i,\lbrace f,g\rbrace\rbrace=0$. Since $G$ is connected we proved that
\begin{equation}
 \xi_M\left[\lbrace f,g\rbrace\right]=0.
\end{equation}
Hence $\lbrace f,g\rbrace$ is $G$-invariant and the claim is proved.
\end{proof}

\subsection{Poisson structure on $\mathcal{L}/G_{\mathcal{L}}$}

Consider the $\mathfrak{g}^{*}$-valued 1-forms $\alpha_{\xi}$ defined by $\boldsymbol{\mu}$ by eq. (\ref{eq: al}). The distribution $\{\alpha_{\xi}\vert \xi\in\mathfrak{g}\}$ defines a $G$-invariant foliation 
$\mathcal{F}$ on $M_{reg}$, the open submanifold of regular values of $\boldsymbol{\mu}$ of $M$ by
\begin{equation}
 T_m\mathcal{L}= \ker {\alpha_\xi}(m)=\bigcap_i \ker dH_i(m)
\end{equation}
for any leaf $\mathcal{L}$, which is of the form $\mathcal{L}=\boldsymbol{\mu}^{-1}( x)$. The leaf $\mathcal{L}$ is not a Poisson submanifold but we prove that, considering the action of $G$ on the space of leaves, 
the quotient $\mathcal{L}/G_{\mathcal{L}}$ inherits a Poisson structure by $M$, where
\begin{equation}
 G_{\mathcal{L}}=\{g\in G\vert g\cdot\mathcal{L}=\mathcal{L}\}
\end{equation}
is the stabilizer of the action of $G$ on $\mathcal{L}$.

In order to prove this statement we observe that, since $\pi_{G^{*}}$ restricted to $\mathcal{O}_x$ does not depend on the transversal coordinates $y_i$'s, the Poisson structure $\pi$ on $M$ depends on the coordinates $H_i$ defined in (\ref{eq: hi}) only in the combination $\partial_{x_i}\wedge\partial_{H_i}$. This is evident because the differential $d\boldsymbol{\mu}$ between $TM\vert_N/TN$ and $TG^{*}/T\mathcal{O}_x$ is a bijective map, so using the definition (\ref{eq: hi}) the claim is proved.

Now consider the ideal $\mathcal{I}$ generated by $H_i$. The coordinates $H_i$ are locally defined but we can show that $\mathcal{I}$ is globally defined.
Considering a different neighborhood on the orbit of $G^{*}$ we have transversal coordinates $y_i^{\prime}$ and their pullback to $M$ will be $H_i^{\prime}=y_i^{\prime}\circ \boldsymbol{\mu}$.
The coordinates $H_i^{\prime}$ are defined in a different open neighborhood $V$ of $N$, but we can see that the ideal $\mathcal{I}$ generated by $H_i$ coincides with $\mathcal{I}^{\prime}$ generated by $H_i^{\prime}$ 
on the intersection of $U$ and $V$, then it is globally defined.
Moreover, since $\boldsymbol{\mu}$ is a Poisson map we have:
\begin{equation}
 \{ H_i,H_j\}=\{ y_i\circ \boldsymbol{\mu},y_j\circ \boldsymbol{\mu}\}=\{ y_i,y_j\}\circ \boldsymbol{\mu}.
\end{equation}
Hence the ideal $\mathcal{I}$ is closed under Poisson brackets.

\begin{lemma}\label{lem: id2}
 Suppose that $N/G$ is an embedded submanifold of the smooth manifold $M/G$, then
\begin{equation}
 (C^{\infty}(M)/\mathcal{I})^G=(C^{\infty}(M)^G + \mathcal{I})/\mathcal{I}
\end{equation}
\end{lemma}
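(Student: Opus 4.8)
The plan is to reinterpret both sides of the claimed identity as spaces of functions on the quotient manifolds $N/G$ and $M/G$, and then to deduce the statement from the standard fact that a smooth function on a closed embedded submanifold extends to the ambient manifold. The inclusion $(C^\infty(M)^G+\mathcal{I})/\mathcal{I}\subseteq(C^\infty(M)/\mathcal{I})^G$ is immediate, since the class of any $G$-invariant function is manifestly invariant in the quotient and the classes coming from $\mathcal{I}$ vanish; the whole content lies in the reverse inclusion.

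First I would record that $N=\boldsymbol{\mu}^{-1}(\mathcal{O}_x)$ is a closed, $G$-invariant, embedded submanifold of $M$: it is $G$-invariant because $\boldsymbol{\mu}$ is equivariant and $\mathcal{O}_x$ is a single dressing orbit, and it is cut out cleanly because $x$ is a regular value of $\boldsymbol{\mu}$. Since the generators $H_i=y_i\circ\boldsymbol{\mu}$ then have independent differentials transverse to $N$, the ideal $\mathcal{I}$ they generate is exactly the vanishing ideal $I_N$ of $N$. Consequently the restriction map $r:C^\infty(M)\to C^\infty(N)$ is a $G$-equivariant surjection (functions extend off the closed submanifold $N$) with kernel $\mathcal{I}$, so it descends to a $G$-equivariant isomorphism $\overline r:C^\infty(M)/\mathcal{I}\xrightarrow{\ \sim\ }C^\infty(N)$. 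Passing to invariants gives $(C^\infty(M)/\mathcal{I})^G\cong C^\infty(N)^G$, while the image of $(C^\infty(M)^G+\mathcal{I})/\mathcal{I}$ becomes $r(C^\infty(M)^G)$, the set of $G$-invariant functions on $N$ that extend to $G$-invariant functions on all of $M$.

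Next I would pass to the quotients. Because the projections $\mathrm{pr}:M\to M/G$ and $\mathrm{pr}_N:N\to N/G$ are surjective submersions, pullback identifies $C^\infty(M)^G\cong C^\infty(M/G)$ and $C^\infty(N)^G\cong C^\infty(N/G)$. Under these identifications the map $r$ restricted to invariants becomes the pullback along the inclusion $\iota:N/G\hookrightarrow M/G$, namely $\iota^*:C^\infty(M/G)\to C^\infty(N/G)$, so the lemma is equivalent to the surjectivity of $\iota^*$.

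Finally, the hypothesis that $N/G$ is an embedded submanifold of $M/G$—together with the fact that $N/G$ is closed, being the image under the quotient map of the closed $G$-saturated set $N$—lets me invoke the smooth extension theorem: every $h\in C^\infty(N/G)$ is the restriction of some $\bar f\in C^\infty(M/G)$, built from a tubular neighborhood of $N/G$ and a cutoff function. Pulling $\bar f$ back to $f=\mathrm{pr}^*\bar f\in C^\infty(M)^G$ then yields a $G$-invariant extension with $f|_N=h$, so $f-(\text{any representative})\in\mathcal{I}$ and the reverse inclusion follows, giving the equality. The only substantive step is this extension, which is precisely where embeddedness and closedness of $N/G$ enter; everything else is bookkeeping with the equivariant isomorphism $C^\infty(M)/\mathcal{I}\cong C^\infty(N)$ and the identification of invariants with functions downstairs.
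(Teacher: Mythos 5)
Your proposal is correct and follows essentially the same route as the paper: both reduce the statement to extending a $G$-invariant function from $N$ (equivalently, a function on $N/G$) to a smooth function on $M/G$ using the embedded-submanifold hypothesis, and then pull back along $\mathrm{pr}:M\to M/G$ to obtain a $G$-invariant representative agreeing with the given one modulo $\mathcal{I}$. Your version is slightly more careful at the extension step (invoking closedness of $N/G$ and a tubular neighborhood with cutoff rather than a single slice chart), but this is a refinement of the same argument, not a different one.
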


\begin{proof}
 Let $f$ be a smooth function on $M$ satisfying $[f]\in (C^{\infty}(M)/\mathcal{I})^G$. If the equivalence class $[f]$ is $G$-invariant, we have
\begin{equation}
f(G\cdot m)=f(m)+i(m),
\end{equation}
where $i\in \mathcal{I}=\{f\in C^{\infty}(M): f\vert_N =0 \}$. It is clear that $f\vert_N$ is $G$-invariant and hence it defines a smooth function $\bar{f}\in C^{\infty}(N/G)$.
Since $N/G$ is a $k$-dimensional embedded submanifold of the $n$-dimensional smooth manifold $M/G$, the inclusion map $\iota: N/G\rightarrow M/G$ has local coordinates representation:
\begin{equation}
 (x_1,\dots,x_k)\mapsto (x_1,\dots,x_k,c_{k+1},\dots,c_n)
\end{equation}
where $c_i$ are constants. Hence we can extend $\bar{f}$ to a smooth function $\phi$ on $M/G$ by setting $\bar{f}(x_1,\dots,x_k)=\phi(x_1,\dots,x_k,0,\dots,0)$.
The pullback $\tilde{f}$ of $\phi$ by $\text{pr}:M\rightarrow M/G$ is $G$-invariant and satisfies
\begin{equation}
 \tilde{f}-f\vert_N=0,
\end{equation}
hence $\tilde{f}-f\in \mathcal{I}$.
\end{proof}

\begin{theorem}\label{thm: pred}
Let $\Phi:G\times M\rightarrow M$ be a Poisson action of $(G,\pi_G)$ on a Poisson manifold $(M,\pi)$ with equivariant momentum map $\boldsymbol{\mu}:M\rightarrow G^*$. For each leaf, the orbit space 
$\mathcal{L}/G_{\mathcal{L}}$ has a Poisson structure induced by $\pi$.
\end{theorem}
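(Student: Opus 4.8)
The plan is to realize the algebra of functions on $\mathcal{L}/G_{\mathcal{L}}$ as a quotient of the Poisson algebra $C^{\infty}(M)^G$ by a Poisson ideal, in the spirit of the Sniatycki--Weinstein description of symplectic reduction recalled in Chapter~\ref{ch: one}. Since $x$ is a regular value of $\boldsymbol{\mu}$, the preimage $N=\boldsymbol{\mu}^{-1}(\mathcal{O}_x)$ is a closed embedded $G$-invariant submanifold whose vanishing ideal is exactly $\mathcal{I}$, so restriction gives $C^{\infty}(M)/\mathcal{I}\cong C^{\infty}(N)$. By equivariance of $\boldsymbol{\mu}$ together with transitivity of $G$ on the dressing orbit $\mathcal{O}_x$, each fibre $\boldsymbol{\mu}^{-1}(g\cdot x)=g\cdot\mathcal{L}$ is a $G$-translate of $\mathcal{L}$, whence $g\cdot\mathcal{L}=\mathcal{L}$ if and only if $g$ fixes $x$; this identifies $G_{\mathcal{L}}$ with the dressing isotropy of $x$ and yields a diffeomorphism $N/G\cong\mathcal{L}/G_{\mathcal{L}}$. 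Passing to invariants then gives $C^{\infty}(\mathcal{L}/G_{\mathcal{L}})\cong C^{\infty}(N)^G\cong (C^{\infty}(M)/\mathcal{I})^G$.

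Next I would invoke Lemma~\ref{lem: id2}, which (under the standing assumption that $N/G$ embeds in $M/G$) identifies $(C^{\infty}(M)/\mathcal{I})^G$ with $(C^{\infty}(M)^G+\mathcal{I})/\mathcal{I}$. By the second isomorphism theorem this equals $C^{\infty}(M)^G/\mathcal{I}_G$, where $\mathcal{I}_G:=\mathcal{I}\cap C^{\infty}(M)^G$. Thus it suffices to produce a Poisson structure on $C^{\infty}(M)^G/\mathcal{I}_G$. By Theorem~\ref{thm: m/g} the bracket of $M$ already restricts to a Lie bracket on $C^{\infty}(M)^G$, so the only remaining point is that $\mathcal{I}_G$ is a Poisson ideal of $C^{\infty}(M)^G$, that is, a multiplicative ideal satisfying $\{C^{\infty}(M)^G,\mathcal{I}_G\}\subseteq\mathcal{I}_G$.

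The key step is this ideal property, and it rests on the explicit form (\ref{eq: xis}) of the infinitesimal generators. Because the $\alpha_{\xi}=\boldsymbol{\mu}^*(\theta_{\xi})$ span the same pointwise subspace as the $dH_i$ along $N$, $G$-invariance of $f$ is equivalent to $\{H_i,f\}=0$ for every $i$ --- exactly the implication used in Theorem~\ref{thm: m/g}. Now take $f\in C^{\infty}(M)^G$ and $h\in\mathcal{I}_G$, and write $h=\sum_i g_i H_i$ with $g_i\in C^{\infty}(M)$. The Leibniz rule gives
\begin{equation}
\{f,h\}=\sum_i\left(\{f,g_i\}H_i+g_i\{f,H_i\}\right)=\sum_i\{f,g_i\}H_i,
\end{equation}
since $\{f,H_i\}=-\{H_i,f\}=0$. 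Hence $\{f,h\}\in\mathcal{I}$, while it is $G$-invariant by Theorem~\ref{thm: m/g}, so $\{f,h\}\in\mathcal{I}_G$. That $\mathcal{I}_G$ is a multiplicative ideal of $C^{\infty}(M)^G$ is immediate, since products of invariants are invariant and $\mathcal{I}$ is an ideal in $C^{\infty}(M)$. Therefore the bracket descends to $C^{\infty}(M)^G/\mathcal{I}_G\cong C^{\infty}(\mathcal{L}/G_{\mathcal{L}})$, equipping $\mathcal{L}/G_{\mathcal{L}}$ with the desired Poisson structure.

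I expect the main obstacle to be the smooth-geometric bookkeeping rather than the algebra: one must justify that $N$ and $N/G$ (equivalently $\mathcal{L}/G_{\mathcal{L}}$) are genuine manifolds with $N/G\hookrightarrow M/G$ an embedding, so that Lemma~\ref{lem: id2} applies and $C^{\infty}(N)^G$ really coincides with $C^{\infty}(N/G)$. This is where regularity of $x$, completeness of $G$, and the freeness/properness of the relevant $G$- and $G_{\mathcal{L}}$-actions enter, together with the surjectivity of restriction onto the closed submanifold $N$. Once these identifications are secured, the descent of the bracket is the purely formal computation displayed above.
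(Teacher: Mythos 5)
Your proposal is correct and follows essentially the same route as the paper: both arguments reduce the claim to showing that the ideal generated by the pullbacks $H_i=y_i\circ\boldsymbol{\mu}$ of the transversal coordinates is a Poisson ideal relative to the $G$-invariant functions, using the key fact $\{f,H_i\}=0$ for invariant $f$ from Theorem~\ref{thm: m/g}, and then conclude via Lemma~\ref{lem: id2} and the identification $C^{\infty}(\mathcal{L}/G_{\mathcal{L}})\simeq (C^{\infty}(M)/\mathcal{I})^G$. Your reformulation in terms of $\mathcal{I}_G=\mathcal{I}\cap C^{\infty}(M)^G$ is equivalent to the paper's quotient $(C^{\infty}(U)^G+\mathcal{I})/\mathcal{I}$ by the second isomorphism theorem, and your Leibniz computation is in fact slightly more careful than the paper's, which omits the term $\sum_i\{f,a_i\}H_i$ (harmlessly, since it lies in $\mathcal{I}$).
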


\begin{proof}
First we prove that the Poisson bracket of $M$ induces a well defined Poisson bracket on $(C^{\infty}(U)^G+\mathcal{I})/\mathcal{I}$.
In fact, for any $f+i\in C^{\infty}(U)^{G}/\mathcal{I}$ and $j\in \mathcal{I}$ the Poisson bracket $\{ f+i,j\}$ still belongs to the ideal $\mathcal{I}$.
Since the ideal $\mathcal{I}$ is closed under Poisson brackets, $\{ i,j\}$ belongs to $\mathcal{I}$.
The function $j$, by definition on the ideal $\mathcal{I}$, can be written as a linear combination of $H_i$, so $\{ f,j\}=\sum_i a_i\{ f,H_i\}$.
By the Theorem \ref{thm: m/g}, we have $\{ f,H_i\}=0$, hence $\{ f+i,j\}\in \mathcal{I}$ as stated.
Finally, using the isomorphism proved in the Lemma (\ref{lem: id2}) and the identifications
\begin{equation}\label{eq: id1}
 C^{\infty}(\mathcal{L}/G_{\mathcal{L}})\simeq C^{\infty}(N/G)\simeq(C^{\infty}(U)/\mathcal{I})^{G}.
\end{equation}
the claim is proved.
\end{proof}

We refer to $\mathcal{L}/G_{\mathcal{L}}$ as the \textbf{Poisson reduced space}.

\section{An example: $\mathbb{R}^2$ action}
\label{sec: ex}

Here we want to discuss a concrete example for the Poisson reduction.
Consider the Lie bialgebra $\mathfrak{g}=\mathbb{R}^2$ with generators $\xi$ and $\eta$ such that
\begin{equation}
[\xi,\eta]=\eta
\end{equation}
and cobracket given by
\begin{equation}
\delta(\xi)=0\quad \delta(\eta)=\xi\wedge\eta.
\end{equation}

The matrix representation of $\mathfrak{g}$ is the Lie algebra $\mathfrak{gl}(2,\mathbb{R})$ and the subgroups $G$ and $G^*$
of $GL(2,\mathbb{R})$ of matrices with positive determinant are given by
\begin{equation}
G=\left\lbrace \left(\begin{matrix} 1 & 0 \\ x & y \end{matrix}\right)\; :y>0\right\rbrace \qquad G^*=\left\lbrace \left(\begin{matrix} a & b \\ 0 & 1 \end{matrix}\right)\; :a>0\right\rbrace
\end{equation}
and we remark that the Poisson bivector on $G^*$ is
\begin{equation}
\pi_{G^*}=a b\partial_a\wedge\partial_b.
\end{equation}

In this simple case it is clear that $\{a,b\}$ are global coordinates on $G^*$. 
We analyze the orbits of the dressing action of $G$ on $G^*$ for this example.

Remember that the dressing orbits $\mathcal{O}_x$ through a point $x\in G^*$ are the same as the symplectic leaves, hence it 
is clear that they are generated by the equation $b=0$. The symplectic foliation of the manifold $G^*$ is now given by two open orbit, determined by the conditions
$b>0$ and $b<0$ respectively, and a closed orbit given by $b=0$ and $a\in\mathbb{R}$.

Consider a Poisson action of $G$ on a generic Poisson manifold $M$ induced by the equivariant momentum map $\boldsymbol{\mu}:M\rightarrow G^*$.
Its pullback
\begin{equation}
\boldsymbol{\mu}\st: C^{\infty}(G^*)\longrightarrow C^{\infty}(M)
\end{equation}
maps the coordinates $a$ and $b$ on $G^*$ to $\hat{a}(x)=a(\boldsymbol{\mu}(x))$ and $\hat{b}(x)=b(\boldsymbol{\mu}(x))$ resp. In order to simplify the notation we denote the coordinates on $M$ only with $a$ and $b$. 
It is important to underline that we have no information on the dimension of $M$, so $a$ and $b$ are just a couple of the possible coordinates. Nevertheless, since $\boldsymbol{\mu}$ is a Poisson map,  we have
\begin{equation}
\lbrace a,b\rbrace=ab
\end{equation}
on $M$. The infinitesimal action of $\mathfrak{g}=\mathbb{R}^2$ on $M$ that we consider can be written in terms of these 
coordinates $a,b$ as
\begin{equation}
\Phi(\xi)=a\{b,\cdot \}\quad \Phi(\eta)=a\{a^{-1},\cdot \}.
\end{equation}

In the previous section we proved that the Poisson reduction is given equivalently either as the Poisson algebra $C^{\infty}(N/G)$ on the quotient $N/G$, with $N=\boldsymbol{\mu}^{-1}(\mathcal{O}_x)$ or as 
$(C^{\infty}(M)/\mathcal{I})^{G}$.
In the following, we discuss 3 different cases of dressing orbit.

\paragraph{Case 1: $b>0$.} Consider the dressing orbit $\mathcal{O}_x$ generated by the condition $b>0$. Since $a$ and $b$ are both positive, we can put
\begin{equation}
  a=e^p,\quad b=e^q
\end{equation}
and we have
\begin{equation}
	\lbrace p,q\rbrace=1
\end{equation} 
since $\lbrace a,b\rbrace=ab$. For this reason we can claim that
the preimage of the dressing orbit can be split as $N=\mathbb{R}^2\times M_1$ and $C^{\infty}(N)$ is given explicitly by the set of functions generated by $b^{-1}$. The infinitesimal action is given by
\begin{equation}
\Phi(\xi)=e^p\{e^q,\cdot \}\qquad \Phi(\eta)=e^p\{e^{-p},\cdot \}
\end{equation}
which is just the action of $G$ on the plane. Hence the Poisson reduction in this case is given by
\begin{equation}
 (C^{\infty}(M)[b^{-1}])^G.
\end{equation}

\paragraph{Case 2: $b<0$.} This case is similar, with the only difference that $b=-e^q$. 

\paragraph{Case 3: $b=0$.} This case is slightly different. The orbit $\mathcal{O}_x$ is given by fixed points on the line $b=0$, then we choose the point $a=1$. Clearly, in this case we can not define $b=e^p$.

Consider the ideal $\mathcal{I}=\langle a-1,b\rangle$ of functions vanishing on $N$. It is easy to check that it is $G$-invariant, hence the Poisson reduction in this case is:
\begin{equation}
 (C^{\infty}(M)/\mathcal{I})^G.
\end{equation}

\chapter{Quantum Momentum Map}
\label{ch: three}

The main goal of this chapter is the definition of the quantum momentum map as a deformation of the classical momentum map, introduced in the previous chapter. In the first part we introduce the reader to the theory of deformation 
quantization of Poisson manifolds developed by M. Kontsevich \cite{Ko}. Then we present some basic results about quantum groups and their connection with Poisson Lie groups and Lie bialgebras \cite{C}. Finally, we discuss the quantization of the momentum map and analyze some examples of quantum reduction.

\section{Deformation quantization of Poisson manifolds}\label{sec_3.1}

We start this section with a survey of deformation quantization sketching the physical motivations which underlie such theory.
In general, deformation quantization establishes a correspondence between classical and quantum mechanics.

In the Hamiltonian formalism of classical mechanics, physical observables are represented by smooth functions on a certain space, called phase space. This generally is a symplectic or Poisson manifold $M$. On the other hand, a quantum system is usually described by a Hilbert space and the observables are self-adjoint operators on it. However, a formal correspondence between the two theories is still missing, despite the fact that many progresses in that direction have been done.

The problem of finding a precise mathematical procedure to associate to a classical observable (smooth function on $M$) a quantum analog, was first approached by trying to construct a correspondence between the commutative algebra $C^{\infty}(M)$ and the non-commutative algebra of operators. Starting from the quantization of $\mathbb{R}^{2n}$, the first result was achieved by Groenewold \cite{Gro}, which states that  the Poisson algebra $C^{\infty}(\mathbb{R}^{2n})$ can not be quantized in such a way that the Poisson bracket of two classical observables is mapped into the Lie bracket of the correspondent operators. 

The idea of Bayen, Flato et al. \cite{BFLS},\cite{FLS1}, \cite{FLS2} was a change of perspective: instead of mapping functions to operators, the algebra of functions can be deformed into a non-commutative one.
In particular, they proved
that on the symplectic vector space $\mathbb{R}^{2n}$, there exists a standard deformation quantization, or star product, known as the Moyal-Weyl product. 
The origins of the Moyal-Weyl product can be found in the works of Weyl \cite{Wy} 
and Wigner \cite{Wi}, where they give an explicit correspondence between functions and operators, and of Groenewold \cite{Gro} and Moyal \cite{Mo}, where 
the product and the bracket of operators defined by Weyl have been introduced. The existence of an associative star product has been generalized to a symplectic manifold admitting a flat connection $\nabla$ in \cite{BFLS}. 
The first proof of the existence of star product for any symplectic manifold was given by De Wilde and Lecomte \cite{DL} and few years later by Fedosov \cite{Fe}. In subsequent works (e.g. \cite{NT}, \cite{Gu}) the equivalence classes of star products on symplectic manifolds and the connection with de Rham cohomology has been studied. It came out that the equivalence classes of star products and elements in $H_{dR}^2(M)  \llbracket \epsilon  \rrbracket $ are in a one-to-one correspondence.

The existence and classification of star product culminated with Kontsevich's Formality Theorem, that was first formalized in a conjecture in \cite{Ko1} and then proved in \cite{Ko}. Kontsevich showed that any finite dimensional Poisson manifold $M$ admits a canonical deformation quantization and established a correspondence between the set of isomorphism classes of deformations of $C^{\infty}(M)$ 
and the set of equivalence classes of formal Poisson structures on $M$.

\subsection{Classification of Star Products}

We start discussing the problem of the existence of a formal deformation for an arbitrary Poisson manifold. 
First, we recall the basic notion of formal deformation of an algebra $A$ and then we explain the connection of deformations with Poisson structures. 

Kontsevich in \cite{Ko} solved the problem of classifying star products on a given Poisson manifold $M$ by proving that there is a one-to-one correspondence between equivalence classes of star products and equivalence classes of Poisson structures.

Let $k$ be a commutative ring and $A$ a $k$-algebra, associative and unital. 
Denote by $k  \llbracket \hbar  \rrbracket $ the ring of formal power series in $\hbar$ and by $A  \llbracket \hbar  \rrbracket $ the $k  \llbracket \hbar  \rrbracket $-module of formal power
series 
\begin{equation}
\sum_{n=0}^{\infty} \hbar^n a_n
\end{equation}
with coefficients in $A$.
A \textbf{formal deformation} of the algebra $A$ is a formal power series 

\begin{equation}\label{eq: star}
a\star b=ab+\sum_{k=1}^{\infty}\hbar^k P_k(a,b)
\end{equation}
where $P_m:A\times A\rightarrow A$ are $k$-bilinear maps such that

\begin{enumerate}
\item The product $\star$ is associative
\item $P_k(1,f)=P_k(f,1)=0$ for any $f\in A$
\end{enumerate}

An isomorphism of two deformations $\star$, $\star'$ is a formal power series \linebreak $T(a)=a+\sum_{m=0}^{\infty}t^m T_m(a)$ such that
\begin{equation}
 T(a\star b)=T(a)\star' T(b)\quad\forall a,b\in A.
\end{equation}
Let $M$ be a smooth manifold. It has been proven in \cite{BFLS} that a deformation quantization of $C^{\infty}(M)$, or a \textbf{star product}, is a deformation of $\mathcal{A}=C^{\infty}(M)$ 
such that $P_m$ are bidifferential operators. An isomorphism of two star products is an isomorphism of the corresponding deformations such that the operators $T_m$ are differential.

Given a star product on a smooth manifold $M$, we can define a Poisson bracket on $C^{\infty}(M)$ by setting
\begin{equation}\label{eq: pbd1}
\lbrace f,g\rbrace=P_1(f,g)-P_1(g,f).
\end{equation}
Recall from the previous chapter that we can associate a bivector $\pi$ to the Poisson bracket, putting 
\begin{equation}\label{eq: pbd2}
\lbrace f,g\rbrace_{\pi}=\pi( df,dg)
\end{equation}
From the associativity of $\star$ we obtain that the Poisson bracket (\ref{eq: pbd1}) is necessarily of the form $\lbrace f,g\rbrace_{\pi_0}$ for some bivector field $\pi_0$. 

From  Proposition \ref{pro: sn} we know that a bivector $\pi$ is a Poisson bivector if and only if the Schouten bracket $\left[ \pi,\pi\right]_S $ is zero. It is easy to show that for any star product the bivector field $\pi_0$ in eq. (\ref{eq: pbd2}) is a Poisson structure. Moreover, we can define a formal Poisson structure as a formal power series $\pi_{\hbar}= \sum_{m=0}^{\infty}\hbar^m\pi_m$ such that $\left[ \pi_{\hbar},\pi_{\hbar}\right]_S =0$. For any Poisson structure $\pi$ it is possible to define a formal Poisson structure $\pi_{\hbar}=\hbar\pi$. Two formal Poisson structures $\pi_{\hbar}$ and $\pi'_{\hbar}$ are equivalent if there is a formal power series $X=\sum_{m=0}^{\infty}\hbar^m X_m$ such that $\pi'_{\hbar}=\exp(\mathcal{L}_X)\pi_{\hbar}$. The connection between formal Poisson structures and star products motivates the following result:

\begin{theorem}[Kontsevich, \cite{Ko}]
There is a bijection, natural with respect to diffeomorphisms, between the set of equivalence classes of formal Poisson structures on $M$ and the set of isomorphism classes of deformation quantizations of $C^{\infty}(M)$.
\end{theorem}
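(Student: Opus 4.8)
The plan is to recast both sides of the claimed bijection as the moduli of Maurer--Cartan elements of a differential graded Lie algebra (DGLA), and then to exhibit an $L_\infty$-quasi-isomorphism between the two DGLAs in question; the bijection then follows from the general deformation-theoretic principle that an $L_\infty$-quasi-isomorphism induces an isomorphism of the associated deformation functors, hence a bijection between gauge-equivalence classes of Maurer--Cartan elements. First I would set up the algebraic side. Let $D_{poly}^\bullet(M)$ be the subcomplex of the Hochschild cochain complex of $C^\infty(M)$ consisting of polydifferential operators, equipped with the Hochschild differential $b$ and the Gerstenhaber bracket $[\cdot,\cdot]_G$; after the standard degree shift this is a DGLA. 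A star product $f\star g=fg+\sum_{k\geq1}\hbar^k P_k(f,g)$ is exactly the datum of $\gamma=\sum_{k\geq1}\hbar^k P_k\in\hbar D_{poly}^1(M)[[\hbar]]$, and associativity of $\star$ is precisely the Maurer--Cartan equation
\[
b\gamma+\tfrac12[\gamma,\gamma]_G=0 .
\]
The normalization $P_k(1,f)=P_k(f,1)=0$ means we work in the normalized complex, and an isomorphism of star products $T=\mathrm{id}+\sum_m\hbar^m T_m$ is exactly a gauge equivalence. Thus isomorphism classes of deformation quantizations coincide with gauge-equivalence classes of Maurer--Cartan elements of $\hbar D_{poly}^\bullet(M)[[\hbar]]$.

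Next the geometric side. Let $T_{poly}^\bullet(M)=\bigoplus_k\Gamma(M,\wedge^{k+1}TM)$ be the graded Lie algebra of polyvector fields with zero differential and the Schouten--Nijenhuis bracket $[\cdot,\cdot]_S$ from Proposition \ref{pro: sn}; this too is a DGLA. A formal Poisson structure $\pi_\hbar=\sum_{m\geq1}\hbar^m\pi_m\in\hbar T_{poly}^1(M)[[\hbar]]$ is by definition a Maurer--Cartan element: since the differential vanishes, the equation reduces to $[\pi_\hbar,\pi_\hbar]_S=0$, and equivalence of formal Poisson structures via $\exp(\mathcal{L}_X)$ is again gauge equivalence. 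Hence equivalence classes of formal Poisson structures coincide with gauge-equivalence classes of Maurer--Cartan elements of $\hbar T_{poly}^\bullet(M)[[\hbar]]$. The problem is therefore reduced to comparing the two moduli spaces of Maurer--Cartan elements.

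The heart of the argument is the Formality Theorem of \cite{Ko}: there is an $L_\infty$-quasi-isomorphism $\mathcal{U}:T_{poly}^\bullet(M)\to D_{poly}^\bullet(M)$ whose first Taylor coefficient $\mathcal{U}_1$ is the Hochschild--Kostant--Rosenberg map, sending a decomposable polyvector $\xi_0\wedge\cdots\wedge\xi_k$ to the operator $(f_0,\dots,f_k)\mapsto\frac{1}{(k+1)!}\sum_\sigma\mathrm{sgn}(\sigma)\prod_i\xi_{\sigma(i)}(f_i)$. The subtlety, and the reason the theorem is hard, is that $\mathcal{U}_1$ is a quasi-isomorphism of complexes but \emph{not} a morphism of DGLAs: it fails to intertwine the Schouten and Gerstenhaber brackets. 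Kontsevich repairs this by producing all higher coefficients $\mathcal{U}_n$, $n\geq2$, satisfying the tower of $L_\infty$-relations. For the local model $M=\mathbb{R}^d$ I would define $\mathcal{U}_n(\gamma_1,\dots,\gamma_n)$ as a finite sum over admissible graphs $\Gamma$, each contributing the polydifferential operator obtained by contracting the tensors $\gamma_i$ along the edges of $\Gamma$, weighted by a real number $w_\Gamma$ equal to an integral of angle one-forms over a compactified configuration space of points in the upper half-plane. The main obstacle is verifying the $L_\infty$-relations: by Stokes' theorem each relation is equivalent to the vanishing of the sum of boundary integrals, where the codimension-one boundary strata of the configuration space must reproduce exactly the quadratic terms of the structure equation, the spurious strata being killed by a dimension/degeneracy vanishing argument.

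Finally I would globalize. The explicit $\mathbb{R}^d$ formulas are not coordinate-invariant, so I would pass to a general manifold by formal geometry: choosing a torsion-free connection, one forms the bundle of formal coordinate systems and the associated flat Fedosov-type resolution, and glues the flat local $L_\infty$-morphisms into a global $\mathcal{U}$ on $M$. With $\mathcal{U}$ constructed, the general principle cited above yields the required bijection between equivalence classes of formal Poisson structures and isomorphism classes of star products. Naturality with respect to diffeomorphisms is built into the formal-geometry construction, since a diffeomorphism of $M$ acts compatibly on $T_{poly}^\bullet(M)$ and $D_{poly}^\bullet(M)$ and commutes with $\mathcal{U}$, which completes the proof.
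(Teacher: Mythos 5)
Your proposal is correct and follows exactly the route the thesis itself indicates: the paper states this theorem without proof, deferring to the Formality theorem, and your reduction of both sides to gauge-equivalence classes of Maurer--Cartan elements in the DGLAs $\mathfrak{g}_S^{\bullet}(M)$ and $\mathfrak{g}_G^{\bullet}(C^{\infty}(M))$, combined with Kontsevich's $L_\infty$-quasi-isomorphism, is precisely the standard argument the paper alludes to when it writes that the formality map ``induces a one-to-one map from formal Poisson structures on $M$ to star products.'' If anything, your sketch (graph expansion, configuration-space weights, Stokes' theorem for the $L_\infty$-relations, globalization via formal geometry) supplies considerably more detail than the thesis, which only records the statements of the Hochschild--Kostant--Rosenberg and Formality theorems.
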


In other words, this theorem states that classes of star products corresponds to classes of deformations of the Poisson structure, i.e. any Poisson manifold admits a deformation quantization. This result follows from a more general one, called Formality theorem.

\subsection{Formality Theory}
As mentioned in the previous section, a Poisson structure is completely defined by the choice of a bivector field satisfying certain properties; on the other hand a star product is specified by a family of bidifferential operators. In order to work out the correspondence between these two objects, we introduce the two differential graded Lie algebras they belong to: multivector fields $\mathfrak{g}_S^{\bullet}(M)$ and multidifferential operators $\mathfrak{g}_G^{\bullet}(C^{\infty}(M))$.
In the Formality theorem, Kontsevich constructed a $L_{\infty}$ quasi-isomorphism between these differential graded Lie algebras. 

\begin{definition}
A \textbf{graded Lie algebra} (GLA) is a graded vector space \linebreak 
$\mathfrak{g}=\oplus_{i\in\mathbb{Z}}\mathfrak{g}^i$ endowed with a bilinear operation
\begin{equation}
[\cdot,\cdot]:\mathfrak{g}\otimes\mathfrak{g}\rightarrow \mathfrak{g}
\end{equation}
satisfying the following conditions:
\begin{enumerate}
\item homogeneity, $[a,b]\in \mathfrak{g}^{\alpha+\beta}$
\item skew-symmetry, $[a,b]=-(-1)^{\alpha\beta}[b,a]$
\item Jacoby identity, $[a,[b,c]]=[[a,b],c]+(-1)^{\alpha\beta}[b,[a,c]]$
\end{enumerate}
for any $a\in \mathfrak{g}^{\alpha}$, $b\in \mathfrak{g}^{\beta}$ and $c\in \mathfrak{g}^{\gamma}$.
\end{definition}

As an example, any Lie algebra is a GLA concentrated in degree 0.

\begin{definition}
A \textbf{differential graded Lie algebra} (DGLA) is a GLA $\mathfrak{g}$ together with a differential $d:\mathfrak{g}\rightarrow \mathfrak{g}$, i.e. a linear operator of degree 1 which satisfies the Leibniz rule
\begin{equation}
d[a,b]=[da,b]+(-1)^{\alpha\beta}[a,db]\qquad a\in \mathfrak{g}^{\alpha},\quad b\in \mathfrak{g}^{\beta}
\end{equation}
 and $d^2=0$.
\end{definition}
Given a DGLA we can define immediately the cohomology of $\mathfrak{g}$ as
\begin{equation}
H^i(\mathfrak{g}):=Ker(d:\mathfrak{g}^i\rightarrow \mathfrak{g}^{i+1})/Im(d: \mathfrak{g}^{i-1}\rightarrow \mathfrak{g}^i)
\end{equation}
The set $H:=\oplus_i H^i(\mathfrak{g})$ has a natural structure of graded Lie algebra.

The morphism $f:\mathfrak{g}_1\rightarrow \mathfrak{g}_2$ of DGLA's induces a morphism $H(f):H_1\rightarrow H_2$ between cohomologies. Recall that a \textbf{quasi-isomorphism} is a morphism of DGLA's inducing isomorphisms in cohomology.

\begin{definition}\label{def: fdgla}
A differential graded Lie algebra $\mathfrak{g}$ is \textbf{formal} if it is quasi-isomorphic to its cohomology, regarded as a DGLA with zero differential and the induced bracket.
\end{definition}

\subsection*{Multivector fields and Multidifferential operators}

In the following, we discuss two DGLA's which will play a fundamental role in deformation quantization (a useful presentation can be found in \cite{CI}). We start with the DGLA of multidifferential operators, which is a subalgebra of the Hochschild DGLA. In the following, we explain how this algebra is  constructed. 

The Hochschild complex of an associative unital algebra $A$ is the complex $C(A,A)$ with vanishing components in degree $n<0$ 
and whose $n$-th component, for $n\geq 0$ is the space
\begin{equation}
\tilde{C}(A,A):= \sum_{n=-1}^{\infty}\tilde{C}^n(A,A)\qquad     \tilde{C}^n(A,A)=Hom(A^{\otimes n+1},A).
\end{equation}
If $A=C^{\infty}(M)$, we require that $\tilde{C}^n(A,A)$ consists of those maps from $A^{\otimes n}$ to $A$ which are multi-differential.
By definition, the differential of a $n$-cochain $f$ is the $(n+1)$-cochain defined by
\begin{equation}
\begin{split}
(-1)^n(df)(a_0,\dots, a_n) &= a_0 f(a_1,\dots, a_n)-\sum_{i=0}^{n-1}(-1)^i f(a_0,\dots, a_i a_{i+1},\dots, a_n)\\
                                         &+(-1)^{n-1}f(a_0,\dots, a_{n-1})a_n
\end{split}
\end{equation}
The Hochschild cohomology $H(A,A)$ of $A$ is the homology associated to the Hochschild complex. The normalized 
Hochschild complex is 
\begin{equation}\label{eq: hcc}
C^n(A,A)=Hom(\bar{A}^{\otimes n},A)
\end{equation}
where $\bar{A}=A/k1$. Now we introduce a new structure on the Hochschild complex, the Gerstenhaber bracket \cite{G}.
The Gerstenhaber product of $f\in\tilde{C}^n(A,A)$ and $g\in \tilde{C}^m(A,A)$ is the $(n+m-1)$-cochain defined by
\begin{equation}
(f\circ g)(a_1,\dots, a_{n+m-1})=\sum_{j=0}^{n-1}(-1)^{(m-1)j}f(a_1,\dots, a_j, g(a_{j+1}, \dots, a_{j+m} ),\dots)
\end{equation}
that is not associative in general. As a consequence, we define the Gerstenhaber bracket as follows:
\begin{equation}
[D,E]_G=D\circ E-(-1)^{(n-1)(m-1)}E\circ D.
\end{equation}
We notice that the Hochschild differential can be expressed in terms of the Gerstenhaber bracket 
and the multiplication $m$ of $A$ as
\begin{equation}
d=[m, \cdot]_G:\tilde{C}^{\bullet}(A,A)\rightarrow \tilde{C}^{\bullet+1}(A,A)
\end{equation}

It follows that the Hochschild complex $\tilde{C}^{\bullet}(A,A)$ endowed with the Gerstenhaber bracket is a DGLA \cite{G}
 that we denote by $\mathfrak{g}^{\bullet}_G(A)$. It is well known that the embedding of $C^{\bullet}(A,A)$ into
 $\tilde{C}^{\bullet}(A,A)$ is a quasi-isomorphism \cite{CE}.

The second DGLA we are interested in is given by the multivector fields on $M$. By definition, a $k$-multivector field $X$ is a section of the $k$-th exterior power $\wedge^k TM$ of the tangent space $TM$.
We define the Schouten-Nijenhuis bracket:
\begin{equation}
[X,Y_1\wedge\dots\wedge Y_k]_S:=\sum_{i=1}^k(-1)^{i+1}[X,Y_i]\wedge Y_1\wedge\dots\wedge\hat{Y}_i\wedge\dots\wedge Y_k.
\end{equation}
such that
\begin{enumerate}
	\item $X\in \Gamma(M,T)$, $[X,\pi]_S=L_X\pi$,
	\item for $f,g\in\Gamma(M,\wedge^0 T)$, $[f,g]_S=0$,
	\item the bracket $[\cdot,\cdot]_S$ turns $\Gamma(M,\wedge^{\bullet+1}T)$ into a graded Lie algebra,
	\item for any $\pi,\psi,\varphi\in \Gamma(M,\wedge^{\bullet}T)$,
		\begin{equation}
[\pi, \varphi\wedge\psi]_S=[\pi, \varphi]_S\wedge\psi+(-1)^{\vert\pi\vert (\vert\varphi\vert+1)}\varphi\wedge[\pi,\psi]_S.
		\end{equation}	
\end{enumerate}

The space $\wedge^k TM$, endowed with the Schouten-Nijenhuis bracket and with differential $d=0$, is a DGLA, which we denote by 
\begin{equation}
\mathfrak{g}_S^{\bullet}(M)=\Gamma(M,\wedge^{\bullet+1}T).
\end{equation}

\subsection*{Formality Theorem}

As we mentioned above, Kontsevich's main result is that 
$\mathfrak{g}^{\bullet}_G(A)$ is a formal DGLA (see Def. \ref{def: fdgla}). This result relies on the existence of a previous result by Hochschild, Kostant and Rosenberg \cite{HKR} which establishes the existence of an isomorphism between the cohomology of the algebra of multidifferential operators and the algebra of multivector fields.

\begin{theorem}[Hochschild-Kostant-Rosenberg \cite{HKR}]\label{hkr}
The formula
\begin{equation}
D_{\pi}(a_1,\dots, a_n)=\langle\pi, da_1\dots da_n\rangle
\end{equation}
defines a quasi-isomorphism
\begin{equation}
(\Gamma(T,\wedge^{\bullet}T),0)\rightarrow C^{\bullet}(C^{\infty}(M),C^{\infty}(M))
\end{equation}
In particular, the cohomology groups $ H^{\bullet}(C^{\infty}(M),C^{\infty}(M))$ is isomorphic to
\begin{equation}
\Gamma(T,\wedge^{\bullet}T),
\end{equation}
where the bracket induced by $[\cdot,\cdot]_G$ becomes the Schouten bracket $[\cdot,\cdot]_S$.
\end{theorem}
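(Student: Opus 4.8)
The statement to prove is the classical Hochschild–Kostant–Rosenberg theorem, and the plan is to establish it in three stages: verify that $D$ is a chain map landing in cocycles, prove the induced map on cohomology is an isomorphism by a local computation, and finally match the Gerstenhaber and Schouten–Nijenhuis brackets. First I would check that $D_\pi$ is a well-defined multidifferential cochain and a Hochschild cocycle. In local coordinates a $k$-vector field reads $\pi=\sum \pi^{i_1\cdots i_k}\partial_{i_1}\wedge\cdots\wedge\partial_{i_k}$, so that $D_\pi(a_1,\dots,a_k)=\sum \pi^{i_1\cdots i_k}\,\partial_{i_1}a_1\cdots\partial_{i_k}a_k$ is skew-symmetric and a derivation in each argument. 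Since the source DGLA $\mk{g}^{\bullet}_S(M)$ carries the zero differential, being a chain map is exactly the assertion $dD_\pi=0$. Expanding $(dD_\pi)(a_0,\dots,a_k)$ and applying the Leibniz rule to each product $a_ia_{i+1}$, every term splits into two pieces which cancel against the neighbouring outer terms $a_0 D_\pi(\dots)$ and $(-1)^{k+1}D_\pi(\dots)a_k$, precisely because $D_\pi$ differentiates each slot once. Hence $D$ descends to a map $\bar D:\Gamma(M,\wedge^{\bullet}T)\to H^{\bullet}(C^{\infty}(M),C^{\infty}(M))$.

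The core step is to show $\bar D$ is an isomorphism, and here I would exploit locality. Both $\mk{g}^{\bullet}_S(M)$ and the multidifferential Hochschild complex are sheaves of complexes on $M$, and $D$ is a morphism of such sheaves, so it suffices to prove the quasi-isomorphism on a chart $U\cong\R^n$ and afterwards glue by a partition of unity. On $\R^n$ I would introduce the antisymmetrization $\mathrm{Alt}(c)(a_1,\dots,a_n)=\tfrac{1}{n!}\sum_{\sigma}\mathrm{sgn}(\sigma)\,c(a_{\sigma(1)},\dots,a_{\sigma(n)})$ and argue that for a cocycle $c$ the antisymmetrization $\mathrm{Alt}(c)$ is automatically a multiderivation, hence equals $D_\pi$ for a unique $\pi$; conversely $\mathrm{Alt}\circ D=\mathrm{id}$ on multivector fields, which gives injectivity of $\bar D$. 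Surjectivity amounts to producing an explicit contracting homotopy $h$ on the reduced bar complex of $C^{\infty}(\R^n)$ realizing $c-D_{\pi}=dh(c)+h(dc)$, so that every cocycle is cohomologous to its antisymmetrization. Equivalently, one resolves $C^{\infty}(\R^n)$ as a module over $C^{\infty}(\R^n)\otimes C^{\infty}(\R^n)$ by the Koszul complex and reads off $H^k=\Gamma(\wedge^k T\R^n)$ directly. This local cohomology computation is the main obstacle: one must build the homotopy (or Koszul resolution) while staying inside the \emph{multidifferential} subcomplex, and then verify that the chartwise isomorphisms are compatible under coordinate changes.

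Finally I would identify the brackets. Having the isomorphism $\bar D$ in hand, it remains to check that it intertwines the Gerstenhaber bracket $[\cdot,\cdot]_G$ with the Schouten–Nijenhuis bracket $[\cdot,\cdot]_S$ of Proposition \ref{pro: sn}. It is enough to test on generators: for vector fields $X,Y$ the operators $D_X,D_Y$ are ordinary derivations, $[D_X,D_Y]_G$ is their commutator, and this equals $D_{[X,Y]}=D_{[X,Y]_S}$; for functions the bracket vanishes on both sides. Since both brackets act as graded derivations of the wedge (respectively cup) product in each slot, an induction on total degree using these generating cases extends the identity to all multivector fields, which completes the proof.
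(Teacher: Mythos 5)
The paper does not prove this theorem: it is quoted verbatim from Hochschild--Kostant--Rosenberg as background for Kontsevich's formality theorem, with only the citation \cite{HKR} offered in place of an argument. So there is no in-paper proof to compare against; I can only assess your proposal on its own terms.

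Your outline is the standard (and correct) route to the smooth HKR theorem, and the three-stage decomposition --- cocycle check, local quasi-isomorphism, bracket identification --- is sound. The first stage is complete as written: the telescoping cancellation of $dD_\pi$ against the Leibniz expansion is exactly right, and the bracket comparison in the last stage (agreement on functions and vector fields, then extension by the biderivation property of both brackets with respect to the products on cohomology) is the usual argument. The weight of the proof, however, sits on two lemmas you invoke but do not establish. First, that the full antisymmetrization of a Hochschild \emph{cocycle} in the multidifferential complex is a multiderivation, and --- which you need for injectivity of $\bar D$ but do not state --- that antisymmetrization annihilates all coboundaries, so that $\mathrm{Alt}$ descends to cohomology at all. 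Second, the existence of a homotopy, inside the multidifferential subcomplex, from a cocycle to its antisymmetrization; you correctly flag this as the main obstacle, but as written it is an announcement rather than an argument, and the Koszul-resolution alternative requires either the topological (completed tensor product) version of Hochschild cohomology or a filtration by order of the differential operators to stay within the subcomplex the paper actually uses in (\ref{eq: hcc}). The globalization step also deserves one more sentence: a partition of unity does not directly glue quasi-isomorphisms; one should argue via the sheaf-theoretic spectral sequence (both complexes are complexes of fine sheaves, and $D$ is a morphism of sheaves of complexes inducing an isomorphism on cohomology sheaves). None of these is a wrong turn --- each is a known, provable statement --- but in a self-contained proof they are precisely the content, so the proposal should be read as a correct skeleton awaiting its two or three load-bearing lemmas.
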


The last tool we need is the notion of $L_{\infty}$-quasi isomorphism.
Let $L_1$ and $L_2$ be two DGLA. By definition, an $\mathbf{L_{\infty}}$-\textbf{morphism} $f:L_1\rightarrow L_2$ is given by a sequence of maps
\begin{equation}
f_n:L_1^{\otimes n}\rightarrow L_2,\quad n\geq 1,
\end{equation} 
homogeneous of degree $1-n$ and such that the following conditions are satisfied:
\begin{enumerate}
	\item The morphism $f_n$ is graded antisymmetric, i.e. we have
\begin{equation}
f_n(x_1,\dots, x_i,x_{i+1},\dots x_n)=-(-1)^{\vert x_i\vert\vert x_{i+1}\vert}f_n(x_1,\dots, x_{i+1},x_{i},\dots x_n)
\end{equation}
for all homogeneous $x_1,\dots, x_n$ of $L_1$. 
	\item We have $f_1\circ d=d\circ f_1$ i.e. the map $f_1$ is a morphism of complexes.
	\item $f_1$ is compatible with the brackets up to a homology given by $f_2$. In particular, $f_1$ induces a morphism of graded 
	Lie algebras from $H^{\bullet}(L_1)$ to $H^{\bullet}(L_2)$.
	\item More generally, for any homogeneous element $x_1,\dots, x_n$ of $\mathfrak{g}^{\bullet}$,
\begin{equation}
	\begin{split}
		\sum \pm & f_{q+1}([x_{i_1},\dots, x_{i_p}]_p,x_{j_1},\dots, x_{j_q})= \\
& \sum\pm \frac{1}{k!}[f_{n_1}(x_{i_{11}},\dots, x_{i_{1n_1}}),\dots,f_{n_k}(x_{i_{k1}},\dots, x_{i_{kn_k}})]
	\end{split}
\end{equation}
\end{enumerate} 
Roughly, an $L_{\infty}$-morphism is a map between DGLA which is compatible with the brackets up to a given coherent system
of higher homotopies. 

An $\mathbf{L_{\infty}}$-\textbf{quasi isomorphism} is an $L_{\infty}$-morphism whose first components is a quasi-isomorphism.

Kontsevitch's Formality Theorem can be stated as follows:

\begin{theorem}[Kontsevich \cite{Ko}]
There exists natural $L_{\infty}$ quasi-isomorphism
\begin{equation}
K:\mathfrak{g}_S^{\bullet}(M)\rightarrow \mathfrak{g}_G^{\bullet}(C^{\infty}(M))
\end{equation}
The component $K_1$ of $K$ coincides with the quasi-isomorphism defined in the Hochschild-Kostant-Rosenberg Theorem \ref{hkr}.
\end{theorem}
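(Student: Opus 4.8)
The plan is to reduce to the local model $M=\R^d$ and then exhibit $K$ by an explicit universal formula, verifying the $L_\infty$ relations through a Stokes-theorem argument on compactified configuration spaces. Because multidifferential operators are local and the claimed morphism is to be natural under diffeomorphisms, it suffices to produce a $GL_d(\R)$-equivariant family of polynomial (universal) formulas for the Taylor coefficients of $\pi$ on $\R^d$, which then glue to a globally defined $L_\infty$ morphism on any $M$. For each $n\geq 1$ I would define the component
\[
K_n:\bigl(\mathfrak{g}_S^{\bullet}(\R^d)\bigr)^{\otimes n}\longrightarrow \mathfrak{g}_G^{\bullet}(C^{\infty}(\R^d))
\]
as a finite sum over \emph{admissible graphs} $\Gamma$ with $n$ ``aerial'' vertices and $m$ ordered ``grounded'' vertices, each graph contributing a multidifferential operator $B_\Gamma$ times a real weight $w_\Gamma$. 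The operator $B_\Gamma$ is built by placing a polyvector field at each aerial vertex and the input functions $a_1,\dots,a_m$ at the grounded vertices, interpreting each edge as a partial derivative acting on the vertex at its head and contracted against a tensor index at its tail. The weight is the configuration-space integral
\[
w_\Gamma=\frac{1}{(2\pi)^{|E_\Gamma|}}\int_{\overline{C}_{n,m}}\ \bigwedge_{e\in E_\Gamma} d\phi_e,
\]
where $\phi_e$ is Kontsevich's hyperbolic angle attached to each edge and $\overline{C}_{n,m}$ is the Fulton--MacPherson-type compactification of the space of $n$ points in the open upper half-plane $\mathbb{H}$ and $m$ points on $\R$, taken modulo the rescalings $z\mapsto az+b$, $a>0$. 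The dimension count $2n+m-2=|E_\Gamma|$ selects the graphs with possibly nonzero weight.

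To prove that the $K_n$ satisfy the $L_\infty$ morphism relations I would apply Stokes' theorem to $\int_{\overline{C}} d\bigl(\bigwedge_e d\phi_e\bigr)=\int_{\partial\overline{C}}\bigwedge_e d\phi_e$ for graphs carrying one extra edge. Since each $d\phi_e$ is closed, the bulk term vanishes and the identity localizes on the codimension-one boundary strata of $\overline{C}_{n,m}$, which are indexed by clusters of vertices that collapse: a cluster of aerial points colliding inside $\mathbb{H}$ reproduces the Schouten bracket on the source or the internal composition of operators, while a cluster approaching the real axis reproduces composition with the Hochschild differential and the algebra multiplication. Matching these boundary contributions term-by-term with the quadratic $L_\infty$ identity requires the two Kontsevich vanishing lemmas — that graphs with a double edge or with an aerial vertex of valence one contribute zero, and that the integral over any stratum where three or more aerial points collide vanishes for dimensional reasons. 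This boundary analysis, together with convergence and smoothness of the angle forms on the compactification, is the technical heart and the step I expect to be the main obstacle.

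Finally, for $n=1$ there are no edges joining aerial vertices, so the only admissible graph is a single aerial vertex connected by $m$ edges to the $m$ grounded vertices; its weight is $1$ and the associated operator is exactly $\langle\pi,da_1\cdots da_m\rangle$, so $K_1$ coincides with the Hochschild--Kostant--Rosenberg map of Theorem \ref{hkr}, and in particular it is a quasi-isomorphism, making $K$ an $L_\infty$ quasi-isomorphism. Naturality follows because the angle function $\phi_e$, hence every weight $w_\Gamma$, is defined purely from the hyperbolic geometry of $\mathbb{H}$ and is invariant under the orientation-preserving affine transformations used to rigidify the configuration space; the universal formulas are therefore coordinate-independent, transform correctly under changes of chart, and glue to a well-defined natural $L_\infty$ quasi-isomorphism on an arbitrary Poisson manifold $M$.
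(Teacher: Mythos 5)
The paper itself gives no proof of this theorem: it is stated purely as a citation of Kontsevich's work, as background for the quantization machinery used later. So there is nothing internal to compare against; what you have written is a recognizable outline of Kontsevich's own argument — admissible graphs, the weights as integrals of products of angle forms over the compactified configuration spaces $\overline{C}_{n,m}$, the dimension count $2n+m-2=|E_\Gamma|$, the Stokes/boundary-stratum analysis yielding the quadratic $L_\infty$ relations, and the identification of $K_1$ with the Hochschild--Kostant--Rosenberg map. That skeleton is correct, and your identification of the boundary analysis (together with the vanishing lemmas for double edges and for collapsing clusters of three or more aerial points) as the technical heart is accurate.

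The one step that is genuinely wrong as stated is the globalization. You claim that $GL_d(\R)$-equivariance of the universal formulas makes them ``transform correctly under changes of chart'' and hence glue to an $L_\infty$ morphism on an arbitrary $M$. This is false: the weights and operators $B_\Gamma$ are equivariant only under affine transformations of $\R^d$, not under general diffeomorphisms, and the local formality morphisms on coordinate charts do not patch together naively. Kontsevich's own treatment of this point uses formal geometry (the bundle of formal coordinate systems and a flat connection on the associated bundles of polyvector fields and polydifferential operators), and the details were only later made fully rigorous by Cattaneo--Felder--Tomassini and by Dolgushev via Fedosov-type resolutions. If you intend your sketch to cover a general Poisson manifold rather than just $\R^d$, you must either restrict the theorem to the local case or add this formal-geometry globalization step; note also that ``naturality with respect to diffeomorphisms'' in the statement is precisely the property that requires this extra work, not the property that lets you avoid it. A second, more minor, point: the weight of the single $n=1$ graph is not $1$ but carries the standard symmetry-factor normalization, and one should say explicitly that $K_1$ being a quasi-isomorphism is the content of Theorem \ref{hkr} rather than of the configuration-space computation.
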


Kontsevich's formality map  induces a one-to-one map from formal Poisson structures on $M$
to star products on $C^{\infty}(M)$.

\section{Quantization of a Poisson Lie group}\label{sec_3.2}

The theory of Kontsevich provides a procedure to quantize a Poisson manifold; we now introduce 
a theory of quantization for Poisson Lie groups and Lie bialgebras. As defined in Section \ref{sec: poisson lie groups} a Poisson Lie group is a 
Poisson manifold endowed with a Lie group structure. The quantization of these structures can be done using the formalism of  quantum groups; this allows us to 
deform the Poisson manifold and group structures in a compatible way. 
More precisely, given a Poisson Lie group or a Lie bialgebra, an associated Hopf algebra can be defined and deformed to obtain the correspondent quantum group. 
In the following we introduce the definitions of Hopf algebra and Hopf algebra action and we explain how to quantize them. The interested reader can consult the standard books about quantum groups e.g. \cite{C} and \cite{Ma} for details.

\subsection{Hopf algebras}\label{sec_3.2.1}

An algebra with unit over a commutative ring $k$ is a $k$-module $A$ with a multiplication, bilinear over $k$ and associative, and with the unit element $1$ such that $a\cdot 1=1\cdot a=a$ for all $a\in A$. We reformulate this definition in terms of commutative diagrams.

\begin{definition}
An \textbf{algebra} over a commutative ring $k$ is a $k$-module $A$ equipped with $k$-module maps $m^A:A\otimes_k A\rightarrow A$, the multiplication, 
and $\iota^A:A\rightarrow A$, the unit, such that the following diagrams commute:

\begin{center}
$\begin{CD} 
A\otimes k @>id\otimes\iota >> A\otimes A\\ 
@VV\cong V @VVmV\\ 
A @>id>> A 
\end{CD}
\qquad\qquad 
\begin{CD} 
k\otimes A @>\iota\otimes id >> A\otimes A\\ 
@VV\cong V @VVmV\\ 
A @>id>> A 
\end{CD} $
\end{center}

\medskip

\begin{center}
$
\begin{CD} 
A\otimes A\otimes A @>m\otimes id >> A\otimes A\\ 
@VVid\otimes mV @VVmV\\ 
A\otimes A @>m>> A 
\end{CD}
$
\end{center}
\end{definition}

In terms of the traditional description of an algebra we have 
\begin{equation}
\iota(\lambda)=\lambda 1, \qquad m(a_1\otimes a_2)=a_1\cdot a_2.
\end{equation}
The first two diagrams express the properties of the unit element and the third the associativity of multiplication.

An algebra is commutative if the following diagram commutes
$$
\begin{CD} 
A\otimes A @>\tau >> A\otimes A\\ 
@VV mV @VVmV\\ 
 A @>id>> A 
\end{CD}
$$
where $\tau:A\otimes A\rightarrow A\otimes A$ is the flip map $\tau(a_1\otimes a_2)=a_2\otimes a_1$. If we set $m_{op}=m\circ\tau$, then $(A,\iota, m_{op})$ is the opposite algebra of $A$.

\begin{definition}
A \textbf{coalgebra} over a commutative ring $k$ is a $k$-module $A$ equipped with $k$-module maps $\Delta^A:A\rightarrow A\otimes A$, the coproduct, and $\epsilon: A\rightarrow k$, the counit, 
such that the following diagrams commute:
$$
\begin{CD} 
A\otimes k @<id\otimes\epsilon << A\otimes A\\ 
@AA\cong A @AA\Delta A\\ 
A @<id<< A 
\end{CD}
\qquad\qquad
\begin{CD} 
k\otimes A @<\epsilon\otimes id << A\otimes A\\ 
@AA\cong A @AA\Delta A\\ 
A @<id<< A 
\end{CD} 
$$

\medskip

$$
\begin{CD} 
A\otimes A\otimes A @<\Delta\otimes id << A\otimes A\\ 
@AA id\otimes \Delta A @AA\Delta A\\ 
A\otimes A @<\Delta << A 
\end{CD}
$$
\end{definition}

The commutativity of the third diagram is usually referred to as the coassociativity of $A$. The coalgebra $A$ is called cocommutative if the following diagram commutes:
$$
\begin{CD} 
A\otimes A @<\tau << A\otimes A\\ 
@AA\Delta A @AA\Delta A\\ 
 A @<id<< A 
\end{CD}
$$

If we set $\Delta^{op}=\tau\circ\Delta$, then $(A,\epsilon, \Delta^{op})$ is the opposite coalgebra.

Given two coalgebras $A$ and $B$, a $k$-module map $\varphi:A\rightarrow B$ is a coalgebra homomorphism if 

\begin{equation}
(\varphi\otimes\varphi)\circ\Delta^A=\Delta^B\circ\varphi, \quad \epsilon^B\circ\varphi=\epsilon^A.
\end{equation}

A Hopf algebra has compatible algebra and coalgebra structures and one extra structure map.

\begin{definition}
A \textbf{Hopf algebra} over a commutative ring $k$ is a $k$-module $A$ such that

\begin{enumerate}
\item $A$ is both an algebra and a coalgebra over $k$;
\item the coproduct $\Delta:A\rightarrow A\otimes A$ and the counit $\epsilon:A\rightarrow k$ are algebra homomorphisms;
\item the product $m:A\otimes A$ and the unit $\iota:k\rightarrow A$ are coalgebra homomorphisms;
\item $A$ is equipped with a bijective $k$-module map $S^A:A\rightarrow A$ called the antipode, such that the following diagrams commute:
$$
\begin{CD} 
A\otimes A @>S\otimes id >> A\otimes A\\ 
@AA\Delta A @VVmV\\ 
A @>\iota\circ\epsilon >> A 
\end{CD}
\qquad 
\begin{CD} 
A\otimes A @>id\otimes S >> A\otimes A\\ 
@AA\Delta A @VVmV\\ 
A @>\iota\circ\epsilon>> A 
\end{CD} 
$$
\end{enumerate}
\end{definition}

If $A$ and $B$ are Hopf algebras, a $k$-module map $\varphi:A\rightarrow B$ is a Hopf algebra homomorphism if it is a homomorphism of both the algebra and the coalgebra structures of $A$.

Let us consider two crucial examples.

\begin{example}\label{ex_3.2.1_cg}
Let $G$ be a compact topological group. Consider the space $C(G)$ of the continuous functions on $G$ together with the
 following maps:
\begin{itemize}
\item[-] $(f\cdot h)(g)=f(g)h(g)$
\item[-] $\Delta (f)(g_1\otimes g_2)=f(g_1g_2)$
\item[-] $\iota (x)=x1$ where $1(g)=1$ for any $g\in G$
\item[-] $\epsilon(f)=f(e)$ where $e$ is the unit element of $G$
\item[-] $S(f)(g)=f(g^{-1})$ 
\end{itemize}
where $g_1,g_2,g\in G$, $x\in k$ and $f,h\in C(G)$. The set $C(G)$ together with these maps is a Hopf algebra.
\end{example}

\begin{example}\label{ex_3.2.1_ug}
Let $\mathfrak{g}$ be a Lie algebra and $\mathcal{U}(\mathfrak{g})$ its universal enveloping algebra, then $\mathcal{U}(\mathfrak{g})$ becomes a Hopf algebra when
\begin{itemize}
	\item[-] the ordinary multiplication in $\mathcal{U}(\mathfrak{g})$
	\item[-] $\Delta(x)=x\otimes 1+1\otimes x$
	\item[-] $\iota (\alpha)=\alpha1$ 
	\item[-] $\epsilon(1)=1$ and zero on all the other elements
	\item[-] $S(x)=-x$ 
\end{itemize}
where $x\in\mathfrak{g}$ is considered as a subset of $\mathcal{U}(\mathfrak{g})$. To be precise, this defines $\Delta$, $\iota$, $\epsilon$ and $S$ only on the subset $\mathfrak{g}$ of the universal enveloping algebra, but these maps can be extended uniquely to all $\mathcal{U}(\mathfrak{g})$ such that the Hopf algebra axioms are satisfied everywhere.
\end{example}

These examples are in a sense dual each other. In general, suppose that $(A,m,\Delta,\iota,\epsilon, S)$ is a Hopf algebra and $A^*$ is its dual space; then using the structure maps of $A$ we define the structure maps $(m^*,\Delta^*,\iota^*,\epsilon^*, S^*)$ as follows:
\begin{itemize}
\item[-] $\langle m^*(f\otimes g),x\rangle= \langle f\otimes g, \Delta (x)\rangle$
\item[-] $\langle\Delta^*(f), x\otimes y\rangle=\langle f,xy\rangle$
\item[-] $\langle\iota^*(\alpha),x\rangle=\alpha\cdot\epsilon(x)$
\item[-] $\epsilon^*(f)=\langle f,1\rangle$
\item[-] $\langle S^*(f),x\rangle=\langle f, S(x)\rangle$
\end{itemize}
where $f,g\in A^*$ an $x,y\in A$. The brackets $\langle\cdot,\cdot\rangle$ are the pairing between $A^*$ and $A$ and $\langle f\otimes g, x\otimes y\rangle=\langle f,x\rangle \langle g,y\rangle$. It is 
easy to see that $A^*$ is also a Hopf algebra. 

Since we are interested in the quantization of Poisson Lie groups, we need to introduce the concept of Poisson Hopf algebra. Recall that a Poisson algebra (\ref{1.2_pa}) is an algebra equipped with a
 bilinear map $\lbrace \cdot,\cdot\rbrace: A\otimes A\rightarrow A$ such that $(A,\lbrace \cdot,\cdot\rbrace)$ is a Lie algebra and $\lbrace \cdot,\cdot\rbrace$ is a derivation.

\begin{definition}\label{def: pha}
A Poisson algebra $(A,\lbrace \cdot,\cdot\rbrace)$ is called a \textbf{Poisson Hopf algebra} if it is also a Hopf algebra $(A,m,\Delta,\iota,\epsilon, S)$ over $k$ such that both structure are 
compatible, i.e.
\begin{equation}
\Delta\left( \lbrace a_1,a_2\rbrace\right) =\lbrace \Delta(a_1), \Delta(a_2)\rbrace_{A\otimes A}
\end{equation}
for all $a_1,a_2\in A$. Here the Poisson bracket $\lbrace \cdot, \cdot\rbrace_{A\otimes A}$ is defined as
\begin{equation}
\lbrace a\otimes a', b\otimes b'\rbrace_{A\otimes A}=\lbrace a,b\rbrace\otimes a'b'+ab\otimes \lbrace a',b'\rbrace
\end{equation}
for all $a,a',b,b'\in A$.
\end{definition}

Given a Poisson Lie group $(G,\pi)$, its Poisson algebra $(C^{\infty}(G), \lbrace\cdot,\cdot\rbrace)$ is a Poisson Hopf algebra with the Hopf structure given in Example \ref{ex_3.2.1_cg}.

For the quantization of the Lie bialgebras, we need the dual version of the above definition:

\begin{definition}\label{def: cpha}
A \textbf{co-Poisson Hopf algebra} is a co-commutative Hopf algebra $(A,m,\Delta,\iota,\epsilon, S)$ together with a map
\begin{equation}
\delta: A\rightarrow A\otimes A
\end{equation}
such that
\begin{enumerate}
\item $\tau\circ\delta=-\delta$ co-antisymmetry
\item $(1\otimes 1\otimes 1+(1\otimes\tau)(\tau\otimes 1))(1\otimes\delta)\delta=0$ co-Jacobi identity
\item $(\Delta\otimes 1)\delta=(1\otimes 1\otimes 1+\tau\otimes 1)(1\otimes \delta)\Delta$ co-Leibniz rule
\item $(m\otimes m)\circ \delta_{A\otimes A}=\delta\circ m$ m is co-Poisson homomorphism
\end{enumerate}
where $\delta_{A\otimes A}=(1\otimes\tau\otimes 1)(\delta\otimes\Delta+\Delta\otimes\delta)$ is the co-Poisson structure naturally associated to the tensor product space
\end{definition}

The universal enveloping algebra of a Lie biagebra $\mathfrak{g}$ is a co-Poisson Hopf algebra with Hopf structure defined in Example \ref{ex_3.2.1_ug}.

\subsection{Quasi triangular Hopf algebras}
In the following we discuss a particular class of Hopf algebras and, in analogy with the classical case, the quantum Yang-Baxter equation.

As already mentioned, a Hopf algebra $H$ is cocommutative is $\tau\circ\Delta=\Delta$. Here we consider Hopf algebras that are only cocommutative up to conjugation by an element $R\in H\otimes H$. This element $R$ is called the quasi triangular structure.

\begin{definition}
A \textbf{quasi triangular} Hopf algebra is a pair $(H,R)$, where $H$ is a Hopf algebra and $R\in H\otimes H$ is invertible and such that
\begin{equation}
(\Delta\otimes id)R=R_{13}R_{23},\quad (id\otimes \Delta)R=R_{13}R_{12}
\end{equation}
\end{definition}

For sake of completeness we record  the following two lemmas, in analogy with the classical case.

\begin{lemma}
If $(H,R)$ is a quasitriangular bialgebra, then $R$ as an element of $H\otimes H$ obeys
\begin{equation}
(\epsilon\otimes id)R=(id\otimes \epsilon)R=1
\end{equation}
If $H$ is a Hopf algebra then one also has
\begin{equation}
(S\otimes id)R=R^{-1},\quad (id\otimes S)R^{-1}=R
\end{equation} 
and hence $(S\otimes S)R=R$.
\end{lemma}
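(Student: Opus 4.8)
The plan is to work throughout in index notation, writing $R=\sum_i a_i\otimes b_i$, so that the two defining relations of the quasitriangular structure read $\sum_i \Delta(a_i)\otimes b_i=\sum_{i,j} a_i\otimes a_j\otimes b_i b_j$ and $\sum_i a_i\otimes \Delta(b_i)=\sum_{i,j} a_i a_j\otimes b_j\otimes b_i$. Each identity in the lemma will be extracted by applying a single structure map to one tensor leg of one of these two relations, and then reading the resulting right-hand side as a product inside $H\otimes H$.

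First I would establish the counit relations, since they feed into the antipode computations. Applying $\epsilon\otimes id\otimes id$ to the first relation and using the counit axiom $(\epsilon\otimes id)\circ\Delta=id$, the left-hand side collapses to $R$, while the right-hand side becomes $(1\otimes u)R$ with $u=(\epsilon\otimes id)R$. Cancelling the invertible $R$ on the right gives $u=1$, that is $(\epsilon\otimes id)R=1$; applying $id\otimes id\otimes\epsilon$ to the second relation yields $(id\otimes\epsilon)R=1$ in exactly the same way. Next, for $(S\otimes id)R=R^{-1}$ I would apply the operator $m(S\otimes id)\otimes id$ to the first relation. On the left, the antipode axiom $m(S\otimes id)\Delta=\iota\epsilon$ together with $(\epsilon\otimes id)R=1$ reduces the expression to $1\otimes 1$. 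On the right, the same operator sends $\sum_{i,j} a_i\otimes a_j\otimes b_i b_j$ to $\sum_{i,j} S(a_i)a_j\otimes b_i b_j$, which is precisely the product $\big((S\otimes id)R\big)\cdot R$ in $H\otimes H$. Hence $\big((S\otimes id)R\big)R=1\otimes1$ and $(S\otimes id)R=R^{-1}$.

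The remaining relation $(id\otimes S)R^{-1}=R$ is the delicate point, and I expect it to be the main obstacle. A naive mirror of the previous step (applying $id\otimes m(id\otimes S)$ to the second relation) does \emph{not} work cleanly: because $S$ is an algebra anti-homomorphism, the two factors of the second tensor leg come out multiplied in the reversed order, so the computation only identifies $(id\otimes S)R$ with the inverse of $R$ in $H\otimes H^{op}$ rather than with $R^{-1}$. To control this order I would instead use the antipode $S^{-1}$ of the co-opposite Hopf algebra $H^{cop}$, available since $S$ is assumed bijective, which furnishes the identity $\sum h_{(2)}S^{-1}(h_{(1)})=\epsilon(h)1$. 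Applying $id\otimes\big[m(id\otimes S^{-1})\tau\big]$ to the second relation collapses the left side to $(id\otimes\epsilon)R\otimes1=1\otimes1$, and turns the right side $\sum_{i,j} a_i a_j\otimes b_j\otimes b_i$ into $\sum_{i,j} a_i a_j\otimes b_i S^{-1}(b_j)$, which is exactly $R\cdot(id\otimes S^{-1})R$. Thus $R\cdot(id\otimes S^{-1})R=1\otimes1$, giving $(id\otimes S^{-1})R=R^{-1}$, and applying $id\otimes S$ to this identity yields $(id\otimes S)R^{-1}=R$.

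The final claim is then immediate by substitution, which is the sense in which it is a corollary (``hence''): using $(S\otimes id)R=R^{-1}$ from the second step,
\[
(S\otimes S)R=(id\otimes S)(S\otimes id)R=(id\otimes S)R^{-1}=R .
\]
The only genuinely nontrivial ingredient is the appeal to $S^{-1}$ in the third step; everything else is bookkeeping of which structure map is applied to which leg, followed by recognizing the outcome as a product of the form $(\cdots R)\cdot R$ or $R\cdot(\cdots R)$ and cancelling the invertible element $R$.
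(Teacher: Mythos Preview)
The paper does not prove this lemma; it merely records it (and the following Yang--Baxter lemma) as a standard fact, introduced with ``For sake of completeness we record the following two lemmas.'' So there is nothing to compare against.

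Your argument is correct. The counit identities and $(S\otimes id)R=R^{-1}$ follow exactly as you describe, and your caution about the third identity is well-founded: the naive mirror $id\otimes m(id\otimes S)$ applied to $(id\otimes\Delta)R=R_{13}R_{12}$ yields $\sum_{i,j} a_i a_j\otimes b_j S(b_i)$, which is not a clean product in $H\otimes H$. Your detour through $S^{-1}$ is the standard remedy and is legitimate here without extra hypothesis, since the paper's own definition of a Hopf algebra explicitly requires the antipode to be a \emph{bijective} $k$-module map. The final line deriving $(S\otimes S)R=R$ is then immediate as you wrote it.
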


\begin{lemma}
Let $(H,R)$ be a quasitriangular bialgebra. Then 
\begin{equation}
R_{12}R_{13}R_{23}=R_{23}R_{13}R_{12}
\end{equation}
is the quantum Yang-Baxter equation.
\end{lemma}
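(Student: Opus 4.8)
The plan is to deduce the quantum Yang-Baxter equation from the two comultiplication axioms together with the quasi-cocommutativity property that is the defining feature of a quasitriangular structure, namely that $H$ is cocommutative only up to conjugation by $R$: for every $a\in H$ one has
$$
\Delta^{op}(a)=R\,\Delta(a)\,R^{-1},\qquad\text{equivalently}\qquad R\,\Delta(a)=\Delta^{op}(a)\,R,
$$
where $\Delta^{op}=\tau\circ\Delta$ and the inverse of $R$ exists by assumption. This relation is implicit in the phrase that $H$ is cocommutative up to conjugation by $R$, and it is the ingredient that makes the two comultiplication axioms interact.

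First I would evaluate the element $R_{12}\cdot(\Delta\otimes id)(R)$ of $H\otimes H\otimes H$ in two different ways. Using the first axiom $(\Delta\otimes id)R=R_{13}R_{23}$ directly gives
$$
R_{12}\cdot(\Delta\otimes id)(R)=R_{12}R_{13}R_{23}.
$$
For the second evaluation I would write $R=\sum_j a_j\otimes b_j$, so that $(\Delta\otimes id)(R)=\sum_j\Delta(a_j)\otimes b_j$ with $\Delta(a_j)$ sitting in the first two legs; multiplying on the left by $R_{12}=R\otimes 1$ and using the quasi-cocommutativity relation $R\,\Delta(a_j)=\Delta^{op}(a_j)\,R$ in those two legs lets me slide $R_{12}$ through the coproduct:
$$
R_{12}\cdot(\Delta\otimes id)(R)=\sum_j\big(\Delta^{op}(a_j)\,R\big)\otimes b_j=(\Delta^{op}\otimes id)(R)\cdot R_{12}.
$$

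The final ingredient is the identification $(\Delta^{op}\otimes id)(R)=(\tau\otimes id)\big((\Delta\otimes id)(R)\big)=(\tau\otimes id)(R_{13}R_{23})$. Since the leg-swap $\tau\otimes id$ is an algebra map on $H\otimes H\otimes H$ that interchanges the first two tensor factors, it sends $R_{13}\mapsto R_{23}$ and $R_{23}\mapsto R_{13}$, whence $(\Delta^{op}\otimes id)(R)=R_{23}R_{13}$ and the second evaluation becomes $R_{23}R_{13}R_{12}$. Equating the two expressions yields $R_{12}R_{13}R_{23}=R_{23}R_{13}R_{12}$, as claimed. The step demanding the most care is precisely this bookkeeping of the flip map — confirming that $(\tau\otimes id)(R_{13}R_{23})=R_{23}R_{13}$ and that the quasi-cocommutativity relation is applied in exactly the legs occupied by $\Delta(a_j)$; everything else is formal manipulation in the tensor algebra.
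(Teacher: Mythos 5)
Your proof is correct, and it is the standard argument; the paper itself records this lemma without proof (it is stated ``for sake of completeness'' immediately after the definition of a quasitriangular Hopf algebra), so there is no in-text proof to compare against. The one point worth flagging is the one you already flag yourself: the computation genuinely requires the quasi-cocommutativity axiom $R\,\Delta(a)=\Delta^{op}(a)\,R$, which in this thesis appears only in the surrounding prose (``cocommutative up to conjugation by $R$'') and not in the displayed definition, which lists only the invertibility of $R$ and the two hexagon identities $(\Delta\otimes id)R=R_{13}R_{23}$ and $(id\otimes\Delta)R=R_{13}R_{12}$. The quantum Yang--Baxter equation does not follow from the hexagon identities alone, so making that intertwining relation explicit, as you do, is necessary and not merely cosmetic. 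Your bookkeeping of the flip is also right: $\tau\otimes id$ is an algebra automorphism of $H^{\otimes 3}$ exchanging the first two legs, so $(\tau\otimes id)(R_{13}R_{23})=R_{23}R_{13}$, and equating $R_{12}R_{13}R_{23}=(\Delta^{op}\otimes id)(R)\cdot R_{12}=R_{23}R_{13}R_{12}$ completes the argument.
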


\subsection{Hopf algebra actions}
In this section we introduce the notion of Hopf algebra action. This will be used in the next sections to define a quantized action and a quantum momentum map.

\begin{definition}
For an algebra $A$, a (left) \textbf{A-module} is a $k$-space $M$ with a $k$-linear map $\gamma:A\otimes M\rightarrow M$ such that $\gamma (m\otimes id)=\gamma(id\otimes m)$ and $\gamma (u\otimes id)=$ scalar multiplication.
\end{definition}
We have that $A$ acts on the $k$-space $M$ if $M$ is a left $A$-module. The action is given by the map $\gamma$. The dual notion is the co-action of a coalgebra:
\begin{definition}
For a coalgebra $C$, a (right) \textbf{C-comodule} is a $k$-space $M$ with a $k$-linear map $\rho: M\rightarrow M\otimes C$ such that $(id\otimes \Delta)\rho=(\Delta\otimes id)\rho$ and $(id\otimes \epsilon)\rho=$ tensoring with 1. 
\end{definition}
We say that $\rho$ is a coaction of $C$ with $M$. Let us define the maps which preserve the module and comodule structures on the corresponding spaces.
\begin{definition}
Let $A$ be an algebra and $C$ a coalgebra.
\begin{enumerate}
\item Let $M$ and $N$ be (left) $A$-modules with structure maps $\gamma_M$ and $\gamma_N$ respectively. A map $f: M\rightarrow N$ is called an \textbf{A-module map} if $f\circ \gamma_M=\gamma_N\circ (id\otimes f)$.
\item Let $M$, $N$ be a (right) $C$-comodules, with structure maps $\rho_M$ and $\rho_N$ respectively.
A map $f:M\rightarrow N$ is called a \textbf{C-comodule map} if $\rho_N\circ f=(f\otimes id)\circ \rho_M$.
\end{enumerate}
\end{definition}

Finally, the Hopf algebra actions are defined as follows:
\begin{definition}\label{def: haa}
Let $H$ be a Hopf algebra. An algebra $A$ is a (left) \textbf{H-module algebra} if: 
\begin{enumerate}
\item $A$ is a (left) $H$-module via $h\otimes a\mapsto \gamma(h)(a)$ 
\item $\gamma(ab)=m(\gamma\otimes \gamma)(\Delta h)(a\otimes b)$ for any $a$ and $b$ in $A$
\item $\gamma(h)1_A=\epsilon(h)1_A$
\end{enumerate}
\end{definition}
In this case we have a Hopf algebra action of $H$ on $A$ if the algebraic structures of $A$ is compatible with this action. Similarly, a Hopf algebra co-action is defined by:
\begin{definition}
An algebra $A$ is a (right) \textbf{H-comodule algebra} if
\begin{enumerate}
\item $A$ is a (right) $H$-comodule via  $\rho:A\rightarrow A\otimes H$ for any $a$ and $b$ in $A$
\item $\rho(ab)=m(\Delta a, \Delta b)$
\item $\rho(1_A)=1_A\otimes 1_A$
\end{enumerate}
\end{definition}

\subsection{Quantization of Poisson structures}\label{sec_3.2.2}
In this section we discuss the quantization of a Lie bialgebra $\mathfrak{g}$ as studied in  \cite{Re} by Reshetikhin and
in \cite{EK} by Etingof and Kazhdan. We will see that the quantization of $\mathfrak{g}$
is provided by the quantum universal enveloping algebra $\mathcal{U}_{\hbar}(\mathfrak{g})$.
 Recall that, given a Poisson algebra $(A, \lbrace\cdot,\cdot\rbrace)$, its quantization is given
by defining a star product as in eq. (\ref{eq: star}). Then, we describe the quantization of a Poisson Hopf algebra, obtained as deformation of the Poisson algebra and the Hopf algebra structures.

\begin{definition}[Quantization of Hopf algebra]
A deformation of a Hopf algebra $(A,\iota, m, \epsilon, \Delta, S)$ over a field $k$ is a Hopf algebra $(A_{\hbar},\iota_{\hbar}, m_{\hbar}, \epsilon_{\hbar}, \Delta_{\hbar}, S_{\hbar})$ over the ring $k  \llbracket \hbar  \rrbracket $ of formal power series such that
\begin{enumerate}
\item $A_{\hbar}$ is isomorphic to $A  \llbracket \hbar  \rrbracket $ as a $k  \llbracket \hbar  \rrbracket $-module;
\item $m_{\hbar}\equiv m$ (mod $\hbar$) and $\Delta_{\hbar}\equiv\Delta$ (mod $\hbar$).
\end{enumerate}
\end{definition}

Next, let us define the quantization of a Poisson Hopf algebra:
\begin{definition}[Quantization of Poisson Hopf algebra]
Let $A$ be a Poisson Hopf algebra over $k$. A quantization of $A$ is a Hopf algebra deformation $A_{\hbar}$
such that
\begin{equation}
\lbrace q(a),q(b)\rbrace= q\left(\frac{[a,b]_{\hbar}}{\hbar}\right) \quad \forall a,b\in A
\end{equation}
where $q$ is the canonical quotient map $q:A_{\hbar}\rightarrow A_{\hbar}/\hbar A_{\hbar}\cong A$ and $[\cdot,\cdot]_{\hbar}$ is the usual commutator with respect to $m_{\hbar}$.
\end{definition} 

If $G$ is a Poisson Lie group, then the algebra of functions on $G$ is a Hopf algebra and the two structure are compatible as in Definition \ref{def: pha}. A quantization of a Poisson Lie group $G$ is a quantization of this Poisson Hopf algebra.

\begin{definition}[Quantization of co-Poisson Hopf algebra]\label{def_3.2.2_dcp}

Consider the co-Poisson Hopf algebra $(A,m,\Delta, \iota, \epsilon, S; \delta)$. A quantization of $A$ is a non-commutative Hopf algebra $(A_{\hbar},m_{\hbar},\Delta_{\hbar}, \iota_{\hbar},\epsilon_{\hbar}, S)$ over $k  \llbracket \hbar  \rrbracket $ such that

\begin{enumerate}
\item $A_{\hbar}/\hbar A_{\hbar}\cong A$
\item $m\circ (q\otimes q)=q\circ m_{\hbar}$
\item $q\circ \iota_{\hbar}=\iota$
\item $\epsilon_{\hbar}\circ q=\epsilon$
\item $\delta(q(a))=q(\frac{1}{\hbar}(\Delta_{\hbar} (a)-\tau\circ\Delta_{\hbar}(a)))$ for all $a\in A$.
\end{enumerate}
\end{definition}

The product and coproduct need to be related by the following condition:

\begin{equation}\label{3.2.2_ax}
\Delta_{\hbar}(a\star b)=\Delta_{\hbar}(a)\star \Delta_{\hbar}(b).
\end{equation}

Let us consider the universal enveloping algebra $\mathcal{U}(\mathfrak{g})$ of the Lie bialgebra $\mathfrak{g}$. The Poisson structure on $G$ induces a co-Poisson structure $\delta$ on its Lie bialgebra $\mathfrak{g}$, that can be easily extended to $\mathcal{U}(\mathfrak{g})$. Using the Hopf structure discussed in Example \ref{ex_3.2.1_ug} we get that $(\mathcal{U}(\mathfrak{g}),m,\Delta,\iota, \epsilon;\delta)$ is a co-Poisson structure. From Definition \ref{def_3.2.2_dcp}, its quantization is a non-commutative Hopf algebra $(\mathcal{U}_{\hbar}(\mathfrak{g}),m_{\hbar},\Delta_{\hbar},\iota_{\hbar}, \epsilon_{\hbar})$ with
\begin{equation}\label{3.2.2_br}
  \left[ \cdot,\cdot\right]_{\star}=\sum_{k=0}^{\infty} \hbar^{k+1} F_k(\cdot,\cdot)
\end{equation}
where $F_0(\cdot,\cdot)$ is the standard commutator, and
\begin{equation}\label{3.2.2_cp}
  \Delta_{\hbar}=\sum_k \hbar^k \Delta_k
\end{equation}
where $\Delta_0$ is the coproduct of $\mathcal{U}(\mathfrak{g})$ and the antisymmetrization of $\Delta_1$ is given by the structure $\delta$.

\begin{example}[Quantization of $\mathcal{U}(\mathfrak{sl})(2)$ \cite{Tj}]

Let us consider the Lie algebra $\mathfrak{sl}(2)$ with basis ${H,E,F}$ and commutation relations

\begin{equation}
\left[ H,E\right]=2E,\quad \left[ H,F\right]=-2F\quad \left[ E,F\right]=H.
\end{equation}

The co-Poisson structure of $\mathcal{U}(\mathfrak{sl}(2))$ is given by the extension to the whole universal enveloping algebra of the map

\begin{align}
\delta(H) &=0\\
\delta(E) &=\frac{1}{2}E\wedge H\\
\delta(F) &=\frac{1}{2}F\wedge H
\end{align}

The quantized space is the set of (formal) polynomials in $\hbar$ with coefficients 
in $\mathcal{U}(\mathfrak{sl}(2))$. The coproduct $\Delta_{\hbar}$ on this new space is determined by the following requirements: 

\begin{enumerate}
\item $\Delta_{\hbar}$ is co-associative
\item $\delta(q(a))=q(\frac{1}{\hbar}(\Delta_{\hbar} (a)-\tau\circ\Delta_{\hbar}(a)))$
\item In the classical limit $\hbar\rightarrow 0$ the coproduct $\Delta_{\hbar}$ reduces to the ordinary coproduct on $\mathcal{U}(\mathfrak{sl}(2))$
\end{enumerate}

The coproduct $\Delta_{\hbar}$ has the general form
\begin{equation}
\Delta_{\hbar}=\sum_{n=0}^{\infty}\frac{\hbar^n}{n!}\Delta_n.
\end{equation}
where the term $\Delta_n$ for an arbitrary $n$ and the coproduct of the quantized universal enveloping algebra is
\begin{align}
\Delta_{\hbar}(H) &=H\otimes 1+1\otimes H\\
\Delta_\hbar(E) &=E\otimes 1+q^{-H}\otimes E\\
\Delta_\hbar(F) &=F\otimes q^H+1\otimes F.
\end{align}
with $q= e^{\frac{\hbar}{4}}$. Now, imposing the condition (\ref{3.2.2_ax}) we get the commutation relations
\begin{align}
\left[ H,E\right]_{\star}&=2E\\
\left[ H,F\right]_{\star}&=-2F\\
\left[ E,F\right]_{\star}&=[H]_q
\end{align}
where $[H]_q=\frac{q^{2H}-q^{-2H}}{q-q^{-1}}$. Finally, we have
\begin{align}
\epsilon_{\hbar}(E)&=\epsilon_{\hbar}(F)=\epsilon_{\hbar}(H)=0\\
\epsilon_{\hbar}(1)&=1
\end{align}
and 
\begin{align}
S_{\hbar}(E)&=-qE\\
S_{\hbar}(F)&=-q^{-1}F\\
S_{\hbar}(H)&=-H.
\end{align}
This Hopf algebra is called the quantum universal enveloping algebra of $\mathfrak{sl}(2)$ and is denoted by $\mathcal{U}_{\hbar}(\mathfrak{sl}(2))$.
\end{example}

\section{Quantum Momentum Map}

The problem of quantizing the momentum map and the theory of reduction has been the main topic of many works, e.g. \cite{Fe1} and \cite{Lu4}. In the following we discuss two different methods that have been proposed to approach it.

The first one, due to Fedosov \cite{Fe1}, uses deformation quantization. In this approach,  given a canonical action of a Lie group $G$ on a symplectic manifold $M$, the quantum momentum map is defined as a Lie algebra homomorphism 
$\boldsymbol{\mu}_{\hbar}$ from the Lie algebra $\mathfrak{g}$ into the deformed algebra $C^{\infty}_{\hbar}(M)$. The corresponding quantum action is given by the quantization of the Hamiltonian vector field induced by $\boldsymbol{\mu}$. We notice that in this approach there is no quantization of the group. Fedosov defined the quantum reduced space as
\begin{equation}\label{eq: qrs1}
 C^{\infty}_{\hbar}(M)^G/\mathcal{I}_{\hbar},
\end{equation}
where $C^{\infty}_{\hbar}(M)^G$ is the set of the functions in $C^{\infty}_{\hbar}(M)$ which are invariant under the quantized action. Here $\mathcal{I}_{\hbar}$ is the ideal generated by the components $\boldsymbol{\mu}^{i}_{\hbar}$ of the quantum momentum map $\boldsymbol{\mu}_{\hbar}$.
Furthermore he proved that, under the assumptions of the Marsden-Weinstein Theorem, the quantum reduced algebra in eq. (\ref{eq: qrs1}) 
is isomorphic to the algebra obtained by canonical deformation quantization of $C^{\infty}(M_{\xi})$. 

A different approach has been developed in \cite{Lu4} by Lu. In this case the quantization procedure is carried on via quantum group techniques. Here the author considers a Hopf algebra $H$ with dual $H^*$ and assumes that $H$ is the quantization of the Poisson Lie group $G$.
Given an action $\Phi:H^*\otimes V\rightarrow V$ of $H^*$ on the algebra $V$, then the quantum momentum map is defined as an algebra homomorphism $\boldsymbol{\mu}: H^*\rightarrow V$, provided that the action $\Phi$ can be rewritten in terms of $\boldsymbol{\mu}$. We stress that this approach does not guarantee that the quantum action $\Phi$ is the quantization of the given Poisson action.

The idea we discuss in this section is the generalization of the deformation quantization approach to the Poisson reduction case. We  use quantum group techniques to quantize Poisson Lie groups and the 
quantum momentum map is basically defined as a linear map from the quantum group $\mathcal{U}_{\hbar}(\mathfrak{g})$ to the deformed algebra $C^{\infty}_{\hbar}(M)$.
The induced quantum action will be a Hopf algebra action, as defined in Definition \ref{def: haa}. This will allows us to discuss some
 examples of quantum momentum map and quantum reduction.

\subsection{Quantization of the momentum map}
\label{sec: def}

The main goal of this section is the quantization of the momentum map as defined in (\ref{def: mm}). 
Basically, we consider a Poisson action of $G$ on $M$, introduce the quantization of the structures using the techniques 
discussed in the previous sections and we give a definition of the corresponding quantum action in terms of Hopf algebra
 action.  The quantum momentum map will be defined as the map which factorizes such an action. 

Let $C^{\infty}_{\hbar}(M)$ be a deformation quantization of $(M,\pi)$ and let us denote
\begin{align}
& m_{\star} :C^\infty_\hbar (M) \times C^\infty_\hbar (M) \rightarrow C^\infty_\hbar (M)\\
&[f,g]_{\star}=\sum_{n=0}^{\infty} P_n(f,g), \quad\forall f,g\in C^{\infty}(M) 
\end{align}
the star-product and the deformed bracket in $C^{\infty}_{\hbar}(M)$.

As discussed in the previous section, $\mathcal{U}_{\hbar}(\mathfrak{g})$ denote the deformation quantization of the universal enveloping algebra of $\mk{g}$ and we denote with $m_{\hbar}$ and $\Delta_{\hbar}$ the deformed 
product and coproduct on $\mathcal{U}_{\hbar}(\mathfrak{g})$, resp. 

Given the quantization of all the structures, we define the quantum action as follows:
\begin{definition}
Given the infinitesimal generator $\Phi: \mathfrak{g}\rightarrow TM$ of a Poisson action of $(G,\pi_G)$ on $(M,\pi)$, the corresponding \textbf{quantum action} is the linear map
\begin{equation}\label{eq: qa}
 \Phi_{\hbar}  : \mathcal{U}_\hbar (\mk{g})\rightarrow End\; C^\infty_\hbar (M): \xi\mapsto  \Phi_{\hbar}(\xi)(f)
\end{equation}
continuous with respect to $C^\infty$-topology and such that it defines a Hopf algebra action, i.e. such that
\begin{equation}\label{eq: hac}
 \Phi_{\hbar}  (\xi)(f\star g)=m_{\star} ( \Phi_{\hbar} \otimes \Phi_{\hbar}  \circ \Delta_\hbar (\xi)(f\otimes g)).
\end{equation}
and
\begin{equation}
[\Phi_{\hbar}  (\xi),\Phi_{\hbar}  (\eta)](f)=\Phi_{\hbar}  ([\xi,\eta])(f)
\end{equation}
\end{definition}

An example can be constructed when the deformation of $\mathcal{U} (\mk{g})$ comes from a  twist $\tau\in (\mathcal{U} (\mk{g})\otimes \mathcal{U} (\mk{g}))  \llbracket \hbar  \rrbracket $ such
that $\Delta_{\hbar}=\tau\Delta\tau^{-1}$ is associative and the corresponding associator is trivial \cite{Xu1}. In fact, one can define a deformation quantization of $M$
setting, for any differential operator $\mathcal{D}$ on $M$.
\begin{equation}
\mathcal{D}(f\star g)=m ((\mathcal{L}_t \Delta (\mathcal{D}))f\otimes g),
\end{equation}
where $\Delta$ is the usual coproduct on differential operators. The $\star$-product defined by this formula is automatically consistent with the action of deformed Hopf algebra $\mathcal{U} (\mk{g})$.

In the following we define a quantum momentum map which, analogously to the classical case, factorizes the quantum action 
(\ref{eq: qa}). Let us recall from the previous chapter the composition of Lie algebra homomorphisms which define the momentum map:
\begin{align}\label{dia: cla1}
 \mathfrak{g} &\longrightarrow \Omega^1(M)\longrightarrow TM\\
 \xi &\longmapsto \alpha_{\xi}\longmapsto \pi^{\sharp}(\alpha_{\xi})
\end{align}

Using the arguments of Section \ref{sub: structure}, it should be clear that a quantum
momentum map can be defined as a quantization of the infinitesimal momentum map $\alpha$ (\ref{eq: imm}). Recall that, in the classical construction, the map $\pi^\sharp: \Omega^1(M)\rightarrow TM$ is defined by:
\begin{equation}
\pi^\sharp: \Omega^1(M)\ni adb \longmapsto  a\{b,\cdot\}\in TM
\end{equation}
where $a,b\in C^{\infty}(M)$. Hence, the classical construction can be rephrased as follows:
\begin{align}\label{dia: cla2}
 \mathfrak{g} &\longrightarrow C^{\infty}(M)\otimes C^{\infty}(M)\longrightarrow End\; C^{\infty} (M)\\
 \xi &\longmapsto a_{\xi}^i \otimes b_{\xi}^i\longmapsto \sum_i a_{\xi}^i \{b_{\xi}^i,f\}
\end{align} 

This motivates the following definition of quantum momentum map.

\begin{definition}\label{def: qmm1}
A \textbf{quantum momentum map} is defined to be a linear map 
\begin{equation}
\boldsymbol{\mu}_{\hbar}:\mathcal{U}_{\hbar}(\mathfrak{g})\rightarrow C_{\hbar}^{\infty}(M)\otimes C_{\hbar}^{\infty}(M): \xi\mapsto \sum_i a_{\xi}^i\otimes b_{\xi}^i.
\end{equation}
such that it is an algebra homomorphism and
\begin{equation}\label{eq: qa1}
\Phi_{\hbar}  (\xi)=\sum_i a_{\xi}^i \left[b_{\xi}^i,\cdot\right]_{\star}
\end{equation}
is a quantized action.
\end{definition}
To avoid cumbersome notation, we omitted the star notation $\star$ in the star-product. Moreover, we denoted the functions $\hat{a} = a \mod \hbar$, $\hat{b} = b \mod \hbar \in C_{\hbar}^{\infty}(M)$  simply by $a$ and $b$. 

It is easy to see that, the classical action (\ref{dia: cla2}) can be recovered in the limit $\hbar\rightarrow 0$ from 
eq. (\ref{eq: qa1}) and using  eq. (\ref{eq: pbd1}).
In other words, this definition gives the quantization of the construction (\ref{dia: cla2})  as
\begin{align}\label{dia: qua1}
 \mathcal{U}_{\hbar}(\mathfrak{g}) &\longrightarrow C_{\hbar}^{\infty}(M)\otimes C_{\hbar}^{\infty}(M)\longrightarrow End\; C^{\infty}_{\hbar} (M)\\
 \xi &\longmapsto a_{\xi}^i\otimes b_{\xi}^i\longmapsto \frac{1}{\hbar}\sum_i a_{\xi}^i[b_{\xi}^i,f]_{\star}
\end{align}

On the other hand, introducing the space $\Omega^1(\mathcal{A}_{\hbar})$ of differential forms on the algebra $\mathcal{A}_{\hbar}=C^{\infty}_{\hbar} (M)$ and identifying 
$\Omega^1(\mathcal{A}_{\hbar})$ with $\mathcal{A}_{\hbar}\otimes \mathcal{A}_{\hbar}$, we have:

\begin{equation}\label{dia: qua2}
 \mathcal{U}_{\hbar}(\mathfrak{g}) \longrightarrow \Omega^1(\mathcal{A}_{\hbar})\longrightarrow End\; \mathcal{A}_{\hbar}\\
\end{equation}

We can define a non commutative product on the space of differential forms $\Omega^1(\mathcal{A}_{\hbar})$, using  the map 
$\Omega^1(\mathcal{A}_{\hbar})\rightarrow End\; \mathcal{A}_{\hbar}: a db\mapsto a [b,f]$. It associates $[b,[c,f]]$ to the product of 
two closed forms $db\cdot dc$ and we have
\begin{equation}
 [b,[c,f]]= b [c,f]-[c,f] b= b [c,f]-[cb,f]+c [b,f].
\end{equation}
It is clear that the product $db\cdot dc$ on $\Omega^1(\mathcal{A}_{\hbar})$ has to be defined as follows:
\begin{equation}\label{eq: prf}
 db\cdot dc= bdc-d(cb)+cdb.
\end{equation}

As introduced in (\ref{eq: hcc}) the space $End\; \mathcal{A}_{\hbar}$ defines the Hochschild cochain of $\mathcal{A}_{\hbar}$ in itself $C^1(\mathcal{A}_{\hbar},\mathcal{A}_{\hbar})$ with
coboundary $b$. We notice here that from the definition of the product (\ref{eq: prf}), the map
 $\Omega^1(\mathcal{A}_{\hbar})\longrightarrow End\; \mathcal{A}_{\hbar}$ is a Lie algebra homomorphism only if the differential of the unit in $\mathcal{A}_{\hbar}$ does not vanish in $\Omega^1(\mathcal{A}_{\hbar})$, i.e. we work with the formal differential forms on the unitalization $\mathcal{A}_{\hbar}^{+}$ of $\mathcal{A}_{\hbar}$.

These observations allow us to rewrite the definition of quantum momentum map as follows:

\begin{definition}
A \textbf{quantum momentum map} for the quantum action $\Phi_{\hbar}:\mathcal{U}_{\hbar}(\mathfrak{g}) \rightarrow End\; \mathcal{A}_{\hbar}$ is a linear map
\begin{equation}
\boldsymbol{\mu}_{\hbar}:\mathcal{U}_{\hbar}(\mathfrak{g}) \longrightarrow \Omega^1(\mathcal{A}_{\hbar}): \xi\mapsto a_{\xi}^i d b_{\xi}^i
\end{equation}
 such that it is an algebra homomorphism and
\begin{equation}
\Phi_{\hbar}(\xi)(f)=\frac{1}{\hbar}\sum_i a_{\xi}^i[b_{\xi}^i,f]_{\star},
\end{equation}
where $ a_{\xi}^i, b_{\xi}^i\in \mathcal{A}_{\hbar}$.
\end{definition}

In Section (\ref{sub: structure}) we rephrased the classical construction (\ref{dia: cla1}) in terms of Gerstenhaber morphisms,
\begin{equation}\label{dia: cla3}
 \wedge^{\bullet}\mathfrak{g}\rightarrow\Omega^{\bullet}(M)\rightarrow  \wedge^{\bullet}TM
\end{equation}
In order to generalize the quantum construction (\ref{dia: qua1}) in a similar way we first notice that the map $\Omega^1(\mathcal{A}_{\hbar})\rightarrow C^1(\mathcal{A}_{\hbar},\mathcal{A}_{\hbar})$ extends naturally to the map
$\Omega^{\bullet}(\mathcal{A}_{\hbar})\rightarrow C^{\bullet}(\mathcal{A}_{\hbar},\mathcal{A}_{\hbar})$.

 Consider the tensor algebra $T(\mathcal{U}_{\hbar}(\mathfrak{g})[1])$, where the degree of $\xi_1\otimes\dots\otimes \xi_n$ is $n$.
The coproduct on $\mathcal{U}_{\hbar}(\mathfrak{g})$ extends naturally to $T(\mathcal{U}_{\hbar}(\mathfrak{g})[1])$, simply putting
$$\Delta_{\hbar}(\xi_1\otimes\xi_2)=\Delta_{\hbar}(\xi_1)\otimes\xi_2-\xi_1\otimes\Delta_{\hbar}(\xi_2)$$
(i. e. $\Delta$ is extended to an  odd derivation of the tensor algebra).
Then $\Delta_{\hbar}^2=0$ and   $(T(\mathcal{U}_{\hbar}(\mathfrak{g})[1]),\Delta_{\hbar})$ is a complex.
 The action $\mathcal{U}_{\hbar}(\mathfrak{g})\rightarrow C^{1}(\mathcal{A}_{\hbar},\mathcal{A}_{\hbar})$ extends to the cochain map
\begin{align}
& T(\mathcal{U}_{\hbar}(\mathfrak{g})[1])\longrightarrow C^{\bullet}(\mathcal{A}_{\hbar},\mathcal{A}_{\hbar})
\end{align}

These observations motivate the following rephrasing of the definition of quantum momentum map

\begin{definition}\label{def: qmm2}
A \textbf{quantum momentum map} is defined to be a linear map
\begin{equation}
\boldsymbol{\mu}_{\hbar}:T(\mathcal{U}_{\hbar}(\mathfrak{g})[1])\rightarrow \Omega^{\bullet}(\mathcal{A}_{\hbar}):  \xi_1\otimes\dots\otimes \xi_n\mapsto a_1 db_1\otimes\dots\otimes a_n db_n
\end{equation}
such that
\begin{equation}
\Phi_{\hbar}(\xi_1\otimes \dots\otimes \xi_n)(f_1,\ldots ,f_n)=\frac{1}{\hbar^n}a_1[b_1,f_1]\dots a_n[b_n,f_n]
\end{equation}
\end{definition}

\subsection{Examples}
\label{sec: exs}

In this section we apply the construction given above to some explicit examples.
We show that the existence of the quantum momentum map induces the quantization of the Lie algebra. In fact, in the examples studied in this section, the quantization of the Lie algebra is essentially uniquely determined by existence of universal formulas for the quantum momentum map.

\subsubsection*{Two-dimensional case}

Consider the Lie bialgebra $\mathfrak{g}=\mathbb{R}^2$ with generators $\xi,\eta$ and a deformation quantization $C^\infty_\hbar (M)$ of a Poisson manifold $M$. Assume that $\xi$ acts by
\begin{equation}
\Phi_{\hbar}(\xi)=\frac{1}{\hbar}a[b,\cdot\;]
\end{equation}
for some $a,b\in C^\infty_\hbar (M)$.
Let us impose that it is an Hopf algebra action; then we have
\begin{equation}\label{surprise}
\Phi_{\hbar}(\xi)(f g)=\frac{1}{\hbar}a[b,f g]=\frac{1}{\hbar}a[b,f]g+\frac{1}{\hbar}[a,f][b,g]+\frac{1}{\hbar}fa[b,g].
\end{equation}
Suppose that $a$ is invertible, then
$[a,f]=-a[a^{-1},f]a$ and, setting
\begin{equation}\label{eq: ac2}
\Phi_{\hbar}(\eta)=\frac{1}{\hbar}a[a^{-1},\cdot]
\end{equation}
the coproduct which satisfies the condition (\ref{def: haa}) is given by
\begin{equation}
\Delta_{\hbar}( \xi)=\xi\otimes 1-\hbar \;\eta\otimes \xi +1\otimes\xi.
\end{equation}

Similarly, for (\ref{eq: ac2}) we have

\begin{equation}
\begin{split}
\Phi_{\hbar}(\eta)(f g)&=\frac{1}{\hbar}a[a^{-1},f g]\\
&=\frac{1}{\hbar}a[a^{-1},f]g-\frac{1}{\hbar}a[a^{-1},f]a[a^{-1},g]+\frac{1}{\hbar}fa[a^{-1},g].
\end{split}
\end{equation}
hence
\begin{equation}
\Delta_{\hbar} (\eta)=\eta\otimes 1- \hbar\; \eta\otimes \eta+1\otimes \eta.
\end{equation}
Finally we calculate the bracket of the generators to get the deformed algebra structure of $\mathfrak{g}$:
\begin{equation}
\begin{split}\label{eq: br}
\left[\Phi_{\hbar}(\xi),\Phi_{\hbar}(\eta) \right] f &=\frac{1}{\hbar^2}(a[b,a[a^{-1},f]]-a[a^{-1},a[b,f]])\\
&=
a[b,a][a^{-1},f] +aa[b,[a^{-1},f]]-
a[a^{-1},a][b,f] -aa[a^{-1},[b,f]]\\
&=a[b,a][a^{-1},f]+a^2 [[b,a^{-1}],f].
\end{split}
\end{equation}

\begin{remark}
One should notice that the equation (\ref{surprise}) essentially says the following. Given an element in the image of $\Phi_\hbar$ of the form $\frac{1}{\hbar}a[b,\cdot ]$, $a$ is invertible on the support of $b$ and, assuming universality of our formulas, forces the image of $\Phi_\hbar$ also to contain an element of the form $\frac{1}{\hbar}a[a^{-1},\cdot ]$. In the case when ${\mathfrak g}$ is two (or three dimensional), this essentially forces  the formulas for the deformed coproduct in the examples below.
\end{remark}

We obtain different algebra structures that we discuss case by case

\paragraph{Case 1: $[a,b]=0$.}

Under this assumption, from the relation (\ref{eq: br}) we obtain $\left[\Phi_{\hbar}(\xi),\Phi_{\hbar}(\eta) \right] =0$ and imposing that $\Phi_{\hbar}$ is a Lie algebra homomorphism we get
\begin{equation}
[\xi,\eta]=0.
\end{equation}
Hence, the quantum group given by the universal enveloping algebra $\mathcal{U}_{\hbar}(\mathbb{R}^2)$ generated by the commuting elements $\xi,\eta$ with coproduct
\begin{align}
\Delta_{\hbar} (\xi) &=\xi\otimes 1-\hbar \;\eta\otimes \xi +1\otimes\xi\\
\Delta_{\hbar} (\eta) &=\eta\otimes 1- \hbar\; \eta\otimes \eta+1\otimes \eta.
\end{align}
is the deformation quantization of the abelian Lie bialgebra $\mathfrak{g}=\mathbb{R}^2$, with cobracket
\begin{equation}
\delta (\xi)=-\frac{1}{2} \eta\wedge \xi \qquad \delta(\eta)=0.
\end{equation}
The corresponding Poisson Lie group is $(\R^2,\pi)$, where the Poisson bivector is given by
\begin{equation}
    \pi=-\frac{1}{2}x\partial_x\wedge\partial_y
\end{equation}
Setting $a_0=a\mbox{ mod }\hbar$ and $b_0=b\mbox{ mod }\hbar$, the quantum actions
\begin{equation}\label{eq: qa2}
\Phi_{\hbar}(\xi)=\frac{1}{\hbar}a[b,\cdot]\qquad \Phi_{\hbar}(\eta)=\frac{1}{\hbar}a[a^{-1},\cdot]
\end{equation}
give the quantization of the Poisson action of $(\R^2,\pi)$ on $M$ given by
\begin{equation}\label{eq: ca}
\Phi(\xi)=a_0\{b_0,\cdot\}\qquad \Phi(\eta)=a_0\{a^{-1}_0,\cdot\}.
\end{equation}
The Poisson reduction extends to the quantized version immediately. Given $\lambda ,\mu\in\mathcal{A}_{\hbar}$ with $\lambda \neq 0$ and considering the ideal $\mathcal{I}_{\hbar}$ of functions generated by $a-\lambda$ and $b-\mu$, the algebra
\begin{equation}
(C^\infty_\hbar (M) /\mathcal{I}_{\hbar})^{\mathcal{U}_{\hbar}(\mathbb{R}^2)}
\end{equation}
is a quantization of the Poisson algebra
\begin{equation}
\{ a_0 =\lambda , b_0=\mu \}^{\R^2}.
\end{equation}

\paragraph{Case 2: $[a,b]=-\hbar$.}

In this case we have hence $[b,a^{-1}]=a^{-2}\hbar$; using eq. (\ref{eq: br}) we obtain
\begin{equation}
\begin{split}
\left[\Phi_{\hbar}(\xi),\Phi_{\hbar}(\eta) \right] (f)&= Y(f) +a^2[a^{-2},f]\\
&= Y(f)+2a[a^{-1},f]-a^2[a^{-1},[a^{-1},f]]\\
&=3\hbar^2 Y(f)-\hbar Y^2 (f).
\end{split}
\end{equation}
Hence the quantum group $\mathcal{U}_{\hbar}(\mathfrak{g})$
has the following structures
\begin{align}
[\xi,\eta]&=3\eta-\hbar \eta^2\\
\Delta_{\hbar} (\xi)= &=\xi\otimes 1-\hbar \eta\otimes \xi +1\otimes \xi\\
\Delta_{\hbar}( \eta) &=\eta\otimes 1- \hbar \eta\otimes \eta+1\otimes \eta
\end{align}
and defines a deformation quantization of the Lie bialgebra $\mathfrak{g}$ generated by $\xi$ and $\eta$ with
\begin{align}
[\xi,\eta]&=3\eta\\
\delta (\xi)&=-\frac{1}{2} \eta\wedge \xi\\
\delta(\eta)&=0
\end{align}

The action of $\mathfrak{g}$ on $M$ is factorized by the momentum map determined by the forms
\begin{equation}
\boldsymbol{\mu}(\xi)=a_0db_0 \quad \boldsymbol{\mu}(\eta)=d\log (a_0)
\end{equation}
and is given by (\ref{eq: ca}). Its quantization is given by (\ref{eq: qa2}) and it is factorized by the quantum momentum map
\begin{equation}
\boldsymbol{\mu}_{\hbar}(\xi)=adb \quad \boldsymbol{\mu}_{\hbar}(\eta)=d\log (a).
\end{equation}
The quantum reduction is given by
\begin{equation}
(C^{\infty}_{\hbar}(M)[b^{-1}])^{\mathcal{U}_{\hbar}(\mathfrak{g})}
\end{equation}
and it is the quantization of the example discussed in Section \ref{sec: ex}. 

In this case it is easy to check that  $C^{\infty}_{\hbar}(M)[b^{-1}]$ is still an algebra.

\paragraph{Case 3: $[a,b]=-\hbar ba$.}

This example is the direct quantization of the Poisson reduction discussed in Section \ref{sec: ex}.
Similarly we discuss here the different cases that classically give rise to different dressing orbits. First, if $b>0$	
 we can define $b=e^q$ and $a=e^p$, then we recover the previous case, since we get
\begin{equation}
[p,q]=\hbar
\end{equation}	
i.e. the quantum plane. In this case, unfortunately $C^{\infty}_{\hbar}(M)[b^{-1}]$ is not an algebra but we observe that
we easily get a well defined algebra by replacing it with $C^{\infty}_{\hbar}(M)[[\hbar b^{-1}]]$. Hence the quantum reduction is given by
 \begin{equation}
(C^{\infty}_{\hbar}(M)[[\hbar b^{-1}]])^{\mathcal{U}_{\hbar}(\mathfrak{g})}
\end{equation}

If $b=0$ we recover the result of the abelian case.

\subsubsection*{Three-dimensional case}

The second example we discuss here is the Hopf algebra action of $U_{\hbar}(\mathfrak{su}(2))$ on the deformed algebra
 $C^\infty_{\hbar} (M)$. Consider $a,b,c\in C^\infty_{\hbar} (M)$  satisfying
\begin{align}
aba^{-1}&=e^{2\hbar}b\\
aca^{-1}&=e^{-2\hbar}c\\
\left[ b,c\right] &= \frac{\hbar^2}{e^{-\hbar}-e^{\hbar}} a^{-2}-(1-e^{2\hbar})cb
\end{align}
and the generators $\xi,\eta,\zeta$ acting respectively by
\begin{align}
\Phi_{\hbar}(\xi)f&=\frac{1}{\hbar}a[b,f]\\
\Phi_{\hbar}(\eta)f&=\frac{1}{\hbar}[c,f]a\\
\Phi_{\hbar}(\zeta)f&=afa^{-1}.
\end{align}
Then by calculating the commutation relations of these generators and imposing the Lie algebra homomorphism of $\Phi_{\hbar}$ we obtain that $\xi,\eta,\zeta$ satisfy the commutation relations:
\begin{align}
\zeta\xi\zeta^{-1}&=e^{2\hbar}\xi\\
\zeta\eta\zeta^{-1}&=e^{-2\hbar}\eta\\
 \left[ \xi,\eta\right] &=\frac{\zeta^{-1}-\zeta}{e^{-\hbar}-e^{\hbar}}
\end{align}

Checking the condition (\ref{eq: hac}) for any generator, we get
\begin{align}
\Phi_{\hbar}(\zeta)(fg)&=\Phi_{\hbar}(\zeta)(f)\Phi_{\hbar}(\zeta)(g)\\
\Phi_{\hbar}(\xi)(fg)&=\Phi_{\hbar}(\xi)(f)g+ \Phi_{\hbar}(\zeta)(f)\Phi_{\hbar}(\xi)(g)\\
\Phi_{\hbar}(\eta)(fg)&=f\Phi_{\hbar}(\eta)(g)+ \Phi_{\hbar}(\eta)(f)\Phi_{\hbar}(\zeta)^{-1}(g)
\end{align}
hence
\begin{align}
\Delta_{\hbar}(\zeta)&=\zeta\otimes \zeta\\
\Delta_{\hbar}(\xi)&=\xi\otimes 1+\zeta\otimes\xi\\
\Delta_{\hbar}(\eta)&=1\otimes \eta+\eta\otimes\zeta^{-1}.
\end{align}

Finally, we have that $\xi,\eta$ and $\zeta$ generate a Hopf algebra action of $U_{\hbar}(\mathfrak{su}(2))$ on $C^{\infty}_{\hbar}(M)$.

\chapter*{Conclusions}
\addcontentsline{toc}{chapter}{Conclusions}
\markboth{Conclusions}{}

The work described in this thesis suggests a number of extensions and directions for
future work. Here we collect some possibilities.

\section*{Momentum map and Reduction in Poisson geometry}

The contributions of this thesis in the theory of momentum map and reduction in Poisson geometry have been discussed in detail in Chapter \ref{ch: two}. We give here a summary of the main results.

We introduced a definition of momentum map in infinitesimal terms and we proved the theory of reconstruction
	of momentum map from the infinitesimal one (Sections \ref{sub: structure} and \ref{sec: rec}). The reconstruction theory 
	allowed us to prove the existence of the momentum map in two explicit cases.
	
	We studied the uniqueness of the
	 momentum map, proving the Theorem \ref{thm: idef} on the infinitesimal deformations of a momentum map. We analyzed
	 the uniqueness of the momentum map in the case of a compact and semisimple Poisson Lie group acting on a generic
	  Poisson manifold. Finally, in Section \ref{sec: poisson reduction} we introduced  the construction of a theory of Poisson reduction.

These results motivate the study of many open problems and we introduce and briefly discuss in the following those we are
interested in approaching:
\begin{enumerate}
	\item[--] The reconstruction problem has been discussed in the Section \ref{sec: rec} only for the abelian case and the Heisenberg
	 group. An interesting question would be the possibility of extend our result to an arbitrary two-step nilpotent group
	  $G^*$. Moreover, since the computation performed is a kind of spectral sequence computation associated to the central 
	series of a nilpotent Lie algebra, it seems possible to extend the above result to arbitrary \textit{nilpotent groups}.
	\item[--]  The connection of the Poisson reduction with the Lu's point reduction defined in \cite{Lu1} can be investigated.
	More precisely, the Poisson reduction can be regarded as an orbit reduction, hence we aim to generalize the
	\textit{ Reduction diagram theorem}, proved in \cite{RO} for canonical actions of Lie groups. This would complete the
	  analogy with the symplectic theory.
	\item[--] As suggested by Rui Loja Fernandes, Poisson reduction can be rephrased in terms of \textit{Dirac structures}
	   \cite{Co}. Since $M$ is a Poisson manifold, it is known that a Dirac structure on it can be defined by the graph of the
	    map $\pi^{\sharp}:T^*M\rightarrow TM$. We would like to prove that the reduced space, as defined in Section 
	    \ref{sec: poisson reduction}, inherits an integrable Dirac structure from $M$ and hence, a Poisson structure. Explicitly,
	    we are interested to demonstrate the following claim:
	    
\textit{
Consider the Poisson action $G\times M\rightarrow M$ with equivariant momentum map $\bs{\mu}:M\ra G\st$. Let $x\in G\st$ be a regular value of $\bs{\mu}$ and assume that
the action is proper and free on $\bs{\mu}\inv(x)$. Then one has a natural isomorphism
\begin{equation}
 \boldsymbol{\mu}^{-1}(x)/G_{x}\simeq \boldsymbol{\mu}^{-1}(\mathcal{O}_{x})/G,
\end{equation}
where $\mathcal{O}_{x}\subset G^*$ denotes the dressing orbit of $G$ through $x$ and $G_{x}$ denotes the isotropy group of $x$. This isomorphism is a Poisson diffeomorphism for the unique Poisson structures on these
quotients which arise from the diagram
\begin{diagram}
&              &M \\
&\ruTo     &            &\luTo\\
\boldsymbol{\mu}^{-1}(x) &             &        &        &\boldsymbol{\mu}^{-1}(\mathcal{O}_{x})\\
   &\rdTo    &          &\ldTo\\
&              &\boldsymbol{\mu}^{-1}(x)/G_{x}\simeq \boldsymbol{\mu}^{-1}(\mathcal{O}_{x})/G
\end{diagram}
where the inclusions are backward Dirac maps and the projections are forward Dirac maps.}

This new formulation of the Poisson reduction leads to a new question, i.e. the relation with the theory of Dirac reduction. 
\end{enumerate}

\section*{Quantum Momentum map and Quantum Reduction}

The contribution of Chapter \ref{ch: three} is the study of a new definition of the quantum momentum map
associated to the quantized action. Some directions for the future work are discussed below:
\begin{enumerate}
	\item[--] The first open question we would approach is a formal definition of the \textit{quantum reduction}. First we have to
	 complete the example of the Hopf algebra action of $U_{\hbar}(\mathfrak{su}(2))$ on $C^{\infty}_{\hbar}(M)$.
	 Let $H=a^{-2}-e^\hbar\frac{(1-e^{2\hbar})^2}{\hbar^2}cb$. Then we obtain
	 \begin{align}
	 a^{-1}H a&=H\\
	 [b,H]&=-(1-e^{2\hbar})H b\\
	 [c,H]&=c(1-e^{2\hbar})H
	 \end{align}
 In particular, the ideal $\mathcal{I}$ generated by $H$ in $C^{\infty}_{\hbar}(M)$ is $\mc{U}_{\hbar}(\mathfrak{su}(2))$-invariant, and
 \begin{equation}
(C^{\infty}_{\hbar}(M)/\mathcal{I})^{\mc{U}_{\hbar}(\mathfrak{su}(2))}
\end{equation}
is a deformation quantization of the Poisson reduction
\begin{equation}
M/\!/SU(2)
\end{equation}
corresponding to the symplectic leaf $a_0^{-2}-4b_0 c_0=0$ in $SU(2)^*=SB(2,{\mathbb C} )$.
Notice that the above leaf is not compact, hence the action cannot integrate to the action of SU(2). It would be interesting to
investigate on the structure of the other leaves. This would help us to understand how to define correctly the quantum
 reduction.

	\item[--] The procedure of quantization can applied to the Poisson actions lifted to symplectic groupoids.	
It is known that a Poisson action often does not admit a momentum map. For this reason Fernandes and Ponte in \cite{FP}
 define a \textit{symplectization functor} which turns this action into a Hamiltonian action. Given a Poisson manifold there
  is a canonical symplectic groupoid $\Sigma(M)\rightrightarrows M$ and the action lifted to symplectic groupoids always
   admits a momentum map $\boldsymbol{\mu}:\Sigma(M)\rightarrow G^*$. It would be interesting to analyze the
    quantization of this momentum map. Similarly to the case of a Poisson Lie group, the quantization of the symplectic
     groupoid is given by a quantum groupoid, using the theories studied by Lu and Xu \cite{Lu2}, \cite{Xu1}.

	\item[--] Another significant future direction of research aims at comparing other approaches to quantization different than the
 deformation one, in particular we are interested in \textit{geometrical quantization} \cite{WH}.
This is motivated by an example of the Gelfand-Cetlin system considered by Guillemin and Sternberg in
\cite{GS}. In this example geometric quantization  is defined via higher
cohomology groups.
More precisely, Gelfand-Cetlin system is an integrable system on
$\mathfrak{u(n)}^*$ obtained by Thimm's method. This integrable system can be
viewed as an integrable system on the coadjoint orbits of $\mathfrak{u(n)}^*$
and therefore as a collection of real polarization on the symplectic
leaves. There is a natural Hamiltonian action of a compact Lie group
in this setting: a toric action is associated to a natural choice of
action coordinates.
 In the case of  geometric quantization of the Gelfand-Cetlin system,
 Guillemin and Sternberg obtain results in representation
theory.

We would like to apply a similar scheme to other examples in Poisson
Lie groups which inherit real polarizations using a Thimm's method \cite{M}, \cite{HM} and
compare the results obtained via deformation quantization.

\end{enumerate}




\bibliographystyle{plain}
\bibliography{references1,references2,references3}	
\end{document}